\definecolor{my-linkcolor}{rgb}{0.75,0,0}
\definecolor{my-citecolor}{rgb}{0.1,0.57,0}
\definecolor{my-urlcolor}{rgb}{0,0,0.75}
\newtheorem{Proposition}{Proposition}[section]
\newtheorem{Theorem}[Proposition]{Theorem}
\newtheorem{Corollary}[Proposition]{Corollary}
\newtheorem{Lemma}[Proposition]{Lemma}
\theoremstyle{definition}\newtheorem{Remark}[Proposition]{Remark}
\theoremstyle{definition}\newtheorem{Definition}[Proposition]{Definition}
\begin{document}

\title[Logarithmic minimal models]{Fusion and (non)-rigidity of Virasoro Kac modules in logarithmic minimal models at $(p,q)$-central charge
}

\author{Robert McRae and  Valerii Sopin}

\address{Yau Mathematical Sciences Center, Tsinghua University, Beijing 100084, China}

\email{rhmcrae@tsinghua.edu.cn, vvs@myself.com}

\subjclass{Primary 17B68, 17B69, 18M15, 81R10}

%

\numberwithin{equation}{section}

\begin{abstract}
Let $\mathcal{O}_c$ be the category of finite-length modules for the Virasoro Lie algebra at central charge $c$ whose composition factors are irreducible quotients of reducible Verma modules. For any $c\in\mathbb{C}$, this category admits the vertex algebraic braided tensor category structure of Huang--Lepowsky--Zhang. Here, we begin the detailed study of $\mathcal{O}_{c_{p,q}}$ where $c_{p,q} = 1-\frac{6(p-q)^2}{pq}$ for relatively prime integers $p, q \geq 2$; in conformal field theory, $\mathcal{O}_{c_{p,q}}$ corresponds to a logarithmic extension of the central charge $c_{p,q}$ Virasoro minimal model. We particularly focus on the Virasoro Kac modules $\mathcal{K}_{r,s}$, $r,s\in\mathbb{Z}_{\geq 1}$, in $\mathcal{O}_{c_{p,q}}$ defined by Morin-Duchesne--Rasmussen--Ridout, which are finitely-generated submodules of Feigin--Fuchs modules for the Virasoro algebra. We prove that $\mathcal{K}_{r,s}$ is rigid and self-dual when $1\leq r\leq p$ and $1\leq s\leq q$, but that not all $\mathcal{K}_{r,s}$ are rigid when $r>p$ or $s>q$. That is, $\mathcal{O}_{c_{p,q}}$ is not a rigid tensor category. 
We also show that all Kac modules and all simple modules in $\mathcal{O}_{c_{p,q}}$ are homomorphic images of repeated tensor products of $\mathcal{K}_{1,2}$ and $\mathcal{K}_{2,1}$, and we determine completely how $\mathcal{K}_{1,2}$ and $\mathcal{K}_{2,1}$ tensor with Kac modules and simple modules in $\mathcal{O}_{c_{p,q}}$. In the process, we prove some fusion rule conjectures of Morin-Duchesne--Rasmussen--Ridout.
\end{abstract}

\maketitle

\tableofcontents

\allowdisplaybreaks

\section{Introduction}

The Virasoro Lie algebra $\mathfrak{Vir}$ is the unique central extension of the Witt algebra, the Lie algebra of differential operators on the circle \cite{GF}. It appears often in theoretical physics, including string theory (beginning in 1970 with Virasoro's work \cite{Vir}) and two-dimensional conformal field theory \cite{BPZ}. The representation theory of the Virasoro algebra was heavily studied during the 1980s (see \cite{IK} and the references cited there), but open problems remain, especially those related to fusion of Virasoro modules in logarithmic conformal field theory.

Logarithmic conformal field theories (LCFTs) are characterized by the appearance of logarithmic singularities in their correlation functions. Such singularities arise from logarithmic $\mathfrak{Vir}$-modules, on which the Virasoro zero-mode $L_0$ is not diagonalizable. LCFTs have applications in string theory and condensed matter physics, especially in the behavior of statistical models at a critical point; see for example the reviews \cite{Fl, Gab, Kawai, CrRi} and the references cited there. Basic examples of chiral LCFTs include the logarithmic minimal models $\mathcal{LM}(p,q)$ for relatively prime $p,q\in\mathbb{Z}_{\geq 1}$, which are based on the representation theory of the Virasoro algebra at central charge $c_{p,q}=1-\frac{6(p-q)^2}{pq}$. For $p,q\geq 2$, $\mathcal{LM}(p,q)$ is a logarithmic extension of the well-known Virasoro minimal model at central charge $c_{p,q}$ in rational conformal field theory. The simplest case $p=2$, $q=3$, at central charge $0$, is related to percolation at a critical point \cite{MR1, RP1}, and $\mathcal{LM}(p,q)$ for $p,q\geq 2$ in general has been studied extensively in the physics literature \cite{EF, PRZ, RP, RP3, MR2, Ri, MRR}. See also Section \ref{sec:conclusion} for discussion and references on $W$-extended logarithmic minimal models, and see for example \cite{GJRSV, Ni} for work on whether the chiral LCFT $\mathcal{LM}(p,q)$ can be extended to a full (bulk) LCFT.

In this paper, we study the chiral LCFT $\mathcal{LM}(p,q)$ for $p,q\geq 2$ using the universal Virasoro vertex operator algebra $V_{c_{p,q}}$ \cite{FZ} (see \cite{MY-cp1-vir} for similar work on $\mathcal{LM}(p,1)$). Vertex operator algebras, which are essentially the same as chiral algebras in the physics literature, are a powerful algebraic tool for studying two-dimensional chiral CFT. In particular, the fusion product of two representations of a chiral algebra in CFT can be defined mathematically using vertex algebraic intertwining operators \cite{FHL, HLZ2, HLZ3}; see also \cite{KaRi} for further references and detailed analysis on relations between physicists' and mathematicians' approaches to fusion. In rational CFT, after Moore and Seiberg's work on polynomial equations and the Verlinde formula \cite{MS1, MS2} led to Turaev's definition of modular tensor category \cite{Tu}, Huang proved in \cite{Hu-rigid} that the representations of a rational vertex operator algebra naturally form a (semisimple) modular tensor category. More recently in \cite{HLZ1}-\cite{HLZ8}, Huang, Lepowsky, and Zhang showed that logarithmic CFTs can also be studied using non-semisimple braided tensor categories of modules for non-rational vertex operator algebras.

The vertex operator algebra $V_{c_{p,q}}$ at central charge $c_{p,q}$ for relatively prime $p,q\geq 2$ is non-rational, but it has a rational simple quotient $L_{c_{p,q}}$ \cite{Wa}. Thus while the central charge $c_{p,q}$ minimal model in rational CFT corresponds to the modular tensor category of $L_{c_{p,q}}$-modules, it should be possible to study the logarithmic extension $\mathcal{LM}(p,q)$ using some braided tensor category of $V_{c_{p,q}}$-modules. In fact, it was recently shown in \cite{CJORY} that the universal Virasoro vertex operator algebra $V_c$ at any central charge $c\in\mathbb{C}$ has a representation category $\mathcal{O}_c$ that admits the braided tensor category structure of \cite{HLZ1}-\cite{HLZ8}. Specifically, $\mathcal{O}_c$ is the category of $C_1$-cofinite grading-restricted generalized $V_c$-modules, or equivalently the category of finite-length $\mathfrak{Vir}$-modules at central charge $c$ whose composition factors are simple quotients of reducible Verma modules. Writing $c=13-6t-6t^{-1}$ for some $t\in\mathbb{C}^\times$, the braided tensor category $\mathcal{O}_c$ is by now well understood for $t\notin\mathbb{Q}$ \cite{Fr-MZhu, CJORY}, $t\in\mathbb{Z}_{\geq 1}$ \cite{Mil, McR-cpt-orb, CMY-singlet, MY-cp1-vir}, and $t=-1$ \cite{OH, MY-c25-vir}.

In this paper, we begin the detailed study of the tensor structure of $\mathcal{O}_{c_{p,q}}$ for $p,q\geq 2$, which can be identified with the properties of fusion in the chiral logarithmic minimal model $\mathcal{LM}(p,q)$. In the mathematical physics literature, calculations and conjectures for fusion products in $\mathcal{LM}(p,q)$ (equivalently $\mathcal{O}_{c_{p,q}}$) can be found in, for example, \cite{EF, RP1, RP, Ri, RS, GV, GJRSV, MRR, Ni}; our work especially proves some of the fusion product conjectures in \cite{MRR}. We focus mainly on the Kac modules $\mathcal{K}_{r,s}$, $r,s\in\mathbb{Z}_{\geq 1}$, for the Virasoro algebra at central charge $c_{p,q}$ defined in \cite{MRR}. These are (usually non-simple) $\mathfrak{Vir}$-modules labeled by entries of the Kac table of conformal weights $h_{r,s}$ (the weights such that the Virasoro Verma modules of those lowest conformal weights are reducible), and each $\mathcal{K}_{r,s}$ has the same character as the Verma module of lowest conformal weight $h_{r,s}$ modulo a singular vector of conformal weight $h_{r,s}+rs$. However, $\mathcal{K}_{r,s}$ is not generally a Verma module quotient; rather, it is the submodule of a Feigin-Fuchs module \cite{FF-modules} of lowest conformal weight $h_{r,s}$ generated by all vectors of conformal weight strictly less than $h_{r,s}+rs$. It turns out that every $\mathcal{K}_{r,s}$ is a homomorphic image of some repeated fusion tensor product of $\mathcal{K}_{1,2}$ and $\mathcal{K}_{2,1}$, as suggested in part (2) of the following theorem, which summarizes our main results:
\begin{Theorem}\label{main_theorem}
For $r,s\in\mathbb{Z}_{\geq 1}$, let $\mathcal{K}_{r,s}$ be the Virasoro Kac module defined in \cite{MRR}.
\begin{enumerate}
\item (Theorem \ref{thm:Krs_rigid}) If $1\leq r\leq p$ and $1\leq s\leq q$, then $\mathcal{K}_{r,s}$ is rigid and self-dual in the tensor category $\mathcal{O}_{c_{p,q}}$.

\item (Theorem \ref{thm:main_exact_sequences}, Proposition \ref{prop:log_modules}, Remark \ref{rem:log_modules}) Let $\boxtimes$ denote the fusion tensor product on $\mathcal{O}_{c_{p,q}}$. For all $r,s\in\mathbb{Z}_{\geq 1}$, there is an exact sequence
\begin{equation*}
0\longrightarrow\mathcal{K}_{r,s-1}\longrightarrow\mathcal{K}_{1,2}\boxtimes\mathcal{K}_{r,s}\longrightarrow\mathcal{K}_{r,s+1}\longrightarrow 0
\end{equation*}
which splits if and only if $q\nmid s$, and $\mathcal{K}_{1,2}\boxtimes\mathcal{K}_{r,s}$ is a logarithmic $\mathfrak{Vir}$-module if $q\mid s$. Similarly, there is an exact sequence
\begin{equation*}
0\longrightarrow\mathcal{K}_{r-1,s}\longrightarrow\mathcal{K}_{2,1}\boxtimes\mathcal{K}_{r,s}\longrightarrow\mathcal{K}_{r+1,s}\longrightarrow 0
\end{equation*}
which splits if and only if $p\nmid r$, and $\mathcal{K}_{2,1}\boxtimes\mathcal{K}_{r,s}$ is logarithmic if $p\mid r$.

\item (Theorem \ref{thm:Kr1_times_K1s})   For all $r,s\in\mathbb{Z}_{\geq 1}$, $\mathcal{K}_{r,1}\boxtimes\mathcal{K}_{1,s}\cong\mathcal{K}_{r,s}$.

\end{enumerate}
\end{Theorem}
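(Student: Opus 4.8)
The plan is to deduce part (3) from parts (1) and (2), combining a Grothendieck-group character computation with an inductively constructed surjection, and then invoking the elementary principle that a surjection between two grading-restricted modules of equal graded character must be an isomorphism (its kernel has vanishing character). I would induct on $s$ with $r$ arbitrary. The base case $s=1$ is the unit isomorphism $\mathcal{K}_{r,1}\boxtimes\mathcal{K}_{1,1}\cong\mathcal{K}_{r,1}$, since $\mathcal{K}_{1,1}=V_{c_{p,q}}$. For $s=2$ the first exact sequence of Theorem~\ref{main_theorem}(2) with second index $1$ degenerates, under the convention $\mathcal{K}_{r,0}=0$, to an isomorphism $\mathcal{K}_{1,2}\boxtimes\mathcal{K}_{r,1}\cong\mathcal{K}_{r,2}$, whence $\mathcal{K}_{r,1}\boxtimes\mathcal{K}_{1,2}\cong\mathcal{K}_{r,2}$ after applying the braiding.

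For the character input I first record that the classes $[\mathcal{K}_{r,s}]$ in the Grothendieck group of $\mathcal{O}_{c_{p,q}}$ satisfy $[\mathcal{K}_{r,1}]\cdot[\mathcal{K}_{1,s}]=[\mathcal{K}_{r,s}]$. Because $\mathcal{K}_{1,2}$ and $\mathcal{K}_{2,1}$ are rigid by Theorem~\ref{main_theorem}(1), the functors $\mathcal{K}_{1,2}\boxtimes-$ and $\mathcal{K}_{2,1}\boxtimes-$ are exact and descend to commuting additive operators $A,B$ on the Grothendieck group; the two families of exact sequences in Theorem~\ref{main_theorem}(2) then read $A[\mathcal{K}_{r,s}]=[\mathcal{K}_{r,s-1}]+[\mathcal{K}_{r,s+1}]$ and $B[\mathcal{K}_{r,s}]=[\mathcal{K}_{r-1,s}]+[\mathcal{K}_{r+1,s}]$, with $[\mathcal{K}_{r,0}]=[\mathcal{K}_{0,s}]=0$. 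Solving these Chebyshev-type recursions expresses $[\mathcal{K}_{1,s}]$ and $[\mathcal{K}_{r,1}]$ as the corresponding Chebyshev polynomials in $A$ and $B$ applied to $[\mathcal{K}_{1,1}]$, and since $A$ and $B$ commute, a two-step induction in $s$ (matching values at $s=0,1$) yields the identity above. In particular $\mathcal{K}_{r,1}\boxtimes\mathcal{K}_{1,s}$ and $\mathcal{K}_{r,s}$ will have the same graded character, so it remains only to produce a surjection between them.

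For the inductive step, assume the isomorphism for second indices $s-1$ and $s$. Applying the exact functor $\mathcal{K}_{1,2}\boxtimes-$, together with associativity and the braiding, gives
\[
\mathcal{K}_{1,2}\boxtimes\mathcal{K}_{r,s}\cong\mathcal{K}_{1,2}\boxtimes(\mathcal{K}_{r,1}\boxtimes\mathcal{K}_{1,s})\cong\mathcal{K}_{r,1}\boxtimes(\mathcal{K}_{1,2}\boxtimes\mathcal{K}_{1,s}).
\]
Applying the right-exact functor $\mathcal{K}_{r,1}\boxtimes-$ to the $r=1$ instance of the first sequence of Theorem~\ref{main_theorem}(2) and using the inductive hypothesis on both ends produces a right-exact sequence
\[
\mathcal{K}_{r,s-1}\overset{\varphi}{\longrightarrow}\mathcal{K}_{1,2}\boxtimes\mathcal{K}_{r,s}\longrightarrow\mathcal{K}_{r,1}\boxtimes\mathcal{K}_{1,s+1}\longrightarrow 0,
\]
so that $\mathcal{K}_{r,1}\boxtimes\mathcal{K}_{1,s+1}\cong\mathrm{coker}(\varphi)$. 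On the other hand, the first sequence of Theorem~\ref{main_theorem}(2) realizes $\mathcal{K}_{r,s+1}$ as the cokernel of the canonical embedding $\iota\colon\mathcal{K}_{r,s-1}\hookrightarrow\mathcal{K}_{1,2}\boxtimes\mathcal{K}_{r,s}$. Thus it suffices to show $\mathrm{im}(\varphi)=\mathrm{im}(\iota)$; equivalently, that $\varphi$ is injective and $\pi\circ\varphi=0$, where $\pi$ is the surjection onto $\mathcal{K}_{r,s+1}$, for then $\pi$ factors through an isomorphism $\mathcal{K}_{r,1}\boxtimes\mathcal{K}_{1,s+1}\overset{\sim}{\to}\mathcal{K}_{r,s+1}$ by the character equality.

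The main obstacle is precisely this identification of $\mathrm{im}(\varphi)$ with the canonical submodule, and at its heart lies the injectivity of $\varphi$, i.e.\ the question of whether $\mathcal{K}_{r,1}\boxtimes-$ preserves the injection in the sequence $(*)$. When $r\leq p$ this is automatic, since $\mathcal{K}_{r,1}$ is rigid by Theorem~\ref{main_theorem}(1) and hence $\mathcal{K}_{r,1}\boxtimes-$ is exact; this settles the ranges $r\leq p$ (and, by the symmetric induction in the first index using the exact functor $\mathcal{K}_{2,1}\boxtimes-$, also $s\leq q$) cleanly. The delicate region is $r>p$ and $s>q$, where $\mathcal{K}_{r,1}\boxtimes-$ need not be left exact, so $\varphi$ could a priori have a smaller image of the same class. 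I expect to rule this out using the explicit structure of $\mathcal{K}_{1,2}\boxtimes\mathcal{K}_{r,s}$ from Theorem~\ref{main_theorem}(2): when $q\nmid s$ one has a direct sum $\mathcal{K}_{1,2}\boxtimes\mathcal{K}_{r,s}\cong\mathcal{K}_{r,s-1}\oplus\mathcal{K}_{r,s+1}$, and a comparison of tops and socles shows that the only submodule carrying the class $[\mathcal{K}_{r,s-1}]$ through which a nonzero map from $\mathcal{K}_{r,s-1}$ can factor is the first summand; when $q\mid s$ the module $\mathcal{K}_{1,2}\boxtimes\mathcal{K}_{r,s}$ is logarithmic and indecomposable, and its submodule lattice (again read off from part (2)) contains a unique submodule of class $[\mathcal{K}_{r,s-1}]$, namely $\mathrm{im}(\iota)$, forcing $\mathrm{im}(\varphi)=\mathrm{im}(\iota)$. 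The induction then closes, with the character equality providing the final passage from surjection to isomorphism.
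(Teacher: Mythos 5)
Your reduction of part (3) to parts (1) and (2) works cleanly in the split cases and in the ranges $r\leq p$ or $s\leq q$ (this is essentially Proposition \ref{prop:Kr1_K1s_small_r_or_s} of the paper, done with the roles of $r$ and $s$ exchanged), but the inductive step has a genuine gap exactly where you flag the ``delicate region,'' and the fix you sketch does not close it. Two problems. First, the Grothendieck-group identity $[\mathcal{K}_{r,1}]\cdot[\mathcal{K}_{1,s}]=[\mathcal{K}_{r,s}]$ is not available in advance: since $\mathcal{O}_{c_{p,q}}$ is not rigid, $\boxtimes$ does not descend to a ring structure on the Grothendieck group, and only tensoring with the rigid objects $\mathcal{K}_{1,2}$, $\mathcal{K}_{2,1}$ (and $\mathcal{K}_{r,s}$ with $r\leq p$, $s\leq q$) gives well-defined operators. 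For $r>p$ and $s>q$ the class $[\mathcal{K}_{r,1}\boxtimes\mathcal{K}_{1,s+1}]$ is precisely what you are trying to compute, so the ``character equality'' cannot be used as an independent input. Second, and more fundamentally, when $q\mid s$ the map $\varphi$ is obtained by applying the merely right-exact functor $\mathcal{K}_{r,1}\boxtimes(-)$ to an injection, so a priori $\varphi$ could fail to be injective or could even vanish (tensoring with non-rigid objects really can kill nonzero modules here: the paper's Proposition \ref{prop:some_Krs_not_rigid} exploits $\mathcal{K}_{r,q}\boxtimes\mathcal{L}_{1,1}=0$). If $\varphi$ is not injective, its image does \emph{not} have class $[\mathcal{K}_{r,s-1}]$, so your proposed uniqueness-of-submodule argument never applies to it; and in any case part (2) only gives the non-split extension and logarithmicity of $\mathcal{K}_{1,2}\boxtimes\mathcal{K}_{r,s}$, not its submodule lattice, so the claimed uniqueness is not something you can ``read off.''

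The missing ingredient, which is how the paper proceeds, is a supply of \emph{canonical} comparison maps: the surjections $F_{r,s;r',s'}\colon\mathcal{K}_{r,s}\boxtimes\mathcal{K}_{r',s'}\rightarrow\mathcal{K}_{r+r'-1,s+s'-1}$ coming from Heisenberg Fock-module intertwining operators, together with their compatibility with the associativity isomorphisms (Theorem \ref{thm:Fock_module_associativity}). The paper inducts on $r$ and builds a commutative ladder whose top row is the right-exact sequence obtained by applying $(-)\boxtimes\mathcal{K}_{1,s}$ to $0\to\mathcal{K}_{r-1,1}\to\mathcal{K}_{2,1}\boxtimes\mathcal{K}_{r,1}\to\mathcal{K}_{r+1,1}\to 0$, whose bottom row is the known exact sequence $0\to\mathcal{K}_{r-1,s}\to\mathcal{K}_{2,1}\boxtimes\mathcal{K}_{r,s}\to\mathcal{K}_{r+1,s}\to 0$, and whose vertical arrows are the $F$'s; commutativity plus the inductive hypothesis forces the top-left arrow to be injective (it equals an injective map up to isomorphisms), after which the Short Five Lemma gives that $F_{r+1,1;1,s}$ is an isomorphism. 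In other words, the nonvanishing and injectivity of your $\varphi$ is exactly what the Fock-module construction and its associativity buy, and it is not a consequence of parts (1) and (2) alone. To repair your argument you would need either to import those canonical maps, or to prove independently (e.g.\ via Zhu-algebra and $C_1$-cofiniteness bounds) that $\mathrm{Hom}(\mathcal{K}_{r,s-1},\mathcal{K}_{1,2}\boxtimes\mathcal{K}_{r,s})$ is one-dimensional \emph{and} that $\varphi\neq 0$; neither step is supplied in your proposal.
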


The rigidity mentioned in part (1) is a strong duality property (see for example \cite[Section 2.1]{BK}) which is very useful in the study of tensor categories; for example, exact sequences remain exact after tensoring with a rigid object. In fact, in the tensor category $\mathcal{O}_{c_{p,q}}$, rigidity of $\mathcal{K}_{1,2}$ and $\mathcal{K}_{2,1}$ in particular is crucial for proving parts (2) and (3) of Theorem \ref{main_theorem}. However, we show in Proposition \ref{prop:some_Krs_not_rigid} that not all Kac modules in $\mathcal{O}_{c_{p,q}}$ are rigid; essentially, the non-simplicity of the vertex operator algebra $V_{c_{p,q}}$ obstructs rigidity for the whole category $\mathcal{O}_{c_{p,q}}$. Nevertheless, the contragredient modules of \cite{FHL} give $\mathcal{O}_{c_{p,q}}$ a weaker duality structure: by \cite[Theorem 2.12]{ALSW}, $\mathcal{O}_{c_{p,q}}$ is a ribbon Grothendieck-Verdier category in the sense of \cite{BD}. It would be interesting to explore the implications for $\mathcal{O}_{c_{p,q}}$ of this weaker duality structure.

The exact sequences of Theorem \ref{main_theorem}(2) confirm that the fusion predictions at the level of Grothendieck rings in \cite[Equation 4.16]{MRR} are accurate, while Theorem \ref{main_theorem}(3) proves \cite[Conjecture 5]{MRR}. Although these results do not give a complete list of fusion products involving Kac modules, they are the basis for the straightforward derivation of many more fusion product formulas. For example, in Theorem \ref{thm:K12_times_simple}, we use parts (1) and (2) of Theorem \ref{main_theorem} to determine all fusion products of $\mathcal{K}_{1,2}$ with simple objects in $\mathcal{O}_{c_{p,q}}$. In future work, we plan to use Theorem \ref{main_theorem} to construct interesting indecomposable logarithmic modules conjectured in for example \cite{EF, RP}, which we expect will be projective in some suitable subcategory of $\mathcal{O}_{c_{p,q}}$.

We now discuss some elements of the proof of Theorem \ref{main_theorem}. For part (1), the main work is to prove that $\mathcal{K}_{1,2}$ and $\mathcal{K}_{2,1}$ are rigid (and self-dual). To prove this, we use explicit solutions of Belavin-Polyakov-Zamolodchikov (BPZ) differential equations \cite{BPZ}, similar to the method used in \cite{TW, CMY-singlet, MY-cp1-vir}.

To obtain upper bounds on fusion products in $\mathcal{O}_{c_{p,q}}$, we first constrain the lowest conformal weight spaces in fusion products using the method pioneered in \cite{FZ} involving (bi)modules for the Zhu algebra of $V_{c_{p,q}}$. This method is also closely related to the Nahm-Gaberdiel-Kausch algorithm \cite{Na, GK} used by physicists to compute fusion products. We also constrain the degrees of possible singular vectors in fusion products using a result of Miyamoto \cite{Miy} on fusion tensor products of $C_1$-cofinite modules for vertex operator algebras.

To obtain lower bounds on fusion products, we construct intertwining operators (as defined in \cite{FHL}). In Theorem \ref{thm:gen_Verma_intw_ops}, we construct intertwining operators involving $\mathcal{K}_{1,2}$ and two Virasoro Verma modules as the solutions of what are essentially BPZ equations. This method is analogous to what was used in \cite{MY-sl2-intw-op, McR-gen-intw-op} to construct intertwining operators among generalized Verma modules for affine Lie algebras. Theorem \ref{thm:gen_Verma_intw_ops} actually applies to the Virasoro algebra at any central charge and may be of independent interest.  In any case, intertwining operators involving Verma modules then descend to intertwining operators of the form $\mathcal{Y}: \mathcal{K}_{1,2}\otimes\mathcal{K}_{r,s}\rightarrow\mathcal{K}_{r,s\pm 1}\lbrace z\rbrace$ in certain situations where the Kac modules are actually quotients of Verma modules (which is true if either $r< p$ or $s< q$). In Theorem \ref{thm:Kac_module_surj}, we obtain intertwining operators using properties of Feigin-Fuchs modules. Namely, Feigin-Fuchs modules are the $\mathfrak{Vir}$-module restrictions of irreducible Fock modules for a Heisenberg vertex operator algebra with a modified conformal vector. Thus known intertwining operators among Heisenberg Fock modules restrict to $\mathfrak{Vir}$-module intertwining operators involving their Kac submodules, and we show in particular that they restrict to surjective intertwining operators of the form $\mathcal{Y}: \mathcal{K}_{1,2}\otimes\mathcal{K}_{r,s}\rightarrow\mathcal{K}_{r,s+1}\lbrace z\rbrace$ for any $r,s\in\mathbb{Z}_{\geq 1}$. This implies that there is always a surjection $\mathcal{K}_{1,2}\boxtimes\mathcal{K}_{r,s}\rightarrow\mathcal{K}_{r,s+1}$ as indicated in Theorem \ref{main_theorem}(2).

All of our intertwining operator constructions have analogues after exchanging $\mathcal{K}_{1,2}$ with $\mathcal{K}_{2,1}$ because $c_{p,q}=c_{q,p}$. Finally, to complete the proofs of parts (2) and (3) of Theorem \ref{main_theorem}, we heavily use the rigidity of $\mathcal{K}_{1,2}$ and $\mathcal{K}_{2,1}$. For example, rigidity of $\mathcal{K}_{1,2}$ implies that the surjection $\mathcal{K}_{1,2}\boxtimes\mathcal{K}_{r,s-1}\rightarrow\mathcal{K}_{r,s}$ given by Heisenberg intertwining operators dualizes to a non-zero map $\mathcal{K}_{r,s-1}\rightarrow\mathcal{K}_{1,2}\boxtimes{K}_{r,s}$ which turns out to be an injection. We also need exactness of the tensoring functors $\mathcal{K}_{1,2}\boxtimes\bullet$ and $\mathcal{K}_{2,1}\boxtimes\bullet$, which follows from rigidity of $\mathcal{K}_{1,2}$ and $\mathcal{K}_{2,1}$.

We now discuss the remaining contents of this paper. In Section \ref{sec:Vir}, we review the Virasoro algebra at central charge $c_{p,q}$, in particular the structure of Verma modules; the universal Virasoro vertex operator algebra $V_{c_{p,q}}$, its simple quotient $L_{c_{p,q}}$, and their modules; and the definitions and structures of Feigin-Fuchs modules at central charge $c_{p,q}$ and their Kac submodules defined in \cite{MRR}. In Section \ref{sec:intw_op_and_tens_cats}, we review vertex algebraic intertwining operators and some elements of the vertex algebraic tensor category theory of \cite{HLZ1}-\cite{HLZ8}, especially as they pertain to $V_{c_{p,q}}$ and its modules. In particular, we discuss some properties of $C_1$-cofinite modules for the Virasoro algebra and the relationships between the Zhu algebra of $V_{c_{p,q}}$, its (bi)modules, and vertex algebraic intertwining operators among modules for the Virasoro algebra. In Section \ref{sec:intw_op_constructions}, we give two constructions of intertwining operators: the first involves $\mathcal{K}_{1,2}$ (or $\mathcal{K}_{2,1}$) and Verma modules and works at any central charge, while the second involves restricting intertwining operators among modules for the Heisenberg vertex operator algebra. 

In Section \ref{sec:tens_prods_and_rigidity}, we prove some preliminary results on tensoring $\mathcal{K}_{1,2}$ and $\mathcal{K}_{2,1}$ with Verma module quotients, and then we prove that $\mathcal{K}_{1,2}$ and $\mathcal{K}_{2,1}$ are rigid. We then complete the proof of Theorem \ref{main_theorem}(1) and also show that $\mathcal{O}_{c_{p,q}}$ is not a rigid tensor category. In Section \ref{sec:main_results}, we complete the proofs of parts (2) and (3) of Theorem \ref{main_theorem}, and we also determine how $\mathcal{K}_{1,2}$ and $\mathcal{K}_{2,1}$ tensor with all simple objects of $\mathcal{O}_{c_{p,q}}$. Finally, in Section \ref{sec:conclusion}, we discuss potential future applications and extensions of our results and methods, including constructing logarithmic modules which are projective in some suitable subcategory of $\mathcal{O}_{c_{p,q}}$, studying the representation theories of the singlet and triplet $W$-algebra extensions of $V_{c_{p,q}}$, and studying the representation theories of the $N=1,2$ superconformal algebras at interesting rational central charges.

\vspace{3mm}

\noindent\textbf{Acknowledgments.} The results of this paper form part of the Ph.D. thesis of the second author at Tsinghua University. We thank Sylvain Ribault, David Ridout, and the referees for comments and references.

\section{The Virasoro algebra and its representations}\label{sec:Vir}

In this section, we recall basic classes of modules for the Virasoro Lie algebra, including Verma modules and Feigin-Fuchs modules, in particular at central charge $c_{p,q}$.

\subsection{The Virasoro algebra and Verma modules}\label{subsec:Vir_and_Verma}

Let $\mathfrak{Vir}$ be the Virasoro Lie algebra with basis $\{ L_{n}\mid n \in \mathbb{Z} \} \cup \{ C \}$ and Lie brackets 
\begin{align*}
& [L_m, L_n]=(m-n)L_{m+n}+ \frac{1}{12} (m^3 - m) \delta_{m, -n}C,\qquad [L_m, C]=0,
\end{align*}
where $m, n \in \mathbb{Z}$. To define Verma $\mathfrak{Vir}$-modules, we need the two subalgebras
\begin{equation*}
\mathfrak{Vir}_{\geq 0} = \bigoplus_{n = 0}^\infty \mathbb{C}L_n \oplus \mathbb{C}C,\qquad\mathfrak{Vir}_{-} = \bigoplus\limits_{n = 1}^\infty \mathbb{C}L_{-n}.
\end{equation*}
A $\mathfrak{Vir}$-module has \textit{central charge} $c\in\mathbb{C}$ if the central element $C$ acts by the scalar $c$, and a vector in a $\mathfrak{Vir}$-module has \textit{conformal weight} $h\in\mathbb{C}$ if it is a generalized $L_0$-eigenvector of generalized eigenvalue $h$.

For any $c, h\in\mathbb{C}$, the Verma module $\mathcal{V}_h$ at central charge $c$ is defined as follows: First take $\mathbb{C} v_h$ to be the one-dimensional $\mathfrak{Vir}_{\geq 0}$-module on which $C$ acts by $c$, $L_0$ acts by $h$, and for $n>0$, $L_n$ acts by $0$. Then the Verma module is the induced module
\begin{equation*}
\mathcal{V}_h=U(\mathfrak{Vir})\otimes_{U(\mathfrak{Vir}_{\geq 0})} \mathbb{C} v_h.
\end{equation*}
By the Poincar\'{e}-Birkhoff-Witt Theorem, it has a basis consisting of the vectors $L_{-n_1}\cdots L_{-n_k} v_h$ for $1\leq n_1\leq\cdots\leq n_k$. Each Verma module $\mathcal{V}_h$ has a unique simple quotient $\mathcal{L}_h$. Both $\mathcal{V}_h$ and $\mathcal{L}_h$ are highest-weight $\mathfrak{Vir}$-modules, which means that each is generated by a single highest-weight vector (that is, an $L_0$-eigenvector annihilated by $L_n$ for $n>0$).

The composition series structure of $\mathcal{V}_h$ was determined by Feigin and Fuchs \cite{FF} (see also \cite{As}, the expositions in \cite{KR} and \cite[Chapters 5 and 6]{IK}, and the summary in \cite[Section 2.2]{CJORY}). In particular, every non-zero submodule of $\mathcal{V}_h$ is either a Verma module or a (non-direct) sum of two Verma submodules. To describe the structure in more detail, it is convenient to write $c=13-6t-6t^{-1}$ for some $t\in\mathbb{C}^\times$. The Verma module $\mathcal{V}_h$ is then reducible if $h=h_{r,s}$ for some $r,s\in\mathbb{Z}_{\geq 1}$, where
\begin{equation}\label{eqn:h_rs_def}
h_{r,s} =\frac{r^2-1}{4} t-\frac{rs-1}{2}+\frac{s^2-1}{4} t^{-1}.
\end{equation}
For convenience, we write $\mathcal{V}_{r,s}=\mathcal{V}_{h_{r,s}}$ and $\mathcal{L}_{r,s}=\mathcal{L}_{h_{r,s}}$ for $r,s\in\mathbb{Z}_{\geq 1}$. We also use $h_{r,s}$ for non-positive $r,s\in\mathbb{Z}$ to denote the same number \eqref{eqn:h_rs_def}; note the symmetry $h_{r,s}=h_{-r,-s}$.

In this paper, we mainly consider the central charges $c_{p,q}$, where $t=\frac{q}{p}$ for $p,q\in\mathbb{Z}_{\geq 2}$ and $\mathrm{gcd}(p,q)=1$; note that $c_{p,q}=c_{q,p}$. At central charge $c_{p,q}$, we have the additional conformal weight symmetry $h_{r,s}=h_{r+p,s+q}$ for $r,s\in\mathbb{Z}$. The conformal weight symmetries imply that any $h_{r,s}$ for $r,s\in\mathbb{Z}_{\geq 1}$ equals at least one such that $s\in\mathbb{Z}_{\geq 1}$ and $1\leq r\leq p$, and the only cases of $h_{r,s}=h_{r',s'}$ with $s,s'\in\mathbb{Z}_{\geq 1}$, $1\leq r,r'\leq p$, and $(r,s)\neq (r',s')$ are $h_{r,s} = h_{p-r,q-s}$ for $1\leq r\leq p-1$ and $1\leq s\leq q-1$, and $h_{p,s}=h_{p,2q-s}$ for $1\leq s\leq q-1$. The Verma modules $\mathcal{V}_{r,s}$ and their simple quotients $\mathcal{L}_{r,s}$ are thus parametrized by conformal weights $h_{r,s}$ such that $1\leq r\leq p$ and $ps\geq qr$. Because $c_{p,q}=c_{q,p}$, they are also parametrized by conformal weights $h_{r,s}$ such that $1\leq s\leq q$ and $qr\geq ps$.

We now recall the embedding diagrams for Verma modules at central charge $c_{p,q}$ derived in \cite{FF}; see also \cite[Section 5.3]{IK} and \cite[Theorem 2.2.4(1)]{CJORY}:
\begin{itemize}
\item\textit{Bulk case}: For $1\leq r\leq p-1$, $1\leq s\leq q-1$, and $n\geq 0$, we have embedding diagrams
\begin{equation*}
\xymatrixrowsep{.5pc}
\xymatrix{
 & \mathcal{V}_{p-r,(n+1)q+s} \ar[ld] & \mathcal{V}_{p-r,(n+3)q-s} \ar[l] \ar[ldd] & \mathcal{V}_{p-r,(n+3)q+s} \ar[l] \ar[ldd] & \cdots \ar[l]\ar[ldd]\\
\mathcal{V}_{r,nq+s} & & & &\\
 & \mathcal{V}_{r,(n+2)q-s} \ar[lu] & \mathcal{V}_{r,(n+2)q+s} \ar[l] \ar[luu] & \mathcal{V}_{r,(n+4)q-s} \ar[luu],\ar[l] & \cdots \ar[l] \ar[luu]\\
}
\end{equation*}

\item\textit{Boundary cases}: For $1\leq s\leq q-1$ and $n\geq 1$, we have embedding diagrams
\begin{equation*}
\xymatrix{
\mathcal{V}_{p,nq+s} & \mathcal{V}_{p,(n+2)q-s} \ar[l] & \mathcal{V}_{p,(n+2)q+s} \ar[l] & \mathcal{V}_{p,(n+4)q-s} \ar[l] & \cdots \ar[l]\\
}
\end{equation*}
For $1\leq r\leq p-1$ and $n\geq 1$, we have embedding diagrams
\begin{equation*}
\xymatrix{
\mathcal{V}_{r,nq} & \mathcal{V}_{p-r,(n+1)q} \ar[l] & \mathcal{V}_{r,(n+2)q} \ar[l] & \mathcal{V}_{p-r,(n+3)q} \ar[l] & \cdots \ar[l]\\
}
\end{equation*}

\item\textit{Corner case}: For $n\geq 1$, we have embedding diagrams
\begin{equation*}
\xymatrix{
\mathcal{V}_{p,nq} & \mathcal{V}_{p,(n+2)q} \ar[l] & \mathcal{V}_{p,(n+4)q} \ar[l] & \mathcal{V}_{p,(n+6)q} \ar[l] & \cdots \ar[l]\\
}
\end{equation*}
\end{itemize}
Because all submodules of Verma modules are generated by their singular vectors, these embedding diagrams depict all submodules of $\mathcal{V}_{r,s}$. Note that at central charge $c_{p,q}$, the Verma module $\mathcal{V}_h$ is irreducible if $h\neq h_{r,s}$ for all $r,s\in\mathbb{Z}_{\geq 1}$.

\subsection{Virasoro vertex operator algebras}

Let $c\in\mathbb{C}$ be a central charge, and let $\mathcal{V}_0$ be the Virasoro Verma module of central charge $c$ generated by a highest-weight vector $\mathbf{1}$ of conformal weight $0$. Then $L_{-1}\mathbf{1}$ is a singular vector in $\mathcal{V}_0$, and it is well-known \cite{FZ} (see also \cite[Section 6.1]{LL}) that $V_c =\mathcal{V}_0/\langle L_{-1}\mathbf{1}\rangle$ has the structure of a vertex operator algebra in the sense of \cite{FLM, FHL, LL}. In particular, there is a linear map
\begin{align}\label{eqn:alg_vertex_op}
Y: V_c & \rightarrow(\mathrm{End}\,V_c)[[z,z^{-1}]]\nonumber\\
 v & \mapsto Y(v,z) =\sum_{n\in\mathbb{Z}} v_n\,z^{-n-1}
\end{align}
such that $Y(\mathbf{1},x)=\mathrm{Id}_{V_c}$ (among other properties), and there is a conformal vector $\omega = L_{-2}\mathbf{1}$ such that
\begin{equation*}
Y(\omega,z)=\sum_{n\in\mathbb{Z}} L_n\,z^{-n-2},
\end{equation*}
that is, the operator $\omega_{n+1}\in\mathrm{End}\,V_c$ is the Virasoro operator $L_n$. We call $V_c$ the \textit{universal Virasoro vertex operator algebra} at central charge $c$.

A $\mathfrak{Vir}$-module $W$ at central charge $c$ is a \textit{grading-restricted generalized $V_c$-module} (or \textit{$V_c$-module} for short) if it satisfies the following properties:
\begin{enumerate}
\item Lower truncation: For any $w\in W$, $L_n w=0$ for $n\in\mathbb{Z}$ sufficiently positive.

\item Conformal weight grading: $W=\bigoplus_{h\in\mathbb{C}} W_{[h]}$ where $W_{[h]}$ is the generalized $L_0$-eigenspace of eigenvalue $h$.

\item Grading restrictions: $\dim W_{[h]}<\infty$ for all $h\in\mathbb{C}$, and for any fixed $h\in\mathbb{C}$, $W_{[h-n]}=0$ for $n\in\mathbb{Z}$ sufficiently positive.
\end{enumerate}
Any $V_c$-module $W$ admits a linear map
\begin{align}\label{eqn:mod_vertex_op}
Y_W: V_c & \rightarrow (\mathrm{End}\,W)[[z,z^{-1}]]\\
 v & \mapsto Y_W(v,z)=\sum_{n\in\mathbb{Z}} v_n\,z^{-n-1}
\end{align}
uniquely determined by the conditions $Y_W(\mathbf{1},z)=\mathrm{Id}_W$ and
\begin{equation*}
Y_W(\omega,z)=\sum_{n\in\mathbb{Z}} L_n\,z^{-n-2},
\end{equation*}
as well as by the axioms for modules for a general vertex operator algebra \cite{FLM, FHL, LL}.

\begin{Remark}\label{rem:CFT_VOA_relns}
In conformal field theory, a vertex operator algebra $V$ corresponds to the vacuum module of a chiral conformal field theory, and the maps $Y(v,z)$ and $Y_W(v,z)$ are conformal fields associated to states $v\in V$. In \eqref{eqn:alg_vertex_op} and \eqref{eqn:mod_vertex_op}, we have used the vertex algebraic convention that $v_n$ denotes the coefficient of $z^{-n-1}$ in $Y(v,z)$ or $Y_W(v,z)$ for arbitrary $v\in V$, rather than the physics convention that $v_n$ denotes the coefficient of $z^{-n-m}$ for $L_0$-eigenvectors $v\in V$ with eigenvalue $m$. The vertex algebraic convention was introduced in \cite{FLM} to simplify the appearance of the various general algebraic identities satisfied in a vertex operator algebra.
\end{Remark}

For any vertex operator algebra $V$ and $V$-module $W$, the \textit{contragredient} of $W$ \cite{FHL} is a $V$-module structure on the graded dual vector space $W'=\bigoplus_{h \in \mathbb{C}} W_{[h]}^*$, with $Y_{W'}$ defined by
\begin{equation}\label{eqn:contra}
 \langle Y_{W'}(v, z)w', w \rangle = \langle w', Y_{W}(e^{z L_1}(-z^{-2})^{L_0}v, z^{-1})w \rangle 
\end{equation} 
for all $v\in V$, $w'\in W'$, and $w\in W$. Here $\langle  \; , \; \rangle$ denotes the bilinear pairing between a graded vector
space and its graded dual.

A vertex operator algebra is simple if it is simple as a module for itself. For a $V_c$-module $W$, $V_c$-submodules are the same as $\mathfrak{Vir}$-submodules, so $V_c$ is simple if and only if it is simple as a $\mathfrak{Vir}$-module. At central charge $c_{p, q} = 1 - \frac{6(p-q)^2}{pq}$ with $p,q\geq 2$ and $\mathrm{gcd}(p,q)=1$, the Verma module embedding diagrams from the previous subsection show that $V_{c_{p,q}}=\mathcal{V}_{1,1}/\langle L_{-1}\mathbf{1}\rangle$ contains $\mathcal{L}_{1,2q-1} =\mathcal{L}_{2p-1,1}$ as its unique non-zero proper submodule and thus is not simple. The quotient $L_{c_{p,q}} =V_{c_{p,q}}/\mathcal{L}_{1,2q-1}$ is the \textit{simple Virasoro vertex operator algebra} at central charge $c_{p,q}$. It was conjectured in \cite{FZ} and proved in \cite{Wa} that $L_{c_{p, q}}$ is rational, which means in particular that every $L_{c_{p,q}}$-module is semisimple and that $L_{c_{p,q}}$ has finitely many simple modules up to isomorphism. In fact, the simple $L_{c_{p,q}}$-modules are precisely the $\mathcal{L}_{r,s}$ such that $r < p$ and $s< q$. By \cite{Hu-rigid}, the category of $L_{c_{p,q}}$-modules is a modular tensor category.

\subsection{Feigin-Fuchs modules and Kac modules}\label{subsec:FF_and_K_modules}

We now recall the Feigin-Fuchs modules \cite{FF-modules} for the Virasoro algebra at central charge $c_{p,q}$, which yield free-field realizations of the Virasoro algebra. For further details, see \cite[Chapters 4 and 8]{IK}; here, we mainly follow the exposition of \cite[Appendix E]{MRR}.

The Heisenberg Lie algebra $\mathfrak{H}$ has basis $\lbrace a_n\,\vert\,n \in \mathbb{Z}\rbrace\cup\lbrace K\rbrace$, with $K$ central, and
\begin{equation*}
[a_m, a_n] = m \delta_{m,-n}K
\end{equation*}
for $m,n\in\mathbb{Z}$.  We have $\mathfrak{H}=\mathfrak{H}_{\geq 0}\oplus\mathfrak{H}_-$ where 
\begin{equation*}
\mathfrak{H}_{\geq 0} =\bigoplus_{n=0}^\infty \mathbb{C} a_n\oplus\mathbb{C}K,\qquad\mathfrak{H}_-=\bigoplus_{n=1}^\infty\mathbb{C} a_{-n}.
\end{equation*}
For $\lambda\in\mathbb{C}$, the Fock $\mathfrak{H}$-module $\mathcal{F}_\lambda$ is defined as follows: First let $\mathbb{C}v_\lambda$ be the one-dimensional $\mathfrak{H}_{\geq 0}$-module on which $K$ acts by $1$, $a_0$ acts by $\lambda$, and for $n>0$, $a_n$ acts by $0$. Then the Fock module is the induced module
\begin{equation*}
\mathcal{F}_\lambda =U(\mathfrak{H})\otimes_{U(\mathfrak{H}_{\geq 0})} \mathbb{C} v_\lambda.
\end{equation*}
It is linearly spanned by vectors $a_{-n_1}\cdots a_{-n_k} v_\lambda$ for $1\leq n_1\leq\cdots\leq n_k$.

The Fock module $\mathcal{F}_0$ has the structure of a vertex operator algebra \cite{FLM} (see also \cite[Section 6.3]{LL}) with the linear map $Y: \mathcal{F}_0\rightarrow(\mathrm{End}\,\mathcal{F}_0)[[z,z^{-1}]]$ characterized by
\begin{equation*}
Y(a_{-1} v_0,z) =\sum_{n\in\mathbb{Z}} a_n\,z^{-n-1}.
\end{equation*}
Each $\mathcal{F}_\lambda$ is a simple $\mathcal{F}_0$-module. The vertex operator algebra $\mathcal{F}_0$ admits a one-parameter family of conformal vectors
\begin{equation*}
\omega_Q=\frac{1}{2}\left( a_{-1}^2 +Qa_{-2}\right)v_0
\end{equation*}
for $Q\in\mathbb{C}$, which yield $\mathfrak{Vir}$-actions of central charge $1-3Q^2$ on each $\mathcal{F}_\lambda$:
\begin{equation*}
L_{n} = \frac{1}{2} \sum_{j \in \mathbb{Z}} a_j a_{n-j} - \frac{1}{2}(n+1)Qa_n\quad\text{for}\quad n\neq 0, \qquad L_{0} = \frac{1}{2}a_0^2  + \sum_{j=1}^\infty a_{-j} a_{j} - \frac{1}{2}Qa_0.
\end{equation*} 
With respect to the conformal vector $\omega_Q$, the contragredient module of $\mathcal{F}_\lambda$ is $\mathcal{F}_\lambda ' \cong\mathcal{F}_{Q-\lambda}$.

To get central charge $c_{p,q}$, we take $Q=\sqrt{\frac{2}{pq}}(q-p)$. For each $\lambda$, the generator $v_\lambda\in\mathcal{F}_\lambda$ is an $L_0$-eigenvector of conformal weight $\frac{1}{2}\lambda(\lambda-Q)$. For selected values of $\lambda$, these conformal weights agree with $h_{r,s}$ as defined in \eqref{eqn:h_rs_def}. In particular, for $r,s\in\mathbb{Z}$, define
\begin{equation}\label{lambda_rs_def}
\lambda_{r,s} = (1-r)\sqrt{\frac{q}{2p}}-(1-s)\sqrt{\frac{p}{2q}}.
\end{equation}
For convenience, we write $\mathcal{F}_{r,s}=\mathcal{F}_{\lambda_{r,s}}$, with generating vector $v_{r,s}=v_{\lambda_{r,s}}$, for $r,s\in\mathbb{Z}$. Then $v_{r,s}$ has conformal weight $h_{r,s}$, and as a vector space graded by conformal weights, $\mathcal{F}_{r,s}$ is isomorphic to the Verma module $\mathcal{V}_{r,s}$. However, $\mathcal{F}_{r,s}$ is not isomorphic to $\mathcal{V}_{r,s}$ as a $\mathfrak{Vir}$-module; for example, $v_{r,s}$ does not generate $\mathcal{F}_{r,s}$ as a $\mathfrak{Vir}$-module. When we consider the Fock module $\mathcal{F}_{r,s}$ as a $\mathfrak{Vir}$-module (as opposed to an $\mathfrak{H}$-module), we call it a Feigin-Fuchs module.

The Heisenberg weights $\lambda_{r,s}$ satisfy the same symmetry $\lambda_{r,s}=\lambda_{r+p,s+q}$ as the Virasoro conformal weights $h_{r,s}$ . However, $\lambda_{r,s}\neq\lambda_{-r,-s}$, but rather $\lambda_{-r,-s}=Q-\lambda_{r,s}$, so that $\mathcal{F}_{-r,-s}\cong\mathcal{F}_{r,s}'$. Thus each $\lambda_{r,s}$ is equal to a unique such weight with either $1\leq r\leq p$ or $1\leq s\leq q$. For $r,s\in\mathbb{Z}_{\geq 1}$ such that $1\leq r\leq p$ and $ps\geq qr$, except for $(r,s)=(p,q)$, there are two distinct Fock modules with lowest conformal weight $h_{r,s}$:
\begin{itemize}

\item\textit{Bulk case}: For $1\leq r\leq p-1$, $1\leq s\leq q-1$, and $n\geq 0$, $\mathcal{F}_{r,nq+s}$ and its contragredient $\mathcal{F}_{(n+1)p-r,q-s}$ have lowest conformal weight $h_{r,nq+s}$.

\item\textit{Boundary cases}: For $1\leq s\leq q-1$ and $n\geq 1$, $\mathcal{F}_{p,nq+s}$ and its contragredient $\mathcal{F}_{np,q-s}$ have lowest conformal weight $h_{p,nq+s}$, and for $1\leq r\leq p-1$ and $n\geq 1 $, $\mathcal{F}_{r,nq}$ and its contragredient $\mathcal{F}_{(n+1)p-r,q}$ have lowest conformal weight $h_{r,nq}$.

\item\textit{Corner case}: For $n\geq 1$, $\mathcal{F}_{p,nq}$ and its contragredient $\mathcal{F}_{np,q}$ have  lowest conformal weight $h_{p,nq}$.

\end{itemize}

The Feigin-Fuchs module $\mathcal{F}_{r,s}$ has the same composition factors as the Verma module $\mathcal{V}_{r,s}$ but different $\mathfrak{Vir}$-module structure. The following diagrams show the composition factors of each $\mathcal{F}_{r,s}$. We draw an arrow between two simple subquotients if together they determine an indecomposable length-$2$ subquotient, with the arrow pointing to the submodule of the length-$2$ subquotient. In other words, an arrow from one simple subquotient to another indicates that Virasoro operators can be used to move vectors corresponding to the first subquotient to vectors corresponding to the second:
\begin{itemize}

\item\textit{Bulk case}: For $1\leq r\leq p-1$, $1\leq s\leq q-1$, and $n\geq 0$, $\mathcal{F}_{r,nq+s}$ has the structure:
\begin{equation*}
\xymatrixcolsep{1pc}
\xymatrixrowsep{.5pc}
\xymatrix{
 & \mathcal{L}_{p-r,(n+1)q+s} \ar[ld] \ar[r] \ar[rdd] & \mathcal{L}_{p-r,(n+3)q-s} \ar[ldd] \ar[rdd]  & \mathcal{L}_{p-r,(n+3)q+s} \ar[l] \ar[r] \ar[ldd] \ar[rdd] &  \mathcal{L}_{p-r,(n+5)q-s} \ar[ldd] \ar[rdd] & \cdots \ar[l]  \ar[ldd]  \\
\mathcal{L}_{r,nq+s} \ar[rd] & & & &  \\
 & \mathcal{L}_{r,(n+2)q-s}  & \mathcal{L}_{r,(n+2)q+s} \ar[l] \ar[r] & \mathcal{L}_{r,(n+4)q-s} & \mathcal{L}_{r,(n+4)q+s} \ar[l] \ar[r] & \cdots \\
}
\end{equation*}
Since $\mathcal{F}_{(n+1)p-r,q-s}\cong\mathcal{F}_{r,s}'$ as both $\mathfrak{Vir}$-module and $\mathcal{F}_0$-module, its structure as a $\mathfrak{Vir}$-module is the same as the above except that all arrows are reversed. 

\item \textit{Boundary cases}: For $1\leq s\leq q-1$ and $n\geq 1$, $\mathcal{F}_{p,nq+s}$ has the structure:
\begin{equation*}
\xymatrixcolsep{1.5pc}
\xymatrix{
\mathcal{L}_{p, nq+s} \ar[r] & \mathcal{L}_{p,(n+2)q-s} & \ar[l] \mathcal{L}_{p,(n+2)q+s} \ar[r] & \mathcal{L}_{p,(n+4)q-s} & \mathcal{L}_{p,(n+4)q+s} \ar[l] \ar[r] & \cdots \\
}
\end{equation*}
Its contragredient $\mathcal{F}_{np,q-s}$ has the same composition factors, with arrows reversed in its structure. For $1\leq r\leq p-1$ and $n\geq 1$, $\mathcal{F}_{r,nq}$ has the structure
\begin{equation*}
\xymatrix{
\mathcal{L}_{r,nq} & \mathcal{L}_{p-r,(n+1)q} \ar[l] \ar[r] & \mathcal{L}_{r,(n+2)q}  & \mathcal{L}_{p-r,(n+3)q} \ar[l] \ar[r]  & \mathcal{L}_{r,(n+4)q} & \cdots \ar[l] \\
}
\end{equation*}
Its contragredient $\mathcal{F}_{(n+1)p-r,q}$ has the same structure with arrows reversed.

\item\textit{Corner case}: For $n\geq 1$, $\mathcal{F}_{p,nq}$ has the structure
\begin{equation*}
\xymatrix{
\mathcal{L}_{p,nq} & \mathcal{L}_{p,(n+2)q} & \mathcal{L}_{p,(n+4)q}  & \mathcal{L}_{p,(n+6)q} & \mathcal{L}_{p,(n+8)q} & \cdots \\
}
\end{equation*}
In particular, $\mathcal{F}_{p,nq}$ is semisimple as a $\mathfrak{Vir}$-module and thus is isomorphic to its contragredient $\mathcal{F}_{np,q}$ as a $\mathfrak{Vir}$-module (though not as an $\mathcal{F}_0$-module if $n>1$).

\end{itemize}

We now define Virasoro Kac modules following \cite[Section 3.2.1]{MRR}. For $r,s\in\mathbb{Z}_{\geq 0}$, the Kac module $\mathcal{K}_{r,s}$ is the submodule of $\mathcal{F}_{r,s}$ generated by all vectors of conformal weight strictly less than $h_{r,s} + rs$ (so $\mathcal{K}_{r,0}=\mathcal{K}_{0,s}=0$ for $r,s\in\mathbb{Z}_{\geq 0}$). Note that although $\mathcal{F}_{r,s}\cong\mathcal{F}_{r+p,s+q}$ for $r,s\in\mathbb{Z}_{\geq 1}$, the Kac modules $\mathcal{K}_{r,s}$ and $\mathcal{K}_{r+p,s+q}$ are different since the latter has more generators. The composition series structure of $\mathcal{K}_{r,s}$ follows from that of $\mathcal{F}_{r,s}$:
\begin{itemize}

\item \textit{Bulk case}: For $1\leq r\leq p-1$, $1\leq s\leq q-1$, and $m,n\geq 0$, the structures of $\mathcal{K}_{mp+r,nq+s}$ for $m\leq n$ and $n\leq m$, respectively, are:
\begin{equation*}
\xymatrixcolsep{1pc}
\xymatrixrowsep{1.25pc}
\xymatrix{
 \mathcal{L}_{r,(n-m)q+s}  \ar[d] & \\
\mathcal{L}_{r,(n-m+2)q-s}  & \mathcal{L}_{p-r,(n-m+1)q+s} \ar[lu] \ar[ld] \ar[d]\\
\mathcal{L}_{r,(n-m+2)q+s} \ar[u]\ar[d] & \mathcal{L}_{p-r,(n-m+3)q-s} \ar[lu]\ar[ld]\\
\vdots \ar[d] & \vdots \ar[u]\ar[ld] \ar[lu]\\
\mathcal{L}_{r,(n+m)q-s}  & \mathcal{L}_{p-r,(n+m-1)q+s} \ar[u]\ar[d] \ar[lu]\ar[ld]\\
\mathcal{L}_{r,(n+m)q+s} \ar[u]\ar[d] & \mathcal{L}_{p-r,(n+m+1)q-s} \ar[lu]\ar[ld]\\
\mathcal{L}_{r,(n+m+2)q-s}  & \\
} 
\qquad\qquad\xymatrix{
\mathcal{L}_{p-r,(m-n+1)q-s} \ar[rd] & \\
\mathcal{L}_{p-r,(m-n+1)q+s}\ar[u]\ar[d]\ar[rd] & \mathcal{L}_{r,(m-n+2)q-s} \\
\mathcal{L}_{p-r, (m-n+3)q-s} \ar[ru]\ar[rd] & \mathcal{L}_{r,(m-n+2)q+s}\ar[u]\ar[d]\\
\vdots \ar[u]\ar[ru]\ar[rd] & \vdots \ar[d]\\
\mathcal{L}_{p-r, (m+n-1)q+s} \ar[u]\ar[d]\ar[ru]\ar[rd] & \mathcal{L}_{r,(m+n)q-s}\\
\mathcal{L}_{p-r,(m+n+1)q-s} \ar[ru]\ar[rd] & \mathcal{L}_{r,(n+m)q+s} \ar[u]\ar[d]\\
 & \mathcal{L}_{r,(n+m+2)q-s} \\
}
\end{equation*}
Note that these two diagrams agree when $m=n$ because $h_{r,s}=h_{p-r,q-s}$, and that $\mathcal{K}_{mp+r,nq+s}$ is an object of $\mathcal{O}_{c_{p,q}}$ with finite length $4\min(m,n)+2$.

\item\textit{Boundary cases}: For $1\leq s\leq q-1$, $m\geq 1$, and $n\geq 0$, the structures of $\mathcal{K}_{mp,nq+s}$ for $m\leq n$ and $n<m$, respectively, are:
\begin{align*}
\xymatrixcolsep{1pc}
\xymatrix{
\mathcal{L}_{p,(n-m+1)q+s} \ar[r] & \mathcal{L}_{p,(n-m+3)q-s} & \cdots \ar[l] & \mathcal{L}_{p,(n+m-1)q+s} \ar[l]\ar[r] &\mathcal{L}_{p,(n+m+1)q-s} \\
} &\\
\xymatrixcolsep{1pc}
\xymatrix{
\mathcal{L}_{p,(m-n+1)q-s} & \ar[l] \mathcal{L}_{p,(m-n+1)q+s} \ar[r] & \mathcal{L}_{p,(m-n+3)q-s} & \cdots \ar[l] & \mathcal{L}_{p,(m+n-1)q+s} \ar[l]\ar[r] &\mathcal{L}_{p,(m+n+1)q-s}
} & 
\end{align*}
Note that $\mathcal{K}_{mp,nq+s}$ has length $2m$ if $m\leq n$ and $2n+1$ if $n<m$. For $1\leq r\leq p-1$, $m\geq 0$, and $n\geq 1$, the structures of $\mathcal{K}_{mp+r,nq}$ for $m<n$ and $n\leq m$, respectively, are:
\begin{align*}
\xymatrixcolsep{1pc}
\xymatrix{
\mathcal{L}_{r,(n-m)q} & \ar[l] \mathcal{L}_{p-r,(n-m+1)q} \ar[r] & \mathcal{L}_{r,(n-m+2)q} & \cdots \ar[l] & \mathcal{L}_{p-r,(m+n-1)q} \ar[l]\ar[r] &\mathcal{L}_{r,(m+n)q}
} & \nonumber\\
\xymatrixcolsep{1pc}
\xymatrix{
\mathcal{L}_{p-r,(m-n+1)q} \ar[r] & \mathcal{L}_{r,(m-n+2)q} & \cdots \ar[l] & \mathcal{L}_{p-r,(m+n-1)q} \ar[l]\ar[r] &\mathcal{L}_{r,(n+m)q} \\
} &
\end{align*}
So $\mathcal{K}_{mp+r,nq}$ has length $2m+1$ if $m<n$ and length $2n$ if $n\leq m$.

\item\textit{Corner case}: For $m,n\geq 1$, $\mathcal{K}_{mp,nq}\cong\mathcal{K}_{np,mq}$ is semisimple with structure 
\begin{equation*}
\xymatrixcolsep{1pc}
\xymatrix{
\mathcal{L}_{p,(\vert m-n\vert +1)q} & \mathcal{L}_{p,(\vert m-n\vert +3)q} & \cdots & \mathcal{L}_{p,(m+n-3)q} & \mathcal{L}_{p,(m+n-1)q}
}
\end{equation*}
So $\mathcal{K}_{mp,nq}$ has length $\min(m,n)$.

\end{itemize}

\begin{Remark}\label{rem:Krs_Verma_quot}
The above composition series structures of the Kac modules $\mathcal{K}_{r,s}$ imply that $\mathcal{K}_{r,s}$ is singly-generated by a highest-weight vector of minimal conformal weight in $\mathcal{F}_{r,s}$ if and only if $r\leq p$ or $s\leq q$. Thus when $r\leq p$ or $s\leq q$, $\mathcal{K}_{r,s}$ is the quotient of the Verma module $\mathcal{V}_{r,s}$ by a singular vector of conformal weight $h_{r,s} +rs$.
\end{Remark}

\begin{Remark}\label{rem:K_small_rs_2}
The Verma module quotients $\mathcal{K}_{r,s}$ where $r\leq p$ or $s\leq q$ comprise every length-$2$ quotient of the bulk case Verma modules $\mathcal{V}_{r,s}$ (where $1\leq r\leq p-1$, $ps\geq qr$, and $q\nmid s$), every simple and length-$2$ quotient of the boundary case Verma modules $\mathcal{V}_{p,s}$ (where $s>q$ and $q\nmid s$) and $\mathcal{V}_{r,nq}$ (where $1\leq r\leq p-1$ and $n\geq 1$), and every simple quotient of the corner case Verma modules $\mathcal{V}_{p,nq}$ (where $n\geq 1$).
\end{Remark}

\section{Intertwining operators and Virasoro tensor categories}\label{sec:intw_op_and_tens_cats}

In this section, we recall properties of vertex algebraic intertwining operators among modules for the Virasoro vertex operator algebra $V_{c_{p,q}}$ and recall basic properties of the tensor category of $V_{c_{p,q}}$-modules obtained in \cite{CJORY}.

\subsection{Intertwining operators and tensor product modules}

If $W_1$, $W_2$, $W_3$ are modules for a vertex operator algebra $V$, then an intertwining operator of type $\binom{W_3}{W_1\,W_2}$ is a linear map
\begin{align*}
\mathcal{Y}: W_1 & \rightarrow \mathrm{Hom}(W_2,W_3)\lbrace z\rbrace[\log z]\\
w_1 & \mapsto \mathcal{Y}(w_1,z)=\sum_{h\in\mathbb{C}}\sum_{k\in\mathbb{Z}_{\geq 0}} (w_1)_{h;k}\,z^{-h-1} (\log z)^k
\end{align*}
which satisfies properties similar to those satisfied by $Y_W$ for a $V$-module $W$ (see for example \cite[Definition 3.10]{HLZ2} for the full definition). Intertwining operators of type $\binom{W_3}{W_1\,W_2}$ correspond to $V$-module homomorphisms from the fusion tensor product of $W_1$ and $W_2$ into $W_3$ and thus are essential for understanding tensor categories of $V$-modules.

When $V$ is a (universal or simple) Virasoro vertex operator algebra, the defining properties of an intertwining operator $\mathcal{Y}$ of type $\binom{W_3}{W_1\,W_2}$ become the following:
\begin{enumerate}

\item Lower truncation: For all $w_1\in W_1$, $w_2\in W_2$, and $h\in\mathbb{C}$, $(w_1)_{h+n;k} w_2 = 0$ for $n\in\mathbb{Z}$ sufficiently positive, independently of $k$.

\item The commutator formula: For all $w_1\in W_1$ and $n\in\mathbb{Z}$,
\begin{equation}\label{eqn:Vir_comm_form}
 L_n\mathcal{Y}(w_1,z) =\mathcal{Y}(w_1,z)L_n+\sum_{i\geq 0}\binom{n+1}{i} z^{n+1-i}\mathcal{Y}(L_{i-1} w_1,z).
\end{equation}

\item The iterate formula: For all $w_1\in W_1$ and $n\in\mathbb{Z}$,
\begin{align}\label{eqn:Vir_it_form}
\mathcal{Y}(L_n w_1,z) =\sum_{i\geq 0} (-1)^i\binom{n+1}{i}\left( L_{n-i}\,z^i\mathcal{Y}(w_1,z)+(-1)^{n} z^{n+1-i}\mathcal{Y}(w_1,z)L_{i-1}\right).
\end{align}

\item The $L_{-1}$-derivative property: For $w_1 \in W_1$, $\mathcal{Y}(L_{-1}w_1, z) = \frac{d}{dz}\mathcal{Y}(w_1, z)$.

\end{enumerate}
If $\mathcal{Y}$ is a linear map satisfying \eqref{eqn:Vir_comm_form} and \eqref{eqn:Vir_it_form}, then it also satisfies the Jacobi identity from \cite[Definition 3.10]{HLZ2} and thus is indeed an intertwining operator in the sense of that definition. We say that an intertwining operator $\mathcal{Y}$ of type $\binom{W_3}{W_1\,W_2}$ is \textit{surjective} if $W_3$ is spanned by the vectors $(w_1)_{h;k} w_2$ for $w_1\in W_1$, $w_2\in W_2$, $h\in\mathbb{C}$, and $k\in\mathbb{Z}_{\geq 0}$.

\begin{Remark}\label{rem:intw_op_CFT_relns}
Intertwining operators for a vertex operator algebra correspond to fields in a chiral conformal field theory (for example, compare the iterate formula \eqref{eqn:Vir_it_form} for $n<0$ with the definition of descendant field in \cite[Equation 6.148]{Fran}). In particular, if $\mathcal{Y}$ is an intertwining operator of type $\binom{W_3}{W_1\,W_2}$ and $w_1\in W_1$ satisfies $L_0 w_1=hw_1$ and $L_n w_1=0$ for $n>0$, then $\mathcal{Y}(w_1,z)$ is a primary field.
\end{Remark}


The fusion tensor product of modules for a vertex operator algebra $V$ corresponds to fusion in conformal field theory; see in particular \cite{KaRi} for a detailed comparison of the vertex algebraic approach to fusion developed in \cite{HLZ1}-\cite{HLZ8} with the Nahm-Gaberdiel-Kausch algorithm \cite{Na, GK} for computing fusion tensor products in non-rational conformal field theory. In \cite{HLZ3}, the fusion tensor product of $V$-modules is defined in terms of intertwining maps, which are intertwining operators with the formal variable $z$ specialized to a non-zero complex number using a choice of branch of logarithm. However, we can also define the fusion tensor product in terms of intertwining operators:
\begin{Definition}
  Let $\mathcal{C}$ be a category of grading-restricted generalized $V$-modules containing $W_1$ and $W_2$. A \textit{tensor product} of $W_1$ and $W_2$ in $\mathcal{C}$ is a pair $(W_1 \boxtimes W_2, \mathcal{Y}_{\boxtimes})$, where $W_1 \boxtimes W_2$ is a module in $\mathcal{C}$ and $\mathcal{Y}_{\boxtimes}$ is an intertwining operator  of type $\binom{W_1\boxtimes W_2}{W_1\,W_2}$, which satisfies the following universal property: For any module $W_3$ in $\mathcal{C}$ and intertwining operator $\mathcal{Y}$ of type $\binom{W_3}{W_1\,W_2}$,  there is a unique $V$-module homomorphism $f: W_1 \boxtimes W_2 \rightarrow W_3$ such that $\mathcal{Y} = f \circ \mathcal{Y}_{\boxtimes}.$
  \end{Definition}

If a tensor product $(W_1 \boxtimes W_2, \mathcal{Y}_{\boxtimes})$ exists, then it is unique up to unique isomorphism, and the  intertwining operator $\mathcal{Y}_{\boxtimes}$ is surjective. We sometimes call the tensor product module $W_1\boxtimes W_2$ the fusion tensor product to emphasize that it is not a $V$-module structure on the tensor product vector space $W_1\otimes W_2$. If a category $\mathcal{C}$ of $V$-modules satisfies certain conditions including existence of tensor products (see \cite{HLZ8} for details), then tensor products give $\mathcal{C}$ the structure of a braided tensor category with unit object $V$. See \cite{HLZ8} or the exposition in \cite[Section 3.3]{CKM-exts} for descriptions of the left and right unit isomorphisms, associativity isomorphisms, and braiding isomorphisms in $\mathcal{C}$. Later, we will need the explicit formula for the braiding isomorphisms: If $W_1$ and $W_2$ are modules in a vertex algebraic braided tensor category $\mathcal{C}$, then the braiding isomorphism $\mathcal{R}_{W_1,W_2}: W_1\boxtimes W_2\rightarrow W_2\boxtimes W_1$ is characterized by
\begin{equation}\label{eqn:braiding}
\mathcal{R}_{W_1,W_2}(\mathcal{Y}_\boxtimes(w_1,z)w_2) = e^{zL_{-1}}\mathcal{Y}_\boxtimes(w_2,e^{\pi i} z)w_1
\end{equation}
for all $w_1\in W_1$, $w_2\in W_2$.

\subsection{\texorpdfstring{$C_1$}{C1}-cofinite modules for the Virasoro algebra}

Given a module $W$ for a vertex operator algebra $V$, define the subspace
\begin{equation*}
C_1(W) = \mathrm{span} \lbrace v_{-1} w \; | \; v \in V,\, w \in W,\, \mathrm{wt}\,v > 0\rbrace,
\end{equation*}
where $\mathrm{wt}\,v$ denotes the conformal weight of a vector $v\in V$. The module $W$ is \textit{$C_1$-cofinite} if $\dim W/C_1(W)<\infty$. This vertex algebraic notion of $C_1$-cofiniteness corresponds to the conformal field theoretic notion of quasi-rationality defined by Nahm in \cite{Na}. A $V$-module $W$ is $C_1$-cofinite if and only if one can write $W=T+C_1(W)$ for some finite-dimensional subspace $T\subseteq W$, called a \textit{special subspace} in \cite{Na}.

\begin{Remark}
Recalling Remark \ref{rem:CFT_VOA_relns}, the space $C_1(W)$ would be defined in physics notation as the span of vectors $v_{-m} w$ for $w\in W$ and $v\in V$ homogeneous of conformal weight $m>0$.
\end{Remark}

In \cite{Na}, Nahm used a conformal field theoretic definition of fusion product (see also \cite{GK}) to show that the fusion product of two quasi-rational modules is quasi-rational. In \cite{Miy}, Miyamoto proved this same result in a vertex algebraic context, showing that the category of $C_1$-cofinite modules is closed under tensor products. Moreover, in \cite{Hu-diff-eqns}, Huang proved that compositions of intertwining operators involving $C_1$-cofinite modules for a vertex operator algebra satisfy systems of differential equations with regular singular points, a key step towards proving the associativity of intertwining operators involving $C_1$-cofinite modules. Thus it is natural to expect that the category of $C_1$-cofinite modules for a vertex operator algebra should satisfy the conditions of \cite{HLZ8} for existence of braided tensor category structure. These conditions have now been verified for the $C_1$-cofinite module categories of many vertex operator algebras; in particular, the main result of \cite{CJORY} is the following:
\begin{Theorem}
Let $V_c$ be the universal Virasoro vertex operator algebra at any central charge $c\in\mathbb{C}$, and let $\mathcal{O}_c$ be the category of $C_1$-cofinite $V_c$-modules. Then $\mathcal{O}_c$ is equal to the category of finite-length $V_c$-modules whose composition factors come from $\lbrace\mathcal{L}_{r,s}\;\vert\; r,s\in\mathbb{Z}_{\geq 1}\rbrace$. Moreover, $\mathcal{O}_c$ admits the braided tensor category structure of \cite{HLZ8}.
\end{Theorem}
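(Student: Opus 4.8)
The plan is to prove the two assertions separately: first identify $\mathcal{O}_c$ with the category of finite-length modules whose composition factors lie in $\{\mathcal{L}_{r,s}\}$, and then verify that this category satisfies the hypotheses of \cite{HLZ8}. The first step is to make the $C_1$-cofiniteness condition tractable. Since $V_c$ is strongly generated by the conformal vector $\omega=L_{-2}\mathbf{1}$, and since $(L_{-m}\mathbf{1})_{-1}$ acts as a nonzero multiple of $L_{-m}$ for each $m\geq 2$, a direct mode computation should give
\begin{equation*}
C_1(W)=\sum_{m\geq 2}L_{-m}W,
\end{equation*}
so that $W$ is $C_1$-cofinite exactly when $W/\sum_{m\geq 2}L_{-m}W$ is finite-dimensional. (Here I will repeatedly use that $L_{-1}C_1(W)\subseteq C_1(W)$, which follows from the bracket relations.) Two consequences are immediate: $C_1$-cofiniteness passes to quotients, and it is stable under extensions, since $\dim W/C_1(W)\leq \dim W'/C_1(W')+\dim W''/C_1(W'')$ for any short exact sequence $0\to W'\to W\to W''\to 0$; moreover a lower-bounded $C_1$-cofinite module is finitely generated.

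The representation-theoretic heart of the identification is the claim that a simple lower-bounded module $\mathcal{L}_h$ is $C_1$-cofinite if and only if $h=h_{r,s}$ for some $r,s\in\mathbb{Z}_{\geq 1}$. If $\mathcal{V}_h$ is irreducible then $\mathcal{L}_h=\mathcal{V}_h$, and the PBW basis shows the vectors $L_{-1}^k v_h$ are linearly independent modulo $\sum_{m\geq 2}L_{-m}\mathcal{V}_h$, so $\mathcal{V}_h$ is not $C_1$-cofinite. Conversely, if $h=h_{r,s}$ then $\mathcal{V}_{r,s}$ has a singular vector at level $rs$ with leading term $L_{-1}^{rs}v_{r,s}$; this vanishes in $\mathcal{L}_{r,s}$, giving $L_{-1}^{rs}v_{r,s}\in C_1(\mathcal{L}_{r,s})$, and applying powers of $L_{-1}$ shows $L_{-1}^k v_{r,s}\in C_1(\mathcal{L}_{r,s})$ for all $k\geq rs$, so $\mathcal{L}_{r,s}/C_1(\mathcal{L}_{r,s})$ is finite-dimensional. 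The direction that any finite-length module with factors in $\{\mathcal{L}_{r,s}\}$ is $C_1$-cofinite then follows by stability under extensions. For the reverse direction, finite generation realizes a $C_1$-cofinite $W$ as a quotient of a finite direct sum of Verma modules (with generalized lowest-weight generators in the logarithmic case), and the Feigin–Fuchs composition data recalled in Section~\ref{sec:Vir} pins down its composition factors. \emph{This last point is the main obstacle:} one must rule out a ``generic'' composition factor and bound the length, and because submodules of $C_1$-cofinite modules are not manifestly $C_1$-cofinite, the argument cannot proceed by naively restricting to submodules but must instead exploit the finitely many Verma covers and their known submodule lattices to show that a nontrivial map from an irreducible Verma would produce a contradiction with $C_1$-cofiniteness.

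With the identification established, the braided tensor structure reduces to checking the conditions of \cite{HLZ8}. The category $\mathcal{O}_c$ is abelian and closed under the contragredients of \cite{FHL}: a simple lower-bounded module is determined by its lowest conformal weight, so $\mathcal{L}_{r,s}'\cong\mathcal{L}_{r,s}$, and passing to contragredients reverses a composition series. Fusion products of $C_1$-cofinite modules exist and remain $C_1$-cofinite by Miyamoto's theorem \cite{Miy}, so the identification shows $\mathcal{O}_c$ is closed under $\boxtimes$. The decisive analytic input is Huang's result \cite{Hu-diff-eqns} that products and iterates of intertwining operators among $C_1$-cofinite modules satisfy systems of differential equations with regular singular points; this furnishes the convergence and associativity (extension) properties demanded by \cite{HLZ8}. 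Combined with the grading restrictions built into the definition of a $V_c$-module, these supply all the required conditions, so $\mathcal{O}_c$ carries the braided tensor category structure of \cite{HLZ8} with unit object $V_c$.
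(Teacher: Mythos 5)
First, a structural point: the paper does not prove this theorem. It is quoted verbatim as the main result of \cite{CJORY} and used as an imported black box, so there is no internal proof to compare your attempt against; the comparison has to be with the strategy of the cited work. With that caveat, the first half of your outline does track that strategy: the identification $C_1(W)=\sum_{m\geq 2}L_{-m}W$ is exactly what this paper later invokes (the proof of Proposition \ref{prop:C1-cofinite_dimension} cites the corresponding equation of \cite{CJORY}), and your use of the nonvanishing of the $L_{-1}^{rs}$-coefficient of the level-$rs$ singular vector to get $C_1$-cofiniteness of $\mathcal{L}_{r,s}$ is the same ``Fuchs leading coefficient'' fact used there. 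The extension-stability and finite-generation observations are correct, as is the reduction of the tensor structure to Miyamoto's closure theorem \cite{Miy} plus Huang's regular-singular-point differential equations \cite{Hu-diff-eqns}.

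The gap is the one you flag yourself, and it is the substantive content of the theorem: you do not prove that an arbitrary $C_1$-cofinite module has finite length with all composition factors among the $\mathcal{L}_{r,s}$. Realizing a finitely generated lower-bounded $W$ as a quotient of a finite sum of (generalized) Verma modules does not bound its length --- at central charge $c_{p,q}$ the Verma modules $\mathcal{V}_{r,s}$ have infinite length, so one must show that $C_1$-cofiniteness forces the kernel to have finite colength --- and it does not exclude a composition factor $\mathcal{L}_h=\mathcal{V}_h$ with $h\neq h_{r,s}$, precisely because $C_1$-cofiniteness is not inherited by submodules or subquotients. Your sentence about ``exploiting the finitely many Verma covers to produce a contradiction'' states the goal rather than supplying the argument; in \cite{CJORY} this step requires the explicit Verma submodule lattices together with cofinite-dimension counts of the kind appearing in Proposition \ref{prop:C1-cofinite_dimension}. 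A secondary soft spot: verifying the hypotheses of \cite{HLZ8} requires more than convergence of compositions of intertwining operators (existence of tensor products in the category, the expansion condition, closure under contragredients), and your one-line treatment of contragredients presupposes the identification of $\mathcal{O}_c$ that is exactly the part left open.
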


\begin{Remark}
For $c=c_{p,q}$ where $p,q\geq 2$ and $\mathrm{gcd}(p,q)=1$, the braided tensor category $\mathcal{O}_{c_{p,q}}$ does not include the Verma modules $\mathcal{V}_{r,s}$ or the Feigin-Fuchs modules $\mathcal{F}_{r,s}$, since these modules do not have finite length. But $\mathcal{O}_{c_{p,q}}$ does include all proper quotients of the Verma modules $\mathcal{V}_{r,s}$ and all Kac modules $\mathcal{K}_{r,s}$.
\end{Remark}

For a $C_1$-cofinite module $W$, we call $\dim(W/C_1(W))$ the \textit{cofinite dimension} of $W$ and denote it by $\dim_{C_1}(W)$. That is, $\dim_{C_1}(W)$ is the dimension of a minimal special subspace for $W$. The next three propositions on cofinite dimension will help constrain the size of certain tensor product modules in $\mathcal{O}_{c_{p,q}}$. The first was derived in \cite{Na} and is also a quantitative version of the Key Theorem of \cite{Miy}; the bound on cofinite dimension can be obtained as the number of independent solutions to a finite system of differential equations (see \cite[Equation 2.3]{Miy}):
\begin{Proposition} \label{prop:Miyamoto}
If $W_1$ and $W_2$ are $C_1$-cofinite modules for a vertex operator algebra and $\mathcal{Y}$ is a surjective intertwining operator of type $\binom{W_3}{W_1\,W_2}$, then $W_3$ is also $C_1$-cofinite. In particular, 
$$\dim_{C_1}(W_3) \leq \dim_{C_1} (W_1)\cdot\dim_{C_1} (W_2).$$
\end{Proposition}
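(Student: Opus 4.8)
The goal is to bound $\dim(W_3/C_1(W_3))$ by $d_1d_2$, where $d_i:=\dim(W_i/C_1(W_i))$; the qualitative claim that $W_3$ is $C_1$-cofinite is then automatic. The plan is to produce a spanning set of $W_3/C_1(W_3)$ controlled by pairs of coset representatives. Since $\mathcal{Y}$ is surjective, $W_3$ is spanned by the coefficients $(w_1)_{h;k}w_2$, so it suffices to understand these modulo $C_1(W_3)$. Write $\pi\colon W_3\to W_3/C_1(W_3)$ for the projection and fix homogeneous representatives $w_1^{(1)},\dots,w_1^{(d_1)}$ and $w_2^{(1)},\dots,w_2^{(d_2)}$ of bases of $W_1/C_1(W_1)$ and $W_2/C_1(W_2)$.

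The central mechanism is that all negative modes of positive-weight elements of $V$ acting on $W_3$ vanish modulo $C_1(W_3)$: for $v\in V$ with $\mathrm{wt}\,v>0$ and any $u\in W_3$ we have $v_{-1}u\in C_1(W_3)$ by definition, and $v_{-1-i}u\in C_1(W_3)$ for all $i\geq 0$ because $v_{-1-i}$ is (up to a scalar) the $(-1)$-mode of $L_{-1}^iv$, which still has positive weight $\mathrm{wt}(v)+i$. Feeding this into the general associativity/iterate formula for intertwining operators (of which \eqref{eqn:Vir_it_form} is the Virasoro specialization) shows that for $\mathrm{wt}\,v>0$,
\[ \mathcal{Y}(v_{-1}w_1,z)w_2\equiv -\sum_{i\geq 0} z^{-1-i}\,\mathcal{Y}(w_1,z)\,v_i w_2\pmod{C_1(W_3)}, \]
while the general commutator formula (the analogue of \eqref{eqn:Vir_comm_form}) gives
\[ \mathcal{Y}(w_1,z)\,v_{-1}u\equiv -\sum_{i\geq 0}(-1)^i z^{-1-i}\,\mathcal{Y}(v_i w_1,z)\,u\pmod{C_1(W_3)}. \]
Thus a $C_1(W_1)$-contribution in the first slot is traded for \emph{non-negative} modes $v_i$ acting on the second slot, and a $C_1(W_2)$-contribution in the second slot is traded for non-negative modes acting on the first. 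Because $\{w_1^{(i)}\}$ and $\{w_2^{(j)}\}$ span $W_1$ and $W_2$ modulo $C_1$, the aim is to use these two congruences repeatedly to rewrite, modulo $C_1(W_3)$, every coefficient of every $\mathcal{Y}(w_1,z)w_2$ in terms of the coefficients of the finitely many series $\Psi_{ij}(z):=\pi(\mathcal{Y}(w_1^{(i)},z)w_2^{(j)})$.

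To extract the exact product bound I would then invoke the differential-equations argument underlying Miyamoto's Key Theorem (see \cite[Equation 2.3]{Miy} and \cite{Hu-diff-eqns}). Using the $L_{-1}$-derivative property, $\tfrac{d}{dz}\Psi_{ij}=\pi(\mathcal{Y}(L_{-1}w_1^{(i)},z)w_2^{(j)})$; reducing $L_{-1}w_1^{(i)}$ back to the representatives via the two congruences above (and reducing the resulting non-negative-mode images $v_i w_2^{(j')}$, $v_i w_1^{(i')}$ modulo $C_1(W_2)$, $C_1(W_1)$ back to representatives) should show that the $\mathbb{C}$-span of $\{\Psi_{ij}\}$ is closed under $z\tfrac{d}{dz}$. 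This realizes the $\Psi_{ij}$ as a solution of a first-order matrix ODE of rank at most $d_1d_2$ with a regular singular point at $z=0$, so that all their coefficients—and hence all of $W_3/C_1(W_3)$—lie in a space of dimension at most $d_1d_2$.

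The hard part will be Step 3: organizing the reductions so that the system genuinely \emph{closes} at rank $d_1d_2$. The difficulty is that the two congruences raise the conformal weight in one tensor factor while lowering it in the other (e.g.\ $v_0 w_1$ has weight $\geq \mathrm{wt}(w_1)$), so a naive single-variable induction on weight does not terminate; the real content is that passing through coinvariants modulo $C_1(W_1)$ and $C_1(W_2)$ collapses these competing shifts into the finite-rank regular-singular system. I would also need to handle the genuinely logarithmic, $\mathbb{C}$-graded setting: the $\log z$ terms and non-integral powers of $z$ in $\mathcal{Y}$ must be accommodated in the regular-singular-point analysis, and one must verify that counting (generalized) solutions of the resulting system, with their logarithmic multiplicities, still yields the clean bound $d_1d_2$.
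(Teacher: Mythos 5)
The proposal takes essentially the same route as the paper, which offers no independent argument for this proposition but simply invokes Miyamoto's Key Theorem and identifies the bound $\dim(W_1/C_1(W_1))\cdot\dim(W_2/C_1(W_2))$ with the number of independent solutions of the finite system of differential equations in \cite[Equation 2.3]{Miy}; your reduction to coset representatives via the iterate/commutator congruences and the appeal to a regular-singular first-order system of rank $d_1d_2$ is precisely the mechanism behind that theorem. The closure step you flag as the hard part is exactly what \cite{Miy} (building on \cite{Hu-diff-eqns}) supplies, so your sketch is consistent with, and somewhat more detailed than, the paper's own treatment (modulo an immaterial sign in the first congruence, which by the conventions of \eqref{eqn:Vir_it_form} should read $\mathcal{Y}(v_{-1}w_1,z)w_2\equiv+\sum_{i\geq 0}z^{-1-i}\mathcal{Y}(w_1,z)v_iw_2$).
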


\begin{Proposition}\label{prop:C1_quotient}
If $W_1$ is a $C_1$-cofinite module for a vertex operator algebra and $W_2\subseteq W_1$ is a submodule, then $\dim_{C_1} (W_1/W_2)\leq \dim_{C_1}(W_1).$
\end{Proposition}
\begin{proof}
If $T$ is a finite-dimensional special subspace for $W_1$, then $(T+W_2)/W_2\cong T/(W_2\cap T)$ is a special subspace for $W_1/W_2$.
\end{proof}

\begin{Proposition}\label{prop:C1-cofinite_dimension}
Let $\mathcal{V}$ be the Virasoro Verma module of lowest conformal weight $h$ at central charge $c\in\mathbb{C}$, and let $\tilde{v}\in\mathcal{V}$ be a singular vector of conformal weight $h+N$ for some $N\in\mathbb{Z}_{\geq 0}$. Then the cofinite dimension of $\mathcal{V}/\langle\tilde{v}\rangle$ is equal to $N$.
\end{Proposition}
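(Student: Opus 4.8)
The plan is to compute $C_1(\mathcal{V}/\langle\tilde{v}\rangle)$ explicitly. The first step is to identify $C_1(W)$ for an arbitrary Virasoro module $W$. Since $V_c$ is strongly generated by the conformal vector $\omega=L_{-2}\mathbf{1}$, and since $(L_{-1}^{\,n-2}\omega)_{-1}$ acts as a nonzero multiple of $L_{-n}$ for each $n\geq 2$ (which follows immediately from the $L_{-1}$-derivative property $(L_{-1}v)_m=-m\,v_{m-1}$), I would show
\begin{equation*}
C_1(W)=\sum_{n\geq 2}L_{-n}W .
\end{equation*}
The inclusion $\supseteq$ is the computation just indicated, while $\subseteq$ is the standard fact that, for a vertex operator algebra strongly generated in positive weights by $\omega$, every $v_{-1}w$ with $\mathrm{wt}\,v>0$ lies in the span of the $L_{-n}W$, $n\geq 2$ (re-expressing the modes of iterated products of $\omega$ via the iterate formula).

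Next I would analyze the quotient $Q:=\mathcal{V}/\sum_{n\geq 2}L_{-n}\mathcal{V}$. The subspace $\bigoplus_{n\geq 2}\mathbb{C}L_{-n}$ is a Lie subalgebra of $\mathfrak{Vir}_-$, so $\sum_{n\geq 2}L_{-n}\mathcal{V}$ is a $\mathfrak{Vir}_-$-submodule and $Q$ is a $\mathfrak{Vir}_-$-module on which every $L_{-n}$, $n\geq 2$, acts as $0$. Using the PBW basis of $\mathcal{V}$ together with the commutators $[L_{-1},L_{-n}]=(n-1)L_{-(n+1)}$ to push any generator of index $\geq 2$ to the left, I would show that every PBW monomial containing some index $\geq 2$ lies in $\sum_{n\geq 2}L_{-n}\mathcal{V}$, so that $Q$ has basis $\{[L_{-1}^{\,j}v_h]:j\geq 0\}$ and is a free rank-one module over $\mathbb{C}[L_{-1}]$.

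The third step is to locate the image of $\langle\tilde{v}\rangle$ in $Q$. Because $\tilde{v}$ is singular, $\langle\tilde{v}\rangle=U(\mathfrak{Vir}_-)\tilde{v}$; and since the quotient map $\mathcal{V}\to Q$ is $\mathfrak{Vir}_-$-linear while the $L_{-n}$ ($n\geq 2$) act by $0$ on $Q$, this image equals $\mathbb{C}[L_{-1}]\cdot[\tilde{v}]$. Here I invoke the classical fact that a level-$N$ singular vector in a Verma module has nonzero coefficient on $L_{-1}^{\,N}v_h$; this gives $[\tilde{v}]=c_N[L_{-1}^{\,N}v_h]$ with $c_N\neq 0$, hence the image is $\mathrm{span}\{[L_{-1}^{\,j}v_h]:j\geq N\}$. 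Combining this with the identity $(\mathcal{V}/\langle\tilde{v}\rangle)/C_1(\mathcal{V}/\langle\tilde{v}\rangle)=\mathcal{V}/\big(\sum_{n\geq 2}L_{-n}\mathcal{V}+\langle\tilde{v}\rangle\big)$ then yields a space with basis $\{[L_{-1}^{\,j}v_h]:0\leq j\leq N-1\}$, of dimension $N$.

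I expect the main obstacle to be the first step, namely justifying $C_1(W)=\sum_{n\geq 2}L_{-n}W$ cleanly—in particular the inclusion $\subseteq$, which requires controlling $v_{-1}$ for \emph{all} positive-weight $v$ rather than just for $\omega$. The nonvanishing of the $L_{-1}^{\,N}$-coefficient is also essential, since without it the cofinite dimension could exceed $N$; but this is a known result that can simply be cited. Everything else reduces to bookkeeping with the PBW basis and the observation that $L_{-1}$ acts freely on $Q$.
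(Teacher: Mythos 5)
Your proof is correct and follows essentially the same route as the paper's: both rest on the identification of $C_1$ with $\sum_{n\geq 2}L_{-n}W$, the PBW basis, and the nonvanishing of the $L_{-1}^N$-coefficient of the level-$N$ singular vector (cited to Astashkevich/Fuchs). The only difference is organizational: you compute the image of $\langle\tilde{v}\rangle$ in the free $\mathbb{C}[L_{-1}]$-module $\mathcal{V}/\sum_{n\geq 2}L_{-n}\mathcal{V}$ exactly, which gives both the upper and lower bounds at once, whereas the paper obtains the upper bound directly and then rules out extra relations by a weight-minimality contradiction.
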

\begin{proof}
It is straightforward to show that $C_1(\mathcal{V})$ is spanned by all $L_{-n}w$ for any $n > 1$ and any $w \in \mathcal{V}$ (see \cite[Equation 2.27]{CJORY}), so that $\mathcal{V}=C_1(\mathcal{V})+\bigoplus_{n=0}^\infty L_{-1}^n v$ where $v$ is a generator of $\mathcal{V}$ of minimal conformal weight $h$. The singular vector $\tilde{v}\in\mathcal{V}$ may be expressed as a linear combination of Poincar\'{e}-Birkhoff-Witt ordered monomials in the $L_{-n}$ with $n>0$, acting on $v$. A crucial fact (see for example \cite[Theorem 3.1]{As}, where the result is attributed to Fuchs) is that the coefficient of $L^N_{-1} v$ is never $0$ (irrespective of the chosen order), and thus $\mathcal{V}/\langle\tilde{v}\rangle =C_1(\mathcal{V}/\langle \tilde{v}\rangle)+\bigoplus_{n=0}^{N-1} L_{-1}^N v$. As a result, $\dim_{C_1}(\mathcal{V}/\langle\tilde{v}\rangle)\leq N$.

To show equality, suppose some non-trivial linear combination of $L_{-1}^n v$, $0\leq n\leq N-1$, were contained in $C_1(\mathcal{V}/\langle\tilde{v}\rangle)$. Then since $C_1(\mathcal{V}/\langle\tilde{v}\rangle)$ is a graded vector space, we would get $L_{-1}^n v\in C_1(\mathcal{V}/\langle\tilde{v}\rangle)$ for some $n\leq N-1$. Using \cite[Equation 2.27]{CJORY} again, this would imply a relation of the form
\begin{equation*}
L_{-1}^n v +\sum_{m=2}^{n} L_{-m} w_m \equiv 0\quad(\mathrm{mod}\,\langle\tilde{v}\rangle),
\end{equation*}
where $w_m\in\mathcal{V}$ has conformal weight $h+n-m$. But this relation cannot vanish in $\mathcal{V}$ since $\mathcal{V}$ has a PBW basis, contradicting that $h+N$ is the minimal conformal weight of $\langle\tilde{v}\rangle$.
\end{proof}

\subsection{Zhu algebra theory}\label{subsec:Zhu_theory}

In the following sections, we will compute some fusion tensor products of Kac modules in $\mathcal{O}_{c_{p,q}}$ for $p,q\geq 2$ and $\mathrm{gcd}(p,q)=1$. One of the main tools we will use to constrain the sizes of these tensor product modules is the Zhu algebra, first constructed in \cite{Zh} to study the irreducible representations of a vertex operator algebra.

We recall the definition of the Zhu algebra for any vertex operator algebra $V$: First define a bilinear operation $*$ on $V$ by
\begin{equation*}
u * v = \mathrm{Res}_z\,z^{-1}Y((1+z)^{L_0} u, z)v
\end{equation*}
for $u, v\in V$, and then define the subspace $O(V)\subseteq V$ to be the linear span of the vectors
\begin{equation*}
\mathrm{Res}_z\,z^{-2}Y((1+z)^{L_0} u, z)v
\end{equation*}
for $u,v\in V$. Zhu showed that $*$ is well defined and associative on the quotient $A(V)=V/O(V)$, and thus $A(V)$ is an associative algebra with unit $\mathbf{1}+O(V)$. Similarly, in \cite{FZ}, Frenkel and Zhu constructed an $A(V)$-bimodule for any $V$-module $W$: We set $A(W)=W/O(W)$ where $O(W)$ is the span of vectors
\begin{equation*}
\mathrm{Res}_z\,z^{-2}Y((1+z)^{L_0} v, z)w
\end{equation*} 
for $v\in V$, $w\in W$. Then $A(W)$ is an $A(V)$-bimodule with left and right actions
\begin{align*}
(v+O(V))*(w+O(W)) & =\mathrm{Res}_z\,z^{-1}Y((1+z)^{L_0} v, z)w+O(W)\nonumber\\
 (w+O(W))*(v+O(V)) & =\mathrm{Res}_z\,z^{-1}Y((1+z)^{L_0-1} v, z)w+O(W)
\end{align*}
for $v\in V$, $w\in W$.

Zhu also proved in \cite{Zh} that there is a one-to-one correspondence between simple (grading-restricted) $V$-modules and simple finite-dimensional $A(V)$-modules. In particular, the lowest conformal weight space of a simple $V$-module is an $A(V)$-module with action
\begin{equation*}
(v+O(V))\cdot w = o(v)w,
\end{equation*}
where $o(v)=\mathrm{Res}_z\,Y(z^{L_0-1} v,z)$ is the component of $Y(v,z)$ that preserves conformal weights (recalling Remark \ref{rem:CFT_VOA_relns}, $o(v)$ is the zero-mode of $Y(v,z)$ in physics terminology). More generally, if $W$ is not necessarily simple, then the same formula gives an $A(V)$-action on the direct sum of the minimal conformal weight spaces of $W$. In particular,
\begin{equation}\label{eqn:W_conf_wt_decomp}
W = \bigoplus_{i \in I}\bigoplus_{n=0}^{\infty} W_{[h_i+n]},
\end{equation}
where $I$ is the set of cosets  $i\in\mathbb{C}/\mathbb{Z}$ such that $W_{[h]}\neq 0$ for some $h\in i$, and $h_i$ is the conformal weight of $W$ in $i$ with minimum real part. Then $W$ has the $\mathbb{Z}_{\geq 0}$-grading $W=\bigoplus_{n=0}^\infty W(n)$ where $W(n)=\bigoplus_{i\in I} W_{[h_i+n]}$, and this grading satisfies
\begin{equation*}
v_m \cdot W(n) \subseteq W(\mathrm{wt}\,v+n-m-1) 
\end{equation*}
for $v\in V$, $m\in\mathbb{Z}$, and $n\in\mathbb{Z}_{\geq 0}$. Thus for any $v\in V$, the operator $o(v)$ preserves the subspaces $W(n)$, and $o(v)$ defines an action of $A(V)$ on $W(0)$.

\begin{Remark}
For indecomposable $V$-modules $W$, the set $I\subseteq\mathbb{C}/\mathbb{Z}$ in \eqref{eqn:W_conf_wt_decomp} contains only one coset. More generally in this paper, we will only consider $V$-modules such that $I$ is finite.
\end{Remark}

In \cite{FZ, Li-intw-ops}, Frenkel-Zhu and Li gave precise relationships between $V$-module intertwining operators and $A(V)$-modules and bimodules. Let $W_1$, $W_2$, and $W_3$ be $V$-modules such that the conformal weights of $W_3$ are contained in the union of finitely many cosets of $\mathbb{C}/\mathbb{Z}$, that is, the set $I$ in \eqref{eqn:W_conf_wt_decomp} is finite. In this case, if $\mathcal{Y}$ is an intertwining operator of type $\binom{W_3}{W_1\,W_2}$, we can substitute $z\mapsto 1$ in $\mathcal{Y}$ (that is, $z^{-h-1}\mapsto 1$ for any $h\in\mathbb{C}$ and $\log z\mapsto 0$) to get a ``$P(1)$-intertwining map'' in the terminology of \cite{HLZ3}:
\begin{align*}
\mathcal{Y}(\cdot, 1)\cdot : W_1\otimes W_2 & \rightarrow \prod_{n=0}^\infty W_3(n)\\
w_1\otimes w_2 & \mapsto \mathcal{Y}(w_1,1)w_2.
\end{align*}
Letting $\pi_0:\prod_{n=0}^\infty W_3(n) \rightarrow W_3(0)$ denote the projection, we then get an $A(V)$-module map first constructed in \cite{FZ}:
\begin{align}\label{eqn:AV_(bi)mod_map}
\pi(\mathcal{Y}): A(W_1)\otimes_{A(V)} W_2(0) & \rightarrow W_3(0)\\
(w_1+O(W_1))\otimes u_2 & \mapsto \pi_0\left(\mathcal{Y}(w_1,1)u_2\right)\nonumber
\end{align}
for $w_1\in W_1$, $u_2\in W_2(0)$.

If we further assume that $W_2$ is generated by $W_2(0)$ as a $V$-module, and that $\mathcal{Y}$ is a surjective intertwining operator, then the $A(V)$-module homomorphism $\pi(\mathcal{Y})$ is surjective:
\begin{Proposition}\label{prop:surj_intw_op}
Let $\mathcal{Y}$ be a surjective intertwining operator of type $\binom{W_3}{W_1\,W_2}$ where $W_2$ is generated as a $V$-module by $W_2(0)$ and the conformal weights of $W_3$ are contained in the union of finitely many cosets in $\mathbb{C}/\mathbb{Z}$. Then the $A(V)$-module $W_3(0)$ is a homomorphic image of $A(W_1)\otimes_{A(V)} W_2(0)$. In particular this holds when $W_3=W_1\boxtimes W_2$ is a fusion tensor product of $W_1$ and $W_2$, and $\mathcal{Y}=\mathcal{Y}_\boxtimes$.
\end{Proposition}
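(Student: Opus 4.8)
The goal is to show that for a surjective intertwining operator $\mathcal{Y}$ of type $\binom{W_3}{W_1\,W_2}$, with $W_2$ generated by $W_2(0)$ and the conformal weights of $W_3$ lying in finitely many cosets of $\mathbb{C}/\mathbb{Z}$, the map $\pi(\mathcal{Y})$ from \eqref{eqn:AV_(bi)mod_map} is surjective onto $W_3(0)$. I would approach this by unwinding the definitions and leveraging the fact that surjectivity of $\mathcal{Y}$ means $W_3$ is spanned by modes $(w_1)_{h;k}w_2$.

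First I would reduce to showing that $W_3(0)$ is spanned by elements of the form $\pi_0(\mathcal{Y}(w_1,1)u_2)$ with $w_1\in W_1$ and $u_2\in W_2(0)$. Since $\mathcal{Y}$ is surjective, $W_3$ is spanned by coefficients $(w_1)_{h;k}w_2$; projecting to $W_3(0)$ and using the decomposition \eqref{eqn:W_conf_wt_decomp}, I would argue that it suffices to consider only those modes landing in the minimal-degree piece. The $\mathbb{Z}_{\geq 0}$-grading compatibility $v_m\cdot W(n)\subseteq W(\mathrm{wt}\,v+n-m-1)$ has an analogue for intertwining operator modes, which controls how $(w_1)_{h;k}$ shifts the degree; this lets me isolate the component that survives $\pi_0$.

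The key step is to replace an arbitrary generator $w_2\in W_2$ by a vector in $W_2(0)$. Because $W_2$ is generated by $W_2(0)$, any $w_2$ is a sum of terms $v^{(1)}_{m_1}\cdots v^{(j)}_{m_j}u_2$ for $v^{(i)}\in V$ and $u_2\in W_2(0)$. I would then use the iterate/commutator formulas \eqref{eqn:Vir_comm_form}, \eqref{eqn:Vir_it_form} (or more precisely the general intertwining-operator commutator with the $V$-action on $W_2$) to commute the vertex-algebra modes $v_m$ across $\mathcal{Y}(w_1,z)$, converting $\mathcal{Y}(w_1,z)v_m u_2$ into a combination of $v_n\mathcal{Y}(w'_1,z)u_2$ terms acting on $u_2\in W_2(0)$ directly. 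After specializing $z\mapsto 1$ and projecting with $\pi_0$, the modes $v_n$ with $n$ raising the degree annihilate the minimal piece, while those preserving degree reproduce exactly the left and right $A(V)$-actions built into the bimodule $A(W_1)$ and into $W_2(0)$. This is precisely the content that makes $\pi(\mathcal{Y})$ well defined on $A(W_1)\otimes_{A(V)}W_2(0)$, and it shows the image contains all of $W_3(0)$.

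The main obstacle I expect is the bookkeeping in the commutation step: one must carefully track which mode rearrangements preserve versus shift the conformal-weight grading, and verify that the degree-lowering modes do not contribute spurious terms outside $W_3(0)$ while the degree-preserving modes assemble into genuine $A(V)$-bimodule actions rather than extra relations. This is essentially Nahm's argument \cite{Na}, and since the statement is explicitly attributed to \cite{Na} with a modern proof in \cite[Proposition 2.5]{MY-cp1-vir}, I would ultimately cite those sources rather than reproduce the full computation; the role of this proposition here is to package that known result in the form needed for the Virasoro tensor category applications, where $W_2=\mathcal{K}_{r,s}$ is generated by its lowest conformal weight space and $W_3=\mathcal{K}_{1,2}\boxtimes\mathcal{K}_{r,s}$ has conformal weights in finitely many cosets.
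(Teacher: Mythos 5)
Your proposal is correct and takes essentially the same route as the paper: the paper offers no self-contained proof of this proposition, instead citing \cite[Proposition 2.5]{MY-cp1-vir} and attributing the result to Nahm \cite{Na}, exactly as you ultimately propose to do. Your sketch of the underlying argument (surjectivity plus generation by $W_2(0)$, then the commutator/iterate formulas to sort modes by how they shift the $\mathbb{Z}_{\geq 0}$-grading) is the standard one behind those references, and deferring the careful mode-rearrangement bookkeeping to them is consistent with the paper's own treatment.
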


This proposition is proved for example in \cite[Proposition 2.5]{MY-cp1-vir}, but it is actually a version of a result originally due to Nahm \cite{Na}. More specifically, Nahm's result in vertex algebraic language is that $(W_1\boxtimes W_2)(0)$ is spanned by vectors of the form $\pi_0(\mathcal{Y}_\boxtimes(t_1, 1)u_2)$ where $t_1$ comes from a special subspace $T_1$ for $W_1$ and $u_2\in W_2(0)$. Proposition \ref{prop:surj_intw_op} follows from this result because $T_1+O(W_1)$ generates $A(W_1)$ as an $A(V)$-bimodule \cite[Proposition 3.16]{Li-fin}.

Now suppose $V=V_{c_{p,q}}$ is the universal Virasoro vertex operator algebra at central charge $c_{p,q}=1-\frac{6(p-q)^2}{pq}$ for $p,q\geq 2$, $\mathrm{gcd}(p,q)=1$. Then there is an isomorphism $A(V_{c_{p,q}})\rightarrow\mathbb{C}[x]$ determined by $\omega+O(V_{c_{p,q}})\mapsto x$ \cite{FZ}. In the next sections, we will compute the fusion tensor products $\mathcal{K}_{1,2}\boxtimes\mathcal{K}_{r,s}$ and $\mathcal{K}_{2,1}\boxtimes\mathcal{K}_{r,s}$ of Kac modules, so to use the above proposition, we need the $\mathbb{C}[x]$-modules $A(\mathcal{K}_{1,2})\otimes_{\mathbb{C}[x]} \mathbb{C} v_{r,s}$ and $A(\mathcal{K}_{2,1})\otimes_{\mathbb{C}[x]} \mathbb{C} v_{r,s}$ for $r,s\in\mathbb{Z}_{\geq 1}$. These modules were (essentially) determined in \cite{Fr-MZhu}; see also \cite[Section 3.1]{MY-cp1-vir}:
\begin{align}\label{eqn:bimodules}
\begin{matrix}
A(\mathcal{K}_{1,2})\otimes_{\mathbb{C}[x]} \mathbb{C} v_{r,s} & \cong\mathbb{C}[x]/\langle(x-h_{r,s-1})(x-h_{r,s+1})\rangle\\
A(\mathcal{K}_{2,1})\otimes_{\mathbb{C}[x]} \mathbb{C} v_{r,s} &\cong\mathbb{C}[x]/\langle(x-h_{r-1,s})(x-h_{r+1,s})\rangle
\end{matrix}
\end{align}
Note that $h_{r,s+1}-h_{r,s-1}=-r+\frac{ps}{q}$ for $r,s\in\mathbb{Z}_{\geq 1}$, where we may assume $1\leq r\leq p$ and $ps\geq qr$. Thus $h_{r,s+1}-h_{r,s-1}\in\mathbb{Z}$ if and only if $q\mid s$, and these two conformal weights are equal if and only if $h_{r,s}=h_{p,q}$. Similar considerations hold for the conformal weights $h_{r\pm 1,s}$. 
Combining Proposition \ref{prop:surj_intw_op} with this discussion, we can now prove:
%
%
\begin{Proposition}\label{prop:lowest_weight_spaces}
Suppose there is a surjective intertwining operator of type $\binom{\mathcal{W}}{\mathcal{K}_{1,2}\,\widetilde{\mathcal{V}}_{r,s}}$ where $\mathcal{W}$ is non-zero and $\widetilde{\mathcal{V}}_{r,s}$ is some non-zero quotient of $\mathcal{V}_{r,s}$ for some $r,s\in\mathbb{Z}_{\geq 1}$. Then:
\begin{enumerate}
\item If $q\nmid s$, then $\dim\mathcal{W}(0)\leq 2$ and $L_0$ acts semisimply on $\mathcal{W}(0)$ with eigenvalue(s) $h_{r,s\pm 1}$.

\item If $q\mid s$ but $h_{r,s}\neq h_{p,q}$, then $\dim\mathcal{W}(0)=1$ and $L_0$ acts on $\mathcal{W}(0)$ by either the scalar $h_{r,s-1}$ or the scalar $h_{r,s+1}$.

\item If $h_{r,s}=h_{p,q}$, then $\dim\mathcal{W}(0)\leq 2$ and $L_0$ acts on $\mathcal{W}(0)$ by an indecomposable Jordan block with (generalized) eigenvalue $h_{p,q-1}=h_{p,q+1}$.
\end{enumerate}
Similarly, if there is a surjective intertwining operator of type $\binom{\mathcal{W}}{\mathcal{K}_{2,1}\,\widetilde{\mathcal{V}}_{r,s}}$, then:
\begin{enumerate}
\item If $p\nmid r$, then $\dim\mathcal{W}(0)\leq 2$ and $L_0$ acts semisimply on $\mathcal{W}(0)$ with eigenvalue(s) $h_{r\pm 1,s}$.

\item If $p\mid r$ but $h_{r,s}\neq h_{p,q}$, then $\dim\mathcal{W}(0)=1$ and $L_0$ acts on $\mathcal{W}(0)$ by either the scalar $h_{r-1,s}$ or the scalar $h_{r+1,s}$.

\item If $h_{r,s}=h_{p,q}$, then $\dim\mathcal{W}(0)\leq 2$ and $L_0$ acts on $\mathcal{W}(0)$ by an indecomposable Jordan block with (generalized) eigenvalue $h_{p-1,q}=h_{p+1,q}$.
\end{enumerate}
\end{Proposition}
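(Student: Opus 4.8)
The plan is to combine Proposition~\ref{prop:surj_intw_op} with the explicit bimodule-tensor-products recorded just above the statement. Since $\widetilde{\mathcal{V}}_{r,s}$ is a quotient of the Verma module $\mathcal{V}_{r,s}$, it is generated as a $V_{c_{p,q}}$-module by its one-dimensional lowest conformal weight space $\mathbb{C}v_{r,s}$, on which $A(V_{c_{p,q}})\cong\mathbb{C}[x]$ acts through $x\mapsto h_{r,s}$; moreover the conformal weights of $\mathcal{W}$ lie in finitely many cosets of $\mathbb{C}/\mathbb{Z}$ because $\mathcal{K}_{1,2}$ has finite length and $\widetilde{\mathcal{V}}_{r,s}$ is supported on the single coset $h_{r,s}+\mathbb{Z}$. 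Hence, for a surjective intertwining operator $\mathcal{Y}$ of type $\binom{\mathcal{W}}{\mathcal{K}_{1,2}\,\widetilde{\mathcal{V}}_{r,s}}$, Proposition~\ref{prop:surj_intw_op} exhibits $\mathcal{W}(0)$ as a homomorphic image of
$$A(\mathcal{K}_{1,2})\otimes_{\mathbb{C}[x]}\mathbb{C}v_{r,s}\cong\mathbb{C}[x]/\langle(x-h_{r,s-1})(x-h_{r,s+1})\rangle,$$
a two-dimensional $\mathbb{C}[x]$-module on which $x$ acts as $o(\omega)=L_0$. This already gives $\dim\mathcal{W}(0)\leq 2$ in every case, and the remaining task is to determine the $L_0$-action.

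I would then read off the three cases from the identity $h_{r,s+1}-h_{r,s-1}=\tfrac{ps}{q}-r$ (which is $\geq 0$ under the normalization $1\leq r\leq p$, $ps\geq qr$). The two roots coincide exactly when $ps=qr$, equivalently $h_{r,s}=h_{p,q}$, and their difference is an integer exactly when $q\mid s$. In case~(1), with $q\nmid s$, the roots are distinct and lie in \emph{different} cosets of $\mathbb{C}/\mathbb{Z}$, so the source is semisimple; any quotient is then semisimple with $L_0$-eigenvalues among $h_{r,s\pm 1}$. In case~(3), with $h_{r,s}=h_{p,q}$, the two roots coincide and equal $h_{p,q-1}=h_{p,q+1}$ (by the conformal weight symmetries), so the source is a single $2\times 2$ Jordan block; both of its basis vectors have the same, minimal, conformal weight, so no further constraint applies and any nonzero quotient carries an indecomposable Jordan block with that generalized eigenvalue.

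The crux is case~(2), where $q\mid s$ but $h_{r,s}\neq h_{p,q}$, and I must improve $\dim\mathcal{W}(0)\leq 2$ to $\dim\mathcal{W}(0)=1$. Here the roots $h_{r,s-1}$ and $h_{r,s+1}$ are distinct but differ by a \emph{nonzero integer}, hence lie in the \emph{same} coset of $\mathbb{C}/\mathbb{Z}$. The extra ingredient is that $\mathcal{W}(0)$, by its definition in~\eqref{eqn:W_conf_wt_decomp}, contains only vectors whose conformal weight has minimal real part within its coset. Because the surjection $\pi(\mathcal{Y})$ is $A(V_{c_{p,q}})$-linear, hence $L_0$-equivariant, the image of the eigenvector for the larger weight $h_{r,s+1}$ would be an $L_0$-eigenvector of weight $h_{r,s+1}$ inside $\mathcal{W}(0)$; but $h_{r,s+1}$ is not minimal in its coset whenever the smaller weight $h_{r,s-1}$ already occurs, so this image must vanish. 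Thus at most one eigenvalue survives and $\dim\mathcal{W}(0)\leq 1$; since $\mathcal{W}\neq 0$ forces $\mathcal{W}(0)\neq 0$, we conclude $\dim\mathcal{W}(0)=1$ with $L_0$-eigenvalue one of $h_{r,s\pm 1}$. The statements for $\mathcal{K}_{2,1}$ follow identically after replacing the bimodule by $A(\mathcal{K}_{2,1})\otimes_{\mathbb{C}[x]}\mathbb{C}v_{r,s}\cong\mathbb{C}[x]/\langle(x-h_{r-1,s})(x-h_{r+1,s})\rangle$ and using $h_{r+1,s}-h_{r-1,s}=\tfrac{qr}{p}-s$, which is an integer iff $p\mid r$ and zero iff $h_{r,s}=h_{p,q}$; this is precisely the substitution $p\leftrightarrow q$, $r\leftrightarrow s$ allowed by $c_{p,q}=c_{q,p}$.
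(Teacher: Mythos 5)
Your proposal is correct and follows essentially the same route as the paper, which presents this proposition as an immediate consequence of Proposition \ref{prop:surj_intw_op} and the displayed isomorphism $A(\mathcal{K}_{1,2})\otimes_{\mathbb{C}[x]}\mathbb{C}v_{r,s}\cong\mathbb{C}[x]/\langle(x-h_{r,s-1})(x-h_{r,s+1})\rangle$, combined with the observation that $h_{r,s+1}-h_{r,s-1}=\tfrac{ps}{q}-r$ is an integer iff $q\mid s$ and vanishes iff $h_{r,s}=h_{p,q}$. Your additional step in case (2) --- that only the weight of minimal real part in a given coset can contribute to $\mathcal{W}(0)$, so two distinct eigenvalues lying in the same coset of $\mathbb{C}/\mathbb{Z}$ cannot both survive in the quotient --- correctly supplies the detail the paper leaves implicit.
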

\begin{proof}
By $c_{p,q}=c_{q,p}$ symmetry, it is enough to consider the case of a surjective intertwining operator of type $\binom{\mathcal{W}}{\mathcal{K}_{1,2}\,\widetilde{\mathcal{V}}_{r,s}}$. In cases (1) and (2), where $h_{r,s+1}\neq h_{r,s-1}$, Proposition \ref{prop:surj_intw_op} and \eqref{eqn:bimodules} imply that $\mathcal{W}(0)$ is a quotient of 
\begin{equation*}
\mathbb{C}[x]/\langle(x-h_{r,s-1})(x-h_{r,s+1})\rangle\cong\mathbb{C}[x]/\langle x-h_{r,s-1}\rangle\oplus\mathbb{C}[x]/\langle x-h_{r,s+1}\rangle,
\end{equation*}
where $L_0$ acts as $x$. Thus in these cases, $\dim\mathcal{W}(0)\leq 2$ and $L_0$ is diagonalizable on $\mathcal{W}(0)$ with eigenvalues $h_{r,s-1}$ and/or $h_{r,s+1}$. In case (2), where $\vert h_{r,s+1}-h_{r,s-1}\vert\in\mathbb{Z}_{\geq 1}$, our $\mathbb{Z}_{\geq 0}$-grading convention for $\mathcal{W}$ forces $\dim\mathcal{W}(0)=1$. Indeed, assuming without loss of generality that $h_{r,s-1}<h_{r,s+1}$, then either $\mathcal{W}(0)$ contains an $L_0$-eigenvector of conformal weight $h_{r,s-1}$ or $\mathcal{W}(0)$ is spanned by a single $L_0$-eigenvector of conformal weight $h_{r,s+1}$. Clearly $\dim\mathcal{W}(0)=1$ in the second case, and $\dim\mathcal{W}(0)=1$ in the first case as well because then any $L_0$-eigenvector of conformal weight $h_{r,s+1}$ would be contained in $\mathcal{W}(h_{r,s+1}-h_{r,s-1})$. 

In case (3), $\mathcal{W}(0)$ is a quotient of $\mathbb{C}[x]/\langle (x-h_{p,q+1})^2\rangle$, where $L_0$ again acts by $x$. Thus if $\dim\mathcal{W}(0)=1$, then $\mathcal{W}(0)$ is spanned by an $L_0$-eigenvector of conformal weight $h_{p,q+1}$, while if $\dim\mathcal{W}(0)=2$, then $\mathcal{W}(0)\cong\mathbb{C}[x]/\langle (x-h_{p,q+1})^2\rangle$ is spanned by the generalized $L_0$-eigenvector $1$ and the $L_0$-eigenvector $x-h_{p,q+1}$. Note that $L_0$ cannot be diagonalizable if $\dim\mathcal{W}(0)=2$ because $\mathbb{C}[x]/\langle (x-h_{p,q+1})^2\rangle$ is not isomorphic to $\mathbb{C}[x]/\langle x-h_{p,q+1}\rangle\oplus \mathbb{C}[x]/\langle x-h_{p,q+1}\rangle$.
\end{proof}

Proposition \ref{prop:lowest_weight_spaces} applies in particular when $\mathcal{W}=\mathcal{K}_{1,2}\boxtimes\widetilde{\mathcal{V}}_{r,s}$ or $\mathcal{W}=\mathcal{K}_{2,1}\boxtimes\widetilde{\mathcal{V}}_{r,s}$. Thus the Zhu algebra provides strong constraints (essentially, upper bounds) on fusion tensor products in $\mathcal{O}_{c_{p,q}}$. On the other hand, the existence of a $\mathbb{C}[x]$-module homomorphism $f: A(W_1)\otimes_{\mathbb{C}[x]} W_2(0)\rightarrow W_3(0)$ for $V_{c_{p,q}}$-modules $W_1$, $W_2$, and $W_3$ does not necessarily imply the existence of an intertwining operator $\mathcal{Y}$ of type $\binom{W_3}{W_1\,W_2}$ such that $\pi(\mathcal{Y})=f$, that is, the Zhu algebra is less useful for providing lower bounds on fusion tensor products. There is, however, the following consequence of \cite[Theorem 2.11]{Li-intw-ops}:
\begin{Theorem}\label{thm:Li's-thm}
Suppose $\mathcal{W}_1$ is a $V_{c_{p,q}}$-module, $\mathcal{V}_2$ is a Virasoro Verma module at central charge $c_{p,q}$, and $\mathcal{V}_3'$ is the contragredient of a Virasoro Verma module at central charge $c_{p,q}$. Then $\mathcal{Y}\mapsto\pi(\mathcal{Y})$ is a linear isomorphism between the space of intertwining operators of type $\binom{\mathcal{V}_3'}{\mathcal{W}_1\,\mathcal{V}_2}$ and $\mathrm{Hom}_{\mathbb{C}[x]}(A(\mathcal{W}_1)\otimes_{\mathbb{C}[x]} \mathcal{V}_2(0),\mathcal{V}_3'(0))$.
\end{Theorem}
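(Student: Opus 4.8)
\emph{Overall approach.} The plan is to recognize the statement as a special case of the general correspondence of Li \cite[Theorem 2.11]{Li-intw-ops} between intertwining operators whose second input and whose (contragredient) output are \emph{generalized Verma modules} and homomorphisms of $A(V)$-modules. So the first step is to identify the Virasoro Verma modules $\mathcal{V}_2$ and $\mathcal{V}_3$ as the generalized Verma modules attached to one-dimensional $A(V_{c_{p,q}})\cong\mathbb{C}[x]$-modules: each $\mathcal{V}_h$ is $\mathbb{Z}_{\geq 0}$-graded, free over $U(\mathfrak{Vir}_-)$ on its lowest weight space $\mathcal{V}_h(0)=\mathbb{C}v_h\cong\mathbb{C}[x]/\langle x-h\rangle$, and universal among lower-truncated $V_{c_{p,q}}$-modules generated in lowest degree by this $A(V)$-module. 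Since $V_{c_{p,q}}=\mathcal{V}_{1,1}/\langle L_{-1}\mathbf{1}\rangle$ and the relation $Y(L_{-1}\mathbf{1},z)=\frac{d}{dz}Y(\mathbf{1},z)$ holds automatically on every $\mathfrak{Vir}$-module of central charge $c_{p,q}$, the generalized Verma modules in Li's sense are exactly the Virasoro Verma modules, so the hypotheses of \cite[Theorem 2.11]{Li-intw-ops} are met. Well-definedness of $\mathcal{Y}\mapsto\pi(\mathcal{Y})$ is already recorded in \eqref{eqn:AV_(bi)mod_map}, so only bijectivity remains.

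\emph{Injectivity.} I would prove this directly, as it also pins down the correspondence concretely. Using the PBW spanning sets $L_{-n_1}\cdots L_{-n_k}v_2$ of $\mathcal{V}_2$ and $L_{-m_1}\cdots L_{-m_l}v_3$ of $\mathcal{V}_3$, every matrix coefficient $\langle\mathcal{Y}(w_1,z)u_2,u_3\rangle$ reduces, by the adjoint relation $\langle L_n w',w\rangle=\langle w',L_{-n}w\rangle$ coming from \eqref{eqn:contra} together with the commutator formula \eqref{eqn:Vir_comm_form}, to a finite $z$-dependent combination of the basic functions $g_{w_1}(z)=\langle\mathcal{Y}(w_1,z)v_2,v_3\rangle$: the positive modes moved onto the $\mathcal{V}_2$-side annihilate $v_2$, and the negative modes on the $\mathcal{V}_3$-side are absorbed into the pairing with $v_3$. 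Thus $\mathcal{Y}$ is determined by the family $\{g_{w_1}\}$. The $L_0$-conjugation identity $[L_0,\mathcal{Y}(w_1,z)]=z\frac{d}{dz}\mathcal{Y}(w_1,z)+\mathcal{Y}(L_0w_1,z)$ (the $n=0$ case of \eqref{eqn:Vir_comm_form} with the $L_{-1}$-derivative property) then forces $g_{w_1}(z)=z^{h_3-h_1-h_2}G_{w_1}(\log z)$ for a polynomial $G_{w_1}$, and along a Jordan chain $(L_0-h_1)w_1^{(j)}=w_1^{(j-1)}$ it yields the recursion $G_{w_1^{(j)}}'=-G_{w_1^{(j-1)}}$. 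Hence each $G_{w_1^{(j)}}$ is determined by its constant term $G_{w_1^{(j)}}(0)=g_{w_1^{(j)}}(1)=\langle\pi(\mathcal{Y})((w_1^{(j)}+O(\mathcal{W}_1))\otimes v_2),v_3\rangle$, using that $\mathcal{V}_3'(0)$ is one-dimensional so that pairing with $v_3$ is faithful. Therefore $\pi(\mathcal{Y})=0$ forces every $g_{w_1}=0$ and so $\mathcal{Y}=0$.

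\emph{Surjectivity and the main obstacle.} Given $f\in\mathrm{Hom}_{\mathbb{C}[x]}(A(\mathcal{W}_1)\otimes_{\mathbb{C}[x]}\mathcal{V}_2(0),\mathcal{V}_3'(0))$, the injectivity analysis suggests how to build $\mathcal{Y}$: define the basic functions $g_{w_1}$ from $f$ by the log-recursion above, then extend to all matrix coefficients by reversing the reduction. The substantive difficulty — and the step I expect to be the main obstacle — is well-definedness and consistency of this extension, namely that the resulting coefficients do not depend on how a PBW monomial is built up, and that the assembled $\mathcal{Y}$ genuinely satisfies lower truncation, the commutator and iterate formulas \eqref{eqn:Vir_comm_form}--\eqref{eqn:Vir_it_form}, and the $L_{-1}$-derivative property. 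This is exactly the content handled by Li's generalized Verma module construction: the freeness of $\mathcal{V}_2$ and $\mathcal{V}_3$ over $U(\mathfrak{Vir}_-)$ and the bimodule property of $A(\mathcal{W}_1)$ guarantee that the prescription descends consistently and that the intertwining axioms hold. I would therefore obtain surjectivity by invoking \cite[Theorem 2.11]{Li-intw-ops} after the identification in the first paragraph, rather than re-deriving Li's consistency argument from scratch.
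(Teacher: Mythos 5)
Your proposal is correct and takes essentially the same route as the paper, which states this result as a direct citation of \cite[Theorem 2.11]{Li-intw-ops} without further proof; your identification of the Virasoro Verma modules as Li's generalized Verma modules over $A(V_{c_{p,q}})\cong\mathbb{C}[x]$ is exactly the implicit content of that citation. The additional self-contained injectivity argument you supply is sound but not needed beyond what Li's theorem already provides.
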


\section{Construction of intertwining operators}\label{sec:intw_op_constructions}

The results in Section \ref{subsec:Zhu_theory}, especially Proposition \ref{prop:lowest_weight_spaces} and Theorem \ref{thm:Li's-thm}, indicate that the Zhu algebra provides information about Virasoro intertwining operators of type $\binom{W_3}{W_1\,W_2}$ when $W_2$ is (a quotient of) a Verma module and/or $W_3$ is (a submodule of) the contragredient of a Verma module. But the Kac modules $\mathcal{K}_{r,s}$ at central charge $c_{p,q}$ that we want to consider do not necessarily satisfy these conditions (recall Remark \ref{rem:Krs_Verma_quot}). Thus in this section, we give some intertwining operator constructions that do not rely on the Zhu algebra.

First, in Section \ref{subsec:Verma_intw_ops}, we will construct intertwining operators of type $\binom{\mathcal{V}_2}{\mathcal{K}_{1,2}\,\mathcal{V}_1}$, where $\mathcal{V}_1$ and $\mathcal{V}_2$ are Verma modules. Although Verma modules are not objects of the tensor category $\mathcal{O}_{c_{p,q}}$, we will be able to show that in some cases, intertwining operators involving Verma modules descend to well-defined intertwining operators of type $\binom{\mathcal{W}_2}{\mathcal{K}_{1,2}\,\mathcal{W}_1}$ where $\mathcal{W}_1$ and $\mathcal{W}_2$ are finite-length Verma module quotients. Thus in these cases we still obtain information about fusion tensor products in $\mathcal{O}_{c_{p,q}}$. Next, in Section \ref{subsec:Fock_intw}, we consider intertwining operators among Feigin-Fuchs modules. Although Feigin-Fuchs modules are also not objects of $\mathcal{O}_{c_{p,q}}$, we can restrict an intertwining operator among Feigin-Fuchs modules of type $\binom{\mathcal{F}_{r+r'-1,s+s'-1}}{\mathcal{F}_{r,s}\,\,\mathcal{F}_{r',s'}}$, for $r,r',s,s'\in\mathbb{Z}_{\geq 1}$, to the Kac submodules $\mathcal{K}_{r,s}$ and $\mathcal{K}_{r',s'}$. After careful analysis, we will show that the image of such a restriction is in fact the Kac submodule $\mathcal{K}_{r+r'-1,s+s'-1}\subseteq\mathcal{F}_{r+r'-1,s+s'-1}$, and thus we will obtain a surjective map $\mathcal{K}_{r,s}\boxtimes\mathcal{K}_{r',s'}\rightarrow\mathcal{K}_{r+r'-1,s+s'-1}$ in $\mathcal{O}_{c_{p,q}}$.

\subsection{Intertwining operators involving Verma modules}\label{subsec:Verma_intw_ops}

Here we construct some intertwining operators involving Virasoro Verma modules at any central charge, following the ideas and methods of \cite{Li-hom-functor, MY-sl2-intw-op, McR-gen-intw-op}; see also \cite[Section 7.3.1]{Fran}. We fix a central charge $c=13-6t-6t^{-1}$ for some $t\in\mathbb{C}^\times$ and consider the Virasoro Verma module $\mathcal{V}_{1,2}$ generated by a highest-weight vector $v_{1,2}$ of conformal weight $h_{1,2}=-\frac{1}{2}+\frac{3}{4 t}$. The Verma module $\mathcal{V}_{1,2}$ also contains a singular vector
\begin{equation}\label{eqn:v12_singular}
\widetilde{v}_{1,2}=\left( L_{-1}^2-t^{-1} L_{-2}\right)v_{1,2}
\end{equation}
of conformal weight $h_{1,2}+2$, so we can define the Kac module $\mathcal{K}_{1,2}$ at any central charge as the quotient $\mathcal{K}_{1,2}=\mathcal{V}_{1,2}/\langle\widetilde{v}_{1,2}\rangle$. Our goal is to construct intertwining operators of type $\binom{\mathcal{V}_2}{\mathcal{K}_{1,2}\,\,\mathcal{V}_1}$, where $\mathcal{V}_1$ and $\mathcal{V}_2$ are any two Virasoro Verma modules at central charge $c$.

To construct intertwining operators, we will use Li's ``generalized nuclear democracy theorem'' as specialized to Virasoro vertex operator algebras in \cite[Proposition 4.16]{Li-hom-functor}:
\begin{Theorem}\label{thm:Vir_gen_nucl_demo}
For $h\in\mathbb{C}$, let $\mathcal{V}_h$ be the Virasoro Verma module of central charge $c$ generated by a highest-weight vector $v_h$ of conformal weight $h$, and let $W_1$, $W_2$ be modules for the universal Virasoro vertex operator algebra $V_c$. If 
\begin{equation*}
\Phi(z): W_1\longrightarrow W_2\lbrace z\rbrace
\end{equation*}
is a linear map such that for all $m\in\mathbb{Z}$,
\begin{equation}\label{eqn:gen_intw_op}
[L_m,\Phi(z)] = z^m\left((m+1)h+z\frac{d}{dz}\right)\Phi(z),
\end{equation}
then there is a unique intertwining operator $\mathcal{Y}$ of type $\binom{W_2}{\mathcal{V}_h\,W_1}$ such that $\mathcal{Y}(v_{1,2},z)=\Phi(z)$.
\end{Theorem}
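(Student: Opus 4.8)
The plan is to exploit that $\mathcal{V}_h$ is freely generated over $U(\mathfrak{Vir}_-)$ by its highest-weight vector $v_h$, so that any intertwining operator of type $\binom{W_2}{\mathcal{V}_h\,W_1}$ is completely determined by its value on $v_h$ through the iterate formula \eqref{eqn:Vir_it_form}. This already yields uniqueness: any $\mathcal{Y}$ with $\mathcal{Y}(v_h,z)=\Phi(z)$ must satisfy \eqref{eqn:Vir_it_form}, so the values $\mathcal{Y}(L_{-n_1}\cdots L_{-n_k}v_h,z)$ are forced. For existence I would turn \eqref{eqn:Vir_it_form} into a definition: on the space $\mathcal{L}$ of lower-truncated fields $A(z)\in\mathrm{Hom}(W_1,W_2)\{z\}$, define
\begin{equation*}
L_n\star A=\sum_{i\geq 0}(-1)^i\binom{n+1}{i}\left(L_{n-i}\,z^i A+(-1)^n z^{n+1-i}A\,L_{i-1}\right),\qquad C\star A=cA,
\end{equation*}
where $L_{n-i}$ acts on $W_2$ by postcomposition and $L_{i-1}$ on $W_1$ by precomposition. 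Grading-restriction of $W_1$ and $W_2$ guarantees that, applied to a fixed vector and in a fixed power of $z$, each sum is finite and lands again in $\mathcal{L}$.

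The first task is to verify that $\star$ makes $\mathcal{L}$ a $\mathfrak{Vir}$-module of central charge exactly $c$; here the two copies of the Virasoro action on $W_1$ and $W_2$ must be shown to combine, via the $z$-weighted binomial coefficients, into a single copy whose central terms conspire to produce $\frac{c}{12}(m^3-m)\delta_{m+n,0}$ rather than cancelling. The second task is to identify $\Phi$ as a highest-weight vector: a direct computation (readily checked for $n=0$ and $n=1$) shows that the hypothesis \eqref{eqn:gen_intw_op}, together with its $m=-1$ instance $[L_{-1},\Phi(z)]=\frac{d}{dz}\Phi(z)$, forces $L_0\star\Phi=h\Phi$ and $L_n\star\Phi=0$ for all $n\geq 1$. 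Granting these, the universal property of the Verma module $\mathcal{V}_h$ supplies a unique $\mathfrak{Vir}$-module homomorphism $\mathcal{V}_h\to\mathcal{L}$, $v_h\mapsto\Phi$, which we write as $w_1\mapsto\mathcal{Y}(w_1,z)$; being a $\star$-module map is exactly the assertion that $\mathcal{Y}$ satisfies \eqref{eqn:Vir_it_form}.

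It remains to check that this $\mathcal{Y}$ is an intertwining operator, and by the remark following \eqref{eqn:Vir_it_form} it suffices to verify lower truncation and the commutator formula \eqref{eqn:Vir_comm_form}. Lower truncation follows from the $m=0$ case of \eqref{eqn:gen_intw_op}, which pins down the conformal-weight grading of $\Phi$, together with grading-restriction of $W_1$. For the commutator formula I would induct on PBW degree: specialized to $w_1=v_h$, formula \eqref{eqn:Vir_comm_form} collapses (using $L_iv_h=0$ for $i>0$, $L_0v_h=hv_h$, and $\mathcal{Y}(L_{-1}v_h,z)=\frac{d}{dz}\Phi$) to exactly the hypothesis \eqref{eqn:gen_intw_op}, so it holds on the generator; one then shows that the subspace of $w_1\in\mathcal{V}_h$ for which \eqref{eqn:Vir_comm_form} holds is stable under each $L_{-m}$, $m\geq 1$, and hence equals all of $\mathcal{V}_h$.

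I expect this last inductive step to be the main obstacle. It requires combining the known commutator formula for $\mathcal{Y}(w_1,z)$ with the iterate-formula expression for $\mathcal{Y}(L_{-m}w_1,z)$, commuting $L_n$ past the various $L_{n-i}$ and $L_{i-1}$ via the Virasoro relations (and the bracket $[L_{i-1},L_{-m}]$), and reorganizing the resulting double sums so that they reassemble into $\sum_i\binom{n+1}{i}z^{n+1-i}\mathcal{Y}(L_{i-1}L_{-m}w_1,z)$. This is the point where the compatibility of \eqref{eqn:Vir_comm_form} and \eqref{eqn:Vir_it_form} — ultimately a shadow of the Jacobi identity — must be made explicit, and the binomial bookkeeping is delicate; the central-charge computation in the module verification above is a milder instance of the same kind of calculation.
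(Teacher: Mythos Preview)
The paper does not actually prove this theorem: it is quoted as Li's ``generalized nuclear democracy theorem'' \cite[Proposition 4.16]{Li-hom-functor}, with a brief remark that the statement there, formulated for the simple vertex operator algebra and its irreducible highest-weight modules, carries over verbatim to the universal algebra $V_c$ and the Verma module $\mathcal{V}_h$. So there is no in-paper argument to compare against.

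Your proposal is essentially a reconstruction of Li's proof. The space you call $\mathcal{L}$, equipped with the action $\star$, is (the Virasoro specialization of) Li's module $G(W_1,W_2)$ of generalized intertwining operators; the verification that $\star$ defines a $\mathfrak{Vir}$-action of the correct central charge, and that a field satisfying \eqref{eqn:gen_intw_op} is a highest-weight vector of weight $h$ in $G(W_1,W_2)$, are exactly the ingredients Li assembles before invoking the universal property of $\mathcal{V}_h$. Your inductive verification of the commutator formula is the residual content of showing that the resulting map $\mathcal{V}_h\to\mathcal{L}$ really lands in intertwining operators. One small omission: you do not explicitly check the $L_{-1}$-derivative property $\mathcal{Y}(L_{-1}w_1,z)=\frac{d}{dz}\mathcal{Y}(w_1,z)$ for general $w_1$, but this follows immediately once you have both \eqref{eqn:Vir_comm_form} for $m=-1$ and \eqref{eqn:Vir_it_form} for $n=-1$, since the latter gives $\mathcal{Y}(L_{-1}w_1,z)=[L_{-1},\mathcal{Y}(w_1,z)]$.
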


The operator $\Phi(z)$ in the statement of the theorem is called a ``generalized intertwining
operator'' in \cite[Definition 4.1]{Li-hom-functor}, but it is a primary field in the sense of conformal field theory. Note that \cite[Proposition 4.16]{Li-hom-functor} is stated for modules of the simple vertex operator algebra $L_{c_{p,q}}$, but based on the discussion preceding its statement, it applies to modules for the universal Virasoro vertex operator algebra at any central charge as long as we replace the simple module $L(c,h)$ in \cite[Proposition 4.16]{Li-hom-functor} with the Verma module $\mathcal{V}_h$.

Now we can state and prove the main result of this subsection:
\begin{Theorem}\label{thm:gen_Verma_intw_ops}
Let $\mathcal{V}_1$ and $\mathcal{V}_2$ be Virasoro Verma modules of central charge $c=13-6t-6t^{-1}$ and lowest conformal weights $h_1$ and $h_2$, respectively. If
\begin{equation*}
h_2=h_1+\frac{1}{4t}-\frac{1}{2t}\sqrt{4th_1+(t-1)^2}
\end{equation*}
for some choice of square root such that $\frac{1}{t}\sqrt{4th_1+(t-1)^2}\not\in\mathbb{Z}_{\geq 1}$, then there is a unique up to scale surjective intertwining operator of type $\binom{\mathcal{V}_2}{\mathcal{K}_{1,2}\,\mathcal{V}_1}$.
\end{Theorem}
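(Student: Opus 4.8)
The plan is to build the operator as $\mathcal{Y}(v_{1,2},z)=\Phi(z)$ for a suitable primary field $\Phi$ and to invoke Theorem~\ref{thm:Vir_gen_nucl_demo}. Since $\mathcal{K}_{1,2}=\mathcal{V}_{1,2}/\langle\widetilde{v}_{1,2}\rangle$, an intertwining operator of type $\binom{\mathcal{V}_2}{\mathcal{K}_{1,2}\,\mathcal{V}_1}$ is precisely an intertwining operator of type $\binom{\mathcal{V}_2}{\mathcal{V}_{1,2}\,\mathcal{V}_1}$ with $\mathcal{Y}(\widetilde{v}_{1,2},z)=0$, and by Theorem~\ref{thm:Vir_gen_nucl_demo} the latter is the same as a linear map $\Phi(z)\colon\mathcal{V}_1\to\mathcal{V}_2\lbrace z\rbrace$ satisfying \eqref{eqn:gen_intw_op} with $h=h_{1,2}$ and inducing $\mathcal{Y}(\widetilde{v}_{1,2},z)=0$. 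Because $\mathcal{V}_1$ is generated by its highest-weight vector $v_1$, the field $\Phi$ is determined by the single series $u(z):=\Phi(z)v_1\in\mathcal{V}_2\lbrace z\rbrace$, and \eqref{eqn:gen_intw_op} with $m=0$ forces $u(z)=\sum_{N\ge 0}u_N\,z^{a+N}$ with $u_N\in(\mathcal{V}_2)_{[h_2+N]}$ and leading exponent $a=h_2-h_1-h_{1,2}$.

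First I would set up the recursion coming from \eqref{eqn:gen_intw_op} for $m\ge 1$ (equivalently, from $L_m v_1=0$): matching coefficients of $z^{a+N}$ gives $L_m u_N=\big((m+1)h_{1,2}+a+N-m\big)\,u_{N-m}$ for all $m\ge 1$, with $u_0$ a free multiple of $v_2$. Thus $u_N$ should be the unique weight-$(h_2+N)$ vector of $\mathcal{V}_2$ whose images under all $L_m$ are prescribed by the lower $u_{N-m}$, and the whole construction is determined by the scalar $u_0$. Solving this recursion level by level is the technical heart of the argument.

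Next I would identify the constraint on $h_2$ coming from descent to $\mathcal{K}_{1,2}$; this descent condition $\mathcal{Y}(\widetilde{v}_{1,2},z)=0$ is essentially a second-order BPZ differential equation. The commutator formula \eqref{eqn:Vir_comm_form} applied to the highest-weight vector $\widetilde{v}_{1,2}$ shows that $\mathcal{Y}(\widetilde{v}_{1,2},z)$ is itself a primary field of weight $h_{1,2}+2$ from $\mathcal{V}_1$ to $\mathcal{V}_2\lbrace z\rbrace$, so by the same recursion it vanishes identically as soon as its leading ($v_2$-)coefficient does. Writing $\widetilde{v}_{1,2}=(L_{-1}^2-t^{-1}L_{-2})v_{1,2}$ and using the $L_{-1}$-derivative property together with the iterate formula \eqref{eqn:Vir_it_form}, one computes this leading coefficient to be $u_0\,P(a)$, where $P(a)=a(a-1)-t^{-1}(h_1-a)=a^2+(t^{-1}-1)a-t^{-1}h_1$ is the indicial polynomial of the BPZ equation. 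The two roots of $P$ are $a=\tfrac12(1-t^{-1})\pm\tfrac1{2t}\sqrt{4th_1+(t-1)^2}$; unwinding $a=h_2-h_1-h_{1,2}$ with $h_{1,2}=-\tfrac12+\tfrac3{4t}$ turns these into the two weights $h_2=h_1+\tfrac1{4t}\mp\tfrac1{2t}\sqrt{4th_1+(t-1)^2}$ of the statement. Hence requiring $\mathcal{Y}(\widetilde{v}_{1,2},z)=0$ is exactly the condition $P(a)=0$, i.e.\ that $h_2$ be given by the displayed formula.

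Finally, surjectivity is easy: the image of $\mathcal{Y}$ is a $\mathfrak{Vir}$-submodule of $\mathcal{V}_2$ containing the leading coefficient $u_0 v_2$, so normalizing $u_0=1$ and using that $v_2$ generates $\mathcal{V}_2$ gives surjectivity, while uniqueness up to scale is immediate since everything is determined by $u_0$. The step I expect to be the \emph{main obstacle} is the level-by-level solvability of the recursion of the second paragraph. Here I would use that the two roots of $P$ differ by $\tfrac1t\sqrt{4th_1+(t-1)^2}$, so that the hypothesis $\tfrac1t\sqrt{4th_1+(t-1)^2}\notin\mathbb{Z}_{\ge 1}$ is precisely the classical non-resonance (Frobenius) condition ensuring that the recursion admits a unique solution valued in $\mathcal{V}_2\lbrace z\rbrace$ with no logarithmic corrections, and simultaneously that the vanishing of the single leading coefficient really propagates to $\mathcal{Y}(\widetilde{v}_{1,2},z)=0$. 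Verifying this non-degeneracy carefully — in particular in the cases where $\mathcal{V}_2$ happens to be reducible — is where the genuine work lies; the indicial computation of $P(a)$ itself is routine.
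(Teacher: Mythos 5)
Your overall architecture --- reduce to a primary field via Theorem \ref{thm:Vir_gen_nucl_demo}, identify the constraint on $h_2$ as the indicial equation of the BPZ ODE coming from $\mathcal{Y}(\widetilde{v}_{1,2},z)v_1=0$, and invoke the non-resonance hypothesis --- matches the paper's, and your computation of the indicial polynomial $P(a)$ and of the two admissible values of $h_2$ is correct. But the core mechanism you propose for constructing and controlling the coefficients has a genuine gap. You define $u_N$ by the primary-field recursion $L_m u_N=((m+1)h_{1,2}+a+N-m)u_{N-m}$, i.e.\ by prescribing the images of $u_N$ under all $L_m$, $m\geq 1$. This determines $u_N$ only up to singular vectors of $\mathcal{V}_2$ at level $N$, and its solvability requires a nontrivial compatibility condition; neither is automatic when $\mathcal{V}_2$ is reducible --- which is exactly the case of interest, since Theorem \ref{thm:K_tilde_intw_ops} applies the result to the reducible Verma modules $\mathcal{V}_{r,s\pm 1}$ at central charge $c_{p,q}$. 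The same defect undermines your claim that $\mathcal{Y}(\widetilde{v}_{1,2},z)$, being primary, ``vanishes identically as soon as its leading coefficient does'': if $c_0=\cdots=c_{N-1}=0$, the recursion only forces $L_m c_N=0$ for all $m\geq 1$, i.e.\ $c_N$ is a singular vector of $\mathcal{V}_2$, not necessarily zero. The hypothesis $\frac{1}{t}\sqrt{4th_1+(t-1)^2}\notin\mathbb{Z}_{\geq 1}$ is a condition on the difference of the two indicial roots; it says nothing about the singular vectors of $\mathcal{V}_2$ and therefore does not repair either step.

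The paper runs the logic in the opposite direction, which avoids this problem: it \emph{defines} $\phi_k$ by the BPZ recursion $k\big(tk-\sqrt{4th_1+(t-1)^2}\big)\phi_k=\sum_{i=1}^{k}L_{-i}\phi_{k-i}$, which under the non-resonance hypothesis expresses $\phi_k$ as an explicit nonzero multiple of $\sum_{i}L_{-i}\phi_{k-i}$ and hence is always uniquely solvable regardless of the submodule structure of $\mathcal{V}_2$; it then proves by a lengthy induction that the resulting series satisfies the primary-field identities $L_m\phi_k=(k+m(h_{1,2}-1)+h_2-h_1)\phi_{k-m}$. That verification, together with extending $\Phi(z)$ from $v_1$ to all of $\mathcal{V}_1$ by $\Phi(z)L_{-m}w=L_{-m}\Phi(z)w-z^{-m}((-m+1)h_{1,2}+z\frac{d}{dz})\Phi(z)w$ and checking that this is well defined on $U(\mathfrak{Vir}_-)v_1$, is the real technical content, and your proposal does not supply it. (Your uniqueness claim does come out correctly, but only once one notes that any intertwining operator from $\mathcal{K}_{1,2}$ must satisfy the full BPZ recursion, not merely the vanishing of its leading coefficient.)
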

\begin{proof}
We address the uniqueness first. Thus let $\mathcal{Y}$ be a surjective intertwining operator of type $\binom{\mathcal{V}_2}{\mathcal{K}_{1,2}\,\mathcal{V}_1}$, where the lowest conformal weights of $\mathcal{V}_1$ and $\mathcal{V}_2$ satisfy the relation specified in the theorem statement. Let $v_{1,2}$ and $v_1$ be generators of minimal conformal weight in $\mathcal{K}_{1,2}$ and $\mathcal{V}_1$, respectively. Then it is straightforward from the commutator formula \eqref{eqn:Vir_comm_form}, the iterate formula \eqref{eqn:Vir_it_form}, and the $L_{-1}$-derivative property for intertwining operators that $\mathcal{Y}$ is completely determined by the series $\mathcal{Y}(v_{1,2},z)v_1\in \mathcal{V}_2\lbrace z\rbrace$; this also follows from the proof of \cite[Proposition 11.9]{DL}.

Thus we need to show that $\mathcal{Y}(v_{1,2},z)v_1$ is uniquely determined up to scale. In fact, because the singular vector $\widetilde{v}_{1,2}$ vanishes in $\mathcal{K}_{1,2}$, we have
\begin{equation}\label{eqn:Y_zero_on_gen}
\mathcal{Y}(\widetilde{v}_{1,2},z)v_1 = 0,
\end{equation}
and then the formula \eqref{eqn:v12_singular} for $\widetilde{v}_{1,2}$, the $L_{-1}$-derivative property, and the iterate and commutator formulas for intertwining operators imply that \eqref{eqn:Y_zero_on_gen} holds if and only if the second-order differential equation
\begin{equation}\label{eqn:diff_eqn}
\left(t\cdot z^2\frac{d^2}{dz^2}+z\frac{d}{dz}-h_{1}\right)\mathcal{Y}(v_{1,2},z)v_1=\sum_{i= 1}^\infty L_{-i}z^{i}\mathcal{Y}(v_{1,2},z)v_1,
\end{equation}
is satisfied. This equation can also be derived using the methods of \cite[Section 7.3.1]{Fran}; see in particular the differential equation \cite[Equation 7.47]{Fran} for correlation functions involving a primary field of conformal weight $h_{1,2}$.


As in \cite[Section 7.3.1]{Fran}, the differential equation \eqref{eqn:diff_eqn} constrains the possibilities for the conformal weights $h_1$ and $h_2$ of $\mathcal{V}_1$ and $\mathcal{V}_2$. Specifically, we write $\mathcal{Y}(v_{1,2}, z)v_1=\sum_{k=0}^\infty \phi_k\,z^{h+k}$ where $h=h_2-h_{1,2}-h_1$ and $\phi_k\in\mathcal{V}_2$ is a vector of conformal weight $h_2+k$ (see Equations 5.4.14, 5.4.29, and 5.4.30 in \cite{FHL}).  Inserting this ansatz into \eqref{eqn:diff_eqn} and comparing coefficients of powers of $z$ on both sides, we get the recursive relation
\begin{equation}\label{eqn:diff_eqn_recursion}
\left[ tk^2+(t(2h-1)+1)k+(th(h-1)+h-h_1)\right]\phi_k =\sum_{i=1}^k L_{-i}\phi_{k-i}
\end{equation}
for all $k\in\mathbb{Z}_{\geq 0}$. For $k=0$, the sum on the right side is empty, so the recursion has a non-zero solution only if the term in brackets on the left side vanishes at $k=0$. For a given $h_1$, this happens for at most two values of $h_2$, namely
\begin{equation*}
h_2 =h_1+\frac{1}{4t}-\frac{1}{2t}\sqrt{4th_1+(t-1)^2}
\end{equation*}
for both choices of square root. Given our assumption that $h_2$ indeed equals the above for some choice of square root, \eqref{eqn:diff_eqn_recursion} simplifies to
\begin{equation}\label{eqn:diff_eqn_recursion_simple}
k\left(tk- \sqrt{4th_1+(t-1)^2}\right)\phi_k=\sum_{i=1}^k L_{-i}\phi_{k-i}.
\end{equation}
Thus the condition in the theorem statement that $\frac{1}{t}\sqrt{4th_1+(t-1)^2}\notin\mathbb{Z}_{\geq 1}$ implies that the left side of \eqref{eqn:diff_eqn_recursion_simple} vanishes only for $k=0$, and thus that $\phi_k$ for $k>0$ is completely determined by a choice of $\phi_0$ in the one-dimensional space $(\mathcal{V}_2)_{[h_2]}$. This proves that $\mathcal{Y}(v_{1,2},z)v_1$, and thus also the full intertwining operator $\mathcal{Y}$, is uniquely determined up to scale.

Conversely, to show that a surjective intertwining operator $\mathcal{Y}$ of type $\binom{\mathcal{V}_2}{\mathcal{K}_{1,2}\,\mathcal{V}_1}$ actually exists, we will first use Theorem \ref{thm:Vir_gen_nucl_demo} to obtain a surjective intertwining operator of type $\binom{\mathcal{V}_2}{\mathcal{V}_{1,2}\,\mathcal{V}_1}$. Thus we need to construct a linear map $\Phi(z): \mathcal{V}_1\rightarrow\mathcal{V}_2\lbrace z\rbrace$ which satisfies \eqref{eqn:gen_intw_op} for $h=h_{1,2}$. If such a $\Phi(z)$ exists, the resulting intertwining operator $\mathcal{Y}$ guaranteed by Theorem \ref{thm:Vir_gen_nucl_demo} will satisfy $\mathcal{Y}(v_{1,2},z)v_1=\Phi(z)v_1$. So because we will want $\mathcal{Y}$ to descend to a well-defined intertwining operator of type $\binom{\mathcal{V}_2}{\mathcal{K}_{1,2}\,\mathcal{V}_1}$, we will need \eqref{eqn:Y_zero_on_gen} to hold, and thus we are motivated to define $\Phi(z)v_1$ as a formal series solution to the differential equation \eqref{eqn:diff_eqn}.

Specifically, we define $\Phi(z)v_1=\sum_{k=0}^\infty\phi_k\,z^{h+k}\in\mathcal{V}_2\lbrace z\rbrace$, where $h=h_2-h_{1,2}-h_1$ again, to be a non-zero series whose coefficients satisfy \eqref{eqn:diff_eqn_recursion_simple} for all $k\geq 0$. The assumptions on $h_1$ and $h_2$ in the statement of the theorem guarantee that such a series exists and is uniquely determined by a choice of non-zero $\phi_0\in\mathcal{V}_2$ of conformal weight $h_2$. This determines how $\Phi(z)$ acts on $v_1$, but we still need to define how $\Phi(z)$ acts on higher-degree vectors in $\mathcal{V}_1$. For this, we can use the conformal primary condition \eqref{eqn:gen_intw_op} and the fact that $\mathcal{V}_1$ is spanned by vectors of the form $L_{-n_1}\cdots L_{-n_k} v_1$ where $n_1,\ldots, n_k>0$. Specifically, assuming we have already defined $\Phi(z)w$ where $w$ is some such monomial in the $L_{-n}$'s applied to $v_1$, we recursively define
\begin{equation}\label{eqn:Phi(z)_recursion}
\Phi(z)L_{-m} w = L_{-m}\Phi(z)w-z^{-m}\left((-m+1)h_{1,2}+z\frac{d}{dz}\right)\Phi(z)w
\end{equation}
for any $m>0$.

At first, \eqref{eqn:Phi(z)_recursion} defines a map from the tensor algebra freely generated by $L_{-1}, L_{-2}, L_{-3},\ldots$ to $\mathcal{V}_2\lbrace z\rbrace$. To show that $\Phi(z)$ is actually well defined as a linear map on $\mathcal{V}_1=U(\mathfrak{Vir}_-)\cdot v_1$, we need to check the Virasoro algebra commutation relations, that is,
\begin{equation*}
\Phi(z) L_{-n_1} \cdots [L_{-n_{j}}, L_{-n_{j+1}}] \cdots L_{-n_k} v_1= (n_{j+1}-n_j) \Phi(z)L_{-n_1} \cdots L_{-n_j-n_{j+1}} \cdots L_{-n_k} v_1
\end{equation*}
for $n_1, \ldots n_j,n_{j+1},\ldots, n_k>0$. We check the $j=1$ case first: writing $L_{-n_1}=L_{-m}$, $L_{-n_2}=L_{-n}$, and $L_{-n_3}\cdots L_{-n_k} v_1=w$, we have
\begin{align*}
\Phi(z) & [L_{-m}, L_{-n}]w = \Phi(z)L_{-m} L_{-n}w-\Phi(z) L_{-n} L_{-m}w\nonumber\\
& =\left(L_{-m}+(m-1)h_{1, 2}z^{-m}-z^{-m+1}\frac{d}{dz}\right)\Phi(z) L_{-n}w \nonumber\\
&\qquad\quad -\left(L_{-n}+ (n-1)h_{1, 2}z^{-n}-z^{-n+1}\frac{d}{dz}\right)\Phi(z)L_{-m}w\nonumber\\
& =\left[L_{-m}+(m-1)h_{1, 2}z^{-m}-z^{-m+1}\frac{d}{dz}, L_{-n}+(n-1)h_{1, 2}z^{-n}-z^{-n+1}\frac{d}{dz}\right]\Phi(z)w\nonumber\\
& =[L_{-m},L_{-n}]\Phi(z)w-(m-1)h_{1,2}\left[z^{-m},z^{-n+1}\frac{d}{dz}\right]\Phi(z)w\nonumber\\
&\qquad\quad -(n-1)h_{1,2}\left[z^{-m+1}\frac{d}{dz}, z^{-n}\right]\Phi(z)w +\left[z^{-m+1}\frac{d}{dz}, z^{-n+1}\frac{d}{dz}\right]\Phi(z)w\nonumber\\
& =\left((n-m)L_{-m-n} -\left(m(m-1)-n(n-1)\right)h_{1,2}z^{-m-n}+(m-n)z^{-m-n+1}\frac{d}{dz}\right)\Phi(z)w\nonumber\\
& = (n-m)\left(L_{-m-n}+(m+n-1)h_{1,2}z^{-m-n}-z^{-m-n+1}\frac{d}{dz}\right)\Phi(z)w\nonumber\\
& = (n-m)\Phi(z) L_{-m-n}w
\end{align*}
as required. The $j>0$ case then follows easily by induction on $j$ and the definition \eqref{eqn:Phi(z)_recursion}.

We have now shown that  $\Phi(z): \mathcal{V}_1\rightarrow\mathcal{V}_2\lbrace z\rbrace$ is well defined, and by definition $\Phi(z)$ satisfies the conformal primary condition \eqref{eqn:gen_intw_op} for $m<0$, but it is left to exhibit that $\Phi(z)$ satisfies \eqref{eqn:gen_intw_op} for $m\geq 0$. Applying both sides of \eqref{eqn:gen_intw_op} to $v_1$, we see that we first need to check
\begin{equation}\label{eqn:gen_intw_op_m_non-neg}
L_m\Phi(z)v_1=\left\lbrace\begin{array}{lll}
\left(h_1+h_{1,2}+z\frac{d}{dz}\right)\Phi(z)v_1 & \text{if} & m=0\\
z^m\left((m+1)h_{1,2}+z\frac{d}{dz}\right)\Phi(z)v_1 & \text{if} & m>0\\
\end{array}\right. .
\end{equation}
Recall that $\Phi(z)v_1=\sum_{k=0}^\infty \phi_k\,z^{h_2-h_{1,2}-h_1+k}$ where $\phi_k\in\mathcal{V}_2$ has conformal weight $h_2+k$ and satisfies \eqref{eqn:diff_eqn_recursion_simple}. Thus comparing coefficients of powers of $z$ on both sides of \eqref{eqn:gen_intw_op_m_non-neg}, we need to check $L_m\phi_k = 0 $ for $k < m$ and
\begin{equation}\label{eqn:gen_intw_op_components}
L_m\phi_k=\left\lbrace\begin{array}{lll}
(h_2+k)\phi_k & \text{if} & m=0\\
(k+m(h_{1,2}-1)+h_2-h_1)\phi_{k-m} & \text{if} & m>0
\end{array}\right.
\end{equation}
for $k\geq m$. The $k<m$ and $m=0$ cases are clear because $\phi_k$ has conformal weight $h_2+k$ and because $h_2$ is the lowest conformal weight of $\mathcal{V}_2$. In particular, the $k=0$ case holds.

We now prove the $k\geq 1$ case by induction on $k$. It is enough to prove the second case of \eqref{eqn:gen_intw_op_components}, assuming $1\leq m\leq k$. The calculation is analogous to what was done for affine Lie algebras in  \cite[Theorem 3.6]{McR-gen-intw-op}. We will use the recursion formula \eqref{eqn:diff_eqn_recursion_simple}, which we rewrite as
\begin{equation*}
tk\left(k+2(h_2-h_1)-\frac{1}{2t}\right)\phi_k=\sum_{i=1}^k L_{-i}\phi_{k-i},
\end{equation*}
as well as the induction hypothesis:
\begin{align*}
tk & \left(k+2(h_2-h_1)-\frac{1}{2t}\right) L_m \phi_k =\sum_{i=1}^k L_m L_{-i}\phi_{k-i}\nonumber\\
& = \sum_{i=1}^k\left(L_{-i}L_m\phi_{k-i} +(m+i)L_{m-i}\phi_{k-i}\right)+\frac{m^3-m}{12}\left(13-6t-\frac{6}{t}\right)\phi_{k-m}\nonumber\\
& =\sum_{i=1}^{k-m} \left(k-i+m(h_{1,2}-1)+h_2-h_1\right)L_{-i}\phi_{k-m-i}+\sum_{i=1}^{m-1} (m+i)L_{m-i}\phi_{k-i}\nonumber\\
&\qquad +2mL_0\phi_{k-m}+\sum_{i=m+1}^k (m+i)L_{m-i}\phi_{k-i}+\frac{m^3-m}{12}\left(13-6t-\frac{6}{t}\right)\phi_{k-m}\nonumber\\
& =\left(k+m(h_{1,2}-1)+h_2-h_1\right)t(k-m)\left(k-m+2(h_2-h_1)-\frac{1}{2t}\right)\phi_{k-m}\nonumber\\
&\qquad-\sum_{i=1}^{k-m} i L_{-i}\phi_{k-m-i} +\sum_{i=1}^{m-1} (m+i)\left(k-i+(m-i)(h_{1,2}-1)+h_2-h_1\right)\phi_{k-m}\nonumber\\
&\qquad +2m(k-m+h_2)\phi_{k-m}+\sum_{i=1}^{k-m} (2m+i)L_{-i}\phi_{k-m-i}+\frac{m^3-m}{12}\left(13-6t-\frac{6}{t}\right)\phi_{k-m}\nonumber\\
& =\left(k+m(h_{1,2}-1)+h_2-h_1\right)t(k-m)\left(k-m+2(h_2-h_1)-\frac{1}{2t}\right)\phi_{k-m}\nonumber\\
&\qquad -\frac{1}{6} m(m-1)(2m-1)h_{1,2}\phi_{k-m}+\frac{1}{2}m(m-1)(k-m+h_2-h_1)\phi_{k-m}\nonumber\\
&\qquad +m(m-1)(k+m(h_{1,2}-1)+h_2-h_1)\phi_{k-m}+2m(k-m+h_2)\phi_{k-m}\nonumber\\
&\qquad +2mt(k-m)\left(k-m+2(h_2-h_1)-\frac{1}{2t}\right)\phi_{k-m}+\frac{m^3-m}{12}\left(13-6t-\frac{6}{t}\right)\phi_{k-m}
\end{align*}
At this point, a straightforward but tedious calculation using the definition of $h_{1,2}$ in terms of $t$, and the definition of $h_2$ in terms of $h_1$ and $t$, shows that the right side here is
\begin{equation*}
tk\left(k+2(h_2-h_1)-\frac{1}{2t}\right)\left(k+m(h_{1,2}-1)+h_2-h_1\right)\phi_{k-m}.
\end{equation*}
This finishes the proof of \eqref{eqn:gen_intw_op_components} and thus of \eqref{eqn:gen_intw_op_m_non-neg} for $m\geq 0$.

Now we know that the conformal primary condition \eqref{eqn:gen_intw_op} holds for $m\geq 0$ when both sides of the equation are applied to $v_1$. Next, we show that if \eqref{eqn:gen_intw_op} holds for $m\geq 0$ when both sides are applied to some $w\in\mathcal{V}_1$, then it also holds when both sides are applied to $L_{-n} w$ for any $n\in\mathbb{Z}_{\geq 1}$.  Indeed, using this assumption for $w$ together with the definition \eqref{eqn:Phi(z)_recursion} of $\Phi(z)$,
\begin{align*}
[L_{m}, \Phi(z)] & L_{-n}w  = L_{m}\left( L_{-n}+(n-1)h_{1, 2}z^{-n}-z^{-n+1}\frac{d}{dz}\right)\Phi(z)w -\Phi(z)L_m L_{-n} w\nonumber\\
& =\left(L_{-n}+(n-1)h_{1,2}z^{-n}-z^{-n+1}\frac{d}{dz}\right)L_m\Phi(z)w+[L_m, L_{-n}]\Phi(z)w\nonumber\\
&\qquad -\Phi(z)L_{-n}L_m w-\Phi(z)[L_m,L_{-n}]w\nonumber\\
& =\left( L_{-n}+(n-1)h_{1, 2}z^{-n}-z^{-n+1}\frac{d}{dz}\right)[L_m,\Phi(z)]w +(m+n)[L_{m-n},\Phi(z)]w\nonumber\\
& =\left( L_{-n}+(n-1)h_{1, 2}z^{-n}-z^{-n+1}\frac{d}{dz}\right)z^m\left((m+1)h_{1,2}+z\frac{d}{dz}\right)\Phi(z)w\nonumber\\
&\qquad +(m+n)z^{m-n}\left((m-n+1)h_{1,2}+z\frac{d}{dz}\right)\Phi(z)w\nonumber\\
& =z^m\left((m+1)h_{1,2}+z\frac{d}{dz}\right)\left(L_{-n}+(n-1)h_{1,2}z^{-n}-z^{-n+1}\frac{d}{dz}\right)\Phi(z)w\nonumber\\
&\qquad +\left[(n-1)h_{1,2}z^{-n}-z^{-n+1}\frac{d}{dz}, (m+1)h_{1,2}z^m+z^{m+1}\frac{d}{dz}\right]\Phi(z)w\nonumber\\
&\qquad +(m+n)(m-n+1)h_{1,2}z^{m-n}\Phi(z)w+(m+n)z^{m-n+1}\frac{d}{dz}\Phi(z)w\nonumber\\
& =z^m\left((m+1)h_{1,2}+z\frac{d}{dz}\right)\Phi(z)L_{-n}w,
\end{align*}
as required. This calculation proves \eqref{eqn:gen_intw_op} for all $m\geq 0$ (and thus for all $m\in\mathbb{Z}$) because $\mathcal{V}_1$ is spanned by vectors of the form $L_{-n_1}\cdots L_{-n_k} v_1$.

We now have a linear map $\Phi(z): \mathcal{V}_1\rightarrow\mathcal{V}_2\lbrace z\rbrace$ which satisfies \eqref{eqn:gen_intw_op} and is determined by a non-zero $\phi_0\in\mathcal{V}_2$ of conformal weight $h_2$. Thus by Theorem \ref{thm:Vir_gen_nucl_demo}, there is a non-zero intertwining operator $\mathcal{Y}$ of type $\binom{\mathcal{V}_2}{\mathcal{V}_{1,2}\,\mathcal{V}_1}$ such that $\Phi(z)=\mathcal{Y}(v_{1,2},z)$. This intertwining operator is surjective because $\phi_0$ generates $\mathcal{V}_2$ and is a coefficient of $\mathcal{Y}(v_{1,2},z)v_1$. Finally, to show that $\mathcal{Y}$ descends to a well-defined (and surjective) intertwining operator of type $\binom{\mathcal{V}_2}{\mathcal{K}_{1,2}\,\mathcal{V}_1}$, recall that $\mathcal{K}_{1,2}=\mathcal{V}_{1,2}/\langle\tilde{v}_{1,2}\rangle$, so we need to show $\mathcal{Y}\vert_{\langle\widetilde{v}_{1,2}\rangle\otimes\mathcal{V}_1}=0$. Indeed, $\mathcal{Y}(\widetilde{v}_{1,2},z)v_1=0$ since by construction $\mathcal{Y}(v_{1,2},z)v_1$ satisfies the differential equation \eqref{eqn:diff_eqn}. Then because $v_1$ generates $\mathcal{V}_1$,  the proof of \cite[Proposition 11.9]{DL} shows $\mathcal{Y}\vert_{\langle\widetilde{v}_{1,2}\rangle\otimes\mathcal{V}_1}=0$.
\end{proof}

We now take central charge $c=c_{p,q}$ where $p,q\geq 2$ and $\mathrm{gcd}(p,q)=1$, that is, $t=\frac{q}{p}$. Using the theorem we can construct intertwining operators involving the Kac modules $\mathcal{K}_{r,s}$ of Section \ref{subsec:FF_and_K_modules} for $r\leq p$ or $s\leq q$ (which are Verma module quotients; recall Remark \ref{rem:Krs_Verma_quot}). More generally, for all $r,s\in\mathbb{Z}_{\geq 0}$, define $\widetilde{\mathcal{K}}_{r,s}$ to be the Verma module quotient $\mathcal{V}_{r,s}/\langle \widetilde{v}_{r,s}\rangle$ where $\widetilde{v}_{r,s}$ is the unique (up to scale) singular vector of conformal weight $h_{r,s}+rs$. Thus $\mathcal{K}_{r,s}\cong\widetilde{\mathcal{K}}_{r,s}$ when $r\leq p$ or $s\leq q$, but not when $r>p$ and $s>q$.
\begin{Theorem}\label{thm:K_tilde_intw_ops}
Let $r,s\in\mathbb{Z}_{\geq 1}$. There is a surjective intertwining operator of type $\binom{\widetilde{\mathcal{K}}_{r,s+1}}{\mathcal{K}_{1,2}\,\widetilde{\mathcal{K}}_{r,s}}$ if $q\nmid s$, or if $q\mid s$ and $1\leq r\leq p-1$, and there is a surjective intertwining operator of type $\binom{\widetilde{\mathcal{K}}_{r,s-1}}{\mathcal{K}_{1,2}\,\widetilde{\mathcal{K}}_{r,s}}$ if $q\nmid s$, or if $q\mid s$ and $ps\leq qr$.
\end{Theorem}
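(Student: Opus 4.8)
The plan is to obtain these operators by descending the Verma-module intertwining operators produced by Theorem \ref{thm:gen_Verma_intw_ops} to the appropriate Verma-module quotients. I describe the $\widetilde{\mathcal{K}}_{r,s+1}$-valued case; the $\widetilde{\mathcal{K}}_{r,s-1}$-valued case is entirely parallel with the two relevant square roots exchanged. First I would apply Theorem \ref{thm:gen_Verma_intw_ops} with $h_1=h_{r,s}$. Since $4th_{r,s}+(t-1)^2=(rt-s)^2$, the two permissible target weights are $h_2=h_{r,s\pm1}$, and the square root choice $\sqrt{(rt-s)^2}=rt-s$ gives $h_2=h_{r,s+1}$. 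The non-integrality hypothesis $\tfrac1t\sqrt{4th_1+(t-1)^2}=r-\tfrac{sp}{q}\notin\mathbb{Z}_{\geq1}$ holds exactly when $q\nmid s$ (so that the quantity is not even an integer) or when $q\mid s$ and $1\leq r\leq p-1$ (so that, writing $s=nq$, the integer $r-np\leq p-1-p<0$). Thus under the stated hypotheses there is a surjective intertwining operator $\mathcal{Y}$ of type $\binom{\mathcal{V}_{r,s+1}}{\mathcal{K}_{1,2}\,\mathcal{V}_{r,s}}$; composing with the projection $\pi\colon\mathcal{V}_{r,s+1}\to\widetilde{\mathcal{K}}_{r,s+1}$ gives a surjective operator of type $\binom{\widetilde{\mathcal{K}}_{r,s+1}}{\mathcal{K}_{1,2}\,\mathcal{V}_{r,s}}$, so it remains only to show it descends along the second surjection $\mathcal{V}_{r,s}\to\widetilde{\mathcal{K}}_{r,s}$, that is, that $\mathcal{Y}(w,z)\widetilde{v}_{r,s}\in\langle\widetilde{v}_{r,s+1}\rangle\lbrace z\rbrace$ for all $w\in\mathcal{K}_{1,2}$.

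Because $\mathcal{K}_{1,2}$ is generated by $v_{1,2}$ and $\langle\widetilde{v}_{r,s+1}\rangle$ is $\mathfrak{Vir}$-stable, the commutator and iterate formulas \eqref{eqn:Vir_comm_form}--\eqref{eqn:Vir_it_form} reduce this to the single claim $\mathcal{Y}(v_{1,2},z)\widetilde{v}_{r,s}\in\langle\widetilde{v}_{r,s+1}\rangle\lbrace z\rbrace$. Here I would exploit that $\widetilde{v}_{r,s}$ is singular: the derivation that produces \eqref{eqn:diff_eqn} uses only that the source vector is a highest-weight vector (its sole effect is through its $L_0$-eigenvalue), so it applies verbatim with $v_1$ replaced by $\widetilde{v}_{r,s}$, yielding \eqref{eqn:diff_eqn} with $h_1$ replaced by $h_{r,s}+rs$. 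Writing $\mathcal{Y}(v_{1,2},z)\widetilde{v}_{r,s}=\sum_{k\geq0}\psi_k\,z^{\tilde{h}+k}$, the leading coefficient $\psi_0$ is annihilated by all $L_m$, $m>0$ (it is the lowest-weight coefficient and $L_m\widetilde{v}_{r,s}=0$), hence is a singular vector of $\mathcal{V}_{r,s+1}$; and the $k=0$ case of the resulting recursion \eqref{eqn:diff_eqn_recursion}, together with $4t(h_{r,s}+rs)+(t-1)^2=(rt+s)^2$, forces its conformal weight (if $\psi_0\neq0$) to be one of the two values $h_{r,s+1}+r(s+1)$ or $h_{r,s-1}+r(s-1)$. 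The first is precisely the weight of $\widetilde{v}_{r,s+1}$, and since the singular vector there is unique up to scale we would get $\psi_0\in\mathbb{C}\,\widetilde{v}_{r,s+1}$; then the recursion, whose coefficients $kt\bigl(k+r+\tfrac{sp}{q}\bigr)$ are nonzero for $k\geq1$, places every $\psi_k$ in $U(\mathfrak{Vir}_-)\psi_0\subseteq\langle\widetilde{v}_{r,s+1}\rangle$, which is what we want (and if instead $\mathcal{Y}(v_{1,2},z)\widetilde{v}_{r,s}=0$ the descent is trivial).

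The hard part will therefore be to exclude the second value $h_{r,s-1}+r(s-1)$ as the weight of $\psi_0$, i.e.\ to show $\mathcal{V}_{r,s+1}$ has \emph{no} singular vector there under the hypotheses; this forces $\psi_0\in\mathbb{C}\,\widetilde{v}_{r,s+1}$ as above. When $q\nmid s$ this is immediate, since $h_{r,s-1}+r(s-1)$ exceeds the lowest weight $h_{r,s+1}$ by $rs-\tfrac{sp}{q}\notin\mathbb{Z}$, whereas singular vectors occur only at integral levels. When $q\mid s$ (say $s=nq$, with $1\leq r\leq p-1$) I would instead use the conformal-weight symmetries $h_{a,b}=h_{a+p,b+q}=h_{-a,-b}$ to rewrite $h_{r,s-1}+r(s-1)=h_{r,-(s-1)}=h_{p-r,(n+1)q-1}$, and then read off from the bulk Verma/Feigin--Fuchs embedding diagram of $\mathcal{V}_{r,nq+1}$ in Section \ref{subsec:Vir_and_Verma} that every submodule generator with first index $p-r$ has second index at least $(n+1)q+1$; hence there is no singular vector of weight $h_{p-r,(n+1)q-1}$, as needed.

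This completes the construction for the $\widetilde{\mathcal{K}}_{r,s+1}$-valued operators, and surjectivity of the descended operator is inherited from that of $\pi\circ\mathcal{Y}$. For the $\widetilde{\mathcal{K}}_{r,s-1}$-valued operators I would run the identical argument with the square root $\sqrt{(rt-s)^2}=s-rt$, so that the condition $\tfrac1t(s-rt)=\tfrac{sp}{q}-r\notin\mathbb{Z}_{\geq1}$ becomes exactly ``$q\nmid s$, or $q\mid s$ and $ps\leq qr$''; the roles of the ``good'' weight $h_{r,s-1}+r(s-1)$ (now that of $\widetilde{v}_{r,s-1}$) and the ``bad'' weight $h_{r,s+1}+r(s+1)$ are interchanged. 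The one extra subtlety I anticipate here is that the hypothesis $ps\leq qr$ forces $r\geq p$, so ruling out the bad weight requires first applying the symmetries $h_{a,b}=h_{a+p,b+q}=h_{p-a,q-b}$ to bring the indices of $\mathcal{V}_{r,s-1}$ into the standard range before reading off the embedding diagram; this bookkeeping, and the diagram analysis excluding the bad weight, is the main obstacle throughout.
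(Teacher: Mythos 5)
Your construction of the $\widetilde{\mathcal{K}}_{r,s+1}$-valued operators is essentially the paper's argument: the paper likewise descends the operators of Theorem \ref{thm:gen_Verma_intw_ops} along $\mathcal{V}_{r,s+1}\twoheadrightarrow\widetilde{\mathcal{K}}_{r,s+1}$ and $\mathcal{V}_{r,s}\twoheadrightarrow\widetilde{\mathcal{K}}_{r,s}$, and rules out an unwanted singular vector of weight $h_{-r,s-1}=h_{p-r,(n+1)q-1}$ using the embedding diagrams; the only cosmetic difference is that the paper constrains the weight of the leading coefficient via Proposition \ref{prop:lowest_weight_spaces} rather than by re-deriving the indicial equation for the restriction to $\langle\widetilde{v}_{r,s}\rangle$. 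That half of your proposal is correct.

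The gap is in the $\widetilde{\mathcal{K}}_{r,s-1}$-valued case with $q\mid s$ and $ps\leq qr$. Your stated plan is to show that $\mathcal{V}_{r,s-1}$ has \emph{no} singular vector of the bad weight $h_{-r,s+1}$, but this is false. For example, take $p=2$, $q=3$, $(r,s)=(2,3)$: then $\mathcal{V}_{r,s-1}=\mathcal{V}_{2,2}$ has lowest weight $h_{2,2}=h_{2,4}=\tfrac{1}{8}$ and is a boundary-case Verma module with singular vectors at weights $h_{2,8}=\tfrac{33}{8}=h_{-2,2}$ (the weight of $\widetilde{v}_{2,2}$) \emph{and} $h_{2,10}=\tfrac{65}{8}=h_{-2,4}$, which is exactly the bad weight. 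What is true — and what the paper's proof actually uses — is that this bad-weight singular vector generates a submodule of $\langle\widetilde{v}_{r,s-1}\rangle$, so it dies in $\widetilde{\mathcal{K}}_{r,s-1}$; your descent argument survives once the claim is weakened to this containment, since a bad-weight $\psi_0$ would then still lie in $\langle\widetilde{v}_{r,s-1}\rangle$. A second, related problem is your assertion that the argument transfers "identically": in the $s-1$ case with $q\mid s$ the bad indicial root sits \emph{above} the good one by the integer $r+\tfrac{ps}{q}$, so even when $\psi_0$ is a multiple of $\widetilde{v}_{r,s-1}$ the Frobenius recursion degenerates at that level (its left-hand coefficient vanishes there) and the corresponding $\psi_k$ is determined by the ODE only up to a singular vector of weight $h_{-r,s+1}$; the claim that every $\psi_k$ lies in $U(\mathfrak{Vir}_-)\psi_0$ fails as stated. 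Both defects are repaired by the same containment fact, but as written your argument would stall at both points, and this containment (read off from the embedding diagrams after shifting $(r,s-1)$ into standard range) is precisely the content of the paper's treatment of this case.
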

\begin{proof}
Taking $t=\frac{q}{p}$ and $h_1=h_{r,s}$ in Theorem \ref{thm:gen_Verma_intw_ops}, yields a surjective intertwining operator of type $\binom{\mathcal{V}_{r,s+1}}{\mathcal{K}_{1,2}\,\mathcal{V}_{r,s}}$ if $h_{r,s-1}-h_{r,s+1}\notin\mathbb{Z}_{\geq 1}$, and of type $\binom{\mathcal{V}_{r,s-1}}{\mathcal{K}_{1,2}\,\mathcal{V}_{r,s}}$ if $h_{r,s+1}-h_{r,s-1}\notin\mathbb{Z}_{\geq 1}$. Since
\begin{equation}\label{eqn:conf_wt_diff}
h_{r,s+1}-h_{r,s-1}=-r+\frac{ps}{q},
\end{equation}
this means there is a surjective intertwining operator of type $\binom{\mathcal{V}_{r,s+1}}{\mathcal{K}_{1,2}\,\mathcal{V}_{r,s}}$ when $q\nmid s$, or when $q\mid s$ and $ps\geq qr$, and there is a surjective intertwining operator of type $\binom{\mathcal{V}_{r,s-1}}{\mathcal{K}_{1,2}\,\mathcal{V}_{r,s}}$ when $q\nmid s$, or when $q\mid s$ and $ps\leq qr$. Whenever such intertwining operators exist, we get surjective intertwining operators of type $\binom{\widetilde{\mathcal{K}}_{r,s\pm1}}{\mathcal{K}_{1,2}\,\mathcal{V}_{r,s}}$ by composing with the surjections $\mathcal{V}_{r,s\pm 1}\twoheadrightarrow\widetilde{\mathcal{K}}_{r,s\pm1}$. 

We need to determine when these intertwining operators are well defined on the quotient $\widetilde{\mathcal{K}}_{r,s}=\mathcal{V}_{r,s}/\langle\widetilde{v}_{r,s}\rangle$. Thus suppose $\mathcal{Y}$ is a surjective intertwining operator of type $\binom{\widetilde{\mathcal{K}}_{r,s\pm 1}}{\mathcal{K}_{1,2}\,\mathcal{V}_{r,s}}$. To show that $\mathcal{Y}$ is well defined on $\widetilde{\mathcal{K}}_{r,s}$, we need to show that $\mathcal{Y}\vert_{\mathcal{K}_{1,2}\otimes\langle\widetilde{v}_{r,s}\rangle}=0$. Since $\langle \widetilde{v}_{r,s}\rangle$ is a Verma module with lowest conformal weight $h_{r,s}+rs=h_{-r,s}$, Proposition \ref{prop:lowest_weight_spaces} shows that if $\mathcal{Y}\vert_{\mathcal{K}_{1,2}\otimes\langle\widetilde{v}_{r,s}\rangle}\neq 0$, then the lowest conformal weight of its image in $\widetilde{\mathcal{K}}_{r,s\pm 1}$ is $h_{-r,s\pm 1}$, that is, $\widetilde{\mathcal{K}}_{r,s\pm 1}$ contains a singular vector of conformal weight $h_{-r,s+1}$ or $h_{-r,s-1}$.

First consider the case that $\mathcal{Y}$ is of type $\binom{\widetilde{\mathcal{K}}_{r,s+1}}{\mathcal{K}_{1,2}\,\mathcal{V}_{r,s}}$. Then $\widetilde{\mathcal{K}}_{r,s+1}$ does not contain any singular vector of conformal weight $h_{-r,s+1}$ since by definition $\widetilde{\mathcal{K}}_{r,s+1}$ is the quotient of $\mathcal{V}_{r,s}$ by its unique up to scale singular vector of weight $h_{-r,s+1}$. When $q\nmid s$, then $\widetilde{\mathcal{K}}_{r,s+1}$ also contains no singular vector of weight $h_{-r,s-1}$ since by \eqref{eqn:conf_wt_diff}, $h_{-r,s-1}-h_{-r,s+1}\notin\mathbb{Z}$ in this case. When $s=nq$ for some $n\geq 1$ and $1\leq r\leq p-1$, then the Verma module embedding diagrams imply that $\widetilde{\mathcal{K}}_{r,nq+1}\cong\mathcal{K}_{r,nq+1}$ satisfies a short exact sequence
\begin{equation*}
0\longrightarrow \mathcal{L}_{r,(n+2)q-1}\longrightarrow\mathcal{K}_{r,nq+1}\longrightarrow\mathcal{L}_{r,nq+1}\longrightarrow 0,
\end{equation*}
so $\widetilde{\mathcal{K}}_{r,nq+1}$ does not contain a singular vector of weight $h_{-r,nq-1}=h_{p-r,(n+1)q-1}$ in this case. These cases prove that $\mathcal{Y}$ descends to a well-defined surjective intertwining operator of type $\binom{\widetilde{\mathcal{K}}_{r,s+1}}{\mathcal{K}_{1,2}\,\widetilde{\mathcal{K}}_{r,s}}$  when $q\nmid s$, or when $q\mid s$ and $1\leq r\leq p-1$. (When $q\mid s$ and $r\geq p$, $\widetilde{\mathcal{K}}_{r,s+1}$ does contain a singular vector of conformal weight $h_{-r,s-1}$, so we cannot rule out that $\mathcal{Y}\vert_{\mathcal{K}_{1,2}\otimes\langle\widetilde{v}_{r,s}\rangle}\neq 0$ here.)

Now take $\mathcal{Y}$ of type $\binom{\widetilde{\mathcal{K}}_{r,s-1}}{\mathcal{K}_{1,2}\,\mathcal{V}_{r,s}}$. To show that $\mathcal{Y}\vert_{\mathcal{K}_{1,2}\otimes\langle\widetilde{v}_{r,s}\rangle}=0$, thus yielding a surjective intertwining operator of type $\binom{\widetilde{\mathcal{K}}_{r,s-1}}{\mathcal{K}_{1,2}\,\widetilde{\mathcal{K}}_{r,s}}$, we just need to show that $\widetilde{\mathcal{K}}_{r,s-1}$ does not contain a singular vector of conformal weight $h_{-r,s+ 1}$ (since the singular vector of weight $h_{-r,s-1}$ already vanishes in $\widetilde{\mathcal{K}}_{r,s-1}$). In the case $q\nmid s$, this follows because $h_{-r,s-1}-h_{-r,s+1}\notin\mathbb{Z}$. In the case $s=nq$ for some $n\geq 1$ and $ps\leq qr$, this follows because the Verma module embedding diagrams show that the Verma submodule $\langle \widetilde{v}_{r,nq-1}\rangle\subseteq\mathcal{V}_{r,nq-1}$ contains the singular vector of conformal weight $h_{-r,nq+1}$. This proves the second assertion of the theorem.
\end{proof}

\subsection{Intertwining operators involving Feigin-Fuchs modules}\label{subsec:Fock_intw}

Recall from Section \ref{subsec:FF_and_K_modules} that the Kac modules $\mathcal{K}_{r,s}$ are submodules of the Feigin-Fuchs modules $\mathcal{F}_{r,s}$, and that the latter are simple modules for the Heisenberg vertex operator algebra $\mathcal{F}_0$. Moreover, intertwining operators involving the $\mathcal{F}_{r,s}$ considered as $\mathcal{F}_0$-modules are still intertwining operators when we consider the $\mathcal{F}_{r,s}$ as $\mathfrak{Vir}$-modules. Thus we can construct some intertwining operators involving Kac modules by restricting Heisenberg Fock module intertwining operators.

Intertwining operators among Heisenberg Fock modules are well known (see for example the constructions in \cite{FLM, DL, LL}); they correspond to vertex operators in the free boson conformal field theory \cite[Section 6.3]{Fran}. In particular, for any $\lambda,\mu\in\mathbb{C}$, there is a unique up to scale non-zero (and surjective) $\mathcal{F}_0$-module intertwining operator of type $\binom{\mathcal{F}_{\lambda+\mu}}{\mathcal{F}_\lambda\,\mathcal{F}_\mu}$, determined by
\begin{equation}\label{eqn:Fock_intw_op}
\mathcal{Y}(v_\lambda,z)v_\mu =z^{\lambda\mu}\exp\left(\lambda\sum_{n=1}^\infty\frac{a_{-n}}{n}z^n\right)v_{\lambda+\mu}.
\end{equation}
This intertwining operator satisfies a commutator formula analogous to \eqref{eqn:Vir_comm_form}:
\begin{equation}\label{eqn:Heis_comm_form}
a_n\mathcal{Y}(w,z) =\mathcal{Y}(w,z)a_n +\sum_{i\geq 0} \binom{n}{i} z^{n-i}\mathcal{Y}(a_i w,z)
\end{equation}
for all $w\in\mathcal{F}_\lambda$ and $n\in\mathbb{Z}$. 

In case $\lambda=\lambda_{1,2}$ and $\mu=\lambda_{r,s}$ for some $r,s\in\mathbb{Z}_{\geq 1}$, we have $\lambda+\mu=\lambda_{r,s+1}$. So by restriction, we get a $\mathfrak{Vir}$-module intertwining operator $\mathcal{Y}$ of type $\binom{\mathcal{F}_{r,s+1}}{\mathcal{K}_{1,2}\,\mathcal{F}_{r,s}}$.
\begin{Lemma}\label{lem:rest_Y_surj}
For $r,s\in\mathbb{Z}_{\geq 1}$, let $\mathcal{Y}$ be a non-zero $\mathcal{F}_0$-module intertwining operator of type $\binom{\mathcal{F}_{r,s+1}}{\mathcal{F}_{1,2}\,\mathcal{F}_{r,s}}$. Then the $\mathfrak{Vir}$-module intertwining operator $\mathcal{Y}\vert_{\mathcal{K}_{1,2}\otimes\mathcal{F}_{r,s}}$ of type $\binom{\mathcal{F}_{r,s+1}}{\mathcal{K}_{1,2}\,\mathcal{F}_{r,s}}$ is surjective.
\end{Lemma}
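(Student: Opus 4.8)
The plan is to prove the stronger statement that the subspace $N\subseteq\mathcal{F}_{r,s+1}$ spanned by all coefficients of $\mathcal{Y}(v_{1,2},z)w_2$, as $w_2$ ranges over $\mathcal{F}_{r,s}$, is already all of $\mathcal{F}_{r,s+1}$. Since $v_{1,2}$ lies in $\mathcal{K}_{1,2}$ (its conformal weight $h_{1,2}$ is strictly less than $h_{1,2}+2$), the space $N$ is contained in the image of $\mathcal{Y}\vert_{\mathcal{K}_{1,2}\otimes\mathcal{F}_{r,s}}$, so $N=\mathcal{F}_{r,s+1}$ gives surjectivity at once, without even needing to verify that the image is a $\mathfrak{Vir}$-submodule.

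The key point, and the reason some care is needed, is that surjectivity is a statement about the $\mathfrak{Vir}$-module structure of $\mathcal{F}_{r,s+1}$, yet $v_{r,s+1}$ does not $\mathfrak{Vir}$-generate $\mathcal{F}_{r,s+1}$ in general (this is exactly the content of the Feigin-Fuchs structure diagrams of Section \ref{subsec:FF_and_K_modules}). The idea is therefore to revert to the Heisenberg structure, with respect to which $\mathcal{F}_{r,s+1}$ is \emph{simple}. Concretely, I would first show that $N$ is an $\mathcal{F}_0$-submodule, i.e.\ closed under every $a_n$. Applying the Heisenberg commutator formula \eqref{eqn:Heis_comm_form} with $w=v_{1,2}$ and using that $v_{1,2}$ is a Heisenberg highest-weight vector (so $a_i v_{1,2}=0$ for $i>0$ while $a_0 v_{1,2}=\lambda_{1,2}v_{1,2}$), the sum on the right collapses to a single term, giving
\[
a_n\,\mathcal{Y}(v_{1,2},z)w_2=\mathcal{Y}(v_{1,2},z)\,a_n w_2+\lambda_{1,2}\,z^n\,\mathcal{Y}(v_{1,2},z)w_2 .
\]
Extracting the coefficient of a fixed power of $z$ shows that $a_n$ sends each coefficient of $\mathcal{Y}(v_{1,2},z)w_2$ to a combination of a coefficient of $\mathcal{Y}(v_{1,2},z)\,a_n w_2$ and a coefficient of $\mathcal{Y}(v_{1,2},z)w_2$; since $a_n w_2\in\mathcal{F}_{r,s}$, both lie in $N$, so $N$ is $\mathcal{F}_0$-stable.

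Once $N$ is known to be an $\mathcal{F}_0$-submodule, the argument finishes quickly. The space $N$ is nonzero because $\mathcal{Y}$ is nonzero; indeed the explicit formula \eqref{eqn:Fock_intw_op} shows that the lowest power of $z$ occurring in $\mathcal{Y}(v_{1,2},z)v_{r,s}$ has coefficient a nonzero scalar multiple of $v_{r,s+1}$, so $v_{r,s+1}\in N$. As $\mathcal{F}_{r,s+1}$ is a simple $\mathcal{F}_0$-module, its only nonzero submodule is itself, whence $N=\mathcal{F}_{r,s+1}$, and the restricted intertwining operator is surjective.

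I expect the main obstacle to be conceptual rather than computational: one must notice that the desired $\mathfrak{Vir}$-module surjectivity cannot be read off directly from $v_{r,s+1}$ alone, and that the correct move is to upgrade to an $\mathcal{F}_0$-spanning statement produced using only the single domain vector $v_{1,2}$ together with the simplicity of the Fock module. The sole calculation is the collapse of the Heisenberg commutator formula, which is routine precisely because $v_{1,2}$ is a Heisenberg highest-weight vector.
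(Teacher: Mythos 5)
Your proof is correct and follows essentially the same route as the paper's: both arguments exploit that $v_{1,2}$ is a Heisenberg highest-weight vector so that the commutator formula \eqref{eqn:Heis_comm_form} collapses, allowing the modes $a_n$ to be passed through $\mathcal{Y}(v_{1,2},z)$ onto the second input, and both use the explicit formula \eqref{eqn:Fock_intw_op} to see that $v_{r,s+1}$ appears as a coefficient. The only cosmetic difference is packaging: the paper starts from the spanning set $a_{-n_1}\cdots a_{-n_k}\mathcal{Y}(v_{1,2},z)v_{r,s}$ of $\mathcal{F}_{r,s+1}$ and pushes the modes inward, whereas you show the span $N$ of coefficients is an $\mathcal{F}_0$-submodule and invoke simplicity of the Fock module -- logically equivalent.
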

\begin{proof}
Since \eqref{eqn:Fock_intw_op} shows that $v_{r,s+1}$ is the coefficient of the lowest power of $z$ in $\mathcal{Y}(v_{1,2}, z)v_{r,s}$,  the module $\mathcal{F}_{r,s+1}$ is linearly spanned by coefficients of powers of $z$ in expressions of the form
\begin{equation*}
a_{-n_1}\cdots a_{-n_k}\mathcal{Y}(v_{1,2},z)v_{r,s}
\end{equation*}
for $n_1,\ldots, n_k >0$. Then the commutator formula \eqref{eqn:Heis_comm_form} together with $a_i v_{1,2}=\lambda_{1,2}\delta_{i,0} v_{1,2}$ for $i\geq 0$ implies that such coefficients of powers of $z$ are linear combinations of coefficients in expressions of the form
\begin{equation*}
\mathcal{Y}(v_{1,2},z)a_{-m_1}\cdots a_{-m_\ell} v_{r,s}.
\end{equation*}
Since $v_{1,2}\in\mathcal{K}_{1,2}\subseteq\mathcal{F}_{1,2}$, this shows that $\mathcal{F}_{r,s+1}$ is spanned by coefficients of powers of $z$ in expressions $\mathcal{Y}(v,z)w$ for $v\in\mathcal{K}_{1,2}$ and $w\in\mathcal{F}_{r,s}$. That is, $\mathcal{Y}\vert_{\mathcal{K}_{1,2}\otimes\mathcal{F}_{r,s}}$ is surjective.
\end{proof}

Now given a surjective $\mathfrak{Vir}$-module intertwining operator of type $\binom{\mathcal{F}_{r,s+1}}{\mathcal{K}_{1,2}\,\mathcal{F}_{r,s}}$ as in the previous lemma, we can further restrict to the Kac submodule $\mathcal{K}_{r,s}\subseteq\mathcal{F}_{r,s}$ to get a non-surjective $\mathfrak{Vir}$-module intertwining operator of type $\binom{\mathcal{F}_{r,s+1}}{\mathcal{K}_{1,2}\,\mathcal{K}_{r,s}}$. We will show that the image in $\mathcal{F}_{r,s+1}$ of this intertwining operator is the Kac module $\mathcal{K}_{r,s+1}$:
\begin{Theorem}\label{thm:Kac_module_surj}
For any $r,s\in\mathbb{Z}_{\geq 1}$, any non-zero $\mathcal{F}_0$-module intertwining operator of type $\binom{\mathcal{F}_{r,s+1}}{\mathcal{F}_{1,2}\,\mathcal{F}_{r,s}}$ restricts to a surjective $\mathfrak{Vir}$-module intertwining operator of type $\binom{\mathcal{K}_{r,s+1}}{\mathcal{K}_{1,2}\,\mathcal{K}_{r,s}}$. In particular, for any $r,s\in\mathbb{Z}_{\geq 1}$, there is a surjection $\mathcal{K}_{1,2}\boxtimes\mathcal{K}_{r,s}\rightarrow\mathcal{K}_{r,s+1}$ in $\mathcal{O}_{c_{p,q}}$.
\end{Theorem}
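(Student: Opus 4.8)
The plan is to study the image $M:=\mathrm{Im}\big(\mathcal{Y}\vert_{\mathcal{K}_{1,2}\otimes\mathcal{K}_{r,s}}\big)\subseteq\mathcal{F}_{r,s+1}$ and prove that it equals $\mathcal{K}_{r,s+1}$; the final statement about $\boxtimes$ then follows at once from the universal property of the fusion tensor product. First I would record two facts. By the commutator formula \eqref{eqn:Vir_comm_form}, the span of the coefficients of $\mathcal{Y}(v,z)w$ over $v\in\mathcal{K}_{1,2}$, $w\in\mathcal{K}_{r,s}$ is stable under all $L_n$, so $M$ is a $\mathfrak{Vir}$-submodule of $\mathcal{F}_{r,s+1}$; it is grading-restricted and, by Proposition \ref{prop:Miyamoto}, $C_1$-cofinite, hence an object of $\mathcal{O}_{c_{p,q}}$. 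Second, the explicit formula \eqref{eqn:Fock_intw_op} shows that the lowest power of $z$ in $\mathcal{Y}(v_{1,2},z)v_{r,s}$ has coefficient a nonzero multiple of $v_{r,s+1}$, so $v_{r,s+1}\in M$ and therefore $\langle v_{r,s+1}\rangle\subseteq M$.

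For the containment $\mathcal{K}_{r,s+1}\subseteq M$, recall that $\mathcal{K}_{r,s+1}$ is by definition generated by the vectors of $\mathcal{F}_{r,s+1}$ of conformal weight below $h_{r,s+1}+r(s+1)$. When $r\leq p$ or $s\leq q-1$, Remark \ref{rem:Krs_Verma_quot} identifies $\mathcal{K}_{r,s+1}$ with the cyclic module $\langle v_{r,s+1}\rangle$, and since $v_{r,s+1}\in M$ this gives $\mathcal{K}_{r,s+1}=\langle v_{r,s+1}\rangle\subseteq M$.

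For the reverse containment $M\subseteq\mathcal{K}_{r,s+1}$, I would compose with the quotient map $\pi\colon\mathcal{F}_{r,s+1}\to\mathcal{F}_{r,s+1}/\mathcal{K}_{r,s+1}$ to form the intertwining operator $\bar{\mathcal{Y}}=\pi\circ\mathcal{Y}$ of type $\binom{\mathcal{F}_{r,s+1}/\mathcal{K}_{r,s+1}}{\mathcal{K}_{1,2}\,\mathcal{K}_{r,s}}$, and show $\bar{\mathcal{Y}}=0$, which is exactly the assertion $M\subseteq\mathcal{K}_{r,s+1}$. Since $\mathcal{K}_{r,s+1}$ contains every vector of conformal weight below $h_{r,s+1}+r(s+1)$, the quotient $\mathcal{F}_{r,s+1}/\mathcal{K}_{r,s+1}$ vanishes in conformal weights $<h_{r,s+1}+r(s+1)$. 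On the other hand, when $\mathcal{K}_{r,s}$ is generated by its lowest weight space $\mathbb{C}v_{r,s}$ — that is, when $r\leq p$ or $s\leq q$ — Proposition \ref{prop:surj_intw_op} applies to $\bar{\mathcal{Y}}$ regarded as a surjection onto its image, and together with Proposition \ref{prop:lowest_weight_spaces} it forces the lowest weight space of $\mathrm{Im}(\bar{\mathcal{Y}})$ to carry $L_0$-(generalized) eigenvalues only among $h_{r,s-1}$ and $h_{r,s+1}$. From \eqref{eqn:conf_wt_diff} one checks $h_{r,s-1}=h_{r,s+1}+r-\tfrac{ps}{q}$, whence both $h_{r,s-1}$ and $h_{r,s+1}$ are strictly less than $h_{r,s+1}+r(s+1)$. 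A nonzero $\mathrm{Im}(\bar{\mathcal{Y}})$ would then carry a conformal weight below the minimal weight of $\mathcal{F}_{r,s+1}/\mathcal{K}_{r,s+1}$, which is impossible; hence $\bar{\mathcal{Y}}=0$.

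Together these arguments prove the theorem whenever $r\leq p$ or $s\leq q-1$. The hard part is the remaining range $r>p$ and $s\geq q$, where the Kac modules cease to be singly generated: Proposition \ref{prop:surj_intw_op} no longer applies to $\mathcal{K}_{r,s}$ (its lowest weight space does not generate it), and $\mathcal{K}_{r,s+1}$ acquires an additional generator, at a conformal weight above $h_{r,s+1}$, that $\langle v_{r,s+1}\rangle$ does not contain. To treat this range I would use the explicit composition-series descriptions of the multiply-generated Kac modules from Section \ref{subsec:FF_and_K_modules}: for the upper bound, extend the Zhu-algebra estimate to the two-generator module $\mathcal{K}_{r,s}$ and verify that the extra generator still contributes only weights below $h_{r,s+1}+r(s+1)$ to $\mathrm{Im}(\bar{\mathcal{Y}})$; for the lower bound, identify the missing higher generator of $\mathcal{K}_{r,s+1}$ with a leading coefficient of $\mathcal{Y}(v_{1,2},z)$ applied — via \eqref{eqn:Fock_intw_op} — to the corresponding generator of $\mathcal{K}_{r,s}$. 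An induction raising $r$ in steps of $p$ from the already-settled cases is a plausible alternative.
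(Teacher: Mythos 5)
Your argument for the singly-generated range is correct and is a somewhat more direct route than the paper's: for $r\leq p$ or $s\leq q-1$ you get $\mathcal{K}_{r,s+1}=\langle v_{r,s+1}\rangle\subseteq M$ from the explicit formula \eqref{eqn:Fock_intw_op}, and the reverse containment follows from Proposition \ref{prop:lowest_weight_spaces} applied to $\bar{\mathcal{Y}}$ together with the observation that $h_{r,s\pm 1}<h_{r,s+1}+r(s+1)$. The paper instead treats all $(r,s)$ uniformly by filtering $\mathcal{F}_{r,s}$ as $\mathcal{K}_{r,s}^{(0)}\subseteq\mathcal{K}_{r,s}^{(1)}\subseteq\cdots$ with $\mathcal{K}_{r,s}^{(m)}=\mathcal{K}_{mp+r,mq+s}$ and proving $\mathrm{Im}\,\mathcal{Y}\vert_{\mathcal{K}_{1,2}\otimes\mathcal{K}_{r,s}^{(m)}}=\mathcal{K}_{r,s+1}^{(m)}$ for all $m$ at once; your cyclic case is essentially its $m=0$ step.

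The genuine gap is the range $r>p$ and $s\geq q$, which you correctly identify as the hard part but only sketch. Two concrete problems remain there. First, for the upper bound, the Zhu-algebra method (Propositions \ref{prop:surj_intw_op} and \ref{prop:lowest_weight_spaces}) fundamentally requires the source module to be generated by its lowest conformal weight space, so it cannot be "extended to the two-generator module $\mathcal{K}_{r,s}$" directly; the paper's resolution is to apply it to the successive subquotients $\mathcal{K}_{r,s}^{(m+1)}/\mathcal{K}_{r,s}^{(m)}$, which \emph{are} Verma-module quotients, and then run an induction on the filtration with explicit weight inequalities such as $h_{-mp-r,mq+s+1}-h_{-np-r,nq+s\pm 1}>0$ to kill the induced intertwining operators into $\mathcal{F}_{r,s+1}^{(m)}/\mathcal{K}_{r,s+1}^{(m)}$. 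Your "induction raising $r$ in steps of $p$" is the right instinct but none of this machinery is supplied. Second, and more seriously, for the lower bound when $q\mid s$ the two candidate lowest weights $h_{-mp-r,mq+s\pm 1}$ of the quotient $\mathcal{F}_{r,s+1}/\mathcal{F}_{r,s+1}^{(m)}$ differ by a nonnegative integer, so weight counting alone cannot show that the image contains all generators of $\mathcal{K}_{r,s+1}^{(m)}$; the paper needs the explicit four-case analysis of the Feigin--Fuchs structure diagrams from Section \ref{subsec:FF_and_K_modules} to conclude that the vectors of weight below $h_{-mp-r,mq+s-1}$ together with $v_{r,s+1}$ already generate $\mathcal{K}_{r,s+1}^{(m)}$. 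Your alternative suggestion of producing the missing generator as a leading coefficient of $\mathcal{Y}(v_{1,2},z)w$ for $w$ a higher generator of $\mathcal{K}_{r,s}$ is not backed by any computation: \eqref{eqn:Fock_intw_op} gives a closed form only for $\mathcal{Y}(v_{1,2},z)v_{r,s}$, and a priori the relevant coefficient could vanish or fail to hit the required singular vector. As written, the proof is complete only for $r\leq p$ or $s\leq q-1$.
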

\begin{proof}
Recall that every Feigin-Fuchs module $\mathcal{F}_{r,s}$ is the same as one for which either $r\leq p$ or $s\leq q$. Thus fix $(r,s)$ with $r\leq p$ or $s\leq q$; from the structures in Section \ref{subsec:FF_and_K_modules}, we see that $\mathcal{F}_{r,s}$ has a filtration
\begin{equation*}
\mathcal{K}_{r,s}\subseteq\mathcal{K}_{p+r,q+s}\subseteq\cdots\subseteq\mathcal{K}_{mp+r,mq+s}\subseteq\cdots\subseteq\mathcal{F}_{r,s}.
\end{equation*}
To shorten the notation, set $\mathcal{K}_{r,s}^{(m)} =\mathcal{K}_{mp+r,mq+s}$; then for each $m\geq 0$, the lowest conformal weight of $\mathcal{F}_{r,s}/\mathcal{K}_{r,s}^{(m)}$ is 
\begin{equation*}
h_{r,s}+(mp+r)(mq+s)=h_{-mp-r,mq+s},
\end{equation*}
and $\mathcal{K}_{r,s}^{(m+1)}/\mathcal{K}_{r,s}^{(m)}$ is generated by its lowest conformal weight space, and thus is a quotient (of length $1$, $2$, or $4$) of $\mathcal{V}_{-mp-r,mq+s}$.

Now let $\mathcal{Y}$ be the intertwining operator of type $\binom{\mathcal{F}_{r,s+1}}{\mathcal{K}_{1,2}\,\mathcal{F}_{r,s}}$ obtained by restricting a non-zero Fock module intertwining operator to $\mathcal{K}_{1,2}$, and for $m\geq 0$, set
\begin{equation*}
\mathcal{F}_{r,s+1}^{(m)}=\mathrm{Im}\,\mathcal{Y}\vert_{\mathcal{K}_{1,2}\otimes\mathcal{K}_{r,s}^{(m)}}
\end{equation*}
(so we need to show that $\mathcal{F}^{(m)}_{r,s+1}=\mathcal{K}^{(m)}_{r,s+1}$). For each $m$, $\mathcal{Y}$ induces an intertwining operator 
\begin{equation*}
\mathcal{Y}^{(m)}: \mathcal{K}_{1,2}\otimes(\mathcal{F}_{r,s}/\mathcal{K}_{r,s}^{(m)})\longrightarrow(\mathcal{F}_{r,s+1}/\mathcal{F}_{r,s+1}^{(m)})\lbrace z\rbrace
\end{equation*}
It follows from Lemma \ref{lem:rest_Y_surj} that each $\mathcal{Y}^{(m)}$ is surjective.

We will first show that $\mathcal{K}_{r,s+1}^{(m)}\subseteq\mathcal{F}_{r,s+1}^{(m)}$. Since $\mathcal{K}_{r,s+1}^{(m)}$ is generated by all vectors in $\mathcal{F}_{r,s+1}$ of conformal weight strictly less than $h_{r,s+1}+(mp+r)(mq+s+1)=h_{-mp-r,mq+s+1}$, it is sufficient to show that the conformal weights of $\mathcal{F}_{r,s+1}/\mathcal{F}_{r,s+1}^{(m)}=\mathrm{Im}\,\mathcal{Y}^{(m)}$ are contained in $ h_{-mp-r,mq+s+1}+\mathbb{Z}_{\geq 0}$. To prove this condition on conformal weights, first note that
\begin{equation*}
\mathcal{F}_{r,s}/\mathcal{K}_{r,s}^{(m)} =\bigcup_{n=0}^\infty \mathcal{K}_{r,s}^{(m+n)}/\mathcal{K}_{r,s}^{(m)}.
\end{equation*}
Thus the conformal weights of $\mathcal{F}_{r,s+1}/\mathcal{F}^{(m)}_{r,s+1}$ are the union (over $n\geq 0$) of the conformal weights of $\mathrm{Im}\,\mathcal{Y}^{(m)}\vert_{\mathcal{K}_{1,2}\otimes(\mathcal{K}_{r,s}^{(m+n)}/\mathcal{K}_{r,s}^{(m)})}$. Now consider the following commutative diagram with exact rows:
\begin{equation*}
\xymatrixcolsep{1.4pc}
\xymatrix{
0 \ar[r] & \mathcal{K}_{1,2}\otimes(\mathcal{K}_{r,s}^{(m+n)}/\mathcal{K}_{r,s}^{(m)}) \ar[r] \ar[d]^{\mathcal{Y}^{(m)}\vert_{\mathcal{K}_{1,2}\otimes(\mathcal{K}_{r,s}^{(m+n)}/\mathcal{K}_{r,s}^{(m)})}} & \mathcal{K}_{1,2}\otimes(\mathcal{K}_{r,s}^{(m+n+1)}/\mathcal{K}_{r,s}^{(m)}) \ar[r] \ar[d]^{\mathcal{Y}^{(m)}\vert_{\mathcal{K}_{1,2}\otimes(\mathcal{K}_{r,s}^{(m+n+1)}/\mathcal{K}_{r,s}^{(m)})}} & \mathcal{K}_{1,2}\otimes(\mathcal{K}_{r,s}^{(m+n+1)}/\mathcal{K}_{r,s}^{(m+n)}) \ar[r] \ar[d] & 0\\
0\ar[r] & (\mathcal{F}_{r,s+1}^{(m+n)}/\mathcal{F}_{r,s+1}^{(m)})\lbrace z\rbrace \ar[r] & (\mathcal{F}_{r,s+1}^{(m+n+1)}/\mathcal{F}_{r,s+1}^{(m)})\lbrace z\rbrace \ar[r] & (\mathcal{F}_{r,s+1}^{(m+n+1)}/\mathcal{F}_{r,s+1}^{(m+n)})\lbrace z\rbrace \ar[r] & 0\\
}
\end{equation*}
From this diagram, induction on $n$ implies that the conformal weights of $\mathcal{F}_{r,s+1}/\mathcal{F}_{r,s+1}^{(m)}$ are the union (over $n\geq 0$) of the conformal weights of surjective intertwining operators
\begin{equation*}
\mathcal{K}_{1,2}\otimes(\mathcal{K}_{r,s}^{(m+n+1)}/\mathcal{K}_{r,s}^{(m+n)})\longrightarrow(\mathcal{F}_{r,s+1}^{(m+n+1)}/\mathcal{F}_{r,s+1}^{(m+n)})\lbrace z\rbrace.
\end{equation*}
Since $\mathcal{K}_{r,s}^{(m+n+1)}/\mathcal{K}_{r,s}^{(m+n)}$ is a quotient of the Verma module $\mathcal{V}_{-(m+n)p-r,(m+n)q+s}$, Proposition \ref{prop:lowest_weight_spaces} implies that the conformal weights of $\mathcal{F}_{r,s+1}/\mathcal{F}_{r,s+1}^{(m)}$ are contained in
\begin{align*}
\bigcup_{n=0}^\infty &\, (h_{-(m+n)p-r,(m+n)q+s-1}+\mathbb{Z}_{\geq 0})\cup ( h_{-(m+n)p-r,(m+n)q+s+1}+\mathbb{Z}_{\geq 0})\nonumber\\
&\qquad = (h_{-mp-r,mq+s-1}+\mathbb{Z}_{\geq 0})\cup ( h_{-mp-r,mq+s+1}+\mathbb{Z}_{\geq 0}),
\end{align*}
where the equality follows because for $n\geq 0$,
\begin{align*}
&h_{-(m+n)p-r,(m+n)q+s\pm 1}  - h_{-mp-r,mq+s\pm 1} \nonumber\\
&\qquad\qquad = h_{r,s\pm 1} +((m+n)p+r)((m+n)q+s\pm 1) -h_{r,s\pm 1}-(mp+r)(mq+s\pm 1)\nonumber\\
&\qquad\qquad = np(mq+s\pm 1)+(mp+r)nq +n^2pq\in\mathbb{Z}_{\geq 0}.
\end{align*}

Now since all conformal weights of $\mathcal{F}_{r,s+1}/\mathcal{F}_{r,s+1}^{(m)}$ are congruent to $h_{r,s+1}$ modulo $\mathbb{Z}$, the lowest possible conformal weight of $\mathcal{F}_{r,s+1}/\mathcal{F}_{r,s+1}^{(m)}$ is indeed $h_{-mp-r,mq+s+1}$, except maybe when
\begin{equation*}
h_{-mp-r,mq+s+1}-h_{-mp-r,mq+s-1}\in\mathbb{Z}_{\geq 0}.
\end{equation*}
This occurs only when $q\mid s$, so because either $r\leq p$ or $s\leq q$, we have four cases to consider:
\begin{enumerate}
\item $1\leq r\leq p-1$ and $s=nq$ for some $n\geq 1$.
\item $r=p$ and $s=nq$ for some $n\geq 1$.
\item $r=np+r'$ for some $n\geq 1$ and $1\leq r'\leq p-1$, and $s=q$.
\item $r=np$ for some $n\geq 2$ and $s=q$.
\end{enumerate}
In the first case, $h_{-mp-r,mq+s\pm 1}= h_{p-r,(n+2m+1)q\pm 1}$, and the structure of $\mathcal{F}_{r,nq+1}$ from Section \ref{subsec:FF_and_K_modules} is (implicitly assuming $m\geq 1$):
\begin{equation*}
\xymatrixcolsep{2pc}
\xymatrixrowsep{.5pc}
\xymatrix{
 & \mathcal{L}_{p-r,(n+1)q+1} \ar[ld] \ar[r] \ar[rdd] & \cdots \ar[ldd]\ar[rdd] \ar[r] & \mathcal{L}_{p-r,(n+2m+1)q-1} \ar[ldd] \ar[rdd]  & \mathcal{L}_{p-r,(n+2m+1)q+1} \ar[l] \ar[r] \ar[ldd] \ar[rdd]  & \cdots \ar[ldd]  \\
\mathcal{L}_{r,nq+1} \ar[rd] & & & & &  \\
 & \mathcal{L}_{r,(n+2)q-1}  & \cdots \ar[l] &\mathcal{L}_{r,(n+2m)q+1} \ar[l] \ar[r] & \mathcal{L}_{r,(n+2m+2)q-1} &  \cdots \ar[l] \\
}
\end{equation*}
In the second case, $h_{-mp-r,mq+s\pm 1}=h_{p,(n+2m+2)q\pm 1}$, and the structure of $\mathcal{F}_{p,nq+1}$ is:
\begin{equation*}
\xymatrixcolsep{2pc}
\xymatrix{
\mathcal{L}_{p, nq+1} \ar[r] & \mathcal{L}_{p,(n+2)q-1} & \ar[l] \cdots \ar[r] &  \mathcal{L}_{p,(n+2m+2)q-1} & \mathcal{L}_{p,(n+2m+2)q+1} \ar[l] \ar[r] & \cdots \\
}
\end{equation*}
In the third case, $h_{-mp-r,mq+s\pm 1}=h_{p-r',(n+2m+2)q\pm 1}$, and $\mathcal{F}_{np+r',q+1}\cong\mathcal{F}_{p-r',nq-1}'$ has structure:
\begin{equation*}
\xymatrixcolsep{2pc}
\xymatrixrowsep{.5pc}
\xymatrix{
 & \mathcal{L}_{r',(n+1)q-1}  & \cdots \ar[l] & \mathcal{L}_{r',(n+2m+1)q+1}  \ar[l]\ar[r] & \mathcal{L}_{r',(n+2m+3)q-1} & \cdots \ar[l] \\
\mathcal{L}_{p-r',nq-1} \ar[ru] & & & & &  \\
 & \mathcal{L}_{p-r',nq+1} \ar[lu] \ar[ruu]\ar[r]  & \cdots \ar[luu]\ar[ruu]\ar[r]  &\mathcal{L}_{p-r',(n+2m+2)q-1} \ar[ruu]\ar[luu] & \mathcal{L}_{p-r',(n+2m+2)q+1} \ar[l]\ar[r]\ar[ruu]\ar[luu]  &\cdots \ar[luu]  \\
}
\end{equation*}
In the fourth case, $h_{-mp-r,mq+s\pm 1}=h_{p,(n+2m+2)q\pm 1}$, and the structure of $\mathcal{F}_{np,q+1}\cong\mathcal{F}_{p,nq-1}'$ is:
\begin{equation*}
\xymatrixcolsep{2pc}
\xymatrix{
\mathcal{L}_{p, nq-1} & \mathcal{L}_{p,nq+1} \ar[l]\ar[r] & \cdots \ar[r] & \mathcal{L}_{p,(n+2m+2)q-1}  & \mathcal{L}_{p,(n+2m+2)q+1} \ar[r]\ar[l] &\cdots \\
}
\end{equation*}
In all four cases, we know that $\mathcal{F}_{r,s+1}^{(m)}$ must contain all vectors in $\mathcal{F}_{r,s+1}$ with conformal weight strictly less than $h_{-mp-r,mq+s-1}$, and moreover $\mathcal{F}_{r,s+1}^{(m)}$ contains the lowest conformal weight vector $v_{r,s+1}$. But it follows from the above structures that in all cases, these vectors are enough to generate all of $\mathcal{K}_{r,s+1}^{(m)}$. That is, the lowest conformal weight of $\mathcal{F}_{r,s+1}/\mathcal{F}_{r,s+1}^{(m)}$ is at least $h_{-mp-r,mq+s+1}$, and $\mathcal{K}_{r,s+1}^{(m)}\subseteq\mathcal{F}_{r,s+1}^{(m)}$ as desired.

Conversely, we need to show that $\mathcal{F}_{r,s+1}^{(m)}\subseteq\mathcal{K}_{r,s+1}^{(m)}$. To do so, we consider the composition
\begin{equation*}
\widetilde{\mathcal{Y}}^{(m)}: \mathcal{K}_{1,2}\otimes\mathcal{K}_{r,s}^{(m)} \xrightarrow{\mathcal{Y}^{(m)}} \mathcal{F}_{r,s+1}^{(m)}\lbrace z\rbrace\twoheadrightarrow(\mathcal{F}_{r,s+1}^{(m)}/\mathcal{K}_{r,s+1}^{(m)})\lbrace z\rbrace.
\end{equation*}
This intertwining operator is surjective, so it is sufficient to show that $\widetilde{\mathcal{Y}}^{(m)}=0$. First we show that the restriction of $\widetilde{\mathcal{Y}}^{(m)}$ to $\mathcal{K}_{r,s}=\mathcal{K}_{r,s}^{(0)}$ vanishes. In fact, since $\mathcal{K}_{r,s}$ is generated by $v_{r,s}$, Proposition \ref{prop:lowest_weight_spaces} shows that if $\widetilde{\mathcal{Y}}^{(m)}\vert_{\mathcal{K}_{1,2}\otimes\mathcal{K}_{r,s}}\neq 0$, then $\mathrm{Im}\,\widetilde{\mathcal{Y}}^{(m)}\vert_{\mathcal{K}_{1,2}\otimes\mathcal{K}_{r,s}}$ has lowest conformal weight(s) $h_{r,s\pm 1}$. However, the conformal weights (if any) of $\mathcal{F}_{r,s+1}^{(m)}/\mathcal{K}_{r,s+1}^{(m)}$ are contained in $h_{-mp-r,mq+s+1}+\mathbb{Z}_{\geq 0}$, and
\begin{align*}
h_{-mp-r,mq+s+1}-h_{r,s+1} & = (mp+r)(mq+s+1)>0,\\
h_{-mp-r,mq+s+1}-h_{r,s-1} & = (mp+r)(mq+s+1) +h_{r,s+1}-h_{r,s-1}\nonumber\\
& =(mp+r)(mq+s)+mp+r+\left(-r+\frac{ps}{q}\right)\nonumber\\
 & = (mp+r)(mq+s)+p\left(m+\frac{s}{q}\right)>0.
\end{align*}
Thus $h_{r,s\pm 1}$ are not conformal weights of $\mathcal{F}_{r,s+1}^{(m)}/\mathcal{K}_{r,s+1}^{(m)}$, so it follows that $\widetilde{\mathcal{Y}}^{(m)}\vert_{\mathcal{K}_{1,2}\otimes\mathcal{K}_{r,s}}= 0$.

Now assume by induction that $\widetilde{\mathcal{Y}}^{(m)}\vert_{\mathcal{K}_{1,2}\otimes\mathcal{K}_{r,s}^{(n)}} = 0$ for some $n\in\lbrace 0, 1,\ldots, m-1\rbrace$. Then $\widetilde{\mathcal{Y}}^{(m)}$ induces a well-defined intertwining operator
\begin{equation}\label{eqn:vanishing_intw_op}
\mathcal{K}_{1,2}\otimes(\mathcal{K}_{r,s}^{(n+1)}/\mathcal{K}_{r,s}^{(n)})\longrightarrow(\mathcal{F}_{r,s+1}^{(m)}/\mathcal{K}_{r,s+1}^{(m)})\lbrace z\rbrace.
\end{equation}
As before, if this intertwining operator is non-zero, then Proposition \ref{prop:lowest_weight_spaces} implies that the lowest conformal weight(s) of its image are $h_{-np-r,nq+s\pm 1}$. But similar to the above,
\begin{align*}
h_{-mp-r,mq+s+1}-h_{-np-r,nq+s+1} & =(mp+r)(mq+s)-(np+r)(nq+s)+(m-n),\\
h_{-mp-r,mq+s+1}-h_{-np-r,nq+s-1} & =(mp+r)(mq+s)-(np+r)(nq+s)+(m+n)p+r+\frac{ps}{q}
\end{align*}
are both positive. So $h_{-np-r,nq+s\pm 1}$ are not conformal weights of $\mathcal{F}_{r,s+1}^{(m)}/\mathcal{K}_{r,s+1}^{(m)}$, and thus the intertwining operator \eqref{eqn:vanishing_intw_op} is $0$. It follows that $\widetilde{\mathcal{Y}}^{(m)}\vert_{\mathcal{K}_{1,2}\otimes\mathcal{K}_{r,s}^{(n+1)}} = 0$, and then by induction on $n$, we conclude $\widetilde{\mathcal{Y}}^{(m)}=0$. That is, $\mathcal{F}_{r,s+1}^{(m)}/\mathcal{K}_{r,s+1}^{(m)}=0$, completing the proof that $\mathcal{F}_{r,s+1}^{(m)} = \mathcal{K}_{r,s+1}^{(m)}$. This conclusion applies to all $(r,s)$ such that $r\leq p$ or $s\leq q$, and to all $m\geq 0$. Thus for any $r,s\in\mathbb{Z}_{\geq 1}$, the image $\mathcal{Y}\vert_{\mathcal{K}_{1,2}\otimes\mathcal{K}_{r,s}}$ equals $\mathcal{K}_{r,s+1}$, proving the theorem.
\end{proof}

\begin{Remark}\label{rem:Kac_surjection}
Because $c_{p,q}=c_{q,p}$, we can exchange the roles of $r$ and $s$ in Theorem \ref{thm:Kac_module_surj} to conclude that for all $r,s\in\mathbb{Z}_{\geq 1}$, there is also a surjection $\mathcal{K}_{2,1}\boxtimes\mathcal{K}_{r,s}\rightarrow\mathcal{K}_{r+1,s}$ in $\mathcal{O}_{c_{p,q}}$.
\end{Remark}

We will need to generalize the preceding theorem and remark to Fock module intertwining operators that do not necessarily involve $\mathcal{F}_{1,2}$ or $\mathcal{F}_{2,1}$. For this it is convenient to introduce the ind-completion (or direct limit completion) of the braided tensor category $\mathcal{O}_{c_{p,q}}$. This is the category  $\mathrm{Ind}\,\mathcal{O}_{c_{p,q}}$ of all generalized $V_{c_{p,q}}$-modules which are the unions of submodules which are objects of $\mathcal{O}_{c_{p,q}}$. For example, although the Feigin-Fuchs modules $\mathcal{F}_{r,s}$ for $r\leq p$ or $s\leq q$ are not objects of $\mathcal{O}_{c_{p,q}}$, they are objects of $\mathrm{Ind}\,\mathcal{O}_{c_{p,q}}$ because $\mathcal{F}_{r,s}=\bigcup_{m=0}^\infty \mathcal{K}_{mp+r,mq+s}$. By the main theorem of \cite{CMY-completions}, $\mathrm{Ind}\,\mathcal{O}_{c_{p,q}}$ admits the vertex algebraic braided tensor category structure of \cite{HLZ1}-\cite{HLZ8}, in such a way that the inclusion $\mathcal{O}_{c_{p,q}}\rightarrow\mathrm{Ind}\,\mathcal{O}_{c_{p,q}}$ is a braided tensor functor.

For any Feigin-Fuchs modules $\mathcal{F}_\lambda$ and $\mathcal{F}_\mu$, where $\lambda,\mu\in\mathbb{C}$, fix a non-zero $\mathcal{F}_0$-module intertwining operator of type $\binom{\mathcal{F}_{\lambda+\mu}}{\mathcal{F}_\lambda\,\mathcal{F}_\mu}$, which we will write as $\mathcal{Y}_{\lambda,\mu}$.

\begin{Theorem}\label{thm:Fock_module_associativity}
For any $r,s,r',s'\in\mathbb{Z}_{\geq 1}$, the $\mathcal{F}_0$-module intertwining operator $\mathcal{Y}_{\lambda_{r,s},\lambda_{r',s'}}$ restricts to a surjective $\mathfrak{Vir}$-module intertwining operator of type $\binom{\mathcal{K}_{r+r'-1,s+s'-1}}{\mathcal{K}_{r,s}\,\,\,\mathcal{K}_{r',s'}}$. In particular, there is a surjection $F_{r,s;r',s'}: \mathcal{K}_{r,s}\boxtimes\mathcal{K}_{r',s'}\rightarrow\mathcal{K}_{r+r'-1,s+s'-1}$ in $\mathcal{O}_{c_{p,q}}$. Moreover, for any $r,s,r',s',r'',s''\in\mathbb{Z}_{\geq 1}$, there is a commutative diagram
\begin{equation*}
\xymatrixcolsep{4.5pc}
\xymatrixrowsep{.75pc}
\xymatrix{
\mathcal{K}_{r,s}\boxtimes(\mathcal{K}_{r',s'}\boxtimes\mathcal{K}_{r'',s''}) \ar[r]^{\mathrm{Id}\boxtimes F_{r',s';r'',s''}} \ar[dd]^{\cong} & \mathcal{K}_{r,s}\boxtimes\mathcal{K}_{r'+r''-1,s'+s''-1} \ar[rd]^{\qquad F_{r,s;r'+r''-1,s'+s''-1}} & \\
 & & \mathcal{K}_{r+r'+r''-2,s+s'+s''-2} \\
 (\mathcal{K}_{r,s}\boxtimes\mathcal{K}_{r',s'})\boxtimes\mathcal{K}_{r'',s''} \ar[r]^{F_{r,s;r',s'}\boxtimes\mathrm{Id}} & \mathcal{K}_{r+r'-1,s+s'-1}\boxtimes\mathcal{K}_{r'',s''} \ar[ru]_{\quad\qquad F_{r+r'-1,s+s'-1;r'',s''}} & \\
}
\end{equation*}
where the vertical map is some non-zero multiple of the associativity isomorphism in $\mathcal{O}_{c_{p,q}}$.
\end{Theorem}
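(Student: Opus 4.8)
The plan is to deduce all three assertions from the two special cases already established in Theorem~\ref{thm:Kac_module_surj} and Remark~\ref{rem:Kac_surjection}, using associativity of intertwining operators (available in $\mathrm{Ind}\,\mathcal{O}_{c_{p,q}}$ by \cite{CMY-completions}) together with the fact that, for Heisenberg Fock modules, the spaces of intertwining operators of type $\binom{\mathcal{F}_{\lambda+\mu}}{\mathcal{F}_\lambda\,\mathcal{F}_\mu}$ are one-dimensional. First I would record the weight bookkeeping: from \eqref{lambda_rs_def} one has $\lambda_{r,s}+\lambda_{r',s'}=\lambda_{r+r'-1,s+s'-1}$, so $\mathcal{Y}_{\lambda_{r,s},\lambda_{r',s'}}$ is of type $\binom{\mathcal{F}_{r+r'-1,s+s'-1}}{\mathcal{F}_{r,s}\,\mathcal{F}_{r',s'}}$ and its restriction to $\mathcal{K}_{r,s}\otimes\mathcal{K}_{r',s'}$ takes values in $\mathcal{F}_{r+r'-1,s+s'-1}$, whose distinguished Kac submodule is exactly $\mathcal{K}_{r+r'-1,s+s'-1}$.

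I would then prove that the image of this restriction is all of $\mathcal{K}_{r+r'-1,s+s'-1}$ by induction on $r'+s'$. The cases $r'+s'\le 3$ are bases: $(r',s')=(1,1)$ is immediate since $\lambda_{1,1}=0$ and $\mathcal{K}_{1,1}=V_{c_{p,q}}$, while $(1,2)$ and $(2,1)$ follow from Theorem~\ref{thm:Kac_module_surj} and Remark~\ref{rem:Kac_surjection} after using the skew-symmetry relation \eqref{eqn:braiding} to exchange the two inputs (the operator $e^{zL_{-1}}$ preserves the submodule $\mathcal{K}_{r,s+1}$, so the image is unchanged). For the inductive step with, say, $s'\ge 2$, write $\lambda_{r',s'}=\lambda_{1,2}+\lambda_{r',s'-1}$. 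Associativity gives an equality of $V_{c_{p,q}}$-module maps $G_{\mathrm{prod}}=c\,G_{\mathrm{it}}\circ\mathcal{A}$, where $\mathcal{A}$ is the associativity isomorphism, $G_{\mathrm{prod}}\colon \mathcal{K}_{r,s}\boxtimes(\mathcal{K}_{1,2}\boxtimes\mathcal{K}_{r',s'-1})\to\mathcal{F}_{r+r'-1,s+s'-1}$ corresponds (via the universal property of $\boxtimes$, cf.\ \cite{HLZ8,CKM-exts}) to the product $\mathcal{Y}_{\lambda_{r,s},\lambda_{r',s'}}(\,\cdot\,,z_1)\mathcal{Y}_{\lambda_{1,2},\lambda_{r',s'-1}}(\,\cdot\,,z_2)$, and $G_{\mathrm{it}}$ corresponds to the iterate built from $\mathcal{Y}_{\lambda_{r,s},\lambda_{1,2}}$ and $\mathcal{Y}_{\lambda_{r,s+1},\lambda_{r',s'-1}}$; here $c\neq 0$ because one-dimensionality of Heisenberg intertwining-operator spaces forces the intermediate module of the iterate to be $\mathcal{F}_{r,s+1}$ and the associativity scalar to be nonzero. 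Using Theorem~\ref{thm:Kac_module_surj} to factor $G_{\mathrm{prod}}=\widehat{F}\circ(\mathrm{Id}\boxtimes q)$, with $q\colon\mathcal{K}_{1,2}\boxtimes\mathcal{K}_{r',s'-1}\twoheadrightarrow\mathcal{K}_{r',s'}$ and $\widehat{F}$ induced by $\mathcal{Y}_{\lambda_{r,s},\lambda_{r',s'}}$, and likewise $G_{\mathrm{it}}=\widehat{F}'\circ(q'\boxtimes\mathrm{Id})$ with $q'\colon\mathcal{K}_{r,s}\boxtimes\mathcal{K}_{1,2}\twoheadrightarrow\mathcal{K}_{r,s+1}$, right-exactness of $\boxtimes$ (which preserves surjections because $\mathcal{Y}_\boxtimes$ is surjective) yields $\mathrm{Im}\,\widehat{F}=\mathrm{Im}\,G_{\mathrm{prod}}=\mathrm{Im}\,G_{\mathrm{it}}=\mathrm{Im}\,\widehat{F}'$; since the right factor of $\widehat{F}'$ has smaller total size, the inductive hypothesis gives $\mathrm{Im}\,\widehat{F}'=\mathcal{K}_{r+r'-1,s+s'-1}$. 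The case $s'=1$, $r'\ge 3$ is symmetric with $\mathcal{K}_{2,1}$ replacing $\mathcal{K}_{1,2}$, using $c_{p,q}=c_{q,p}$. The surjection $F_{r,s;r',s'}$ is then the map furnished by the universal property of $\boxtimes$.

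For the commutative diagram I would run the same mechanism in full generality. Precomposing the bottom row with the associativity isomorphism, both composites are the homomorphisms out of $\mathcal{K}_{r,s}\boxtimes(\mathcal{K}_{r',s'}\boxtimes\mathcal{K}_{r'',s''})$ associated, via the universal property of $\boxtimes$, to the product and to the iterate of the three Fock operators $\mathcal{Y}_{\lambda_{r,s},\,\cdot}$ and $\mathcal{Y}_{\lambda_{r',s'},\lambda_{r'',s''}}$. By associativity of intertwining operators in $\mathrm{Ind}\,\mathcal{O}_{c_{p,q}}$ and the one-dimensionality of the Heisenberg intertwining-operator spaces, the associativity isomorphism carries one to a nonzero scalar multiple of the other; this is exactly the assertion that the diagram commutes up to a nonzero multiple of the associativity isomorphism.

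The main obstacle I anticipate is the inductive surjectivity step, and within it the careful transfer of the classical associativity of Heisenberg Fock intertwining operators (see \cite{FLM,DL,LL}) into an honest identity of $V_{c_{p,q}}$-module homomorphisms in $\mathrm{Ind}\,\mathcal{O}_{c_{p,q}}$: one must pin down the intermediate module of the iterate as $\mathcal{F}_{r,s+1}$ (so that its distinguished Kac submodule is what appears), confirm that the associativity and skew-symmetry scalars are all nonzero, and verify that passing to images is compatible with the factorizations $G_{\mathrm{prod}}=\widehat{F}\circ(\mathrm{Id}\boxtimes q)$ and $G_{\mathrm{it}}=\widehat{F}'\circ(q'\boxtimes\mathrm{Id})$ and with right-exactness of $\boxtimes$. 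By contrast, once this transfer is in hand, the weight bookkeeping and the diagram-chase for the associativity statement are comparatively routine.
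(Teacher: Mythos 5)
Your proposal is correct and follows essentially the same route as the paper: restrict the Heisenberg Fock intertwining operators, transfer their classical associativity (from \cite{DL}) into the commutative diagram for the induced maps $F_{\lambda,\mu}$ in $\mathrm{Ind}\,\mathcal{O}_{c_{p,q}}$, and then bootstrap surjectivity from Theorem \ref{thm:Kac_module_surj} using right-exactness of $\boxtimes$. The only organizational difference is that you induct on the second tensor factor, which forces the extra skew-symmetry step to put $\mathcal{K}_{1,2}$ in the second slot --- cleanest to justify by noting that the two induced module maps out of $\mathcal{K}_{1,2}\boxtimes\mathcal{K}_{r,s}$ and $\mathcal{K}_{r,s}\boxtimes\mathcal{K}_{1,2}$ differ by precomposition with the braiding isomorphism and hence have equal images --- whereas the paper inducts on the first factor in two stages ($\mathcal{K}_{1,s}$ then $\mathcal{K}_{r,1}$) and can quote Theorem \ref{thm:Kac_module_surj} directly.
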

\begin{proof}
First note for $r,s,r',s'\in\mathbb{Z}_{\geq 1}$ that $\lambda_{r,s}+\lambda_{r',s'}=\lambda_{r+r'-1,s+s'-1}$. For $r,s,r',s',r'',s''\in\mathbb{Z}_{\geq 1}$, let us write $\lambda=\lambda_{r,s}$, $\mu=\lambda_{r',s'}$, and $\nu=\lambda_{r'',s''}$. Then $\mathcal{F}_\lambda$, $\mathcal{F}_\mu$, and $\mathcal{F}_\nu$ are objects of $\mathrm{Ind}\,\mathcal{O}_{c_{p,q}}$, and we have, for example, a surjective map $F_{\lambda,\mu}: \mathcal{F}_\lambda\boxtimes\mathcal{F}_\mu\rightarrow\mathcal{F}_{\lambda+\mu}$ in $\mathrm{Ind}\,\mathcal{O}_{c_{p,q}}$ induced by the surjective intertwining operator $\mathcal{Y}_{\lambda,\mu}$ and the universal property of tensor products in $\mathrm{Ind}\,\mathcal{O}_{c_{p,q}}$.

We need to compare the two compositions
\begin{equation*}
\mathcal{F}_\lambda\boxtimes(\mathcal{F}_\mu\boxtimes\mathcal{F}_\nu)\xrightarrow{\mathcal{A}} (\mathcal{F}_\lambda\boxtimes\mathcal{F}_\mu)\boxtimes\mathcal{F}_\nu\xrightarrow{F_{\lambda,\mu}\boxtimes\mathrm{Id}} \mathcal{F}_{\lambda+\mu}\boxtimes\mathcal{F}_{\nu}\xrightarrow{F_{\lambda+\mu,\nu}} \mathcal{F}_{\lambda+\mu+\nu},
\end{equation*}
where $\mathcal{A}$ is the associativity isomorphism in $\mathrm{Ind}\,\mathcal{O}_{c_{p,q}}$, and
\begin{equation*}
\mathcal{F}_\lambda\boxtimes(\mathcal{F}_\mu\boxtimes\mathcal{F}_\nu)\xrightarrow{\mathrm{Id}\boxtimes F_{\mu,\nu}} \mathcal{F}_\lambda\boxtimes\mathcal{F}_{\mu+\nu}\xrightarrow{F_{\lambda,\mu+\nu}} \mathcal{F}_{\lambda+\mu+\nu}.
\end{equation*}
From the description of the associativity isomorphisms in \cite{HLZ8} (see also \cite{CKM-exts} or \cite{CMY-completions}), the first composition is given as follows: Choose $r_1,r_2\in\mathbb{R}_{>0}$ such that $r_1>r_2>r_1-r_2$; then for any $w_\lambda\in\mathcal{F}_\lambda$, $w_\mu\in\mathcal{F}_\mu$, $w_\nu\in\mathcal{F}_\nu$, and $w'\in\mathcal{F}_{\lambda+\mu+\nu}'$,
\begin{align}\label{eqn:Fock_iterate}
& \left\langle w',F_{\lambda+\mu,\nu}\left((F_{\lambda,\mu}\boxtimes\mathrm{Id})\left(\mathcal{A}\left(\mathcal{Y}_\boxtimes(w_\lambda,r_1)\mathcal{Y}_\boxtimes(w_\mu,r_2)w_\nu\right)\right)\right)\right\rangle\nonumber\\
&\qquad\qquad =\left\langle w', F_{\lambda+\mu,\nu}\left((F_{\lambda,\mu}\boxtimes\mathrm{Id})\left(\mathcal{Y}_\boxtimes(\mathcal{Y}_\boxtimes(w_\lambda, r_1-r_2)w_\mu,r_2)w_\nu\right)\right)\right\rangle\nonumber\\
&\qquad\qquad =\left\langle w', F_{\lambda+\mu,\nu}\left(\mathcal{Y}_\boxtimes(\mathcal{Y}_{\lambda,\mu}(w_\lambda,r_1-r_2)w_\mu,r_2)w_\nu\right)\right\rangle\nonumber\\
&\qquad\qquad = \langle w',\mathcal{Y}_{\lambda+\mu,\nu}(\mathcal{Y}_{\lambda,\mu}(w_\lambda,r_1-r_2)w_\mu,r_2)w_\nu\rangle,
\end{align}
where in particular all the above series are absolutely convergent. Similarly,
\begin{align}\label{eqn:Fock_product}
& \left\langle w',F_{\lambda,\mu+\nu}\left(\mathrm{Id}\boxtimes F_{\mu,\nu})\left(\mathcal{Y}_\boxtimes(w_\lambda,r_1)\mathcal{Y}_\boxtimes(w_\mu,r_2)w_\nu\right)\right)\right\rangle\nonumber\\
&\qquad\qquad =\langle w',\mathcal{Y}_{\lambda,\mu+\nu}(w_\lambda,r_1)\mathcal{Y}_{\mu,\nu}(w_\mu,r_2)w_\nu\rangle.
\end{align}
Now, associativity properties following from the generalized Jacobi identities proved in \cite{DL} imply that \eqref{eqn:Fock_iterate} and \eqref{eqn:Fock_product} differ by a non-zero scalar $\alpha$ which is independent of $w_\lambda$, $w_\mu$, $w_\nu$, and $w'$ (it only depends on the choices of normalization for $\mathcal{Y}_{\lambda+\mu,\nu}$, $\mathcal{Y}_{\lambda,\mu}$, $\mathcal{Y}_{\lambda,\mu+\nu}$, and $\mathcal{Y}_{\mu,\nu}$). Thus because $\mathcal{F}_\lambda\boxtimes(\mathcal{F}_\mu\boxtimes\mathcal{F}_\nu)$ is spanned by projections of the series $\mathcal{Y}_\boxtimes(w_\lambda,r_1)\mathcal{Y}_\boxtimes(w_\mu,r_2)w_\nu$ to the conformal weight spaces by \cite[Corollary 7.17]{HLZ5}, we get a commutative diagram
\begin{equation}\label{diag:Fock_assoc}
\begin{matrix}
\xymatrixrowsep{.75pc}
\xymatrixcolsep{3pc}
\xymatrix{
\mathcal{F}_{\lambda}\boxtimes(\mathcal{F}_{\mu}\boxtimes\mathcal{F}_{\nu}) \ar[r]^(.55){\mathrm{Id}\boxtimes F_{\mu,\nu}} \ar[dd]^{\alpha\mathcal{A}} & \mathcal{F}_{\lambda}\boxtimes\mathcal{F}_{\mu+\nu} \ar[rd]^{ F_{\lambda,\mu+\nu}} & \\
 & & \mathcal{F}_{\lambda+\mu+\nu} \\
 (\mathcal{F}_{\lambda}\boxtimes\mathcal{F}_{\mu})\boxtimes\mathcal{F}_{\nu} \ar[r]^(.55){F_{\lambda,\mu}\boxtimes\mathrm{Id}} & \mathcal{F}_{\lambda+\mu}\boxtimes\mathcal{F}_{\nu} \ar[ru]_{ F_{\lambda+\mu,\nu}} & \\
}
\end{matrix}
\end{equation}

We can now define $F_{r,s;r',s'}=F_{\lambda,\mu}\vert_{\mathcal{K}_{r,s}\boxtimes\mathcal{K}_{r',s'}}$ for $r,s,r',s'\in\mathbb{Z}_{\geq 1}$, that is, $F_{r,s;r',s'}=F_{\lambda,\mu}\circ(\iota_{r,s}\boxtimes\iota_{r',s'})$ where $\iota_{r,s}:\mathcal{K}_{r,s}\rightarrow\mathcal{F}_{\lambda}$ and $\iota_{r',s'}:\mathcal{K}_{r',s'}\rightarrow\mathcal{F}_\mu$ are the inclusions. Then the above commutative diagram for Feigin-Fuchs modules will imply the commutative diagram for Kac modules in the statement of the theorem provided we can show that the image of $F_{r,s;r',s'}$ is indeed $\mathcal{K}_{r+r'-1,s+s'-1}$ for any $r,s,r',s'\in\mathbb{Z}_{\geq 1}$.

To prove $\mathrm{Im}\,F_{r,s;r',s'}=\mathcal{K}_{r+r'-1,s+s'-1}$, we first prove the $r=1$ case by induction on $s$. The $s=1$ case is clear because $\mathcal{K}_{1,1}$ is the unit object of $\mathcal{O}_{c_{p,q}}$ (and because $\mathcal{Y}_{0,\mu}$ gives the action of the vertex operator algebra $\mathcal{F}_0$ on its module $\mathcal{F}_\mu$), and the $s=2$ case is Theorem \ref{thm:Kac_module_surj}. Now assume by induction that $\mathrm{Im}\,F_{1,s;r',s'}=\mathcal{K}_{r',s+s'-1}$ for some $s\geq 2$. Because
\begin{equation*}
F_{1,2;1,s}: \mathcal{K}_{1,2}\boxtimes\mathcal{K}_{1,s}\rightarrow\mathcal{K}_{1,s+1}
\end{equation*}
is a surjection by Theorem \ref{thm:Kac_module_surj}, we have
\begin{align}\label{eqn:using_associativity}
\mathrm{Im}\,F_{1,s+1;r',s'} & =\mathrm{Im}\,F_{1,s+1;r',s'}\circ(F_{1,2;1,s}\boxtimes\mathrm{Id})\circ(\alpha\mathcal{A})\vert_{\mathcal{K}_{1,2}\boxtimes(\mathcal{K}_{1,s}\boxtimes\mathcal{K}_{r',s'})}\nonumber\\
& =\mathrm{Im}\,F_{\lambda_{1,2},\lambda_{1,s}+\lambda_{r',s'}}\circ(\mathrm{Id}\boxtimes F_{\lambda_{1,s},\lambda_{r',s'}})\vert_{\mathcal{K}_{1,2}\boxtimes(\mathcal{K}_{1,s}\boxtimes\mathcal{K}_{r',s'})}\nonumber\\
& =\mathrm{Im}\,F_{\lambda_{1,2},\lambda_{1,s}+\lambda_{r',s'}}\vert_{\mathcal{K}_{1,2}\boxtimes\mathcal{K}_{r',s+s'-1}}\nonumber\\
& =\mathcal{K}_{r',s+s'}
\end{align}
using \eqref{diag:Fock_assoc} and the inductive hypothesis. This completes the inductive step. Similarly, we can prove $\mathrm{Im}\,F_{r,1;r',s'}=\mathcal{K}_{r+r'-1,s'}$ using Remark \ref{rem:Kac_surjection} and induction on $r$. Finally, the $r=1$ and $s=1$ special cases imply that for any $r,s\in\mathbb{Z}_{\geq 1}$, $F_{r,1;1,s}:\mathcal{K}_{r,1}\boxtimes\mathcal{K}_{1,s}\rightarrow\mathcal{K}_{r,s}$ is a surjection, and then a calculation similar to \eqref{eqn:using_associativity} shows that $\mathrm{Im}\,F_{r,s;r',s'}=\mathcal{K}_{r+r'-1,s+s'-1}$ in general. This proves the theorem.
\end{proof}

\section{Some tensor products and (non)-rigidity results}\label{sec:tens_prods_and_rigidity}

We continue to fix $p,q\geq 2$ such that $\mathrm{gcd}(p,q)=1$.  In this section, we first derive some preliminary results on tensor products in $\mathcal{O}_{c_{p,q}}$, mainly involving the Kac modules $\mathcal{K}_{1,2}$ and $\mathcal{K}_{2,1}$. We then prove that these two modules are rigid and self-dual in $\mathcal{O}_{c_{p,q}}$, and finally obtain some results on rigidity (or not) for other modules in $\mathcal{O}_{c_{p,q}}$.

\subsection{Tensoring with \texorpdfstring{$\mathcal{K}_{1, 2}$}{K12} and \texorpdfstring{$\mathcal{K}_{2, 1}$}{K21}} 

We can already see from Theorem \ref{thm:Fock_module_associativity} that all Virasoro Kac modules $\mathcal{K}_{r,s}$ in $\mathcal{O}_{c_{p,q}}$ can be obtained through repeatedly tensoring $\mathcal{K}_{1,2}$ and $\mathcal{K}_{2,1}$. We now prove further results on how $\mathcal{K}_{1,2}$ and $\mathcal{K}_{2,1}$ tensor with each other and with other Kac modules, though our first result here in general involves $\widetilde{\mathcal{K}}_{r,s}$ rather than $\mathcal{K}_{r,s}$. Recall from Section \ref{subsec:Verma_intw_ops} that $\widetilde{\mathcal{K}}_{r,s}$ is the quotient of the Verma module $\mathcal{V}_{r,s}$ by its (unique up to scale) singular vector of conformal weight $h_{r,s}+rs$. In particular, $\widetilde{\mathcal{K}}_{r,0}=\widetilde{\mathcal{K}}_{0,s}=0$ for $r,s\in\mathbb{Z}_{\geq 0}$.

\begin{Theorem}\label{thm:K12_times_Ktilde}
For $r,s\geq 1$ such that $q\nmid s$,
\begin{equation*}
 \mathcal{K}_{1, 2} \boxtimes \widetilde{\mathcal{K}}_{r, s}\cong
        \widetilde{\mathcal{K}}_{r, s-1} \oplus \widetilde{\mathcal{K}}_{r, s+1}  .    
\end{equation*}
Similarly, for $r,s\geq 1$ such that $p\nmid r$,
\begin{equation*}
 \mathcal{K}_{2, 1} \boxtimes \widetilde{\mathcal{K}}_{r, s}\cong
        \widetilde{\mathcal{K}}_{r-1, s} \oplus \widetilde{\mathcal{K}}_{r+1, s}  .    
\end{equation*}
\end{Theorem}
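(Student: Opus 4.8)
The plan is to prove the first isomorphism in detail; the second then follows at once from the symmetry $c_{p,q}=c_{q,p}$ together with the index swap $(r,s)\mapsto(s,r)$, which interchanges the roles of $(r,p)$ and $(s,q)$, sends $\mathcal{K}_{1,2}$ to $\mathcal{K}_{2,1}$, and turns the condition $q\nmid s$ into $p\nmid r$. So fix $r,s\geq 1$ with $q\nmid s$ and write $\mathcal{W}=\mathcal{K}_{1,2}\boxtimes\widetilde{\mathcal{K}}_{r,s}$, an object of $\mathcal{O}_{c_{p,q}}$. First I would record the two surjections: by Theorem \ref{thm:K_tilde_intw_ops} (both halves apply since $q\nmid s$) there are surjective intertwining operators of types $\binom{\widetilde{\mathcal{K}}_{r,s+1}}{\mathcal{K}_{1,2}\,\widetilde{\mathcal{K}}_{r,s}}$ and $\binom{\widetilde{\mathcal{K}}_{r,s-1}}{\mathcal{K}_{1,2}\,\widetilde{\mathcal{K}}_{r,s}}$, hence surjections $\mathcal{W}\twoheadrightarrow\widetilde{\mathcal{K}}_{r,s+1}$ and $\mathcal{W}\twoheadrightarrow\widetilde{\mathcal{K}}_{r,s-1}$. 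These force $\mathcal{W}(0)$ to surject onto the two one-dimensional spaces of weights $h_{r,s+1}$ and $h_{r,s-1}$; since $q\nmid s$ gives $h_{r,s+1}-h_{r,s-1}=-r+\tfrac{ps}{q}\notin\mathbb{Z}$ by \eqref{eqn:conf_wt_diff}, these weights are distinct, so combined with the upper bound $\dim\mathcal{W}(0)\leq 2$ of Proposition \ref{prop:lowest_weight_spaces}(1) we get $\dim\mathcal{W}(0)=2$, spanned by $L_0$-eigenvectors $w^{-},w^{+}$ of weights $h_{r,s-1},h_{r,s+1}$.

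The key structural step is to show that $\mathcal{W}$ is generated by $\mathcal{W}(0)$ and splits along the two incongruent cosets $h_{r,s\pm1}+\mathbb{Z}$. Because $\widetilde{v}_{1,2}=0$ in $\mathcal{K}_{1,2}$, the series $\mathcal{Y}_{\boxtimes}(v_{1,2},z)v_{r,s}$ satisfies the BPZ differential equation \eqref{eqn:diff_eqn} with $h_1=h_{r,s}$, whose indicial roots are exactly $h_{r,s\pm1}$; having no logarithmic terms here, it decomposes according to the two cosets as $\Sigma_{-}(z)+\Sigma_{+}(z)$, where $\Sigma_{\pm}$ is the Frobenius solution with leading coefficient $w^{\pm}$. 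Via the recursion \eqref{eqn:diff_eqn_recursion_simple}, all higher coefficients of $\Sigma_{\pm}$ lie in $U(\mathfrak{Vir}_-)w^{\pm}$: here I use that $\tfrac1t\sqrt{4th_{r,s}+(t-1)^2}=|{-r}+\tfrac{ps}{q}|\notin\mathbb{Z}_{\geq1}$ when $q\nmid s$, so the scalar $k\bigl(tk-\sqrt{4th_{r,s}+(t-1)^2}\bigr)$ never vanishes for $k\geq1$ and the recursion genuinely solves for each $\phi_k$. A standard reduction using the iterate formula \eqref{eqn:Vir_it_form}, the commutator formula \eqref{eqn:Vir_comm_form}, and the $L_{-1}$-derivative property shows $\mathcal{W}$ is generated by the coefficients of $\mathcal{Y}_{\boxtimes}(v_{1,2},z)v_{r,s}$; combining, $\mathcal{W}=U(\mathfrak{Vir})w^{-}+U(\mathfrak{Vir})w^{+}$ with all conformal weights in $(h_{r,s-1}+\mathbb{Z})\cup(h_{r,s+1}+\mathbb{Z})$. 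Since $\mathfrak{Vir}$ preserves the $\mathbb{C}/\mathbb{Z}$-grading and these two cosets are distinct, $\mathcal{W}=\mathcal{W}_A\oplus\mathcal{W}_B$ with $\mathcal{W}_A=U(\mathfrak{Vir})w^{-}$ and $\mathcal{W}_B=U(\mathfrak{Vir})w^{+}$, each a cyclic highest-weight module, hence a quotient of $\mathcal{V}_{r,s-1}$ and $\mathcal{V}_{r,s+1}$ respectively. Matching cosets, the two surjections restrict to $\mathcal{W}_A\twoheadrightarrow\widetilde{\mathcal{K}}_{r,s-1}$ and $\mathcal{W}_B\twoheadrightarrow\widetilde{\mathcal{K}}_{r,s+1}$.

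To finish I would compare cofinite dimensions. By Propositions \ref{prop:Miyamoto} and \ref{prop:C1-cofinite_dimension}, $\dim(\mathcal{W}/C_1(\mathcal{W}))\leq\dim(\mathcal{K}_{1,2}/C_1)\cdot\dim(\widetilde{\mathcal{K}}_{r,s}/C_1)=2\cdot rs$, while the surjections give $\dim(\mathcal{W}_A/C_1)\geq r(s-1)$ and $\dim(\mathcal{W}_B/C_1)\geq r(s+1)$. As $C_1$ respects the direct sum, $2rs\geq\dim(\mathcal{W}_A/C_1)+\dim(\mathcal{W}_B/C_1)\geq r(s-1)+r(s+1)=2rs$, forcing $\dim(\mathcal{W}_A/C_1)=r(s-1)$ and $\dim(\mathcal{W}_B/C_1)=r(s+1)$. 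Now $\mathcal{W}_B=\mathcal{V}_{r,s+1}/\mathcal{M}$ surjects onto $\widetilde{\mathcal{K}}_{r,s+1}=\mathcal{V}_{r,s+1}/\langle\widetilde{v}_{r,s+1}\rangle$, so $\mathcal{M}\subseteq\langle\widetilde{v}_{r,s+1}\rangle$; by the argument of Proposition \ref{prop:C1-cofinite_dimension} the cofinite dimension of $\mathcal{W}_B$ equals the conformal-weight gap from $h_{r,s+1}$ to the lowest singular vector of $\mathcal{M}$, and the value $r(s+1)$ forces that lowest singular vector to be $\widetilde{v}_{r,s+1}$ itself, whence $\mathcal{M}=\langle\widetilde{v}_{r,s+1}\rangle$ and $\mathcal{W}_B\cong\widetilde{\mathcal{K}}_{r,s+1}$. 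Symmetrically $\mathcal{W}_A\cong\widetilde{\mathcal{K}}_{r,s-1}$, which gives the claimed isomorphism.

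I anticipate the generation-and-splitting step to be the main obstacle: one must verify carefully that the recursion \eqref{eqn:diff_eqn_recursion_simple} really expresses every coefficient of $\mathcal{Y}_{\boxtimes}(v_{1,2},z)v_{r,s}$ through $U(\mathfrak{Vir}_-)$ acting on the two lowest coefficients, which is precisely where the hypothesis $q\nmid s$ enters through non-vanishing of the recursion scalars, and that the intertwining-operator identities indeed reduce all of $\mathcal{W}$ to the submodule generated by $\mathcal{W}(0)$. Once $\mathcal{W}$ is known to be a direct sum of two cyclic Verma quotients lying in distinct cosets, the cofinite-dimension bookkeeping that pins down each summand is routine.
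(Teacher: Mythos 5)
Your proposal is correct and its overall architecture coincides with the paper's: split $\mathcal{K}_{1,2}\boxtimes\widetilde{\mathcal{K}}_{r,s}$ into two summands along the incongruent weight cosets $h_{r,s\pm 1}+\mathbb{Z}$, identify each summand as a quotient of $\mathcal{V}_{r,s\pm1}$, use Theorem \ref{thm:K_tilde_intw_ops} to get surjections onto $\widetilde{\mathcal{K}}_{r,s\pm1}$, and then squeeze with the cofinite-dimension count $2rs\geq r(s-1)+r(s+1)$ exactly as in Propositions \ref{prop:Miyamoto} and \ref{prop:C1-cofinite_dimension}. The one place you genuinely diverge is the step showing each summand is cyclic over its lowest weight vector: you rerun the BPZ/Frobenius recursion \eqref{eqn:diff_eqn_recursion_simple} for $\mathcal{Y}_\boxtimes(v_{1,2},z)v_{r,s}$, whereas the paper avoids this entirely by applying Proposition \ref{prop:lowest_weight_spaces} to the quotient $\mathcal{W}_{r,s\pm1}/\mathrm{Im}\,\Pi_\pm$ (a nonzero quotient would carry a surjective intertwining operator and hence have lowest weight $h_{r,s\pm1}$, which is impossible since that weight space already lies in the image). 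Your route works but is longer and forces you to justify the assertion that $\mathcal{Y}_\boxtimes(v_{1,2},z)v_{r,s}$ has no $\log z$ terms, which you currently state without proof; it does follow (semisimplicity of $L_0$ on $\mathcal{W}(0)$ from Proposition \ref{prop:lowest_weight_spaces}(1) kills logs at $k=0$, and the nonvanishing recursion scalars kill them inductively for $k\geq1$), but the quotient trick sidesteps the issue altogether. The final identification $\mathcal{M}=\langle\widetilde{v}_{r,s+1}\rangle$ should be phrased as in the paper: the PBW argument of Proposition \ref{prop:C1-cofinite_dimension} shows $\mathcal{M}$ contains a nonzero vector of weight at most $h_{r,s+1}+r(s+1)$, and containment in $\langle\widetilde{v}_{r,s+1}\rangle$ then forces that vector to be a multiple of $\widetilde{v}_{r,s+1}$.
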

\begin{proof}
The second statement follows from the first because $c_{p,q}=c_{q,p}$. To prove the first statement, the conformal weights of $\mathcal{K}_{1,2}\boxtimes\widetilde{\mathcal{K}}_{r,s}$ are contained in $( h_{r,s-1}+\mathbb{Z}_{\geq 0})\cup( h_{r,s+1}+\mathbb{Z}_{\geq 0})$ by Proposition \ref{prop:lowest_weight_spaces}. Since we assume $q\nmid s$, we have $h_{r,s-1}-h_{r,s+1}= r-\frac{ps}{q}\notin\mathbb{Z}$, and therefore there is a direct sum decomposition
 \begin{equation*}
 \mathcal{K}_{1,2}\boxtimes\widetilde{\mathcal{K}}_{r,s} =\mathcal{W}_{r,s-1}\oplus\mathcal{W}_{r,s+1}
 \end{equation*}
 such that the conformal weights of $\mathcal{W}_{r,s\pm 1}$ are contained in $h_{r,s\pm 1}+\mathbb{Z}_{\geq 0}$. Moreover, if either of $\mathcal{W}_{r,s\pm 1}\neq 0$, then its lowest conformal weight is precisely $h_{r,s\pm 1}$ and its lowest conformal weight space is one dimensional. Thus there are $\mathfrak{Vir}$-homomorphisms $\Pi_{\pm}: \mathcal{V}_{r,s\pm 1}\rightarrow\mathcal{W}_{r,s\pm 1}$. Each of $\Pi_{\pm}$ is surjective. Indeed, since there is a surjective intertwining operator of type $\binom{\mathcal{W}_{r,s\pm 1}/\mathrm{Im}\,\Pi_{\pm}}{\mathcal{K}_{1,2}\,\,\,\widetilde{\mathcal{K}}_{r,s}}$, Proposition \ref{prop:lowest_weight_spaces} implies that if $\mathcal{W}_{r,s\pm 1}/\mathrm{Im}\,\Pi_{\pm}$ were non-zero, then its lowest conformal weight would be $h_{r,s\pm 1}$. But this is impossible since the one-dimensional conformal weight $h_{r,s\pm 1}$ subspace of $\mathcal{W}_{r,s\pm 1}$ is contained in $\mathrm{Im}\,\Pi_{\pm}$. It follows that
 \begin{equation*}
 \mathcal{K}_{1, 2} \boxtimes \widetilde{\mathcal{K}}_{r, s} \cong (\mathcal{V}_{r, s-1}/\mathcal{J}^{-}) \oplus (\mathcal{V}_{r, s+1}/\mathcal{J}^{+})
 \end{equation*}
 for some submodules $\mathcal{J}^\pm\subseteq\mathcal{V}_{r,s\pm 1}$.
 
 Now since $q\nmid s$, Theorem \ref{thm:K_tilde_intw_ops} shows there is a surjective intertwining operator of type $\binom{\widetilde{\mathcal{K}}_{r,s\pm 1}}{\mathcal{K}_{1,2}\,\widetilde{\mathcal{K}}_{r,s}}$ for either sign choice. Thus the universal property of the tensor product $\mathcal{K}_{1,2}\boxtimes\widetilde{\mathcal{K}}_{r,s}$ induces surjective $\mathfrak{Vir}$-homomorphisms 
 \begin{equation*}
 \mathcal{V}_{r,s\pm 1}/\mathcal{J}^{\pm}\longrightarrow \widetilde{\mathcal{K}}_{r,s\pm 1}
 \end{equation*}
for both sign choices. That is, $\mathcal{J}^\pm$ is contained in the submodule of $\mathcal{V}_{r,s\pm 1}$ generated by the singular vector of conformal weight $h_{r,s\pm 1}+r(s\pm 1)$.

 To prove the theorem, we need to show that the above surjections are isomorphisms, and to prove this, we will use cofinite dimensions. From Propositions \ref{prop:Miyamoto}, \ref{prop:C1_quotient}, and \ref{prop:C1-cofinite_dimension}, we get
 \begin{align*}
 2rs \geq\dim_{C_1}(\mathcal{K}_{1,2}\boxtimes\widetilde{\mathcal{K}}_{r,s}) &=\dim_{C_1}(\mathcal{V}_{r,s-1}/\mathcal{J}^-)+\dim_{C_1}(\mathcal{V}_{r,s+1}/\mathcal{J}^+)\nonumber\\
 &\geq \dim_{C_1}(\widetilde{\mathcal{K}}_{r,s-1})+\dim_{C_1}(\widetilde{\mathcal{K}}_{r,s+1}) = r(s-1)+r(s+1)=2rs,
\end{align*} 
where the second inequality follows because there are surjections $\mathcal{V}_{r,s\pm 1}/\mathcal{J}^{\pm}\rightarrow \widetilde{\mathcal{K}}_{r,s\pm 1}$. Thus all inequalities are equalities and we get $\dim_{C_1}(\mathcal{V}_{r,s\pm 1}/\mathcal{J}^\pm)=\dim_{C_1}(\widetilde{\mathcal{K}}_{r,s\pm 1})$ for both sign choices. Then since $\mathcal{V}_{r,s\pm 1}/\mathcal{J}^{\pm}$ is spanned by vectors $L_{-1}^n v_{r,s\pm 1}$ modulo $C_1(\mathcal{V}_{r,s\pm 1}/\mathcal{J}^\pm)$, and since $C_1(\mathcal{V}_{r,s\pm 1}/\mathcal{J}^\pm)$ is graded, we get $L_{-1}^n v_{r,s\pm 1}\in C_1(\mathcal{V}_{r,s\pm 1}/\mathcal{J}^\pm)$ for some $n\leq r(s\pm 1)$. Then as in the proof of Proposition \ref{prop:C1-cofinite_dimension}, the PBW basis of $\mathcal{V}_{r,s\pm 1}$ implies that $\mathcal{J}^\pm$ contains a non-zero vector of conformal weight $h_{r,s\pm 1}+n$ for some $n\leq r(s\pm 1)$. But since $\mathcal{V}_{r,s\pm 1}/\mathcal{J}^\pm$ surjects onto $\widetilde{\mathcal{K}}_{r,s\pm 1}$, the submodule $\mathcal{J}^\pm$ is contained in the submodule generated by the singular vector in $\mathcal{V}_{r,s\pm 1}$ of conformal weight $h_{r,s\pm 1}+r(s\pm 1)$. Consequently, $\mathcal{J}^\pm$ contains the singular vector of weight $h_{r,s\pm 1}+r(s\pm 1)$, and then $\mathcal{V}_{r,s\pm 1}/\mathcal{J}^\pm = \widetilde{\mathcal{K}}_{r,s\pm 1}$.
\end{proof}

Since $\widetilde{\mathcal{K}}_{r,s}=\mathcal{K}_{r,s}$ when $r\leq p$ or $s\leq q$, the above theorem yields:
\begin{Corollary}\label{cor:1st_Kac_module_fusion}
For $1\leq r\leq p$ and $s\in\mathbb{Z}_{\geq 1}$ such that $q\nmid s$, and for $r\in\mathbb{Z}_{\geq 1}$ and $1\leq s\leq q-1$,
\begin{equation*}
\mathcal{K}_{1,2}\boxtimes\mathcal{K}_{r,s}\cong\mathcal{K}_{r,s-1}\oplus\mathcal{K}_{r,s+1}.
\end{equation*}
Similarly, for $r\in\mathbb{Z}_{\geq 1}$ such that $p\nmid r$ and $1\leq s\leq q$, and for $1\leq r\leq p-1$ and $s\in\mathbb{Z}_{\geq 1}$,
 \begin{equation*}
\mathcal{K}_{2,1}\boxtimes\mathcal{K}_{r,s}\cong\mathcal{K}_{r-1,s}\oplus\mathcal{K}_{r+1,s}.
\end{equation*}
\end{Corollary}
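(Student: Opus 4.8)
The plan is to obtain this corollary as an immediate consequence of the preceding theorem, which already decomposes $\mathcal{K}_{1,2}\boxtimes\widetilde{\mathcal{K}}_{r,s}$ for $q\nmid s$ and $\mathcal{K}_{2,1}\boxtimes\widetilde{\mathcal{K}}_{r,s}$ for $p\nmid r$. The only additional input is the identification $\widetilde{\mathcal{K}}_{a,b}\cong\mathcal{K}_{a,b}$, which holds precisely when $a\leq p$ or $b\leq q$ (Remark \ref{rem:Krs_Verma_quot}). So the entire task reduces to checking, in each stated index range, that this identification is valid simultaneously for the tensorand $\widetilde{\mathcal{K}}_{r,s}$ and for both summands appearing on the right-hand side.

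First I would dispatch the $\mathcal{K}_{1,2}$ statement. When $1\leq r\leq p$ and $q\nmid s$, the condition $q\nmid s$ lets me apply the preceding theorem, while the single inequality $r\leq p$ forces $\widetilde{\mathcal{K}}_{r,s}\cong\mathcal{K}_{r,s}$ and $\widetilde{\mathcal{K}}_{r,s\pm 1}\cong\mathcal{K}_{r,s\pm 1}$ at once, since the first index is $\leq p$ in all three modules (and if $s-1=0$ both sides are zero). When instead $r\geq 1$ and $1\leq s\leq q-1$, the bound $s\leq q-1$ already guarantees $q\nmid s$, so the theorem applies again; and since $s-1$, $s$, and $s+1$ are all $\leq q$ (using $s+1\leq q$), the second index controls the identification in all three terms. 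Substituting into the isomorphism of the preceding theorem yields the asserted decomposition.

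The $\mathcal{K}_{2,1}$ statement follows by the same argument after using $c_{p,q}=c_{q,p}$ to exchange the roles of $(p,r)$ and $(q,s)$, but I would record the direct check as well: for $p\nmid r$ with $1\leq s\leq q$ the bound $s\leq q$ gives the identification via the second index in $\widetilde{\mathcal{K}}_{r-1,s}$, $\widetilde{\mathcal{K}}_{r,s}$, and $\widetilde{\mathcal{K}}_{r+1,s}$, whereas for $1\leq r\leq p-1$ with arbitrary $s$ the bound $r\leq p-1$ both supplies $p\nmid r$ and makes the first index $\leq p$ in all three terms.

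I do not expect any real obstacle here: all the substantive content—the direct-sum splitting by congruence classes of conformal weights, the surjectivity of the relevant intertwining operators, and the cofinite-dimension count—is already contained in the preceding theorem. The only thing requiring care is the bookkeeping of index ranges, namely verifying that in each of the four listed cases at least one of the two conditions $a\leq p$ or $b\leq q$ holds for the tensorand and for both summands; the mild boundary points ($s-1=0$ or $r-1=0$, and the edge values $s+1=q$ or $r+1=p$) are consistent with the identification and cause no trouble.
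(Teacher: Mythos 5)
Your proposal is correct and matches the paper's own argument exactly: the corollary is deduced from the preceding theorem by invoking the identification $\widetilde{\mathcal{K}}_{a,b}\cong\mathcal{K}_{a,b}$ when $a\leq p$ or $b\leq q$ (Remark \ref{rem:Krs_Verma_quot}), and your index bookkeeping in each of the four cases, including the boundary values $s-1=0$ and $s+1=q$, is accurate.
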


\begin{Remark}\label{rem:1st_Kac_module_fusion}
Since $\mathcal{K}_{r,0}=\mathcal{K}_{0,s}=0$ for $r,s\in\mathbb{Z}_{\geq 0}$, Corollary \ref{cor:1st_Kac_module_fusion} yields
\begin{equation*}
\mathcal{K}_{1,2}\boxtimes\mathcal{K}_{r,1}\cong\mathcal{K}_{r,2},\qquad\quad\mathcal{K}_{2,1}\boxtimes\mathcal{K}_{1,s}\cong\mathcal{K}_{2,s}
\end{equation*} 
for $r,s\geq 1$.
\end{Remark}

 The next result is based on Corollary \ref{cor:1st_Kac_module_fusion} and Remark \ref{rem:1st_Kac_module_fusion}:
\begin{Proposition}\label{prop:Kr1_K1s_small_r_or_s}
Let $r,s\in\mathbb{Z}_{\geq 1}$. If either $r\leq p$ or $s\leq q$, then $\mathcal{K}_{r,1}\boxtimes\mathcal{K}_{1,s}\cong\mathcal{K}_{r,s}$.
\end{Proposition}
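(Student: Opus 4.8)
The plan is to prove the isomorphism by induction, using only the fusion rules already recorded in Corollary \ref{cor:1st_Kac_module_fusion}, the braided tensor structure of $\mathcal{O}_{c_{p,q}}$, and a cancellation argument, so that no new intertwining-operator construction is needed. First I would reduce to the regime $s\leq q$. Since $c_{p,q}=c_{q,p}$, interchanging the roles of $(r,p)$ and $(s,q)$ turns the assertion for all $r$ with $s\leq q$ into the assertion for all $s$ with $r\leq p$, and the braiding isomorphism gives $\mathcal{K}_{r,1}\boxtimes\mathcal{K}_{1,s}\cong\mathcal{K}_{1,s}\boxtimes\mathcal{K}_{r,1}$. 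These two regimes together cover exactly the hypothesis ``$r\leq p$ or $s\leq q$'', so it suffices to prove that $\mathcal{K}_{r,1}\boxtimes\mathcal{K}_{1,s}\cong\mathcal{K}_{r,s}$ for every $r\geq 1$ and every $1\leq s\leq q$.

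I would run the induction on $s$. For the base case $s=1$, recall that $\mathcal{K}_{1,1}=\mathcal{V}_{1,1}/\langle L_{-1}\mathbf{1}\rangle=V_{c_{p,q}}$ is the unit object, so $\mathcal{K}_{r,1}\boxtimes\mathcal{K}_{1,1}\cong\mathcal{K}_{r,1}$; the degenerate case $s=0$ holds trivially because $\mathcal{K}_{1,0}=0=\mathcal{K}_{r,0}$ (and Remark \ref{rem:1st_Kac_module_fusion} provides the $s=2$ instance as a consistency check). For the inductive step, fix $1\leq s\leq q-1$ and assume the isomorphism both for $s-1$ and for $s$. The idea is to evaluate $\mathcal{K}_{r,1}\boxtimes(\mathcal{K}_{1,2}\boxtimes\mathcal{K}_{1,s})$ in two ways. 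On one hand, the $r=1$ case of Corollary \ref{cor:1st_Kac_module_fusion} gives $\mathcal{K}_{1,2}\boxtimes\mathcal{K}_{1,s}\cong\mathcal{K}_{1,s-1}\oplus\mathcal{K}_{1,s+1}$, so by additivity of $\boxtimes$ and the inductive hypothesis for $s-1$,
\begin{equation*}
\mathcal{K}_{r,1}\boxtimes(\mathcal{K}_{1,2}\boxtimes\mathcal{K}_{1,s})\cong\mathcal{K}_{r,s-1}\oplus(\mathcal{K}_{r,1}\boxtimes\mathcal{K}_{1,s+1}).
\end{equation*}
On the other hand, using the associativity and braiding isomorphisms of $\mathcal{O}_{c_{p,q}}$ to move $\mathcal{K}_{1,2}$ to the front, then the inductive hypothesis for $s$, and finally the general-$r$ case of Corollary \ref{cor:1st_Kac_module_fusion} (which applies since $1\leq s\leq q-1$),
\begin{equation*}
\mathcal{K}_{r,1}\boxtimes(\mathcal{K}_{1,2}\boxtimes\mathcal{K}_{1,s})\cong\mathcal{K}_{1,2}\boxtimes(\mathcal{K}_{r,1}\boxtimes\mathcal{K}_{1,s})\cong\mathcal{K}_{1,2}\boxtimes\mathcal{K}_{r,s}\cong\mathcal{K}_{r,s-1}\oplus\mathcal{K}_{r,s+1}.
\end{equation*}
Comparing the two computations yields $\mathcal{K}_{r,s-1}\oplus(\mathcal{K}_{r,1}\boxtimes\mathcal{K}_{1,s+1})\cong\mathcal{K}_{r,s-1}\oplus\mathcal{K}_{r,s+1}$.

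The final step is to cancel the common summand $\mathcal{K}_{r,s-1}$ and conclude $\mathcal{K}_{r,1}\boxtimes\mathcal{K}_{1,s+1}\cong\mathcal{K}_{r,s+1}$, which closes the induction. I expect the point needing the most care to be the justification of this cancellation, which rests on the Krull--Schmidt property of $\mathcal{O}_{c_{p,q}}$: because every object of $\mathcal{O}_{c_{p,q}}$ has finite length, Fitting's lemma shows that the endomorphism ring of any indecomposable object is local, so the Krull--Schmidt--Azumaya theorem applies and gives $A\oplus C\cong B\oplus C\Rightarrow A\cong B$. The remaining bookkeeping is routine but should be stated explicitly: one must verify that the hypotheses of Corollary \ref{cor:1st_Kac_module_fusion} hold at each stage (which is exactly why the induction is run only up to $s=q$, invoking $1\leq s\leq q-1$ in the step), and that the reordering $\mathcal{K}_{r,1}\boxtimes(\mathcal{K}_{1,2}\boxtimes\mathcal{K}_{1,s})\cong\mathcal{K}_{1,2}\boxtimes(\mathcal{K}_{r,1}\boxtimes\mathcal{K}_{1,s})$ is a genuine isomorphism assembled from the associativity and braiding isomorphisms. (A more computational alternative would start from the surjection $F_{r,1;1,s}\colon\mathcal{K}_{r,1}\boxtimes\mathcal{K}_{1,s}\to\mathcal{K}_{r,s}$ of Theorem \ref{thm:Fock_module_associativity} and compare cofinite dimensions via Propositions \ref{prop:Miyamoto} and \ref{prop:C1-cofinite_dimension}, but there the injectivity of the surjection is less transparent, so I would prefer the inductive cancellation argument above.)
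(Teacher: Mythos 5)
Your proof is correct and is essentially the paper's own argument reflected through the $c_{p,q}=c_{q,p}$ symmetry: the paper reduces to $r\leq p$, inducts on $r$, and compares the two evaluations of $\mathcal{K}_{2,1}\boxtimes(\mathcal{K}_{r,1}\boxtimes\mathcal{K}_{1,s})$ via Corollary \ref{cor:1st_Kac_module_fusion}, exactly mirroring your induction on $s$ with $\mathcal{K}_{1,2}$. The only cosmetic difference is the cancellation step, where the paper identifies both sides with the direct summand of $\mathcal{K}_{2,1}\boxtimes\mathcal{K}_{r,s}$ whose conformal weights are congruent to $h_{r+1,s}$ mod $\mathbb{Z}$ (possible since $q\nmid s$, resp.\ $p\nmid r$, separates the two summands), whereas your appeal to Krull--Schmidt in the finite-length category $\mathcal{O}_{c_{p,q}}$ is an equally valid justification.
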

\begin{proof}
By $c_{p,q}=c_{q,p}$ symmetry, it is enough to consider the case $r\leq p$. We have $\mathcal{K}_{1,1}\boxtimes\mathcal{K}_{1,s}\cong\mathcal{K}_{1,s}$ because $\mathcal{K}_{1,1}$ is the unit object of $\mathcal{O}_{c_{p,q}}$, and the $r=2$ case of the proposition is Remark \ref{rem:1st_Kac_module_fusion}. Now assume by induction on $r$ that for some $r\in\lbrace 2,\ldots p-1\rbrace$, $\mathcal{K}_{\rho,1}\boxtimes\mathcal{K}_{1,s}\cong\mathcal{K}_{\rho,s}$ for all $\rho\leq r$. Then by Corollary \ref{cor:1st_Kac_module_fusion} and the induction hypothesis,
\begin{align*}
\mathcal{K}_{r-1,s} \oplus\mathcal{K}_{r+1,s}  \cong\mathcal{K}_{2,1}\boxtimes\mathcal{K}_{r,s} & \cong \mathcal{K}_{2,1}\boxtimes (\mathcal{K}_{r,1}\boxtimes\mathcal{K}_{1,s})\cong(\mathcal{K}_{2,1}\boxtimes\mathcal{K}_{r,1})\boxtimes\mathcal{K}_{1,s}\nonumber\\
&\cong(\mathcal{K}_{r-1,1}\oplus\mathcal{K}_{r+1,1})\boxtimes\mathcal{K}_{1,s}\cong\mathcal{K}_{r-1,s}\oplus(\mathcal{K}_{r+1,1}\boxtimes\mathcal{K}_{1,s}).
\end{align*}
Thus $\mathcal{K}_{r+1,1}\boxtimes\mathcal{K}_{1,s}\cong\mathcal{K}_{r+1,s}$ (both are isomorphic to the direct summand of $\mathcal{K}_{2,1}\boxtimes\mathcal{K}_{r,s}$ consisting of vectors whose weights are congruent to $h_{r+1,s}$ mod $\mathbb{Z}$). This completes the induction.
\end{proof}

This paper will culminate in generalizations of Corollary \ref{cor:1st_Kac_module_fusion} and Proposition \ref{prop:Kr1_K1s_small_r_or_s} to all $r,s\in\mathbb{Z}_{\geq 1}$, but we need to prove that $\mathcal{K}_{1,2}$ and $\mathcal{K}_{2,1}$ are rigid first.

\subsection{Rigidity of \texorpdfstring{$\mathcal{K}_{1,2}$}{K12} and \texorpdfstring{$\mathcal{K}_{2,1}$}{K21}}

Recall that an object in a tensor category is called rigid if it has both left and right duals, each of which comes with evaluation and coevaluation morphisms that satisfy the so-called ``zig-zag'' relations, and a tensor category is called rigid if every object is rigid (for more details on rigidity in tensor categories, see for example \cite[Chapter 2]{BK}). In vertex algebraic tensor categories such as $\mathcal{O}_{c_{p,q}}$, left duals and right duals are the same due to the braiding and natural twist automorphism $\theta=e^{2\pi i L_0}$. Thus, recalling that the unit object of $\mathcal{O}_{c_{p,q}}$ is the Kac module $\mathcal{K}_{1,1}$, which is isomorphic to the universal vertex operator algebra $V_{c_{p,q}}$ as a $\mathfrak{Vir}$-module, an object $X$ of $\mathcal{O}_{c_{p,q}}$ is rigid with dual $X^*$ if there are evaluation and coevaluation morphisms
\begin{equation*}
e_X: X^*\boxtimes X\longrightarrow\mathcal{K}_{1,1},\qquad\quad i_X:\mathcal{K}_{1,1}\longrightarrow X\boxtimes X^*,
\end{equation*}
such that both the following compositions are identities:
\begin{align*}
X \xrightarrow{\cong} \mathcal{K}_{1,1}\boxtimes X\xrightarrow{i_X\boxtimes\mathrm{Id}} (X\boxtimes X^*)\boxtimes X & \xrightarrow{\cong} X\boxtimes(X^*\boxtimes X)\xrightarrow{\mathrm{Id}\boxtimes e_X} X\boxtimes\mathcal{K}_{1,1}\xrightarrow{\cong} X\\
 X^* \xrightarrow{\cong} X^*\boxtimes\mathcal{K}_{1,1}\xrightarrow{\mathrm{Id}\boxtimes i_X} X^*\boxtimes (X\boxtimes X^*) & \xrightarrow{\cong} (X^*\boxtimes X)\boxtimes X^*\xrightarrow{e_X\boxtimes\mathrm{Id}} \mathcal{K}_{1,1}\boxtimes X^*\xrightarrow{\cong} X^*
\end{align*}
In this section, we will show that certain Kac modules are rigid, but that the tensor category $\mathcal{O}_{c_{p,q}}$ as a whole is not rigid.

\begin{Remark} \label{rem:K11_rigid}
The module $\mathcal{K}_{1, 1}$ is rigid since it is the unit object of $\mathcal{O}_{c_{p, q}}$. \end{Remark}

We will now show that $\mathcal{K}_{1,2}$ is rigid in $\mathcal{O}_{c_{p,q}}$; because $c_{p,q}=c_{q,p}$, this will also show that $\mathcal{K}_{2,1}$ is rigid. The proof is based on \cite[Section 4.1]{MY-cp1-vir}, which in turn is based on \cite{TW}, but some adjustments are needed because the vertex operator algebra $V_{c_{p,q}}$ is neither simple nor self-contragredient. In particular, we will find that the dual of $\mathcal{K}_{1,2}$ is not the contragredient module $\mathcal{K}_{1,2}'$, but that rather $\mathcal{K}_{1,2}$ is self-dual.

We first construct coevaluation and evaluation candidates $i:\mathcal{K}_{1,1}\rightarrow\mathcal{K}_{1,2}\boxtimes\mathcal{K}_{1,2}$ and $e: \mathcal{K}_{1,2}\boxtimes\mathcal{K}_{1,2}\rightarrow\mathcal{K}_{1,1}$. For $i$, the same construction as in \cite[Section 4.1]{MY-cp1-vir} works, so we just recall the derivation here. First, by Proposition \ref{prop:lowest_weight_spaces}, the lowest conformal weight(s) of $\mathcal{K}_{1,2}\boxtimes\mathcal{K}_{1,2}$ are $h_{1,1}=0$ and/or $h_{1,3}=2\big(\frac{p}{q}-1\big)$. Thus when we give $\mathcal{K}_{1,2}\boxtimes\mathcal{K}_{1,2}$ the $\mathbb{Z}_{\geq 0}$-grading of Section \ref{subsec:Zhu_theory}, the space of conformal weight $0$ is contained in $(\mathcal{K}_{1,2}\boxtimes\mathcal{K}_{1,2})(0)$; as usual, let $\pi_0$ denote the projection to the latter subspace. Using $\mathcal{Y}_\boxtimes$ to denote the tensor product intertwining operator of type $\binom{\mathcal{K}_{1,2}\boxtimes\mathcal{K}_{1,2}}{\mathcal{K}_{1,2}\,\mathcal{K}_{1,2}}$, Nahm's argument in \cite{Na} implies that $(\mathcal{K}_{1,2}\boxtimes\mathcal{K}_{1,2})(0)$ is spanned by vectors of the form $\pi_0(\mathcal{Y}_\boxtimes(t,1)u)$ where $u\in\mathcal{K}_{1,2}(0)$ and $t$ comes from a special subspace $T\subseteq\mathcal{K}_{1,2}$ such that $\mathcal{K}_{1,2}=T+C_1(\mathcal{K}_{1,2})$ (recall the discussion following Proposition \ref{prop:surj_intw_op}). Since we may take $T=\mathrm{span}(v_{1,2}, L_{-1} v_{1,2})$ by Proposition \ref{prop:C1-cofinite_dimension}, $(\mathcal{K}_{1,2}\boxtimes\mathcal{K}_{1,2})(0)$ is spanned by $\pi_0(\mathcal{Y}_\boxtimes(v_{1,2},1)v_{1,2})$ and $\pi_0(\mathcal{Y}_\boxtimes(L_{-1} v_{1,2},1)v_{1,2})$.

It is not difficult to calculate how $L_0$ acts on $\pi_0(\mathcal{Y}_\boxtimes(v_{1,2},1)v_{1,2})$ and $\pi_0(\mathcal{Y}_\boxtimes(L_{-1} v_{1,2},1)v_{1,2})$; such calculations are typical in computing fusion products with the Nahm-Gaberdiel-Kausch algorithm \cite{Na, GK}, and in the vertex algebraic setting, they can also be performed using the Zhu algebra (see for example \cite[Proposition 3.4]{MY-cp1-vir}). One can show in particular that $-\pi_0\left(\mathcal{Y}_\boxtimes(v_{1, 2},1) v_{1, 2}\right)) + \frac{2q}{p} \pi_0\left(\mathcal{Y}_\boxtimes(L_{-1}v_{1, 2}, 1) v_{1, 2}\right)$ is (if non-zero) an $L_0$-eigenvector of conformal weight $0$, so there is a $\mathfrak{Vir}$-homomorphism
\begin{align}\label{eqn:coevaluation}
\mathcal{V}_{1,1} & \rightarrow\mathcal{K}_{1,2}\boxtimes\mathcal{K}_{1,2}\nonumber\\
\mathbf{1} & \mapsto -\pi_0\left(\mathcal{Y}_\boxtimes(v_{1, 2},1) v_{1, 2}\right)) + \frac{2q}{p} \pi_0\left(\mathcal{Y}_\boxtimes(L_{-1}v_{1, 2}, 1) v_{1, 2}\right),
\end{align}
where we denote $v_{1,1}=\mathbf{1}$.
Since $\mathcal{K}_{1,1}\cong\mathcal{V}_{1,1}/\langle L_{-1}\mathbf{1}\rangle$, this map will descend to a well-defined $\mathfrak{Vir}$-module homomorphism $i: \mathcal{K}_{1,1}\rightarrow\mathcal{K}_{1,2}\boxtimes\mathcal{K}_{1,2}$ if $L_{-1}\mathbf{1}$ is in the kernel. Indeed, this is proved by the same calculation as in \cite[Section 4.1]{MY-cp1-vir}, which uses the commutator formula \eqref{eqn:Vir_comm_form}, the iterate formula \eqref{eqn:Vir_it_form}, and the fact that $\big(L^2_{-1}-\frac{p}{q}L_{-2}\big)v_{1, 2}=0$ in $\mathcal{K}_{1,2}$.

\begin{Remark}
It is not clear from \eqref{eqn:coevaluation} that our coevaluation candidate $i:\mathcal{K}_{1,1}\rightarrow\mathcal{K}_{1,2}\boxtimes\mathcal{K}_{1,2}$ is necessarily non-zero. This follows from Corollary \ref{cor:1st_Kac_module_fusion} in case $q>2$, but for $q=2$, we will not be able to conclude $i\neq 0$ until the rigidity proof is complete. In any case, it will not be necessary to know $i\neq 0$ in order to prove that $\mathcal{K}_{1,2}$ is rigid.
\end{Remark}

The evaluation candidate depends on whether $q>2$ or $q=2$. If $q>2$, then  Theorem \ref{thm:K_tilde_intw_ops} shows that there is a surjective intertwining operator $\mathcal{E}$ of type $\binom{\mathcal{K}_{1,1}}{\mathcal{K}_{1,2}\,\mathcal{K}_{1,2}}$, and we define $e:\mathcal{K}_{1,2}\boxtimes\mathcal{K}_{1,2}\rightarrow\mathcal{K}_{1,1}$ to be the unique  homomorphism such that $e\circ\mathcal{Y}_\boxtimes =\mathcal{E}$. Moreover,  the proof of Theorem \ref{thm:gen_Verma_intw_ops} shows that the coefficient of the lowest power of $z$ in $\mathcal{E}(v_{1,2},z)v_{1,2}$ is a non-zero vector of minimal conformal weight in $\mathcal{K}_{1,1}$. Thus by rescaling if necessary,
\begin{equation}\label{eqn:E_for_q>2}
\mathcal{E}(v_{1,2},z)v_{1,2} = z^{-2h_{1,2}}(\mathbf{1}+z E(z))
\end{equation}
where $E(z)\in\mathcal{K}_{1,1}[[z]]$. 

In case $q=2$, Theorem \ref{thm:K_tilde_intw_ops} shows that there is a surjective intertwining operator of type $\binom{\mathcal{K}_{1,3}}{\mathcal{K}_{1,2}\,\mathcal{K}_{1,2}}$; moreover, Section \ref{subsec:FF_and_K_modules} shows that the structure of $\mathcal{K}_{1,1}$ is given by the exact sequence
\begin{equation*}
0\longrightarrow\mathcal{L}_{1,3}\longrightarrow\mathcal{K}_{1,1}\longrightarrow\mathcal{L}_{1,1}\longrightarrow 0.
\end{equation*}
Thus we get a non-zero but non-surjective intertwining operator
\begin{equation*}
\mathcal{E}: \mathcal{K}_{1,2}\boxtimes\mathcal{K}_{1,2}\rightarrow\mathcal{K}_{1,3}\lbrace z\rbrace\twoheadrightarrow\mathcal{L}_{1,3}\lbrace z\rbrace\hookrightarrow\mathcal{K}_{1,1}\lbrace z\rbrace,
\end{equation*}
and we define $e$ so that $e\circ\mathcal{Y}_\boxtimes=\mathcal{E}$. In this case, the coefficient of the lowest power of $z$ in $\mathcal{E}(v_{1,2},z)v_{1,2}$ is a singular vector $\tilde{v}_{1,3}$ generating the maximal proper submodule of $\mathcal{K}_{1,1}$, and
\begin{equation}\label{eqn:E_for_q=2}
\mathcal{E}(v_{1,2},z)v_{1,2} = z^{h_{1,3}-2h_{1,2}}(\tilde{v}_{1,3}+z E(z)),
\end{equation}
for some $E(z)\in\mathcal{L}_{1,3}[[z]]\subseteq\mathcal{K}_{1,1}[[z]]$.

\begin{Theorem}\label{thm:rigidity}
The Kac modules $\mathcal{K}_{1, 2}$ and $\mathcal{K}_{2,1}$ are rigid and self-dual in $\mathcal{O}_{c_{p, q}}$.
\end{Theorem}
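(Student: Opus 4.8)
The plan is to adapt the Tsuchiya--Wood rigidity argument in the form used in \cite{TW, CMY-singlet, MY-cp1-vir}: I will show that the ``zig-zag'' compositions built from the coevaluation $i$ of \eqref{eqn:coevaluation} and the evaluation $e$ of \eqref{eqn:E_for_q>2}--\eqref{eqn:E_for_q=2} are \emph{nonzero} scalar multiples of the identity. The starting observation is that $\mathcal{K}_{1,2}$ is generated by its one-dimensional lowest conformal weight space $\mathbb{C}v_{1,2}$, so any endomorphism in $\mathcal{O}_{c_{p,q}}$ sends $v_{1,2}$ to a multiple of itself and is therefore that scalar; thus $\mathrm{End}_{\mathcal{O}_{c_{p,q}}}(\mathcal{K}_{1,2})=\mathbb{C}$. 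Consequently the first zig-zag composition $\mathcal{K}_{1,2}\to\mathcal{K}_{1,2}$ (the one displayed above with $X=\mathcal{K}_{1,2}$) is automatically $\lambda\cdot\mathrm{Id}_{\mathcal{K}_{1,2}}$ for some $\lambda\in\mathbb{C}$, and the entire content of the theorem is the claim $\lambda\neq 0$. Once this is known, a standard argument in braided ribbon categories (the dual candidate $\mathcal{K}_{1,2}$ also has endomorphism algebra $\mathbb{C}$, and the two zig-zags are related through the braiding \eqref{eqn:braiding} and the twist $\theta=e^{2\pi i L_0}$; see \cite{CMY-singlet, MY-cp1-vir}) forces the second zig-zag to be a nonzero scalar as well, after which rescaling $e$ yields genuine evaluation and coevaluation morphisms. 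This exhibits $\mathcal{K}_{1,2}$ as rigid with dual $\mathcal{K}_{1,2}$ itself, i.e. self-dual.

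To compute $\lambda$, I would evaluate the composition on $v_{1,2}$ and pair against the generator of the one-dimensional top of the graded dual $\mathcal{K}_{1,2}'$. Using the analytic realization of the associativity isomorphism (\cite[Section 3.3]{CKM-exts}, \cite{HLZ5}), this matrix coefficient becomes the boundary value of a product of two intertwining operators applied to four copies of $v_{1,2}$, i.e. a four-point conformal block of the schematic form
\[
G(z)=\langle w',\,\mathcal{Y}_1(v_{1,2},z)\,\mathcal{Y}_2(v_{1,2},1)\,v_{1,2}\rangle,
\]
where $\mathcal{Y}_1$ and $\mathcal{Y}_2$ are the intertwining operators underlying $i$ and $e$ and $w'$ generates the top of a contragredient Verma module. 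Because $v_{1,2}$ is degenerate at level two, $(L_{-1}^2-\tfrac{p}{q}L_{-2})v_{1,2}=0$, the commutator and iterate formulas \eqref{eqn:Vir_comm_form}--\eqref{eqn:Vir_it_form} force $G(z)$ to satisfy a second-order Fuchsian (BPZ) differential equation in the cross-ratio, which after extracting the leading prefactor is a Gauss hypergeometric equation with exponents at the regular singular points $z=0,1,\infty$ determined by the admissible intermediate channels $h_{1,1}=0$ and $h_{1,3}=2\big(\tfrac{p}{q}-1\big)$. The coevaluation \eqref{eqn:coevaluation} selects the solution with the $h_{1,1}$-channel behavior as $z\to 0$ (here the explicit coefficients $-1$ and $2q/p$ ensure cancellation of the would-be $L_{-1}\mathbf{1}$ term, as noted after \eqref{eqn:coevaluation}), while the evaluation selects the $z\to 1$ behavior in the channel dictated by \eqref{eqn:E_for_q>2} or \eqref{eqn:E_for_q=2}; the scalar $\lambda$ is then the product of the two hypergeometric connection coefficients relating these two solution bases.

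Explicitly, the connection coefficients are the classical Gauss ratios
\[
\frac{\Gamma(c)\,\Gamma(c-a-b)}{\Gamma(c-a)\,\Gamma(c-b)}
\qquad\text{and}\qquad
\frac{\Gamma(c)\,\Gamma(a+b-c)}{\Gamma(a)\,\Gamma(b)},
\]
with parameters $a,b,c$ expressed through $t=q/p$ and the exponent differences $h_{1,3}-2h_{1,2}$ and $h_{1,1}-2h_{1,2}$. The main obstacle is exactly the evaluation of these Gamma-function ratios and the verification that $\lambda$ is finite and nonzero: one must check that no numerator Gamma function develops a pole and no denominator Gamma function vanishes, i.e. that the relevant exponent differences avoid the resonant non-positive integers. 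This is where the two cases must be separated. For $q>2$ the evaluation \eqref{eqn:E_for_q>2} lands on the generating vector $\mathbf{1}$ of $\mathcal{K}_{1,1}$ and the $h_{1,1}$-exponent governs the connection problem, whereas for $q=2$ the map $e$ factors through $\mathcal{L}_{1,3}\hookrightarrow\mathcal{K}_{1,1}$ as in \eqref{eqn:E_for_q=2}, so the relevant exponent shifts from $h_{1,1}$ to $h_{1,3}$ and the hypergeometric parameters, together with the corresponding resonance check, change accordingly.

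After confirming $\lambda\neq 0$ in both cases, $\mathcal{K}_{1,2}$ is rigid and self-dual; finally, since $c_{p,q}=c_{q,p}$, interchanging the roles of $p$ and $q$ (equivalently $r\leftrightarrow s$) carries the entire argument over to $\mathcal{K}_{2,1}$.
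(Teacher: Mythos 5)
Your proposal is correct and follows essentially the same route as the paper's proof: construct $i$ and $e$ as in \eqref{eqn:coevaluation} and \eqref{eqn:E_for_q>2}--\eqref{eqn:E_for_q=2}, reduce the zig-zag to a four-point BPZ/hypergeometric connection problem split into the cases $q>2$ and $q=2$, and relate the two zig-zags via the braiding and twist. The only details the paper adds beyond your outline are that at $q=2$ the exponents at $z=0$ \emph{do} resonate (so one uses the logarithmic connection formulas of \cite[Section 15.8]{DLMF} and still extracts a nonzero coefficient of $z^{-2h_{1,2}}(\log z)^0$), and a separate argument that the $h_{1,3}$-channel coefficient $\langle v_{1,2}',Y_{\mathcal{K}_{1,2}}(\tilde v_{1,3},1)v_{1,2}\rangle$ is nonzero, which uses that $\mathcal{K}_{1,2}$ is not an $L_{c_{p,2}}$-module.
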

\begin{proof}
As mentioned before, it is enough to prove $\mathcal{K}_{1,2}$ is rigid. We need to show that the compositions $\mathfrak{L}$ and $\mathfrak{R}$ given respectively by
\begin{equation*}
\mathcal{K}_{1, 2} \xrightarrow{\cong} \mathcal{K}_{1, 1} \boxtimes \mathcal{K}_{1, 2} \xrightarrow{i \boxtimes\mathrm{Id}} (\mathcal{K}_{1, 2} \boxtimes \mathcal{K}_{1, 2}) \boxtimes \mathcal{K}_{1, 2} \xrightarrow{\cong} \mathcal{K}_{1, 2} \boxtimes (\mathcal{K}_{1, 2} \boxtimes \mathcal{K}_{1, 2} )\xrightarrow{\mathrm{Id} \boxtimes e} \mathcal{K}_{1, 2} \boxtimes \mathcal{K}_{1, 1}\xrightarrow{\cong}  \mathcal{K}_{1, 2}
\end{equation*}
\begin{equation*}
 \mathcal{K}_{1, 2} \xrightarrow{\cong} \mathcal{K}_{1, 2} \boxtimes \mathcal{K}_{1, 1} \xrightarrow{\mathrm{Id}\boxtimes i} \mathcal{K}_{1, 2} \boxtimes (\mathcal{K}_{1, 2} \boxtimes \mathcal{K}_{1, 2}) \xrightarrow{\cong} (\mathcal{K}_{1, 2} \boxtimes \mathcal{K}_{1, 2}) \boxtimes \mathcal{K}_{1, 2} \xrightarrow{e\boxtimes \mathrm{Id}} \mathcal{K}_{1, 1} \boxtimes \mathcal{K}_{1, 2}\xrightarrow{\cong}  \mathcal{K}_{1, 2}
 \end{equation*}
equal the same non-zero multiple of the identity (we can then rescale either $e$ or $i$ to get the identity). We will first prove that $\langle v_{1,2}',\mathfrak{R}(v_{1,2})\rangle\neq 0$, where $v'_{1, 2}$ is a vector of weight $h_{1,2}$ in the contragredient module $ \mathcal{K}'_{1, 2}$. This will show that $\mathfrak{R}$ is a non-zero multiple of the identity since $\dim\mathrm{End}\,\mathcal{K}_{1,2}=1$. Then we will prove $\mathfrak{R}=\mathfrak{L}$ using the braiding isomorphisms in $\mathcal{O}_{c_{p,q}}$.

 For the case $q\geq 3$, the arguments and calculations in the proof of \cite[Theorem 4.1]{MY-cp1-vir} (in the $p\geq 3$ case of that theorem) literally apply at central charge $c_{p,q}$ if we make the substitution $p\mapsto \frac{q}{p}$ everywhere. The result is that for $q\geq 3$,
\begin{equation}\label{eqn:R_coefficient}
\langle v_{1,2}',\mathfrak{R}(v_{1,2})\rangle =\frac{2-q/p}{\cos(p \pi/q)},
\end{equation}
showing in particular that $\mathfrak{R}\neq 0$. For the case $q=2$, the argument is still similar to the $p=2$ case of \cite[Theorem 4.1]{MY-cp1-vir}, but there are some differences so we give more details here.

To calculate $\mathfrak{R}$ when $q=2$ and $p\geq 3$ is odd, it follows exactly as in \cite[Section 4.1]{MY-cp1-vir} that $\langle v_{1,2}',\mathfrak{R}(v_{1,2})\rangle$ is the coefficient of $z^{-2h_{1,2}}$ in the series
\begin{equation}\label{eqn:rig_prod}
\left(\frac{4}{p} z \frac{d}{dz}-1\right) \langle v'_{1, 2}, [l \circ (e \boxtimes \mathrm{Id}) \circ \mathcal{A} \circ \mathcal{Y}_{\boxtimes}] (v_{1, 2}, 1) \mathcal{Y}_{\boxtimes}(v_{1, 2}, z)v_{1, 2}\rangle,
\end{equation}
where $\mathcal{Y}_\boxtimes$ represents tensor product intertwining operators, $\mathcal{A}:\mathcal{K}_{1,2}\boxtimes(\mathcal{K}_{1,2}\boxtimes\mathcal{K}_{1,2})\rightarrow(\mathcal{K}_{1,2}\boxtimes\mathcal{K}_{1,2})\boxtimes\mathcal{K}_{1,2}$ is the associativity isomorphism, and $l:\mathcal{K}_{1,1}\boxtimes\mathcal{K}_{1,2}\rightarrow\mathcal{K}_{1,2}$ is the left unit isomorphism. If we treat $z$ as a complex number in \eqref{eqn:rig_prod}, using the branch of logarithm $\log z = \ln |z| + i \arg z$
with $-\pi < \arg z < \pi$ for non-integer powers of $z$ in $\mathcal{Y}_\boxtimes$, then \eqref{eqn:rig_prod} defines an analytic function on the simply-connected region
\begin{equation*}
U_1 = \{ z \in \mathbb{C} \mid |z| < 1 \} \setminus (-1, 0].
\end{equation*}
From the definitions of $\mathcal{A}$, $e$, and $l$ (see \cite{HLZ8} or the exposition in \cite[Section 3.3]{CKM-exts} for the definitions of the associativity and unit isomorphisms), this function has an analytic continuation to the simply-connected region
\begin{equation*}
U_2 = \{ z \in \mathbb{C}\mid |z|> |1-z|>0 \} \setminus [1, \infty) = \{ z \in \mathbb{C} \mid \mathrm{Re}\,z > 1/2 \} \setminus [1, \infty)
\end{equation*}
given by
\begin{align}\label{eqn:rig-it}
\left(\frac{4}{p} z \frac{d}{dz}-1\right)  \langle v'_{1, 2}, Y_{\mathcal{K}_{1, 2}}(\mathcal{E}(v_{1, 2}, 1-z)v_{1, 2}, z)v_{1, 2} \rangle
\end{align}
This expression can be viewed as a double series in $1 - z$ and $z$, or as a series in $\frac{1-z}{z}$. Thus, to show that $ \langle v'_{1, 2}, \mathfrak{R}(v_{1, 2})\rangle \neq 0$, we need to find the explicit expansion of the above as a series in $z$ and $\log z$ on $U_1 \cap U_2$, and then extract the coefficient of $z^{-2h_{1, 2}} (\log z)^0$.

Compositions of intertwining operators as in \eqref{eqn:rig_prod} and \eqref{eqn:rig-it} are solutions to Belavin-Polyakov-Zamolodchikov (BPZ) equations \cite{BPZ, Fran, Hu-vir}, and such solutions can be expressed in terms of hypergeometric functions. Accordingly, we can expand \eqref{eqn:rig-it} as a series in powers of $z$ and $\log z$ using analytic properties of hypergeometric functions; see \cite{Gu} for an early use of this method at central charge $c=-2$. For general $c_{p,q}$, the BPZ differential equation is given in \cite[Equation 8.71]{Fran} (where we take $t=\frac{p}{q}$ and $h_0=h_1=h_2=h_3=h_{1,2}$); see also the discussion in \cite[Section 4.2]{TW}, and see \cite[Proposition 4.1.2]{CMY-singlet} for a recent vertex algebraic derivation of the BPZ equation at $c_{p,1}$ central charge. In any case, when $q=2$,
\begin{equation*}
 \langle v'_{1, 2}, [l \circ (e \boxtimes \mathrm{Id}) \circ \mathcal{A} \circ \mathcal{Y}_{\boxtimes}] (v_{1, 2}, 1) \mathcal{Y}_{\boxtimes}(v_{1, 2}, z)v_{1, 2}\rangle,\qquad\langle v'_{1, 2}, Y_{\mathcal{K}_{1, 2}}(\mathcal{E}(v_{1, 2}, 1-z)v_{1, 2}, z)v_{1, 2} \rangle
\end{equation*}
are series solutions of the second-order differential equation
\begin{equation}\label{eqn:rig-diff-eqn}
z(1-z) \phi''(z)+\frac{p}{2} (1-2z) \phi'(z) - \frac{p}{2}h_{1,2} z^{-1}(1-z)^{-1} \phi(z) =0
\end{equation}
on the regions $U_1$ and $U_2$, respectively. 
If we write 
\begin{equation*}
\phi(z) = z^{p/4}(1-z)^{p/4} f(z)
\end{equation*}
where $\phi(z)$ is a solution of \eqref{eqn:rig-diff-eqn}, then $f(z)$ solves the hypergeometric differential equation
\begin{equation}\label{eqn:hyp-diff-eqn}
z(1-z) f''(z)+p (1-2z) f'(x) + \frac{p}{2}\left(1-\frac{3p}{2}\right) f(z)=0,
\end{equation}
whose solutions are well known (see for example \cite[Section 15.10]{DLMF}). In particular, some solutions of this hypergeometric differential equation are logarithmic.

To identify which solution of \eqref{eqn:hyp-diff-eqn} corresponds to \eqref{eqn:rig-it}, we take $\phi(z)=\langle v'_{1, 2}, Y_{\mathcal{K}_{1, 2}}(\mathcal{E}(v_{1, 2}, 1-z)v_{1, 2}, z)v_{1, 2} \rangle$ and use the $L_0$-conjugation property for intertwining operators \cite[Proposition 3.36(b)]{HLZ2} and \eqref{eqn:E_for_q=2} to find
\begin{align*}
(1-z)^{2h_{1,2}}\phi(z) & =\left(\frac{1-z}{z}\right)^{2h_{1,2}}\left\langle z^{-L_0} v_{1,2}', Y_{\mathcal{K}_{1,2}}\left(\mathcal{E}(z^{L_0} v_{1,2},1-z)z^{L_0}v_{1,2},z\right)z^{L_0}v_{1,2}\right\rangle\nonumber\\
& =\left(\frac{1-z}{z}\right)^{2h_{1,2}}\left\langle v_{1,2}',Y_{\mathcal{K}_{1,2}}\left(\mathcal{E}\left(v_{1,2},\frac{1-z}{z}\right)v_{1,2},1\right)v_{1,2}\right\rangle\nonumber\\
& =\left(\frac{1-z}{z}\right)^{2 h_{1,2}}\bigg(\langle v_{1,2}',Y_{\mathcal{K}_{1,2}}(\tilde{v}_{1,3},1)v_{1,2}\rangle\left(\frac{1-z}{z}\right)^{h_{1,3}-2h_{1,2}} +\ldots \bigg)\nonumber\\
& \in\left(\frac{1-z}{z}\right)^{p-1}\left(\langle v_{1,2}',Y_{\mathcal{K}_{1,2}}(\tilde{v}_{1,3},1)v_{1,2}\rangle+\frac{1-z}{z}\mathbb{C}\left[\left[\frac{1-z}{z}\right]\right]\right),
\end{align*}
as a series in $\frac{1-z}{z}$. In particular, $(1-z)^{2h_{1,2}}\phi(z)$ is analytic at $z=1$; from \cite[Section 15.10]{DLMF}, the only solutions with this property are
\begin{equation}\label{eqn:phi_is_hyper_geom}
\phi(z) =a\cdot z^{p/4}(1-z)^{p/4} {}_2F_1\bigg(\frac{p}{2},\frac{3p}{2}-1;p;1-z\bigg)
\end{equation}
for $a\in\mathbb{C}$. Actually, \eqref{eqn:phi_is_hyper_geom} gives the expansion of $\phi(z)$ as a series in $1-z$, not $\frac{1-z}{z}$, and thus converges on the region 
\begin{equation*}
1-U_1=\lbrace z\in\mathbb{C}\mid \vert 1-z\vert < 1\rbrace\setminus [1,2),
\end{equation*}
which intersects non-trivially with both $U_1$ and $U_2$.

We claim that $a\neq 0$ in \eqref{eqn:phi_is_hyper_geom}, which is equivalent to $\langle v_{1,2}',Y_{\mathcal{K}_{1,2}}(\tilde{v}_{1,3},1)v_{1,2}\rangle\neq 0$. To prove this claim, suppose $w'\in\mathcal{K}_{1,2}'$ is such that $\langle w',Y_{\mathcal{K}_{1,2}}(\tilde{v}_{1,3},z)v_{1,2}\rangle = 0$. Then by \eqref{eqn:contra} and the commutator formula \eqref{eqn:Vir_comm_form} (recall also \eqref{eqn:gen_intw_op}), for any $n>0$,
\begin{align*}
\langle L_{-n}w', Y_{\mathcal{K}_{1,2}}(\tilde{v}_{1,3},z)v_{1,2}\rangle & = \langle w', L_n Y_{\mathcal{K}_{1,2}}(\tilde{v}_{1,3},z)v_{1,2}\rangle \nonumber\\
& = z^n\left((n+1)h_{1,3}+z\dfrac{d}{dz}\right)\langle w',Y_{\mathcal{K}_{1,2}}(\tilde{v}_{1,3},z)v_{1,2}\rangle =0
\end{align*}
as well. Now, $\mathcal{K}_{1,2}'$ is generated by $v_{1,2}'$ since it is simple and isomorphic to $\mathcal{K}_{1,2}$ when $q=2$. Thus $\langle v_{1,2}',Y_{\mathcal{K}_{1,2}}(\tilde{v}_{1,3},1)v_{1,2}\rangle =0$ would imply $Y_{\mathcal{K}_{1,2}}(\tilde{v}_{1,3},z)v_{1,2}=0$. Then \cite[Proposition 11.9]{DL} would imply that $Y_{\mathcal{K}_{1,2}}\vert_{\mathcal{L}_{1,3}\otimes\mathcal{K}_{1,2}}=0$, and thus that $Y_{\mathcal{K}_{1,2}}$ would descend to a well-defined intertwining operator of type $\binom{\mathcal{K}_{1,2}}{L_{c_{p,2}}\,\mathcal{K}_{1,2}}$. But this is impossible since $\mathcal{K}_{1,2}$ is not an $L_{c_{p,2}}$-module \cite{Wa}. Thus in \eqref{eqn:phi_is_hyper_geom}, we have $a\neq 0$, and by rescaling the evaluation intertwining operator $\mathcal{E}$ if necessary, we may assume that $a=1$.

To finish the proof that $\mathfrak{R}\neq 0$ in the $q=2$ case, we need to extract the coefficient of $z^{-2h_{1,2}}$ in the expansion of 
\begin{equation}\label{eqn:series_for_rigidity}
\left(\frac{4}{p} z\frac{d}{dz}-1\right)\phi(z)
\end{equation}
as a series in $z$ on $U_1$. Using \eqref{eqn:phi_is_hyper_geom} and \cite[Equations 15.8.10 and 15.8.12]{DLMF}, the expansion of $\phi(z)$ on $U_1$ is given by 
\begin{align*}
\phi(z) & =\frac{z^{p/4}(1-z)^{p/4}}{\Gamma(\frac{p}{2})\Gamma(\frac{3p}{2}-1)}\sum_{k=0}^{p-2}(-1)^k\frac{(1-\frac{p}{2})_k(\frac{p}{2})_k(p-k-2)!}{k!} z^{k-p+1}\nonumber\\
&\qquad\qquad -\frac{z^{p/4}(1-z)^{p/4}}{\Gamma(1-\frac{p}{2})\Gamma(\frac{p}{2})}\sum_{k=0}^\infty \frac{(\frac{p}{2})_k(\frac{3p}{2}-1)_k}{k! (k+p-1)!} z^k(\log z+C_{p,k})
\end{align*}
for certain $C_{p,k}\in\mathbb{C}$. Since $-2h_{1,2}=1-\frac{3p}{4}$ is the lowest power of $z$ in this expansion, only the lowest power of $z$ contributes to the coefficient of $z^{-2h_{1,2}}$ in \eqref{eqn:series_for_rigidity}. As a result,
\begin{equation*}
\langle v_{1,2}',\mathfrak{R}(v_{1,2})\rangle =\frac{(p-2)!}{\Gamma(\frac{p}{2})\Gamma(\frac{3p}{2}-1)}\left(\frac{4}{p}(-2h_{1,2})-1\right)=\frac{4(\frac{1}{p}-1)(p-2)!}{\Gamma(\frac{p}{2})\Gamma(\frac{3p}{2}-1)}\neq 0.
\end{equation*}
We can now rescale either the evaluation or coevaluation for $\mathcal{K}_{1,2}$ so that $\mathfrak{R}=\mathrm{Id}_{\mathcal{K}_{1,2}}$.

It remains to show that $\mathfrak{R}=\mathfrak{L}$. To do so, we apply the braiding isomorphisms $\mathcal{R}$ to the composition $\mathfrak{L}$, obtaining the following commutative diagram; the rectangles commute thanks to the naturality of the braiding and the hexagon axioms:
\begin{equation*}
\xymatrixrowsep{1.5pc}
\xymatrix{
\mathcal{K}_{1,2} \ar[d]^{\cong} \ar[rd]^{\cong} & &\\
\mathcal{K}_{1,1}\boxtimes\mathcal{K}_{1,2} \ar[d]^{i\boxtimes\mathrm{Id}} \ar[r]^{\mathcal{R}} & \mathcal{K}_{1,2}\boxtimes\mathcal{K}_{1,1} \ar[d]^{\mathrm{Id}\boxtimes i} &   \\
(\mathcal{K}_{1,2}\boxtimes\mathcal{K}_{1,2})\boxtimes\mathcal{K}_{1,2} \ar[d]^{\cong} 
\ar[r]^{\mathcal{R}} & \mathcal{K}_{1,2}\boxtimes(\mathcal{K}_{1,2}\boxtimes\mathcal{K}_{1,2}) \ar[r]^{\mathrm{Id}\boxtimes\mathcal{R}} & \mathcal{K}_{1,2}\boxtimes(\mathcal{K}_{1,2}\boxtimes\mathcal{K}_{1,2}) \ar[d]^{\cong} \\
\mathcal{K}_{1,2}\boxtimes(\mathcal{K}_{1,2}\boxtimes\mathcal{K}_{1,2}) \ar[d]^{\mathrm{Id}\boxtimes e} \ar[r]^{\mathcal{R}} & (\mathcal{K}_{1,2}\boxtimes\mathcal{K}_{1,2})\boxtimes\mathcal{K}_{1,2} \ar[d]^{e\boxtimes\mathrm{Id}} \ar[r]^{\mathcal{R}\boxtimes\mathrm{Id}} & (\mathcal{K}_{1,2}\boxtimes\mathcal{K}_{1,2})\boxtimes\mathcal{K}_{1,2}\\
\mathcal{K}_{1,2}\boxtimes\mathcal{K}_{1,1} \ar[r]^{\mathcal{R}} \ar[d]^{\cong} & \mathcal{K}_{1,1}\boxtimes\mathcal{K}_{1,2} \ar[ld]^\cong &\\
\mathcal{K}_{1,2} & &\\
}
\end{equation*}
Thus we will get $\mathfrak{L}=\mathfrak{R}$ as required if there is a non-zero scalar $c$ such that
\begin{equation}\label{eqn:twist_with_braiding}
\mathcal{R}\circ i = c\cdot i,\qquad e\circ\mathcal{R}=c\cdot e.
\end{equation}
To determine $c$, the definitions of $\mathcal{R}$ \eqref{eqn:braiding} and $i$ \eqref{eqn:coevaluation}, the $L_{-1}$-commutator formula, and the $L_0$-conjugation formula \cite[Proposition 3.36(b)]{HLZ2} imply
\begin{align*}
(\mathcal{R}\circ i)(\mathbf{1}) &=-(\mathcal{R}\circ\pi_0)(\mathcal{Y}_\boxtimes(v_{1,2},1)v_{1,2})+\frac{2q}{p}(\mathcal{R}\circ\pi_0)(\mathcal{Y}_{\boxtimes}(L_{-1} v_{1,2},1)v_{1,2})\nonumber\\
& =-\pi_0\left(e^{L_{-1}}\mathcal{Y}_\boxtimes(v_{1,2},e^{\pi i})v_{1,2}\right)+\frac{2q}{p}\pi_0\left(e^{L_{-1}}\mathcal{Y}_\boxtimes(v_{1,2},e^{\pi i})L_{-1}v_{1,2}\right)\nonumber\\
& =-\pi_0\left(\mathcal{Y}_\boxtimes(v_{1,2},e^{\pi i})v_{1,2}\right)-\frac{2q}{p}\pi_0\left(\mathcal{Y}_\boxtimes(L_{-1} v_{1,2},e^{\pi i})v_{1,2}\right)\nonumber\\
& =-\pi_0\left(e^{\pi i(L_0-2h_{1,2})}\mathcal{Y}_\boxtimes(v_{1,2},1)v_{1,2}\right)-\frac{2q}{p}\pi_0\left(e^{\pi i(L_0-2h_{1,2}-1)}\mathcal{Y}_{\boxtimes}(L_{-1} v_{1,2},1)v_{1,2}\right)\nonumber\\
& =e^{\pi i(L_0 - 2h_{1,2})}i(\mathbf{1}).
\end{align*}
Since the conformal weight of $i(\mathbf{1})$ is $0$ and since $\mathbf{1}$ generates $\mathcal{K}_{1,1}$ as a $\mathfrak{Vir}$-module, it follows that $\mathcal{R}\circ i=e^{-2\pi i h_{1,2}}\cdot i$, that is, $c=e^{-2\pi i h_{1,2}}$.

To show that the same value of $c$ works for the evaluation $e$, note that we may have rescaled the original evaluation candidate to ensure $\mathfrak{R}=\mathrm{Id}_{\mathcal{K}_{1,2}}$. Thus from \eqref{eqn:E_for_q>2} and \eqref{eqn:E_for_q=2}, we have
\begin{equation}\label{eqn:E_intw_op}
\mathcal{E}(v_{1,2},z)v_{1,2} =z^{-2h_{1,2}}\left(z^{L_0}v+z E(z)\right)
\end{equation}
where $E(z)\in\mathcal{K}_{1,1}[[z]]$ and $v\in\mathcal{K}_{1,1}$ is a non-zero singular vector of conformal weight $0$ (if $q\geq 3$) or $h_{1,3}$ (if $q=2$). We need to show that $\Omega(\mathcal{E})= e^{-2\pi i h_{1,2}}\cdot\mathcal{E}$ where $\Omega(\mathcal{E}):=e\circ\mathcal{R}\circ\mathcal{Y}_\boxtimes$ is characterized by
\begin{equation*}
\Omega(\mathcal{E})(w,z)w'=e^{z L_{-1}}\mathcal{E}(w',e^{\pi i}z)w
\end{equation*}
for $w,w'\in\mathcal{K}_{1,2}$. Now, because $\mathfrak{R}=\mathrm{Id}_{\mathcal{K}_{1,2}}$, the coevaluation induces an injective linear map
\begin{equation*}
\mathrm{Hom}(\mathcal{K}_{1,2}\boxtimes\mathcal{K}_{1,2},\mathcal{K}_{1,1})\hookrightarrow\mathrm{Hom}(\mathcal{K}_{1,2},\mathcal{K}_{1,2}\boxtimes\mathcal{K}_{1,1})\cong\mathrm{End}\,\mathcal{K}_{1,2}=\mathbb{C}\cdot\mathrm{Id}_{\mathcal{K}_{1,2}}.
\end{equation*}
Thus the universal property of tensor products shows that the space of intertwining operators of type $\binom{\mathcal{K}_{1,1}}{\mathcal{K}_{1,2}\,\mathcal{K}_{1,2}}$ is one-dimensional, and thus $\Omega(\mathcal{E})=\tilde{c}\cdot\mathcal{E}$ for some $\tilde{c}\in\mathbb{C}$. We can then determine $\tilde{c}$ by using \eqref{eqn:E_intw_op} to calculate
\begin{align*}
\Omega(\mathcal{E})(v_{1,2},z)v_{1,2} & = e^{z L_{-1}}\mathcal{E}(v_{1,2}, e^{\pi i} z)v_{1,2}\nonumber\\
& = (e^{\pi i} z)^{-2h_{1,2}}e^{zL_{-1}}\left((e^{\pi i} z)^{L_0} v -z E(-z)\right) \nonumber\\
&= e^{\pi i(\mathrm{wt}\,v-2h_{1,2})}\cdot z^{-2h_{1,2}}\left(z^{L_0} v+z\widetilde{E}(z)\right)
\end{align*}
for some $\widetilde{E}(z)\in\mathcal{K}_{1,1}[[z]]$. Since $\mathrm{wt}\,v=0$ if $q\geq 3$ and $\mathrm{wt}\,v=h_{1,3}=p-1\in 2\mathbb{Z}$ if $q=2$, we get $\tilde{c}=e^{-2\pi ih_{1,2}}$ as required. This completes the proof that $\mathfrak{L}=\mathfrak{R}$ and thus completes the proof of the theorem.
\end{proof}

If $X$ is any rigid self-dual object in a tensor category with unit object $\mathbf{1}$, equipped with evaluation $e_X: X\boxtimes X\rightarrow\mathbf{1}$ and coevaluation $i_X: \mathbf{1}\rightarrow X\boxtimes X$, the intrinsic dimension of $X$ is defined to be $d(X)=e_X\circ i_X\in\mathrm{End}\,\mathbf{1}$. As in \cite[Proposition 4.2]{MY-cp1-vir}, we can use the proof of Theorem \ref{thm:rigidity} to find the intrinsic dimensions of $\mathcal{K}_{1,2}$ and $\mathcal{K}_{2,1}$:
\begin{Proposition}\label{prop:intrinsic_dim}
The rigid self-dual objects $\mathcal{K}_{1,2}$ and $\mathcal{K}_{2,1}$ in $\mathcal{O}_{c_{p,q}}$ have intrinsic dimensions
\begin{align*}
d(\mathcal{K}_{1,2}) & =-(e^{\pi i p/q}+e^{-\pi ip/q})\cdot\mathrm{Id}_{\mathcal{K}_{1,1}} = -2\cos(p\pi/q)\cdot\mathrm{Id}_{\mathcal{K}_{1,1}},\nonumber\\
d(\mathcal{K}_{2,1}) & =-(e^{\pi i q/p}+e^{-\pi iq/p})\cdot\mathrm{Id}_{\mathcal{K}_{1,1}} = -2\cos(q\pi/p)\cdot\mathrm{Id}_{\mathcal{K}_{1,1}}.
\end{align*}
In particular, $d(\mathcal{K}_{1,2})=0$ if $q=2$ and $d(\mathcal{K}_{2,1})=0$ if $p=2$.
\end{Proposition}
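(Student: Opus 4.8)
The plan is to compute the scalar $d(\mathcal{K}_{1,2})$ directly from the explicit evaluation and coevaluation maps constructed in the proof of Theorem \ref{thm:rigidity}, following \cite[Proposition 4.2]{MY-cp1-vir}. Since $\mathcal{K}_{1,1}\cong V_{c_{p,q}}$ is generated by $\mathbf{1}$, any endomorphism is determined by the image of $\mathbf{1}$, which must have conformal weight $0$ and hence be a multiple of $\mathbf{1}$; thus $\mathrm{End}\,\mathcal{K}_{1,1}=\mathbb{C}\cdot\mathrm{Id}_{\mathcal{K}_{1,1}}$, and it suffices to find the scalar $\lambda$ with $(e\circ i)(\mathbf{1})=\lambda\mathbf{1}$. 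I would first work with the \emph{unnormalized} candidates $e_0$ and $i_0$ from \eqref{eqn:coevaluation} and \eqref{eqn:E_for_q>2}/\eqref{eqn:E_for_q=2}, compute $e_0\circ i_0$, and only at the end divide by the normalization constant $\kappa$ implicit in Theorem \ref{thm:rigidity} that rescales $e_0,i_0$ so that the zig-zags satisfy $\mathfrak{R}=\mathfrak{L}=\mathrm{Id}_{\mathcal{K}_{1,2}}$.

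The key computation is the weight-$0$ extraction. Since $e_0$ is a $\mathfrak{Vir}$-module map, it preserves conformal weights and commutes with projection onto the weight-$0$ subspace; moreover $e_0\circ\mathcal{Y}_\boxtimes=\mathcal{E}$. Hence the weight-$0$ part of $(e_0\circ i_0)(\mathbf{1})$ equals the weight-$0$ part of $-\mathcal{E}(v_{1,2},1)v_{1,2}+\tfrac{2q}{p}\mathcal{E}(L_{-1}v_{1,2},1)v_{1,2}$, and any weight-$h_{1,3}$ contribution to $i_0(\mathbf{1})$ is irrelevant. For $q\geq 3$, the normalization \eqref{eqn:E_for_q>2} gives the weight-$0$ part of $\mathcal{E}(v_{1,2},1)v_{1,2}$ as $\mathbf{1}$, while the $L_{-1}$-derivative property applied to \eqref{eqn:E_for_q>2} gives the weight-$0$ part of $\mathcal{E}(L_{-1}v_{1,2},1)v_{1,2}$ as $-2h_{1,2}\mathbf{1}$. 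Using $h_{1,2}=-\tfrac12+\tfrac{3p}{4q}$, this yields $(e_0\circ i_0)(\mathbf{1})=\big(\tfrac{2q}{p}-4\big)\mathbf{1}$. For $q=2$ the computation is even shorter: by \eqref{eqn:E_for_q=2} the operator $\mathcal{E}$ takes values in $\mathcal{L}_{1,3}\subseteq\mathcal{K}_{1,1}$, whose lowest weight $h_{1,3}=p-1$ is strictly positive, so both weight-$0$ projections vanish and $e_0\circ i_0=0$.

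To conclude, I would combine these with the normalization. For $q\geq 3$, the rigidity proof records via \eqref{eqn:R_coefficient} that $\mathfrak{R}_0=\kappa\,\mathrm{Id}$ with $\kappa=\tfrac{2-q/p}{\cos(p\pi/q)}\neq 0$, and since $e\circ i$ is bilinear in the pair $(e,i)$, the rescaling forcing $\mathfrak{R}=\mathrm{Id}$ multiplies $e_0\circ i_0$ by $\kappa^{-1}$. As $\tfrac{2q}{p}-4=-2\big(2-\tfrac{q}{p}\big)$, this gives $d(\mathcal{K}_{1,2})=\kappa^{-1}\big(\tfrac{2q}{p}-4\big)\mathrm{Id}=-2\cos(p\pi/q)\,\mathrm{Id}=-(e^{\pi i p/q}+e^{-\pi i p/q})\,\mathrm{Id}$. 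For $q=2$ the corresponding $\kappa$ is the nonzero Gamma-function value from the rigidity proof, so $d(\mathcal{K}_{1,2})=\kappa^{-1}\cdot 0=0$, matching $-2\cos(p\pi/2)=0$ since $p$ is odd. The statement for $\mathcal{K}_{2,1}$, including its vanishing when $p=2$, then follows at once from the symmetry $c_{p,q}=c_{q,p}$ by interchanging $p\leftrightarrow q$ and the two Kac labels.

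The main obstacle is bookkeeping rather than analysis: one must carefully distinguish the short loop $e\circ i$ that computes the dimension from the long zig-zag $\mathfrak{R}$ used to prove rigidity, and verify that a single scalar normalization is shared between $e$ and $i$ so that dividing $e_0\circ i_0$ by $\kappa$ is legitimate. The one genuinely case-dependent point is noticing that for $q=2$ the evaluation lands in $\mathcal{L}_{1,3}$ and therefore has no weight-$0$ component; granting the explicit forms \eqref{eqn:E_for_q>2}, \eqref{eqn:E_for_q=2} and the value of $\kappa$ from Theorem \ref{thm:rigidity}, all remaining steps are routine.
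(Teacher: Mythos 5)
Your proposal is correct and follows essentially the same route as the paper: both compute $(e\circ i)(\mathbf{1})$ from the explicit coevaluation \eqref{eqn:coevaluation} and the normalized evaluation determined by \eqref{eqn:E_for_q>2} and \eqref{eqn:R_coefficient}, extract the weight-$0$ coefficient (the paper via the $n=0$ commutator formula, you via the $L_{-1}$-derivative property, which give the same $-2h_{1,2}$), and handle $q=2$ by observing that $\mathrm{Im}\,e$ lies in $\mathcal{L}_{1,3}$, which has no weight-$0$ vectors. The only cosmetic difference is that you carry the normalization $\kappa$ to the end rather than absorbing it into $\mathcal{E}$ up front; your observation that $e\circ i$ and $\mathfrak{R}$ are both bilinear in the pair $(e,i)$ correctly justifies this.
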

\begin{proof}
Because $c_{p,q}=c_{q,p}$, it is enough to calculate $d(\mathcal{K}_{1,2})=e\circ i$, where $i$ is given by \eqref{eqn:coevaluation}. If $q\geq 3$, then \eqref{eqn:E_for_q>2} and \eqref{eqn:R_coefficient} show that $\mathcal{E}=e\circ\mathcal{Y}_\boxtimes$ must then be normalized to satisfy
\begin{equation*}
\mathcal{E}(v_{1,2},z)v_{1,2}=\frac{\cos(p\pi/q)}{2-q/p}\cdot z^{-2h_{1,2}}(\mathbf{1}+z E(z))
\end{equation*}
for some $E(z)\in\mathcal{K}_{1,1}[[z]]$. Thus
\begin{align*}
(e\circ i)(\mathbf{1}) & = -\pi_0(\mathcal{E}(v_{1,2},1)v_{1,2})+\frac{2q}{p}\pi_0(\mathcal{E}(L_{-1}v_{1,2},1)v_{1,2})\nonumber\\
& = -\frac{\cos(p\pi/q)}{2-q/p}\cdot\mathbf{1}+\frac{2q}{p}\pi_0(L_0\mathcal{E}(v_{1,2},1)v_{1,2}-\mathcal{E}(L_0 v_{1,2},1)v_{1,2}-\mathcal{E}(v_{1,2},1)L_0v_{1,2})\nonumber\\
& =\frac{\cos(p\pi/q)}{2-q/p}\left(-1-\frac{4q h_{1,2}}{p}\right)\cdot\mathbf{1}\nonumber\\
& =-2\cos(p\pi/q)\cdot\mathbf{1},
\end{align*}
where the second equality uses $n=0$ case of the commutator formula \eqref{eqn:Vir_comm_form}. Since $\mathbf{1}$ generates $\mathcal{K}_{1,1}$, the result follows when $q\geq 3$. When $q=2$, $e\circ i=0$ because $i(\mathbf{1})$ has conformal weight $0$ while the minimum conformal weight of $\mathrm{Im}\,e\subseteq\mathcal{K}_{1,1}$ is $h_{1,3}=p-1>0$.
\end{proof}

\subsection{Further (non)-rigidity for Kac modules}

We will now use the results of the previous two subsections to show that if $r\leq p$ and $s\leq q$, then the Kac module $\mathcal{K}_{r,s}$ is rigid and self-dual in $\mathcal{O}_{c_{p,q}}$. We will also show that at least some Kac modules $\mathcal{K}_{r,s}$ with $r>p$ or $s>q$ are not rigid. In particular, although $\mathcal{O}_{c_{p,q}}$ does contain a non-trivial class of rigid objects, it is not a rigid tensor category. We begin with a lemma:
\begin{Lemma}\label{lem:direct_summand_rigid}
Suppose $B\cong B_1\oplus B_2$ in some tensor category where both $B$ and $B_1$ are rigid and self-dual, and $\mathrm{Hom}(B_1,B_2)=0$. Then $B_2$ is also rigid and self-dual.
\end{Lemma}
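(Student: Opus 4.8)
The plan is to build explicit evaluation and coevaluation morphisms for $B_2$ by restricting those of $B$, and then to verify the two rigidity (``zig-zag'') compositions by comparing them with the corresponding identities for $B$. Write $\iota_i\colon B_i\to B$ and $p_i\colon B\to B_i$ for the inclusions and projections of the decomposition $B\cong B_1\oplus B_2$, so that $p_i\iota_j=\delta_{ij}\,\mathrm{id}_{B_i}$ and $\iota_1p_1+\iota_2p_2=\mathrm{id}_B$. Denoting the unit object by $\mathbf 1$ and the given (co)evaluations of $B$ by $e_B\colon B\boxtimes B\to\mathbf 1$ and $i_B\colon\mathbf 1\to B\boxtimes B$, I would set
\[
 e_{B_2}:=e_B\circ(\iota_2\boxtimes\iota_2)\colon B_2\boxtimes B_2\to\mathbf 1,\qquad
 i_{B_2}:=(p_2\boxtimes p_2)\circ i_B\colon\mathbf 1\to B_2\boxtimes B_2.
\]
Since these morphisms present $B_2$ as its own dual, it suffices to check that $(\mathrm{id}\boxtimes e_{B_2})$ composed (through the unit and associativity isomorphisms) with $(i_{B_2}\boxtimes\mathrm{id})$ equals $\mathrm{id}_{B_2}$, and likewise for the mirror-image composition; self-duality of $B_2$ is then automatic.

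The heart of the argument is to relate the $B_2$-zig-zag to the $B$-zig-zag. Because $B$ is rigid, the composition $\mathfrak z_B\colon B\to B$ built from $i_B$ and $e_B$ is $\mathrm{id}_B$, hence $p_2\,\mathfrak z_B\,\iota_2=\mathrm{id}_{B_2}$. Tracing the unit and associativity isomorphisms, $p_2\,\mathfrak z_B\,\iota_2$ is exactly the composition defining the $B_2$-zig-zag, except that the internal ``contracted'' strand carries $\mathrm{id}_B$ rather than the idempotent $\iota_2p_2$ coming from $i_{B_2}$ and $e_{B_2}$. Inserting $\mathrm{id}_B=\iota_1p_1+\iota_2p_2$ on that strand therefore splits $\mathrm{id}_{B_2}=p_2\,\mathfrak z_B\,\iota_2$ into exactly two terms: the $\iota_2p_2$-term, which is precisely the $B_2$-zig-zag $\mathfrak z_{B_2}$, and an $\iota_1p_1$-term $\mathfrak z'$ whose contracted strand factors through $B_1$. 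Thus $\mathfrak z_{B_2}=\mathrm{id}_{B_2}-\mathfrak z'$, and it remains to show $\mathfrak z'=0$.

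The vanishing of the cross term $\mathfrak z'$ is where the hypotheses on $B_1$ are used. By construction $\mathfrak z'$ factors through the morphism $(p_2\boxtimes p_1)\circ i_B\in\mathrm{Hom}(\mathbf 1,\,B_2\boxtimes B_1)$. Since $B_1$ is rigid and self-dual, its own evaluation and coevaluation $e_{B_1},i_{B_1}$ set up a natural isomorphism $\mathrm{Hom}(\mathbf 1,\,B_2\boxtimes B_1)\cong\mathrm{Hom}(B_1,B_2)$ (send $g$ to the map $B_1\cong\mathbf 1\boxtimes B_1\xrightarrow{g\boxtimes\mathrm{id}}(B_2\boxtimes B_1)\boxtimes B_1\cong B_2\boxtimes(B_1\boxtimes B_1)\xrightarrow{\mathrm{id}\boxtimes e_{B_1}}B_2$, with inverse built from $i_{B_1}$). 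As $\mathrm{Hom}(B_1,B_2)=0$ by assumption, the entire space $\mathrm{Hom}(\mathbf 1,\,B_2\boxtimes B_1)$ vanishes, so $(p_2\boxtimes p_1)\circ i_B=0$ and hence $\mathfrak z'=0$. This gives $\mathfrak z_{B_2}=\mathrm{id}_{B_2}$. The second zig-zag is handled by the mirror-image computation; there the cross term factors through a morphism in $\mathrm{Hom}(\mathbf 1,\,B_1\boxtimes B_2)$, which the same self-duality argument for $B_1$ identifies with $\mathrm{Hom}(B_1,B_2)=0$. I expect the only real obstacle to be keeping track of the unit and associativity isomorphisms when matching $p_2\,\mathfrak z_B\,\iota_2$ with the $B_2$-zig-zag; this is cleanest to carry out in the graphical calculus for braided tensor categories, where coherence lets one suppress those isomorphisms and the step ``insert $\iota_1p_1+\iota_2p_2$ on the contracted strand'' becomes visually transparent.
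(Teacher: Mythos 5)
Your proof is correct, but it takes a genuinely different route from the paper. You verify the two zig-zag identities for $B_2$ directly: restrict $e_B$, $i_B$ along the inclusion/projection, compare the $B_2$-zig-zag with $p_2\circ\mathfrak z_B\circ\iota_2=\mathrm{id}_{B_2}$, and kill the cross term by observing that it factors through $(p_2\boxtimes p_1)\circ i_B\in\mathrm{Hom}(\mathbf 1,B_2\boxtimes B_1)\cong\mathrm{Hom}(B_1,B_2)=0$ (and symmetrically through $\mathrm{Hom}(\mathbf 1,B_1\boxtimes B_2)$ for the mirror identity). The paper instead invokes the criterion from the Appendix of Kazhdan--Lusztig IV, which characterizes rigidity and self-duality of an object $B$ equipped with $i_B:\mathbf 1\rightarrow B\boxtimes B$ by the condition that the induced maps $\psi_{A,B,C}:\mathrm{Hom}(B\boxtimes A,C)\rightarrow\mathrm{Hom}(A,B\boxtimes C)$ are isomorphisms for all $A,C$; it then runs a diagram chase on a pair of split exact rows to show $\psi_{A,B_2,C}$ is surjective, and exchanges the roles of $B_1$ and $B_2$ to get injectivity. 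Notably, both arguments hinge on exactly the same vanishing $(p_2\boxtimes p_1)\circ i_B=0$ coming from self-duality of $B_1$ and $\mathrm{Hom}(B_1,B_2)=0$. Your approach is more elementary and self-contained (no external rigidity criterion, no need for the role-swapping step), at the cost of a careful bookkeeping of unit and associativity isomorphisms, which coherence indeed lets you suppress; one small hypothesis you use implicitly is bilinearity of $\boxtimes$ on morphisms, needed to expand $\mathrm{id}_B=\iota_1p_1+\iota_2p_2$ on the contracted strand, but this holds in the additive tensor categories at hand and is equally implicit in the paper's use of direct sums.
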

\begin{proof}
From Corollary 3 in the Appendix of \cite{KL4}, an object $B$ in a tensor category equipped with a morphism $i_B:\mathbf{1}\rightarrow B\boxtimes B$ is rigid (and self-dual with coevaluation $i_B$) if and only if the linear map
\begin{equation*}
\psi_{A,B,C}: \mathrm{Hom}(B\boxtimes A,C)\longrightarrow\mathrm{Hom}(A,B\boxtimes C)
\end{equation*}
is an isomorphism for all objects $A$ and $C$, where for $f\in\mathrm{Hom}(B\boxtimes A, C)$, $\psi_{A,B,C}(f)$ is defined to be the following composition (where we suppress unit and associativity isomorphisms from the notation for brevity):
\begin{align*}
A\xrightarrow{i_B\boxtimes\mathrm{Id}} B\boxtimes B\boxtimes A\xrightarrow{\mathrm{Id}\boxtimes f} B\boxtimes C.
\end{align*}
In the setting of the lemma, we have such a rigid object $B$ with coevaluation $i_B:\mathbf{1}\rightarrow B\boxtimes B$ and isomorphism $\psi_{A,B,C}$ for all $A$ and $C$. For $k=1,2$, we also have projections $p_k: B\rightarrow B_k$ and inclusions $q_k: B_k\rightarrow B$ such that $p_k\circ q_{k'}=\delta_{k,k'}$ and $q_1\circ p_1+q_2\circ p_2=\mathrm{Id}$.

For $k=1,2$, we define $i_{B_k}=(p_k\boxtimes p_k)\circ i_B\in\mathrm{Hom}(\mathbf{1},B_k\boxtimes B_k)$; we will show that $\psi_{A,B_2,C}$ defined using $i_{B_2}$ is an isomorphism. To do so, consider the diagram with exact rows:
\begin{equation*}
\xymatrixcolsep{2pc}
\xymatrix{
0 \ar[r] & \mathrm{Hom}(B_1\boxtimes A, C) 
\ar[d]^{\psi_{A,B_1,C}} \ar[rr]^{f\mapsto f\circ(p_1\boxtimes\mathrm{Id})} && \mathrm{Hom}(B\boxtimes A,C) \ar[d]^{\psi_{A,B,C}} \ar[rr]^{g\mapsto g\circ(q_2\boxtimes\mathrm{Id})} && \mathrm{Hom}(B_2\boxtimes A,C) \ar[d]^{\psi_{A,B_2,C}} \ar[r] & 0\\
0 \ar[r] & \mathrm{Hom}(A, B_1\boxtimes C) \ar[rr]^{f\mapsto (q_1\boxtimes\mathrm{Id})\circ f} && \mathrm{Hom}(A, B\boxtimes C) \ar[rr]^{g\mapsto (p_2\boxtimes\mathrm{Id})\circ g} && \mathrm{Hom}(A, B_2\boxtimes C) \ar[r] & 0\\
}
\end{equation*}
To show that the left square commutes, take $f\in\mathrm{Hom}(B_1\boxtimes A, C)$ and calculate
\begin{align*}
(q_1\boxtimes\mathrm{Id})\circ\psi_{A,B_1,C}(f) & = (q_1\boxtimes\mathrm{Id})\circ(\mathrm{Id}\boxtimes f)\circ (p_1\boxtimes p_1\boxtimes\mathrm{Id})\circ(i_B\boxtimes\mathrm{Id})\nonumber\\
& =(\mathrm{Id}\boxtimes f)\circ(\mathrm{Id}\boxtimes p_1\boxtimes\mathrm{Id})\circ(i_B\boxtimes\mathrm{Id})\nonumber\\
&\qquad - (q_2\boxtimes\mathrm{Id})\circ(\mathrm{Id}\boxtimes f)\circ (p_2\boxtimes p_1\boxtimes\mathrm{Id})\circ(i_B\boxtimes\mathrm{Id})
\end{align*}
since $q_1\circ p_1=\mathrm{Id}-q_2\circ p_2$. The first term on the right is $\psi_{A,B,C}(f\circ(p_1\boxtimes\mathrm{Id}))$, while the second vanishes because
\begin{equation*}
(p_2\boxtimes p_1)\circ i_B\in\mathrm{Hom}(\mathbf{1},B_2\boxtimes B_1)\cong\mathrm{Hom}(B_1,B_2) = 0
\end{equation*}
by the self-duality of $B_1$. Thus the left square of the diagram commutes, and the right square commutes for similar reasons.

Now, $\psi_{A,B,C}$ is an isomorphism because $B$ is rigid. Although we do not know if $\psi_{A,B_1,C}$ is defined using the actual coevaluation for $B_1$, we do know from commutativity of the diagram that  $\psi_{A,B_1,C}$ is injective and $\psi_{A,B_2,C}$ is surjective. Then because $B\cong B_1\oplus B_2$, we can exchange $B_1$ and $B_2$ everywhere in the previous paragraph, so  in particular $\psi_{A,B_2,C}$ is also injective and thus an isomorphism. That is, $B_2$ is rigid and self-dual.
\end{proof}

Using Lemma \ref{lem:direct_summand_rigid}, we can now prove:
\begin{Theorem}\label{thm:Krs_rigid}
If $r\leq p$ and $s\leq q$, then $\mathcal{K}_{r,s}$ is rigid and self-dual in $\mathcal{O}_{c_{p,q}}$.
\end{Theorem}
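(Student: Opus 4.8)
The plan is to prove rigidity of $\mathcal{K}_{r,s}$ for $1\leq r\leq p$, $1\leq s\leq q$ by induction, bootstrapping from the known rigidity of the generating modules $\mathcal{K}_{1,2}$ and $\mathcal{K}_{2,1}$ (Theorem \ref{thm:rigidity}) together with the decomposition Lemma \ref{lem:direct_summand_rigid}. The key structural input is Proposition \ref{prop:Kr1_K1s_small_r_or_s}, which gives $\mathcal{K}_{r,s}\cong\mathcal{K}_{r,1}\boxtimes\mathcal{K}_{1,s}$ in the range $r\leq p$ or $s\leq q$, so it suffices to establish rigidity for the ``one-row'' modules $\mathcal{K}_{r,1}$ (with $1\leq r\leq p$) and the ``one-column'' modules $\mathcal{K}_{1,s}$ (with $1\leq s\leq q$); the general case then follows because the tensor product of two rigid self-dual objects is again rigid and self-dual, with dual given by the tensor product of the duals in reverse order. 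By the $c_{p,q}=c_{q,p}$ symmetry it is enough to handle the $\mathcal{K}_{1,s}$ for $1\leq s\leq q$.

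First I would set up the induction on $s$ to prove each $\mathcal{K}_{1,s}$ with $1\leq s\leq q$ is rigid and self-dual. The base cases $s=1$ (the unit object, Remark \ref{rem:K11_rigid}) and $s=2$ (Theorem \ref{thm:rigidity}) are already in hand. For the inductive step, I would feed the rigid self-dual module $\mathcal{K}_{1,s}$ into the tensor product with $\mathcal{K}_{1,2}$ and invoke Corollary \ref{cor:1st_Kac_module_fusion}: for $1\leq s\leq q-1$ we have the direct sum decomposition
\begin{equation*}
\mathcal{K}_{1,2}\boxtimes\mathcal{K}_{1,s}\cong\mathcal{K}_{1,s-1}\oplus\mathcal{K}_{1,s+1}.
\end{equation*}
The left side is rigid and self-dual as a tensor product of two such objects, and by the inductive hypothesis $\mathcal{K}_{1,s-1}$ is rigid and self-dual as well. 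To apply Lemma \ref{lem:direct_summand_rigid} with $B=\mathcal{K}_{1,2}\boxtimes\mathcal{K}_{1,s}$, $B_1=\mathcal{K}_{1,s-1}$, and $B_2=\mathcal{K}_{1,s+1}$, the remaining hypothesis to verify is $\mathrm{Hom}(\mathcal{K}_{1,s-1},\mathcal{K}_{1,s+1})=0$. This holds because the lowest conformal weights of the two summands differ: the conformal weights of $\mathcal{K}_{1,s\pm1}$ lie in the distinct cosets $h_{1,s\pm1}+\mathbb{Z}$, and since $1\leq s\leq q-1$ we have $q\nmid s$, so $h_{1,s-1}-h_{1,s+1}=r-\tfrac{ps}{q}\notin\mathbb{Z}$ (here $r=1$); thus the two summands have no conformal weights in common and any homomorphism between them must vanish. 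Lemma \ref{lem:direct_summand_rigid} then yields that $\mathcal{K}_{1,s+1}$ is rigid and self-dual, completing the induction up through $s=q$.

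I expect the main obstacle to be bookkeeping rather than a deep new idea: one must be careful that the inductive step reaches all the way to $s=q$ and that the non-divisibility condition $q\nmid s$ needed for the direct-sum decomposition in Corollary \ref{cor:1st_Kac_module_fusion} is available at each stage $1\leq s\leq q-1$, which it is. After establishing both families $\mathcal{K}_{1,s}$ ($1\leq s\leq q$) and, by the symmetric argument, $\mathcal{K}_{r,1}$ ($1\leq r\leq p$), the final assembly uses Proposition \ref{prop:Kr1_K1s_small_r_or_s} to write $\mathcal{K}_{r,s}\cong\mathcal{K}_{r,1}\boxtimes\mathcal{K}_{1,s}$ for $r\leq p$, $s\leq q$; since both factors are rigid and self-dual, so is their tensor product, and one checks that the intrinsic self-duality is preserved under tensoring. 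This completes the proof of the theorem.
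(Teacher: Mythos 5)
Your proposal is correct and follows essentially the same route as the paper: induction on $s$ using the splitting $\mathcal{K}_{1,2}\boxtimes\mathcal{K}_{1,s}\cong\mathcal{K}_{1,s-1}\oplus\mathcal{K}_{1,s+1}$ from Corollary \ref{cor:1st_Kac_module_fusion}, Lemma \ref{lem:direct_summand_rigid} to peel off the new summand, and then Proposition \ref{prop:Kr1_K1s_small_r_or_s} to assemble $\mathcal{K}_{r,s}\cong\mathcal{K}_{r,1}\boxtimes\mathcal{K}_{1,s}$. The only addition is your explicit coset-of-conformal-weights justification of $\mathrm{Hom}(\mathcal{K}_{1,s-1},\mathcal{K}_{1,s+1})=0$, which the paper asserts without comment and which is correct.
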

\begin{proof}
By Remark \ref{rem:K11_rigid} and Theorem \ref{thm:rigidity}, $\mathcal{K}_{1,1}$ and $\mathcal{K}_{1,2}$ are rigid and self-dual. Now assume by induction on $s$ that $\mathcal{K}_{1,s-1}$ and $\mathcal{K}_{1,s}$ are rigid and self-dual for some $s\in\lbrace 2,3,\ldots, q-1\rbrace$. Then using Corollary \ref{cor:1st_Kac_module_fusion}, $\mathcal{K}_{1,2}\boxtimes\mathcal{K}_{1,s}\cong\mathcal{K}_{1,s-1}\oplus\mathcal{K}_{1,s+1}$ is rigid and self-dual (see for example \cite[Lemma A.3(b)]{KL4}), and $\mathrm{Hom}(\mathcal{K}_{1,s-1},\mathcal{K}_{1,s+1})=0$. Thus $\mathcal{K}_{1,s+1}$ is rigid and self-dual by Lemma \ref{lem:direct_summand_rigid}, and it follows by induction that $\mathcal{K}_{1,s}$ is rigid and self-dual for all $1\leq s\leq q$. Similarly, $\mathcal{K}_{r,1}$ is rigid and self-dual for all $1\leq r\leq p$, and then using Proposition \ref{prop:Kr1_K1s_small_r_or_s}, $\mathcal{K}_{r,s}\cong\mathcal{K}_{r,1}\boxtimes\mathcal{K}_{1,s}$ is rigid and self-dual.
\end{proof}

It turns out that rigidity for the whole category $\mathcal{O}_{c_{p,q}}$ is obstructed by the subcategory of $L_{c_{p,q}}$-modules, that is, the subcategory consisting of finite direct sums of the modules $\mathcal{L}_{r,s}$ with $r<p$ and $s<q$. To exhibit some non-rigid objects of $\mathcal{O}_{c_{p,q}}$, we first need a lemma:
\begin{Lemma}\label{lem:L_tensor_ideal}
Suppose that $\mathcal{Y}$ is a surjective $V_{c_{p,q}}$-module intertwining operator of type $\binom{W_3}{W_1\,W_2}$ where $W_1$ is an $L_{c_{p,q}}$-module and $W_2$, $W_3$ are objects of $\mathcal{O}_{c_{p,q}}$. Then $W_3$ is an $L_{c_{p,q}}$-module.
\end{Lemma}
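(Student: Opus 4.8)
The plan is to reduce the assertion to the vanishing of a single operator. Recall from Section~\ref{sec:Vir} that $V_{c_{p,q}}$ has a unique maximal proper submodule $\mathcal{L}_{1,2q-1}=\mathcal{L}_{2p-1,1}$, generated by a singular vector $\tilde{v}$, and that $L_{c_{p,q}}=V_{c_{p,q}}/\mathcal{L}_{1,2q-1}$. For any $V_{c_{p,q}}$-module $W$, the annihilator $\{u\in V_{c_{p,q}}\mid Y_W(u,z)=0\}$ is an ideal (if $Y_W(\tilde{v},z)=0$, then the iterate formula for $Y_W$ expresses $Y_W(u_n\tilde{v},z)$ in terms of $Y_W(u,z)$ and $Y_W(\tilde{v},z)$, so it too vanishes), and it therefore contains the submodule $\langle\tilde{v}\rangle=\mathcal{L}_{1,2q-1}$ as soon as it contains $\tilde{v}$. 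Hence $W$ is an $L_{c_{p,q}}$-module if and only if $Y_W(\tilde{v},z)=0$. So it suffices to prove $Y_{W_3}(\tilde{v},z)=0$, knowing that $Y_{W_1}(\tilde{v},z)=0$ because $W_1$ is an $L_{c_{p,q}}$-module.

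The key step is to transport this vanishing from $W_1$ to $W_3$ along $\mathcal{Y}$. First I would invoke the weak associativity satisfied by any intertwining operator, a formal consequence of the Jacobi identity in its definition (see \cite{FHL, HLZ2}): for $w_1\in W_1$, $w_2\in W_2$ there is some $N\in\mathbb{Z}_{\geq 0}$ with
\[
(z_0+z_2)^N Y_{W_3}(\tilde{v},z_0+z_2)\mathcal{Y}(w_1,z_2)w_2=(z_0+z_2)^N\mathcal{Y}\big(Y_{W_1}(\tilde{v},z_0)w_1,z_2\big)w_2.
\]
Since $Y_{W_1}(\tilde{v},z_0)w_1=0$, the right-hand side vanishes; as $z_0+z_2=z_2(1+z_0/z_2)$ is invertible in the relevant space of formal series, dividing out $(z_0+z_2)^N$ gives $Y_{W_3}(\tilde{v},z_0+z_2)\mathcal{Y}(w_1,z_2)w_2=0$. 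Note that the commutator formula alone would \emph{not} suffice here, since it would move $\tilde{v}$ onto $W_2$, where it need not act by zero; it is essential to use the associativity direction, which moves $\tilde{v}$ onto $W_1$.

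It then remains to extract from this combined-variable identity the vanishing of each mode. Expanding $(z_0+z_2)^{-n-1}$ in nonnegative powers of $z_0$ and comparing coefficients of $z_0^j$ and of each monomial $z_2^{-h-1}(\log z_2)^k$ yields, after fixing a target conformal weight, a finite linear system $\sum_n\binom{-n-1}{j}(\tilde{v})_n(w_1)_{h_n;k}w_2=0$ indexed by $j\geq 0$; because $n\mapsto\binom{-n-1}{j}$ is a polynomial of degree $j$, a Vandermonde argument forces each $(\tilde{v})_n(w_1)_{h_n;k}w_2=0$. Alternatively, since $W_1,W_2,W_3$ are $C_1$-cofinite objects of $\mathcal{O}_{c_{p,q}}$, the associativity of intertwining operators of \cite{Hu-diff-eqns, HLZ2} shows that $\langle w_3',Y_{W_3}(\tilde{v},z_1)\mathcal{Y}(w_1,z_2)w_2\rangle$ is the expansion of an analytic function which continues to $\langle w_3',\mathcal{Y}(Y_{W_1}(\tilde{v},z_1-z_2)w_1,z_2)w_2\rangle\equiv 0$; as $z_1,z_2$ are then independent, every coefficient vanishes at once. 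Either way $(\tilde{v})_n(w_1)_{h;k}w_2=0$ for all $n,h,k$ and all $w_1,w_2$. Since $\mathcal{Y}$ is surjective, these vectors span $W_3$, so $Y_{W_3}(\tilde{v},z)=0$ and $W_3$ is an $L_{c_{p,q}}$-module.

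I expect the only genuine subtlety to be this last extraction step: weak associativity entangles the two formal variables, and passing to a statement about individual modes requires the conformal-weight grading together with the Vandermonde (or analytic-continuation) argument above. The reduction in the first paragraph and the transfer of vanishing in the second are routine applications of standard vertex-algebraic identities.
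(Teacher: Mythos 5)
Your proposal is correct and follows essentially the same route as the paper: the paper also observes that $W_3$ is spanned by the modes $(w_1)_{h;k}w_2$ and then invokes the intertwining-operator generalization of \cite[Proposition 4.5.7]{LL} --- which is exactly the weak-associativity-plus-mode-extraction argument you spell out --- to write $v_n(w_1)_{h;k}w_2$ as a combination of $(v_mw_1)_{j;k}w_2$, all of which vanish since $W_1$ is an $L_{c_{p,q}}$-module. Your preliminary reduction to the single singular vector $\tilde{v}$ and your explicit Vandermonde/analytic-continuation extraction are just more detailed versions of what the cited result packages up; the substance is identical.
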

\begin{proof}
By assumption, $W_3$ is spanned by coefficients of powers of $z$ and $\log z$ in $\mathcal{Y}(w_1,z)w_2=\sum_{h\in\mathbb{C}}\sum_{k\in\mathbb{N}} (w_1)_{h;k} w_2\,z^{-h-1}(\log z)^k$ as $w_1$ and $w_2$ run over $W_1$ and $W_2$, respectively. Thus to show that $W_3$ is an $L_{c_{p,q}}$-module we need to show that $v_n(w_1)_{h;k} w_2=0$ for all $w_1\in W_1$, $w_2\in W_2$, $h\in\mathbb{C}$, $k\in\mathbb{N}$, $n\in\mathbb{Z}$, and $v$ in the maximal proper ideal of $V_{c_{p,q}}$. In fact, the easy intertwining operator generalization of \cite[Proposition 4.5.7]{LL} shows that $v_n(w_1)_{h;k} w_2$ is a linear combination of vectors $(v_m w_1)_{j;k} w_2$ for $m\in\mathbb{Z}$ and $j\in\mathbb{C}$. Since $W_1$ is an $L_{c_{p,q}}$-module, each $v_m w_1=0$, and thus each $v_n(w_1)_{h;k}w_2=0$ as well.
\end{proof}

As a result, we have:
\begin{Proposition}
If $L$ is a non-zero $L_{c_{p,q}}$-module, then $L$ is not rigid in $\mathcal{O}_{c_{p,q}}$.
\end{Proposition}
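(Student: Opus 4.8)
The plan is to argue by contradiction, exploiting the fact—encoded in Lemma \ref{lem:L_tensor_ideal}—that the $L_{c_{p,q}}$-modules form a ``tensor ideal'' in $\mathcal{O}_{c_{p,q}}$, together with the non-semisimplicity of the unit object $\mathcal{K}_{1,1}\cong V_{c_{p,q}}$. So suppose $L$ is a non-zero $L_{c_{p,q}}$-module that is rigid with dual $L^*$ (necessarily an object of $\mathcal{O}_{c_{p,q}}$), equipped with evaluation $e_L\colon L^*\boxtimes L\to\mathcal{K}_{1,1}$ and coevaluation $i_L\colon\mathcal{K}_{1,1}\to L\boxtimes L^*$ satisfying the zig-zag identities.

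First I would show that $L^*\boxtimes L$ is itself an $L_{c_{p,q}}$-module. Since $L$ is an $L_{c_{p,q}}$-module and the tensor product intertwining operator $\mathcal{Y}_\boxtimes$ of type $\binom{L\boxtimes L^*}{L\,L^*}$ is surjective, Lemma \ref{lem:L_tensor_ideal} applied with $W_1=L$ shows that $L\boxtimes L^*$ is an $L_{c_{p,q}}$-module; applying the braiding isomorphism $\mathcal{R}_{L^*,L}\colon L^*\boxtimes L\to L\boxtimes L^*$, the same holds for $L^*\boxtimes L$. I deliberately feed the Lemma the coevaluation's tensor order, since the Lemma only requires the $L_{c_{p,q}}$-module in its first tensor slot, and this lets me avoid separately identifying $L^*$ as an $L_{c_{p,q}}$-module.

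The crucial step is then to extract a contradiction from the evaluation. The zig-zag identity forces $e_L\neq 0$, because the composition equal to $\mathrm{Id}_L$ factors through $\mathrm{Id}_L\boxtimes e_L$ and $L\neq 0$. Hence $\mathrm{Im}(e_L)$ is a non-zero submodule of $\mathcal{K}_{1,1}\cong V_{c_{p,q}}$. But $\mathrm{Im}(e_L)$ is also a quotient of the $L_{c_{p,q}}$-module $L^*\boxtimes L$, and quotients of $L_{c_{p,q}}$-modules are again $L_{c_{p,q}}$-modules (the maximal proper submodule $\mathcal{L}_{1,2q-1}$ of $V_{c_{p,q}}$ acts by zero, and this passes to subquotients). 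Thus $\mathrm{Im}(e_L)$ is a non-zero $L_{c_{p,q}}$-submodule of $V_{c_{p,q}}$. This is impossible: the only non-zero submodules of $V_{c_{p,q}}$ are $V_{c_{p,q}}$ itself, which is an indecomposable non-split (hence non-semisimple) extension and so not an $L_{c_{p,q}}$-module, and $\mathcal{L}_{1,2q-1}$, which is not an $L_{c_{p,q}}$-module since $2q-1\geq q$ places it outside the simple $L_{c_{p,q}}$-modules $\mathcal{L}_{r,s}$ with $r<p$ and $s<q$. This contradiction shows $L$ is not rigid.

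The main point to get right is the choice of evaluation rather than coevaluation: the image of the coevaluation $i_L$ lies in $L\boxtimes L^*$ and is only a \emph{quotient} of $V_{c_{p,q}}$, which could legitimately be the $L_{c_{p,q}}$-module $L_{c_{p,q}}=\mathcal{L}_{1,1}$, yielding no contradiction. It is only the evaluation, whose image is forced to be a \emph{submodule} of $V_{c_{p,q}}$, that conflicts with $V_{c_{p,q}}$ having no non-zero $L_{c_{p,q}}$-submodule. One should also resist arguing that $\mathcal{K}_{1,1}$ is a retract of $L\boxtimes L^*$, since the categorical dimension $e_L\circ\mathcal{R}_{L,L^*}\circ i_L$ may vanish; the argument must use only the non-vanishing of $e_L$ itself.
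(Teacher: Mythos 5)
Your proof is correct and follows essentially the same route as the paper: apply Lemma \ref{lem:L_tensor_ideal} to see that the source of the (necessarily non-zero) evaluation is an $L_{c_{p,q}}$-module, and observe that no $L_{c_{p,q}}$-module admits a non-zero map to $\mathcal{K}_{1,1}$ because every non-zero submodule of $\mathcal{K}_{1,1}$ contains $\mathcal{L}_{1,2q-1}$, which is not an $L_{c_{p,q}}$-module. Your extra care about the tensor order in the Lemma (routing through $L\boxtimes L^*$ and the braiding) and about using the evaluation rather than the coevaluation fills in details the paper leaves implicit but does not change the argument.
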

\begin{proof}
If $L$ were rigid, we would have a non-zero evaluation $e_L: L^*\boxtimes L\rightarrow\mathcal{K}_{1,1}$ with $L^*\boxtimes L$ an $L_{c_{p,q}}$-module by Lemma \ref{lem:L_tensor_ideal}. But in fact there is no non-zero map from any $L_{c_{p,q}}$-module to $\mathcal{K}_{1,1}$ because the unique irreducible submodule $\mathcal{L}_{1,2q-1}\subseteq\mathcal{K}_{1,1}$ is not an $L_{c_{p,q}}$-module.
\end{proof}

We can also use Lemma \ref{lem:L_tensor_ideal} to prove that certain Kac modules are not rigid:
\begin{Proposition}\label{prop:some_Krs_not_rigid}
For $1\leq r\leq p-1$, the Kac module $\mathcal{K}_{r,q+1}$ is not rigid in $\mathcal{O}_{c_{p,q}}$, and for $1\leq s\leq q-1$, $\mathcal{K}_{p+1,s}$ is not rigid.
\end{Proposition}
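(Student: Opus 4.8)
The plan is to prove the statement for $\mathcal{K}_{r,q+1}$ with $1\leq r\leq p-1$; the assertion for $\mathcal{K}_{p+1,s}$ then follows immediately from the symmetry $c_{p,q}=c_{q,p}$, which exchanges the two Kac labels. The first thing I would record is the structure of $\mathcal{K}_{r,q+1}$ from Section \ref{subsec:FF_and_K_modules}: writing $q+1=1\cdot q+1$ puts it in the bulk case with $m=0$, $n=1$, $s=1$, so $\mathcal{K}_{r,q+1}$ has length two, is cyclically generated by its lowest conformal weight vector, and has simple top $\mathcal{L}_{r,q+1}$ and socle $\mathcal{L}_{r,3q-1}$. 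Its only quotients are therefore $0$, $\mathcal{L}_{r,q+1}$, and $\mathcal{K}_{r,q+1}$. Crucially, $\mathcal{L}_{r,q+1}$ is \emph{not} an $L_{c_{p,q}}$-module: by the conformal weight identifications of Section \ref{subsec:Vir_and_Verma}, the only labels in $1\leq\rho\leq p$ giving the weight $h_{r,q+1}$ are $(r,q+1)$ and $(p-r,-1)$, neither of which lies in the minimal-model range $1\leq\rho\leq p-1$, $1\leq\sigma\leq q-1$. Consequently $\mathcal{K}_{r,q+1}$ has no nonzero $L_{c_{p,q}}$-module quotient.

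Next I would establish, \emph{unconditionally}, that $\mathcal{K}_{r,q+1}\boxtimes\mathcal{L}_{1,1}=0$. Since $\mathcal{L}_{1,1}=L_{c_{p,q}}$ is a quotient of the unit $\mathcal{K}_{1,1}=V_{c_{p,q}}$, right-exactness of $\mathcal{K}_{r,q+1}\boxtimes\bullet$ shows $\mathcal{K}_{r,q+1}\boxtimes\mathcal{L}_{1,1}$ is a quotient of $\mathcal{K}_{r,q+1}\boxtimes\mathcal{K}_{1,1}\cong\mathcal{K}_{r,q+1}$. On the other hand, applying Lemma \ref{lem:L_tensor_ideal} to the surjective intertwining operator $\mathcal{Y}_\boxtimes$ of type $\binom{\mathcal{K}_{r,q+1}\boxtimes\mathcal{L}_{1,1}}{\mathcal{L}_{1,1}\,\,\mathcal{K}_{r,q+1}}$ shows this tensor product is an $L_{c_{p,q}}$-module. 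By the previous paragraph, such a quotient must be zero.

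Now suppose, for contradiction, that $\mathcal{K}_{r,q+1}$ is rigid, so that $\mathcal{K}_{r,q+1}\boxtimes\bullet$ is exact. Applying it to the defining short exact sequence of the unit,
\begin{equation*}
0\longrightarrow\mathcal{L}_{1,2q-1}\longrightarrow\mathcal{K}_{1,1}\longrightarrow\mathcal{L}_{1,1}\longrightarrow 0,
\end{equation*}
and using $\mathcal{K}_{r,q+1}\boxtimes\mathcal{K}_{1,1}\cong\mathcal{K}_{r,q+1}$ together with $\mathcal{K}_{r,q+1}\boxtimes\mathcal{L}_{1,1}=0$, I obtain an isomorphism $\mathcal{K}_{r,q+1}\boxtimes\mathcal{L}_{1,2q-1}\cong\mathcal{K}_{r,q+1}$. (Equivalently: the induced map $\mathcal{K}_{r,q+1}\boxtimes\mathcal{L}_{1,2q-1}\to\mathcal{K}_{r,q+1}$ is already surjective by right-exactness, and rigidity forces it to be injective.) The last step is to refute this isomorphism, and here the key point is that $\mathcal{L}_{1,2q-1}$ is a degenerate module: its Verma cover carries a singular vector at level $2q-1$, so fusing with it opens $2q-1\geq 3$ distinct conformal weight channels. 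I would make this quantitative by computing the Zhu bimodule data of $\mathcal{L}_{1,2q-1}$ in the style of Section \ref{subsec:Zhu_theory} and by constructing enough intertwining operators out of $\mathcal{L}_{1,2q-1}$ to show that $\mathcal{K}_{r,q+1}\boxtimes\mathcal{L}_{1,2q-1}$ has length strictly greater than $2$ (equivalently, a minimal conformal weight space of dimension $>1$, or a minimal weight below $h_{r,q+1}$), which is incompatible with $\mathcal{K}_{r,q+1}\boxtimes\mathcal{L}_{1,2q-1}\cong\mathcal{K}_{r,q+1}$.

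The reduction through the unit's short exact sequence is routine, so I expect the main obstacle to be this final lower bound. Proposition \ref{prop:surj_intw_op} and Miyamoto's bound (Proposition \ref{prop:Miyamoto}) only provide \emph{upper} bounds on minimal weight spaces and cofinite dimension, so the genuine difficulty is producing \emph{independent} intertwining operators that witness additional composition factors. The natural way to do this is to realize $\mathcal{L}_{1,2q-1}$ as a subquotient of an iterated fusion product of $\mathcal{K}_{1,2}$ and $\mathcal{K}_{2,1}$ (using Theorem \ref{thm:Fock_module_associativity}) and to track the new low-weight channels that appear precisely when the second Kac label crosses a multiple of $q$; rigidity of $\mathcal{K}_{1,2}$ and $\mathcal{K}_{2,1}$ and the associativity diagram of Theorem \ref{thm:Fock_module_associativity} should then convert these channels into the extra factors that contradict the isomorphism above.
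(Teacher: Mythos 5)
Your reduction is correct as far as it goes: the observation that $\mathcal{K}_{r,q+1}$ has no non-zero $L_{c_{p,q}}$-module quotient, hence $\mathcal{K}_{r,q+1}\boxtimes\mathcal{L}_{1,1}=0$ by Lemma \ref{lem:L_tensor_ideal} and right-exactness, and that rigidity would then force $\mathcal{K}_{r,q+1}\boxtimes\mathcal{L}_{1,2q-1}\cong\mathcal{K}_{r,q+1}$, is valid and is in the same spirit as the paper's argument (both hinge on Lemma \ref{lem:L_tensor_ideal} and the exact sequence $0\rightarrow\mathcal{L}_{1,2q-1}\rightarrow\mathcal{K}_{1,1}\rightarrow\mathcal{L}_{1,1}\rightarrow 0$). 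The genuine gap is your final step. You must refute $\mathcal{K}_{r,q+1}\boxtimes\mathcal{L}_{1,2q-1}\cong\mathcal{K}_{r,q+1}$, and everything you offer for this is a plan rather than a proof: a lower bound on the length (or lowest conformal weight) of a fusion product with $\mathcal{L}_{1,2q-1}$ requires constructing intertwining operators out of $\mathcal{L}_{1,2q-1}\otimes\mathcal{K}_{r,q+1}$, and none of the constructions in the paper apply here, since $\mathcal{L}_{1,2q-1}$ is neither a Kac module nor a quotient of a Verma module by a single singular vector, and Theorem \ref{thm:Fock_module_associativity} only produces maps between Kac modules. Moreover, the implication you proved runs only one way: the truth of the proposition does not guarantee that $\mathcal{K}_{r,q+1}\boxtimes\mathcal{L}_{1,2q-1}\not\cong\mathcal{K}_{r,q+1}$, and since the naive fusion channels of $(1,2q-1)\times(r,q+1)$ include the label $(r,q+1)$ itself, Zhu-algebra upper bounds cannot obviously exclude this isomorphism either. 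So the contradiction you aim for is both unestablished and not clearly attainable by the tools at hand.

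The paper sidesteps this difficulty by a different maneuver: it presents $\mathcal{K}_{r,q+1}$ as the cokernel in the exact sequence $0\rightarrow\mathcal{K}_{r,q-1}\rightarrow\mathcal{K}_{1,2}\boxtimes\mathcal{K}_{r,q}\rightarrow\mathcal{K}_{r,q+1}\rightarrow 0$ of Proposition \ref{prop:K12_times_simple_indecomp}, tensors the resulting $3\times 3$ grid against $0\rightarrow\mathcal{L}_{1,2q-1}\rightarrow\mathcal{K}_{1,1}\rightarrow\mathcal{L}_{1,1}\rightarrow 0$, and observes that flatness of $\mathcal{K}_{r,q+1}$ would force an injection $\mathcal{K}_{r,q-1}\boxtimes\mathcal{L}_{1,1}\hookrightarrow\mathcal{K}_{1,2}\boxtimes\mathcal{K}_{r,q}\boxtimes\mathcal{L}_{1,1}$. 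The point of this choice is that both sides are then computable by soft arguments alone: $\mathcal{K}_{r,q}\boxtimes\mathcal{L}_{1,1}=0$ because $\mathcal{K}_{r,q}$ is simple and not an $L_{c_{p,q}}$-module, while $\mathcal{K}_{r,q-1}\boxtimes\mathcal{L}_{1,1}$ surjects onto the non-zero minimal-model module $\mathcal{L}_{r,q-1}$. If you wish to salvage your route, you should replace the problematic computation of $\mathcal{K}_{r,q+1}\boxtimes\mathcal{L}_{1,2q-1}$ with a device of this kind that reduces everything to tensor products with $\mathcal{L}_{1,1}$.
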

\begin{proof}
By $c_{p,q}=c_{q,p}$ symmetry, it is enough to show that $\mathcal{K}_{r,q+1}$ is not rigid. We will show in Proposition \ref{prop:K12_times_simple_indecomp} (without using non-rigidity of $\mathcal{K}_{r,q+1}$) that there is an exact sequence
\begin{equation*}
0\longrightarrow\mathcal{K}_{r,q-1}\longrightarrow\mathcal{K}_{1,2}\boxtimes\mathcal{K}_{r,q}\longrightarrow\mathcal{K}_{r,q+1}\longrightarrow 0.
\end{equation*}
We can combine this with the exact sequence
\begin{equation*}
0\longrightarrow\mathcal{L}_{1,2q-1}\longrightarrow\mathcal{K}_{1,1}\longrightarrow\mathcal{L}_{1,1}\longrightarrow 0
\end{equation*}
to obtain the following commutative diagram with (right) exact rows and columns as indicated and arrows given labels for future reference (recall the Nine, or $3\times 3$, Lemma):
\begin{equation*}
\xymatrixrowsep{1.25pc}
\xymatrixcolsep{1.25pc}
\xymatrix{
& 0 \ar[d] & 0 \ar[d] & &\\
&\mathcal{K}_{r,q-1}\boxtimes\mathcal{L}_{1,2q-1} \ar[d]^{g_1}\ar[r]^(.45){f_1} & \mathcal{K}_{1,2}\boxtimes\mathcal{K}_{r,q}\boxtimes\mathcal{L}_{1,2q-1} \ar[d]^{g_3}\ar[r]^(.53){f_2} & \mathcal{K}_{r,q+1}\boxtimes\mathcal{L}_{1,2q-1} \ar[d]^{g_5}\ar[r] & 0\\
0 \ar[r] &\mathcal{K}_{r,q-1}\boxtimes\mathcal{K}_{1,1} \ar[d]^{g_2}\ar[r]^(.45){f_3} & \mathcal{K}_{1,2}\boxtimes\mathcal{K}_{r,q}\boxtimes\mathcal{K}_{1,1} \ar[d]^{g_4}\ar[r]^(.53){f_4} & \mathcal{K}_{r,q+1}\boxtimes\mathcal{K}_{1,1} \ar[d]^{g_6}\ar[r] & 0\\
&\mathcal{K}_{r,q-1}\boxtimes\mathcal{L}_{1,1} \ar[d]\ar[r]^(.45){f_5} & \mathcal{K}_{1,2}\boxtimes\mathcal{K}_{r,q}\boxtimes\mathcal{L}_{1,1} \ar[d]\ar[r]^(.53){f_6} & \mathcal{K}_{r,q+1}\boxtimes\mathcal{L}_{1,1} \ar[d]\ar[r] & 0\\
& 0 & 0 & 0 &\\
}
\end{equation*}
Now supposing for the sake of contradiction that $\mathcal{K}_{r,q+1}$ were rigid, then it would be flat (see for example Corollary 1 from the Appendix of \cite{KL4}), and thus the rightmost column in the diagram would be exact, not just right exact. By a diagram chase, the map $f_5: \mathcal{K}_{r,q-1}\boxtimes\mathcal{L}_{1,1}\rightarrow\mathcal{K}_{1,2}\boxtimes\mathcal{K}_{r,q}\boxtimes\mathcal{L}_{1,1}$ would then be injective. 

Indeed, if $w_0\in\mathrm{Ker}\,f_5$, choose $w_1\in\mathcal{K}_{r,q-1}\boxtimes\mathcal{K}_{1,1}$ such that $g_2(w_1)=w_0$, so that
\begin{equation*}
g_4(f_3(w_1)) = f_5(g_2(w_1))=f_5(w_0)=0.
\end{equation*}
Thus $f_3(w_1)\in\mathrm{Ker}\,g_4 =\mathrm{Im}\,g_3$, that is, there exists $w_2\in\mathcal{K}_{1,2}\boxtimes\mathcal{K}_{r,q}\boxtimes\mathcal{L}_{1,2q-1}$ such that $g_3(w_2)=f_3(w_1)$. Now,
\begin{equation*}
g_5(f_2(w_2))=f_4(g_3(w_2))=f_4(f_3(w_1))=0,
\end{equation*}
so $f_2(w_2)=0$ since we are assuming $g_5$ is injective. That is, $w_2\in\mathrm{Ker}\,f_2=\mathrm{Im}\,f_1$, and there exists $w_3\in\mathcal{K}_{r,q-1}\boxtimes\mathcal{L}_{1,2q-1}$ such that $f_1(w_3)=w_2$. We then get 
\begin{equation*}
f_3(g_1(w_3))=g_3(f_1(w_3))=g_3(w_2)=f_3(w_1),
\end{equation*}
so $g_1(w_3)=w_1$ because $f_3$ is injective. Finally, we get $w_0=g_2(w_1)=g_2(g_1(w_3))=0$, proving $\mathrm{Ker}\,f_5=0$ as desired.

Now, Lemma \ref{lem:L_tensor_ideal} says that $\mathcal{K}_{r,q}\boxtimes\mathcal{L}_{1,1}$ is an $L_{c_{p,q}}$-module, and there is also a surjection
\begin{equation*}
\mathcal{K}_{r,q}\xrightarrow{\cong}\mathcal{K}_{r,q}\boxtimes\mathcal{K}_{1,1}\twoheadrightarrow\mathcal{K}_{r,q}\boxtimes\mathcal{L}_{1,1}.
\end{equation*}
Since $\mathcal{K}_{r,q}$ is simple and not an $L_{c_{p,q}}$-module, it follows that 
\begin{equation*}
\mathcal{K}_{1,2}\boxtimes\mathcal{K}_{r,q}\boxtimes\mathcal{L}_{1,1} =\mathcal{K}_{1,2}\boxtimes 0 = 0.
\end{equation*}
On the other hand, $\mathcal{L}_{r,q-1}$ is an $L_{c_{p,q}}$-module, so there is a non-zero intertwining operator $Y_{\mathcal{L}_{r,q-1}}$ of type $\binom{\mathcal{L}_{r,q-1}}{L_{c_{p,q}}\,\mathcal{L}_{r,q-1}}$, which leads to a surjection
\begin{equation*}
\mathcal{K}_{r,q-1}\boxtimes\mathcal{L}_{1,1}\twoheadrightarrow\mathcal{L}_{r,q-1}\boxtimes\mathcal{L}_{1,1}\cong\mathcal{L}_{1,1}\boxtimes\mathcal{L}_{r,q-1}\twoheadrightarrow\mathcal{L}_{r,q-1}\neq 0.
\end{equation*}
Thus there is no injection $\mathcal{K}_{r,q-1}\boxtimes\mathcal{L}_{1,1}\rightarrow\mathcal{K}_{1,2}\boxtimes\mathcal{K}_{r,q}\boxtimes\mathcal{L}_{1,1}$, contradicting the assumption that $\mathcal{K}_{r,q+1}$ is rigid.
\end{proof}

We leave the question of whether the remaining Kac modules are rigid (or not) in $\mathcal{O}_{c_{p,q}}$ open for now, since it is not important for the remaining results in this paper. 


\section{Main results on fusion tensor products of Kac modules}\label{sec:main_results}

In this section, we prove the main theorems on fusion tensor products of the modules $\mathcal{K}_{r,s}$ for $r,s\in\mathbb{Z}_{\geq 1}$. We mainly focus on how $\mathcal{K}_{1,2}$ and $\mathcal{K}_{2,1}$ tensor with general $\mathcal{K}_{r,s}$, since it follows from Theorem \ref{thm:Fock_module_associativity} that $\mathcal{K}_{r,s}$ for any $r,s\in\mathbb{Z}_{\geq 1}$ can be realized as a homomorphic image of $\mathcal{K}_{2,1}^{\boxtimes(r-1)}\boxtimes\mathcal{K}_{1,2}^{\boxtimes(s-1)}$.

\subsection{Tensor products of \texorpdfstring{$\mathcal{K}_{1,2}$}{K12} and \texorpdfstring{$\mathcal{K}_{2,1}$}{K21} with Kac modules}

Here we determine how $\mathcal{K}_{1,2}$ and $\mathcal{K}_{2,1}$ tensor with all Kac modules $\mathcal{K}_{r,s}$, $r,s\in\mathbb{Z}_{\geq 1}$. Recall that Corollary \ref{cor:1st_Kac_module_fusion} and Remark \ref{rem:1st_Kac_module_fusion} give the first results in this direction; we will generalize these results here.

Because $\mathcal{K}_{1,2}$ and $\mathcal{K}_{2,1}$ are rigid, for any $\mathfrak{Vir}$-modules $W_1$ and $W_2$ in $\mathcal{O}_{c_{p,q}}$, there are natural isomorphisms
\begin{align*}
\mathrm{Hom}(\mathcal{K}_{1,2}\boxtimes W_1,W_2) &\cong\mathrm{Hom}(W_1,\mathcal{K}_{1,2}\boxtimes W_2)\nonumber\\
\mathrm{Hom}(\mathcal{K}_{2,1}\boxtimes W_1,W_2) &\cong\mathrm{Hom}(W_1,\mathcal{K}_{2,1}\boxtimes W_2)
\end{align*}
(see for example \cite[Lemma 2.1.6]{BK}). In particular, recall from Theorem \ref{thm:Kac_module_surj} that for any $r,s\in\mathbb{Z}_{\geq 1}$, there is a surjection $F_{r,s}^+: \mathcal{K}_{1,2}\boxtimes\mathcal{K}_{r,s}\rightarrow\mathcal{K}_{r,s+1}$ (that is, $F_{r,s}^+=F_{1,2;r,s}$ in the notation of Theorem \ref{thm:Fock_module_associativity}). Thus for $s\geq 2$, rigidity of $\mathcal{K}_{1,2}$ yields a non-zero map
\begin{equation*}
F_{r,s}^-: \mathcal{K}_{r,s-1}\longrightarrow\mathcal{K}_{1,2}\boxtimes\mathcal{K}_{r,s}
\end{equation*}
given specifically by the following composition (where for brevity we suppress unit and associativity isomorphisms from the notation):
\begin{align*}
\mathcal{K}_{r,s-1}\xrightarrow{i_{\mathcal{K}_{1,2}}\boxtimes\mathrm{Id}} \mathcal{K}_{1,2}\boxtimes\mathcal{K}_{1,2}\boxtimes\mathcal{K}_{r,s-1}\xrightarrow{\mathrm{Id}\boxtimes F_{r,s-1}^+} \mathcal{K}_{1,2}\boxtimes\mathcal{K}_{r,s}.
\end{align*}
Of course, setting $\mathcal{K}_{r,0}=0$, we can also set $F_{r,1}^-=0:\mathcal{K}_{r,0}\rightarrow\mathcal{K}_{1,2}\boxtimes\mathcal{K}_{r,1}$.
\begin{Proposition}\label{prop:partial_exact_for_gen_rs}
For all $r,s\in\mathbb{Z}_{\geq 1}$, $F_{r,s}^-$ is injective and $\mathrm{Im}\,F_{r,s}^-\subseteq\mathrm{Ker}\,F_{r,s}^+$.
\end{Proposition}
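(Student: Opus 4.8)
The plan is to treat the two assertions separately, using throughout the rigidity and self-duality of $\mathcal{K}_{1,2}$ from Theorem \ref{thm:rigidity}. By construction, $F_{r,s}^-$ is the image of the surjection $F_{r,s-1}^+\colon\mathcal{K}_{1,2}\boxtimes\mathcal{K}_{r,s-1}\to\mathcal{K}_{r,s}$ under the duality adjunction $\mathrm{Hom}(\mathcal{K}_{1,2}\boxtimes\mathcal{K}_{r,s-1},\mathcal{K}_{r,s})\cong\mathrm{Hom}(\mathcal{K}_{r,s-1},\mathcal{K}_{1,2}\boxtimes\mathcal{K}_{r,s})$; since this adjunction is a linear isomorphism of $\mathrm{Hom}$ spaces and $F_{r,s-1}^+$ is nonzero, $F_{r,s}^-$ is nonzero. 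Rigidity of $\mathcal{K}_{1,2}$ also makes $\mathcal{K}_{1,2}\boxtimes-$ exact and faithful, which I reserve for the injectivity argument.

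For the containment $\mathrm{Im}\,F_{r,s}^-\subseteq\mathrm{Ker}\,F_{r,s}^+$, I would show directly that $F_{r,s}^+\circ F_{r,s}^-=0$. Making the associativity isomorphism explicit in the definition of $F_{r,s}^-$, this composite equals $F_{r,s}^+\circ(\mathrm{Id}\boxtimes F_{r,s-1}^+)\circ\mathcal{A}\circ(i_{\mathcal{K}_{1,2}}\boxtimes\mathrm{Id})$. The key step is to apply the associativity compatibility of Theorem \ref{thm:Fock_module_associativity} with $(r,s),(r',s'),(r'',s'')=(1,2),(1,2),(r,s-1)$, which rewrites $F_{r,s}^+\circ(\mathrm{Id}\boxtimes F_{r,s-1}^+)\circ\mathcal{A}$ as a nonzero multiple of $F_{1,3;r,s-1}\circ(F_{1,2;1,2}\boxtimes\mathrm{Id})$, where $F_{1,2;1,2}\colon\mathcal{K}_{1,2}\boxtimes\mathcal{K}_{1,2}\to\mathcal{K}_{1,3}$. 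Thus $F_{r,s}^+\circ F_{r,s}^-$ is a multiple of $F_{1,3;r,s-1}\circ\big((F_{1,2;1,2}\circ i_{\mathcal{K}_{1,2}})\boxtimes\mathrm{Id}\big)$, and everything reduces to the single identity $F_{1,2;1,2}\circ i_{\mathcal{K}_{1,2}}=0$ for the map $\mathcal{K}_{1,1}\to\mathcal{K}_{1,3}$ obtained by following the coevaluation with the surjection onto $\mathcal{K}_{1,3}$. I would prove this by conformal weights: when $q\geq 3$, $h_{1,3}=2\big(\tfrac{p}{q}-1\big)\notin\mathbb{Z}$ because $\gcd(p,q)=1$, so $\mathrm{Hom}(\mathcal{K}_{1,1},\mathcal{K}_{1,3})=0$; when $q=2$, $h_{1,3}=p-2>0$, so $\mathcal{K}_{1,3}$ has no vectors of weight $0$, while $i_{\mathcal{K}_{1,2}}$ sends the weight-$0$ generator of $\mathcal{K}_{1,1}$ to a weight-$0$ vector, forcing the composite to vanish.

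For injectivity, I would first dispose of the generic cases in which the product splits. When $q\nmid s$ (and $r\leq p$, so that Corollary \ref{cor:1st_Kac_module_fusion} applies), we have $\mathcal{K}_{1,2}\boxtimes\mathcal{K}_{r,s}\cong\mathcal{K}_{r,s-1}\oplus\mathcal{K}_{r,s+1}$ with the two summands lying in distinct cosets of $\mathbb{C}/\mathbb{Z}$, since $h_{r,s-1}-h_{r,s+1}=r-\tfrac{ps}{q}\notin\mathbb{Z}$. Any $\mathfrak{Vir}$-module map out of $\mathcal{K}_{r,s-1}$ respects these cosets, so $F_{r,s}^-$ lands in the $\mathcal{K}_{r,s-1}$ summand and is a nonzero endomorphism of $\mathcal{K}_{r,s-1}$. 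Because $\mathcal{K}_{r,s-1}$ is cyclic with one-dimensional lowest conformal weight space (its generator being the unique, up to scalar, highest-weight vector of weight $h_{r,s-1}$), every endomorphism scales the generator, so $\mathrm{End}\,\mathcal{K}_{r,s-1}=\mathbb{C}$; hence the endomorphism is an isomorphism and $F_{r,s}^-$ is injective.

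The main obstacle is the case $q\mid s$, where $\mathcal{K}_{1,2}\boxtimes\mathcal{K}_{r,s}$ is indecomposable and logarithmic and no splitting is available. Here I would still use that $\mathcal{K}_{r,s-1}$ is cyclic with simple top, so a nonzero $F_{r,s}^-$ can only fail to be injective by annihilating the socle of $\mathcal{K}_{r,s-1}$; I would exclude this by combining the containment $\mathrm{Im}\,F_{r,s}^-\subseteq\mathrm{Ker}\,F_{r,s}^+$ just proved with the conformal-weight constraints of Proposition \ref{prop:lowest_weight_spaces} on $\mathcal{K}_{1,2}\boxtimes\mathcal{K}_{r,s}$, verifying that the singular vector generating the socle cannot lie in $\mathrm{Ker}\,F_{r,s}^+$. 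Alternatively, applying the faithful exact functor $\mathcal{K}_{1,2}\boxtimes-$ and analyzing $\mathrm{Id}\boxtimes F_{r,s}^-$ after decomposing $\mathcal{K}_{1,2}\boxtimes\mathcal{K}_{1,2}$ reduces injectivity to a triviality-of-intersection statement; either way, this logarithmic case is where the bulk of the careful bookkeeping is needed.
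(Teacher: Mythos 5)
Your proof of the containment $\mathrm{Im}\,F_{r,s}^-\subseteq\mathrm{Ker}\,F_{r,s}^+$ is correct and is essentially the paper's argument: apply the associativity square of Theorem \ref{thm:Fock_module_associativity} with $(1,2),(1,2),(r,s-1)$ to reduce everything to the vanishing of the induced map $\mathcal{K}_{1,1}\rightarrow\mathcal{K}_{1,3}$, which holds because $0$ is not a conformal weight of $\mathcal{K}_{1,3}$. (Minor slip: $h_{1,3}=\frac{2p}{q}-1$, so for $q=2$ it equals $p-1$, not $p-2$; the conclusion is unaffected.)

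The injectivity argument, however, has a genuine gap. Your case analysis covers only the situations where Corollary \ref{cor:1st_Kac_module_fusion} applies ($r\leq p$ with $q\nmid s$, or $s\leq q-1$), and the remaining cases are exactly where your plan breaks down: for $r>p$ and $s>q$ the Kac module $\mathcal{K}_{r,s-1}$ is \emph{not} cyclic and does not have a one-dimensional lowest conformal weight space (see the bulk-case structures in Section \ref{subsec:FF_and_K_modules}), so neither ``$\mathrm{End}=\mathbb{C}$'' nor ``cyclic with simple top'' is available; and in the $q\mid s$ case your proposed criterion is not coherent as stated --- you would need to show the socle of $\mathcal{K}_{r,s-1}$ is not killed by $F_{r,s}^-$, but you propose instead to show something is not in $\mathrm{Ker}\,F_{r,s}^+$, even though by the containment just proved everything in $\mathrm{Im}\,F_{r,s}^-$ \emph{does} lie in $\mathrm{Ker}\,F_{r,s}^+$. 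The paper avoids all case analysis with a single uniform argument you are missing: by the zig-zag identity for the rigid object $\mathcal{K}_{1,2}$, the composition $(e_{\mathcal{K}_{1,2}}\boxtimes\mathrm{Id})\circ(\mathrm{Id}\boxtimes F_{r,s}^-)$ equals $F_{r,s-1}^+$ up to unit/associativity isomorphisms; hence $F_{r,s-1}^+$ vanishes on $\mathcal{K}_{1,2}\boxtimes\mathrm{Ker}\,F_{r,s}^-$, i.e.\ the Heisenberg Fock intertwining operator of type $\binom{\mathcal{F}_{r,s}}{\mathcal{F}_{1,2}\,\mathcal{F}_{r,s-1}}$ restricts to zero on $\mathcal{K}_{1,2}\otimes\mathrm{Ker}\,F_{r,s}^-$. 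Since this is a non-zero intertwining operator among irreducible $\mathcal{F}_0$-modules, \cite[Proposition 11.9]{DL} forces $\mathrm{Ker}\,F_{r,s}^-=0$. You should replace your injectivity argument with this one (or supply a complete treatment of the non-split and $r>p$, $s>q$ cases, which would require substantially more work).
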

\begin{proof}
The case $s=1$ is trivial. For $s\geq 2$, let $k:\mathrm{Ker}\,F_{r,s}^-\rightarrow\mathcal{K}_{r,s-1}$ be the inclusion map. Then the following composition vanishes (where we again suppress unit and associativity isomorphisms from the notation):
\begin{align*}
\mathcal{K}_{1,2}\boxtimes\mathrm{Ker}\,F_{r,s}^-\xrightarrow{\mathrm{Id}\boxtimes k}\mathcal{K}_{1,2}\boxtimes\mathcal{K}_{r,s-1}\xrightarrow{\mathrm{Id}\boxtimes F_{r,s}^-} \mathcal{K}_{1,2}\boxtimes\mathcal{K}_{1,2}\boxtimes\mathcal{K}_{r,s}
 \xrightarrow{e_{\mathcal{K}_{1,2}}\boxtimes\mathrm{Id}}\mathcal{K}_{r,s}.
\end{align*}
By the definition of $F_{r,s}^-$, this composition is
\begin{align*}
\mathcal{K}_{1,2}  \boxtimes\mathrm{Ker}\,F_{r,s}^-& \xrightarrow{\mathrm{Id}\boxtimes k}\mathcal{K}_{1,2}\boxtimes\mathcal{K}_{r,s-1}\xrightarrow{\mathrm{Id}\boxtimes i_{\mathcal{K}_{1,2}}\boxtimes\mathrm{Id}}\mathcal{K}_{1,2}\boxtimes\mathcal{K}_{1,2}\boxtimes\mathcal{K}_{1,2}\boxtimes\mathcal{K}_{r,s-1}\nonumber\\
& \xrightarrow{\mathrm{Id}\boxtimes\mathrm{Id}\boxtimes F_{r,s-1}^+} \mathcal{K}_{1,2}\boxtimes\mathcal{K}_{1,2}\boxtimes\mathcal{K}_{r,s}\xrightarrow{ e_{\mathcal{K}_{1,2}}\boxtimes\mathrm{Id}} \mathcal{K}_{1,2}\boxtimes\mathcal{K}_{r,s}.
\end{align*}
After exchanging the $F_{r,s-1}^+$ and evaluation in the last two arrows (which involves naturality of the associativity and unit isomorphisms that have been suppressed from the notation), and then applying rigidity of $\mathcal{K}_{1,2}$, this composition is just
\begin{equation*}
\mathcal{K}_{1,2}\boxtimes\mathrm{Ker}\,F_{r,s}^-\xrightarrow{\mathrm{Id}\boxtimes k}\mathcal{K}_{1,2}\boxtimes\mathcal{K}_{r,s-1}\xrightarrow{F_{r,s-1}^+} \mathcal{K}_{r,s}.
\end{equation*}
Since $F_{r,s-1}^+$ is defined using a non-zero Heisenberg Fock module intertwining operator $\mathcal{Y}$ of type $\binom{\mathcal{F}_{r,s}}{\mathcal{F}_{1,2}\,\mathcal{F}_{r,s-1}}$, this means that $\mathcal{Y}\vert_{\mathcal{K}_{1,2}\otimes\mathrm{Ker}\,F_{r,s}^-}=0$. Then because $\mathcal{Y}$ is a non-zero $\mathcal{F}_0$-module intertwining operator among irreducible $\mathcal{F}_0$-modules, \cite[Proposition 11.9]{DL} implies that $\mathrm{Ker}\,F_{r,s}^-=0$, that is, $F_{r,s}^-$ is injective.

We also need to show that $F_{r,s}^+\circ F_{r,s}^-=0$. By the definition of $F_{r,s}^-$ and the commutative diagram in Theorem \ref{thm:Fock_module_associativity}, this composition is a non-zero multiple of
\begin{equation*}
\mathcal{K}_{r,s-1}\xrightarrow{i_{\mathcal{K}_{1,2}}\boxtimes\mathrm{Id}}  \mathcal{K}_{1,2}\boxtimes\mathcal{K}_{1,2}\boxtimes\mathcal{K}_{r,s-1}\xrightarrow{F_{1,2}^+\boxtimes\mathrm{Id}} \mathcal{K}_{1,3}\boxtimes\mathcal{K}_{r,s-1}\xrightarrow{F_{1,3;r,s-1}} \mathcal{K}_{r,s+1}.
\end{equation*}
The first two arrows involve a map $\mathcal{K}_{1,1}\rightarrow\mathcal{K}_{1,3}$ which must vanish because $0$ is not a conformal weight of $\mathcal{K}_{1,3}$, and therefore this map sends the generating vacuum vector in $\mathcal{K}_{1,1}$ to $0$. As a result, the entire composition above vanishes, showing $\mathrm{Im}\,F_{r,s}^-\subseteq\mathrm{Ker}\,F_{r,s}^+$.
\end{proof}

\begin{Remark}\label{rem:partial_exact_for_general_rs}
By $c_{p,q}= c_{q,p}$ symmetry, we also have for any $r,s\in\mathbb{Z}_{\geq 1}$ an injection $G_{r,s}^-:\mathcal{K}_{r-1,s}\rightarrow\mathcal{K}_{2,1}\boxtimes\mathcal{K}_{r,s}$ and a surjection $G_{r,s}^+:\mathcal{K}_{2,1}\boxtimes\mathcal{K}_{r,s}\rightarrow\mathcal{K}_{r+1,s}$ such that $\mathrm{Im}\,G_{r,s}^-\subseteq\mathrm{Ker}\,G_{r,s}^+$.
\end{Remark}

\begin{Remark}
Note that Proposition \ref{prop:partial_exact_for_gen_rs} and Remark \ref{rem:partial_exact_for_general_rs} are consistent with the results of Corollary \ref{cor:1st_Kac_module_fusion} for the special cases of $r$ and $s$ considered there.
\end{Remark}

The main result of this section will be a strengthening of Proposition \ref{prop:partial_exact_for_gen_rs} and Remark \ref{rem:partial_exact_for_general_rs}: we will show that in fact $\mathrm{Im}\,F_{r,s}^-=\mathrm{Ker}\,F_{r,s}^+$ and $\mathrm{Im}\,G_{r,s}^-=\mathrm{Ker}\,G_{r,s}^+$. By Corollary \ref{cor:1st_Kac_module_fusion} and Remark \ref{rem:1st_Kac_module_fusion}, the first of these claims already holds for $r\leq p$ and $q\nmid s$, and the second already holds for $s\leq q$ and $p\nmid r$. The next proposition extends these results to the $r\leq p$, $q\mid s$ and $s\leq q$, $p\mid r$ cases:
\begin{Proposition}\label{prop:K12_times_simple_indecomp}
If $1\leq r\leq p$ and $n\geq 1$, then there is a non-split exact sequence
\begin{equation*}
0\longrightarrow\mathcal{K}_{r,nq-1}\longrightarrow\mathcal{K}_{1,2}\boxtimes\mathcal{K}_{r,nq}\longrightarrow\mathcal{K}_{r,nq+1}\longrightarrow 0.
\end{equation*}
Similarly, if $1\leq s\leq q$ and $m\geq 1$, then there is a non-split exact sequence
\begin{equation*}
0\longrightarrow\mathcal{K}_{mp-1,s}\longrightarrow\mathcal{K}_{2,1}\boxtimes\mathcal{K}_{mp,s}\longrightarrow\mathcal{K}_{mp+1,s}\longrightarrow 0.
\end{equation*}
\end{Proposition}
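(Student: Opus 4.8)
The plan is to prove the first sequence; the second then follows by the $c_{p,q}=c_{q,p}$ symmetry exactly as in Remark \ref{rem:partial_exact_for_general_rs}. Write $M=\mathcal{K}_{1,2}\boxtimes\mathcal{K}_{r,nq}$ and $F^{\pm}=F^{\pm}_{r,nq}$, so that Proposition \ref{prop:partial_exact_for_gen_rs} already supplies an injection $F^-\colon\mathcal{K}_{r,nq-1}\hookrightarrow M$ and a surjection $F^+\colon M\twoheadrightarrow\mathcal{K}_{r,nq+1}$ with $\mathrm{Im}\,F^-\subseteq\mathrm{Ker}\,F^+$. Since $r\le p$, all of $\mathcal{K}_{1,2},\mathcal{K}_{r,nq},\mathcal{K}_{r,nq\pm1}$ are Verma-module quotients, so Proposition \ref{prop:C1-cofinite_dimension} gives $\dim_{C_1}\mathcal{K}_{1,2}=2$, $\dim_{C_1}\mathcal{K}_{r,nq}=rnq$, and $\dim_{C_1}\mathcal{K}_{r,nq\pm1}=r(nq\pm1)$, and Proposition \ref{prop:Miyamoto} yields the upper bound $\dim_{C_1}M\le 2rnq$. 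It then remains to upgrade the complex $0\to\mathcal{K}_{r,nq-1}\to M\to\mathcal{K}_{r,nq+1}\to0$ to an exact, non-split sequence.

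For exactness I would set $A=\mathrm{Im}\,F^-\cong\mathcal{K}_{r,nq-1}$ and $N=M/A$, so that $N$ surjects onto $\mathcal{K}_{r,nq+1}$ and the homology to be killed is $H:=\mathrm{Ker}\,(N\to\mathcal{K}_{r,nq+1})$. First assume $(r,nq)\neq(p,q)$. By Proposition \ref{prop:lowest_weight_spaces} the conformal weights of $M$ lie in $(h_{r,nq-1}+\mathbb{Z}_{\ge0})\cup(h_{r,nq+1}+\mathbb{Z}_{\ge0})$, and $M(0)$ is one-dimensional of weight $h_{r,nq-1}$; combined with the degree constraints of \cite{Miy}, $M$ is generated by $A$ together with a single vector of weight $h_{r,nq+1}$, whence $N$ is a highest-weight quotient $\mathcal{V}_{r,nq+1}/J$ with $J\subseteq\langle\widetilde v_{r,nq+1}\rangle$. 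The key numerical input is the right-exactness identity $\dim_{C_1}M=\dim_{C_1}N+\dim S$, where $S$ is the image of $A/C_1(A)$ in $M/C_1(M)$; since $\dim S\le r(nq-1)$ and $\dim_{C_1}N\ge r(nq+1)$ (as $N$ surjects onto $\mathcal{K}_{r,nq+1}$), the bound $\dim_{C_1}M\le 2rnq$ forces $\dim S=r(nq-1)$ precisely when $\dim_{C_1}N=r(nq+1)$, while the singular-vector analysis behind Proposition \ref{prop:C1-cofinite_dimension} shows that $H\neq0$ would push the threshold up to $\dim_{C_1}N\ge r(nq+1)+1$. Thus exactness is equivalent to the injectivity of $A/C_1(A)\hookrightarrow M/C_1(M)$, i.e. to $L_{-1}^{\,k}g\notin C_1(M)$ for $0\le k<r(nq-1)$, where $g$ spans $M(0)$. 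For $k<pn-r$ this is automatic because $M$ and $A$ coincide in those low weights, so the real content — and the main obstacle — is the overlapping range $pn-r\le k<r(nq-1)$, in which a relation $L_{-1}^{k}g\in\sum_{m\ge2}L_{-m}M$ could a priori be manufactured from the weight-$h_{r,nq+1}$ generator. I would eliminate this using the contragredient pairing criterion $C_1(M)^{\perp}=\{\,w'\in M'\colon L_m w'=0\ \text{for }m\ge2\,\}$, constructing for each such $k$ a vector $w'\in M'$ annihilated by all $L_m$, $m\ge2$, with $\langle w',L_{-1}^{k}g\rangle\neq0$. The corner case $(r,nq)=(p,q)$, where $h_{p,q-1}=h_{p,q+1}$ and $M(0)$ is a Jordan block by Proposition \ref{prop:lowest_weight_spaces}(3), is handled by the same scheme with the evident modifications.

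For non-splitting, still with $(r,nq)\neq(p,q)$, I would count Hom-spaces. Rigidity and self-duality of $\mathcal{K}_{1,2}$ (Theorem \ref{thm:Krs_rigid}) together with Corollary \ref{cor:1st_Kac_module_fusion} give
\[
\mathrm{Hom}(\mathcal{K}_{r,nq+1},M)\cong\mathrm{Hom}(\mathcal{K}_{1,2}\boxtimes\mathcal{K}_{r,nq+1},\mathcal{K}_{r,nq})\cong\mathrm{Hom}(\mathcal{K}_{r,nq}\oplus\mathcal{K}_{r,nq+2},\mathcal{K}_{r,nq}),
\]
which is one-dimensional: $\mathcal{K}_{r,nq}=\mathcal{L}_{r,nq}$ is simple, and $\mathcal{K}_{r,nq+2}$, being a highest-weight module with top $\mathcal{L}_{r,nq+2}$, admits no non-zero map to $\mathcal{L}_{r,nq}$. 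On the other hand, the structures in Section \ref{subsec:FF_and_K_modules} show that $\mathcal{K}_{r,nq-1}$ has socle $\mathcal{L}_{r,nq+1}$, so the composite $\mathcal{K}_{r,nq+1}\twoheadrightarrow\mathcal{L}_{r,nq+1}\hookrightarrow\mathcal{K}_{r,nq-1}=\mathrm{Im}\,F^-\hookrightarrow M$ is a non-zero element of this one-dimensional Hom-space whose image already lies in $\mathrm{Ker}\,F^+$. Hence every map $\mathcal{K}_{r,nq+1}\to M$ is annihilated by $F^+$ and cannot be a section of $F^+$, so the sequence does not split. Finally, in the corner case $(r,nq)=(p,q)$, non-splitting is immediate: Proposition \ref{prop:lowest_weight_spaces}(3) makes $L_0$ act on $M(0)$ by a non-trivial Jordan block, so $M$ is logarithmic and in particular indecomposable, precluding a splitting of $0\to\mathcal{K}_{p,q-1}\to M\to\mathcal{K}_{p,q+1}\to0$.
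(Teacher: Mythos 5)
Your setup (identifying the submodule with $\mathrm{Im}\,F^-_{r,nq}$, showing the quotient $N$ is a Verma quotient $\mathcal{V}_{r,nq+1}/J$ with $J\subseteq\langle\widetilde v_{r,nq+1}\rangle$, and the Hom-counting for non-splitting) is sound and close to the paper's. But the decisive step --- forcing $J=\langle\widetilde v_{r,nq+1}\rangle$ --- is not actually carried out. Your route requires the sharp bound $\dim_{C_1}N\le r(nq+1)$, which you correctly reduce to the injectivity of $A/C_1(A)\to M/C_1(M)$, i.e.\ to showing $L_{-1}^k g\notin C_1(M)$ for all $k<r(nq-1)$. You then say you ``would eliminate this'' by constructing, for each $k$ in the overlapping range, a dual vector $w'\in M'$ with $L_mw'=0$ for $m\ge 2$ and $\langle w',L_{-1}^kg\rangle\neq 0$ --- but no such construction is given, and producing these vectors is essentially equivalent to the exactness you are trying to prove (it amounts to the lower bound $\dim_{C_1}M\ge 2rnq$, which is exactly what pins everything down). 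So nothing has been reduced; the hard content is deferred to an unexecuted plan. The paper sidesteps this entirely: it first treats only $r=1$, where the \emph{crude} upper bound $\dim_{C_1}N\le\dim_{C_1}M\le 2nq$ (no lower bound on the submodule's contribution needed) already forces $\mathcal{J}^+$ to contain a vector of weight at most $h_{1,nq+1}+2nq$, and an explicit check shows the singular vectors generating the maximal proper submodule of $\langle\widetilde v_{1,nq+1}\rangle$ sit strictly above that threshold; the general-$r$ case is then obtained by tensoring the $r=1$ exact sequence with the rigid module $\mathcal{K}_{r,1}$ and invoking $\mathcal{K}_{r,1}\boxtimes\mathcal{K}_{1,s}\cong\mathcal{K}_{r,s}$. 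If you want to salvage your argument, the cleanest fix is to adopt that reduction to $r=1$ rather than to attempt the dual-vector construction for general $r$ (where the crude bound $2rnq$ need not clear the weight gaps).

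A secondary problem is the corner case $(r,nq)=(p,q)$. You derive non-splitting from the claim that Proposition \ref{prop:lowest_weight_spaces}(3) ``makes $L_0$ act on $M(0)$ by a non-trivial Jordan block,'' but that proposition only gives an upper bound: $M(0)$ is a cyclic module over $\mathbb{C}[x]/\langle(x-h_{p,q-1})^2\rangle$, so it could be one-dimensional with $L_0$ semisimple. The paper establishes that these tensor products are genuinely logarithmic only in Proposition \ref{prop:log_modules}, whose proof depends on Theorem \ref{thm:main_exact_sequences} and hence on the present proposition, so you cannot use it here. Fortunately the corner case needs no special treatment: your own Hom-counting argument goes through verbatim, since $\mathcal{K}_{p,q-1}=\mathcal{L}_{p,q+1}$ and the unique (up to scale) map $\mathcal{K}_{p,q+1}\to M$ still lands in $\mathrm{Ker}\,F^+$.
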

\begin{proof}
Because $c_{p,q}=c_{q,p}$, it is enough to prove the first assertion of the proposition. We take $r=1$ first.
Since $\mathcal{K}_{1,nq}$ is the simple quotient of $\mathcal{V}_{1,nq}$, Proposition \ref{prop:lowest_weight_spaces} implies that the lowest conformal weight(s) of $\mathcal{K}_{1,2}\boxtimes\mathcal{K}_{1,nq}$ are $h_{1,nq\pm 1}$. We have $h_{1,nq+1}-h_{1,nq-1}=pn-1\in\mathbb{Z}_{\geq 1}$. Thus we may write $\mathcal{K}_{1,2}\boxtimes\mathcal{K}_{1,nq}=\bigoplus_{k=0}^\infty (\mathcal{K}_{1,2}\boxtimes\mathcal{K}_{1,nq})(k)$ where $(\mathcal{K}_{1,2}\boxtimes\mathcal{K}_{r,nq})(k)$ is the conformal weight space with (generalized) $L_0$-eigenvalue $h_{1,nq-1}+k$. 

Since $h_{1,nq-1}<h_{1,nq+1}$, Proposition \ref{prop:lowest_weight_spaces} implies that $\dim\,(\mathcal{K}_{1,2}\boxtimes\mathcal{K}_{1,nq})(0)\leq 1$. Thus the submodule $\big\langle(\mathcal{K}_{1,2}\boxtimes\mathcal{K}_{1,nq})(0)\big\rangle$ generated by the lowest weight space is some Verma module quotient $\mathcal{V}_{1,nq-1}/\mathcal{J}^-$. Moreover, the quotient
\begin{equation}\label{eqn:quotient}
\mathcal{K}_{1,2}\boxtimes\mathcal{K}_{1,nq}/\big\langle(\mathcal{K}_{1,2}\boxtimes\mathcal{K}_{1,nq})(0)\big\rangle
\end{equation}
has conformal weights in $h_{1,nq-1}+\mathbb{Z}_{\geq 1}$, and the tensor product intertwining operator
\begin{equation*}
\mathcal{Y}_\boxtimes :\mathcal{K}_{1,2}\otimes\mathcal{K}_{1,nq}\longrightarrow(\mathcal{K}_{1,2}\boxtimes\mathcal{K}_{1,nq})[\log z]\lbrace z\rbrace
\end{equation*}
induces a surjective intertwining operator of type $\binom{\mathcal{K}_{1,2}\boxtimes\mathcal{K}_{1,nq}/\left\langle(\mathcal{K}_{1,2}\boxtimes\mathcal{K}_{1,nq})(0)\right\rangle}{\mathcal{K}_{1,2}\quad\mathcal{K}_{1,nq}}$.  Thus by Proposition \ref{prop:lowest_weight_spaces} again, the lowest conformal weight of the quotient \eqref{eqn:quotient} is $h_{1,nq+1}$, and the dimension of its lowest weight space is at most $1$. Consequently, the lowest weight space of \eqref{eqn:quotient} generates some Verma module quotient $\mathcal{V}_{1,nq+1}/\mathcal{J}^+$. Moreover, the quotient of \eqref{eqn:quotient} by $\mathcal{V}_{1,nq+1}/\mathcal{J}^+$ vanishes because otherwise its conformal weights would be strictly larger than $h_{1,nq+1}$, contradicting Proposition \ref{prop:lowest_weight_spaces}. Thus there is a short exact sequence
\begin{equation}\label{eqn:short_ex_seq}
0\longrightarrow\mathcal{V}_{1,nq-1}/\mathcal{J}^-\longrightarrow\mathcal{K}_{1,2}\boxtimes\mathcal{K}_{1,nq}\longrightarrow\mathcal{V}_{1,nq+1}/\mathcal{J}^+\longrightarrow 0.
\end{equation}

We need to show that $\mathcal{V}_{1,nq\pm 1}/\mathcal{J}^\pm\cong\mathcal{K}_{1,nq\pm 1}$. First, note that both $\mathcal{V}_{1,nq-1}/\mathcal{J}^-$ and $\mathrm{Im}\,F_{1,nq}^-\cong\mathcal{K}_{1,nq-1}$ are generated by the one-dimensional lowest conformal weight space of $\mathcal{K}_{1,2}\boxtimes\mathcal{K}_{1,nq}$ (with conformal weight $h_{1,nq-1}$). Thus $\mathcal{V}_{1,nq-1}/\mathcal{J}^-\cong\mathcal{K}_{1,nq-1}$.

Next, we have now shown $\mathcal{V}_{1,nq-1}/\mathcal{J}^-\subseteq\mathrm{Ker}\,F_{1,nq}^+$, so $F_{1,nq}^+:\mathcal{K}_{1,2}\boxtimes\mathcal{K}_{1,nq}\rightarrow\mathcal{K}_{1,nq+1}$ descends to a surjective map $\mathcal{V}_{1,nq+1}/\mathcal{J}^+\rightarrow\mathcal{K}_{1,nq+1}$. We need to show that this map is an isomorphism. Recalling that $\mathcal{K}_{1,nq+1}\cong\mathcal{V}_{1,nq+1}/\langle\tilde{v}_{1,nq+1}\rangle$ where $\tilde{v}_{1,nq+1}$ is a singular vector of conformal weight $h_{1,nq+1}+nq+1$, we have $\mathcal{J}^+\subseteq\langle\tilde{v}_{1,nq+1}\rangle$, and we need to show the opposite inclusion. Indeed, since $\mathcal{V}_{1,nq+1}/\mathcal{J}^+$ is a quotient of $\mathcal{K}_{1,2}\boxtimes\mathcal{K}_{1,nq}$, cofinite dimensions satisfy
\begin{equation*}
\dim_{C_1}(\mathcal{V}_{1,nq+1}/\mathcal{J}^+)\leq\dim_{C_1}(\mathcal{K}_{1,2}\boxtimes\mathcal{K}_{1,nq}) \leq 2nq,
\end{equation*}
using Propositions \ref{prop:Miyamoto}, \ref{prop:C1_quotient} and \ref{prop:C1-cofinite_dimension}. Thus using the PBW basis of $\mathcal{V}_{1,nq+1}$ as in the proof of Proposition \ref{prop:C1-cofinite_dimension}, we see that $\mathcal{J}^+$ contains a vector of conformal weight at most $h_{1,nq+1}+2nq$. Consulting the Verma module embedding diagrams in Section \ref{subsec:Vir_and_Verma}, we see that the maximal proper submodule of $\langle\tilde{v}_{1,nq+1}\rangle\cong\mathcal{V}_{p-1,(n+1)q+1}\subseteq\mathcal{V}_{1,nq+1}$ is generated by two singular vectors with conformal weights $h_{1,(n+2)q+1}$, $h_{p-1,(n+3)q-1}$. We calculate
\begin{align*}
h_{1,(n+2)q+1}-h_{1,nq+1} & = npq+(p-1)q+p > 2nq,\nonumber\\
h_{p-1,(n+3)q-1}-h_{1,nq+1} & = (n+1)(pq+q-p)\nonumber\\
& =2nq +n(p-2)q+pq+(n+1)(q-p)\nonumber\\
& =2nq+n((p-1)(q-1)-1)+(p+1)(q-1)+1 > 2nq,
\end{align*}
using $p,q\geq 2$. Since $\mathcal{J}^+$ has a vector of conformal weight at most $h_{1,nq+1}+2nq$, this shows that $\mathcal{J}^+$ is not contained in the maximal proper submodule of $\langle\tilde{v}_{1,nq+1}\rangle$, that is, $\mathcal{J}^+=\langle\tilde{v}_{1,nq+1}\rangle$. This proves $\mathcal{V}_{1,nq+1}/\mathcal{J}^+\cong\mathcal{K}_{1,nq+1}$.

We have now proved that for $r=1$, there is an exact sequence 
\begin{equation*}
0\longrightarrow\mathcal{K}_{1,nq-1}\longrightarrow\mathcal{K}_{1,2}\boxtimes\mathcal{K}_{1,nq}\longrightarrow\mathcal{K}_{1,nq+1}\longrightarrow 0.
\end{equation*}
For $2\leq r\leq p$, since $\mathcal{K}_{r,1}$ is rigid by Theorem \ref{thm:Krs_rigid}, this sequence remains exact after tensoring with $\mathcal{K}_{r,1}$. Thus we get an exact sequence
\begin{equation*}
0\longrightarrow\mathcal{K}_{r,nq-1}\longrightarrow\mathcal{K}_{1,2}\boxtimes\mathcal{K}_{r,nq}\longrightarrow\mathcal{K}_{r,nq+1}\longrightarrow 0
\end{equation*}
by Proposition \ref{prop:Kr1_K1s_small_r_or_s}. To show that this sequence does not split, we use the rigidity of $\mathcal{K}_{1,2}$ and Corollary \ref{cor:1st_Kac_module_fusion} to calculate
\begin{align*}
\dim\mathrm{Hom}(\mathcal{K}_{r,nq+1},\mathcal{K}_{1,2}\boxtimes\mathcal{K}_{r,nq}) & =\dim\mathrm{Hom}(\mathcal{K}_{1,2}\boxtimes\mathcal{K}_{r,nq+1},\mathcal{K}_{r,nq})\nonumber\\
& =\dim\mathrm{Hom}(\mathcal{K}_{r,nq}\oplus\mathcal{K}_{r,nq+2},\mathcal{K}_{r,nq})=1.
\end{align*}
Thus every homomorphism $\mathcal{K}_{r,nq+1}\rightarrow\mathcal{K}_{1,2}\boxtimes\mathcal{K}_{r,nq}$ is a multiple of the non-zero composition
\begin{equation*}
\mathcal{K}_{r,nq+1}\twoheadrightarrow\mathcal{L}_{r,nq+1}\hookrightarrow\mathcal{K}_{r,nq-1}\xrightarrow{F_{r,nq}^-}\mathcal{K}_{1,2}\boxtimes\mathcal{K}_{r,nq}
\end{equation*}
(recall the Kac module structures in Section \ref{subsec:FF_and_K_modules}). In particular, since $\mathrm{Im}\,F_{r,nq}^-\subseteq\mathrm{Ker}\,F_{r,nq}^+$ by Proposition \ref{prop:partial_exact_for_gen_rs}, there is no homomorphism $\sigma:\mathcal{K}_{r,nq+1}\rightarrow{K}_{1,2}\boxtimes{K}_{r,nq}$ such that $F_{r,nq}^+\circ\sigma =\mathrm{Id}$. This completes the proof of the theorem.
\end{proof}

We can now generalize Corollary \ref{cor:1st_Kac_module_fusion} and Proposition \ref{prop:K12_times_simple_indecomp} to all $r,s\in\mathbb{Z}_{\geq 1}$; recall that $\mathcal{K}_{r,0}=\mathcal{K}_{0,s}=0$:
\begin{Theorem}\label{thm:main_exact_sequences}
For any $r,s\in\mathbb{Z}_{\geq 1}$, the sequence
\begin{equation}\label{eqn:12_exact_seq}
0\longrightarrow\mathcal{K}_{r,s-1}\xrightarrow{F_{r,s}^-}\mathcal{K}_{1,2}\boxtimes\mathcal{K}_{r,s}\xrightarrow{F_{r,s}^+}\mathcal{K}_{r,s+1}\longrightarrow 0
\end{equation}
is exact, and this sequence splits if and only if $q\nmid s$. Similarly, the sequence
\begin{equation}\label{eqn:21_exact_seq}
0\longrightarrow\mathcal{K}_{r-1,s}\xrightarrow{G_{r,s}^-}\mathcal{K}_{2,1}\boxtimes\mathcal{K}_{r,s}\xrightarrow{G_{r,s}^+}\mathcal{K}_{r+1,s}\longrightarrow 0
\end{equation}
is exact, and this sequence splits if and only if $p\nmid r$.
\end{Theorem}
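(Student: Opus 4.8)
The plan is to prove the first sequence \eqref{eqn:12_exact_seq} for all $r,s$; the second sequence \eqref{eqn:21_exact_seq} then follows by the $c_{p,q}=c_{q,p}$ symmetry (interchanging $p\leftrightarrow q$ and $r\leftrightarrow s$), which also converts the condition $q\nmid s$ into $p\nmid r$. By Proposition \ref{prop:partial_exact_for_gen_rs} and Theorem \ref{thm:Kac_module_surj}, the map $F^-_{r,s}$ is already injective, $F^+_{r,s}$ is already surjective, and $\mathrm{Im}\,F^-_{r,s}\subseteq\mathrm{Ker}\,F^+_{r,s}$, so all that remains for exactness of \eqref{eqn:12_exact_seq} is the reverse inclusion $\mathrm{Ker}\,F^+_{r,s}\subseteq\mathrm{Im}\,F^-_{r,s}$. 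Since these maps already realize $\mathcal{K}_{r,s-1}$ as a sub and $\mathcal{K}_{r,s+1}$ as a quotient, it suffices to prove the single Grothendieck-ring identity $[\mathcal{K}_{1,2}][\mathcal{K}_{r,s}]=[\mathcal{K}_{r,s-1}]+[\mathcal{K}_{r,s+1}]$, for then the lengths of $\mathrm{Im}\,F^-_{r,s}$ and $\mathrm{Ker}\,F^+_{r,s}$ must agree and the two submodules coincide.

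The essential device for this $K_0$ identity is to replace each $\mathcal{K}_{r,s}$ by the Verma-module quotient $\widetilde{\mathcal{K}}_{r,s}$ of Section \ref{subsec:Verma_intw_ops}. Indeed $\mathcal{K}_{r,s}$ and $\widetilde{\mathcal{K}}_{r,s}$ have the same character, hence the same class in $K_0(\mathcal{O}_{c_{p,q}})$, so $[\mathcal{K}_{1,2}][\mathcal{K}_{r,s}]=[\mathcal{K}_{1,2}\boxtimes\widetilde{\mathcal{K}}_{r,s}]$. The advantage is that, unlike $\mathcal{K}_{r,s}$ for $r>p$ and $s>q$, the module $\widetilde{\mathcal{K}}_{r,s}$ is a Verma quotient, in particular singly generated, for every $r,s$; thus the Zhu-algebra bound of Proposition \ref{prop:lowest_weight_spaces} and the cofinite-dimension computations of Propositions \ref{prop:Miyamoto} and \ref{prop:C1-cofinite_dimension} (giving $\dim_{C_1}\widetilde{\mathcal{K}}_{r,s}=rs$) apply uniformly. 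When $q\nmid s$ the decomposition $\mathcal{K}_{1,2}\boxtimes\widetilde{\mathcal{K}}_{r,s}\cong\widetilde{\mathcal{K}}_{r,s-1}\oplus\widetilde{\mathcal{K}}_{r,s+1}$ established in Section \ref{sec:tens_prods_and_rigidity} already yields the identity for all $r$; the remaining task is the analogous non-split sequence $0\to\widetilde{\mathcal{K}}_{r,s-1}\to\mathcal{K}_{1,2}\boxtimes\widetilde{\mathcal{K}}_{r,s}\to\widetilde{\mathcal{K}}_{r,s+1}\to0$ when $q\mid s$. Its upper half — a two-step filtration by Verma quotients $\mathcal{V}_{r,s-1}/\mathcal{J}^-$ and $\mathcal{V}_{r,s+1}/\mathcal{J}^+$ with $\mathcal{J}^\pm$ pinned down exactly — is produced as in Proposition \ref{prop:K12_times_simple_indecomp}, using Proposition \ref{prop:lowest_weight_spaces}, the bound $\dim_{C_1}\leq 2rs$, and the Verma embedding diagrams of Section \ref{subsec:Vir_and_Verma}.

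The hard part is supplying the matching lower bound — a surjection onto $\widetilde{\mathcal{K}}_{r,s+1}$ — when $q\mid s$ and $r\geq p$, since there the intertwining-operator construction of Theorem \ref{thm:K_tilde_intw_ops} no longer applies and $\mathcal{K}_{r,s}$ ceases to be a Verma quotient. I would obtain it by induction, using that $\mathcal{K}_{1,2}$ and $\mathcal{K}_{2,1}$ are rigid (Theorems \ref{thm:rigidity} and \ref{thm:Krs_rigid}), so that $\mathcal{K}_{1,2}\boxtimes-$ and $\mathcal{K}_{2,1}\boxtimes-$ are exact: one raises the first index by tensoring the already-known $\mathcal{K}_{2,1}$-sequence at index $r-1$ with $\mathcal{K}_{1,2}$ and reassociating via Theorem \ref{thm:Fock_module_associativity}. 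Carried out in $K_0$ this gives
\[
[\mathcal{K}_{1,2}][\mathcal{K}_{r,s}]=[\mathcal{K}_{r,s-1}]+[\mathcal{K}_{2,1}][\mathcal{K}_{r-1,s+1}]-[\mathcal{K}_{r-2,s+1}],
\]
which shows that the $\mathcal{K}_{1,2}$-identity at $(r,s)$ is equivalent, modulo classes of strictly smaller $r+s$, to the $\mathcal{K}_{2,1}$-identity at $(r-1,s+1)$. Thus the two families of identities couple along the anti-diagonal $r+s=\mathrm{const}$, and the main obstacle is to break this coupling: I expect to do so by a single induction on $r+s$ that proves all $\mathcal{K}_{1,2}$- and $\mathcal{K}_{2,1}$-identities of a given weight simultaneously, feeding in the $\widetilde{\mathcal{K}}$-computation of the preceding paragraph (valid for every $r$) as the independent input that pins each coupled pair, and bridging back to the $\mathcal{K}$-modules through $[\mathcal{K}_{r,s}]=[\widetilde{\mathcal{K}}_{r,s}]$ together with Proposition \ref{prop:partial_exact_for_gen_rs}.

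Once \eqref{eqn:12_exact_seq} is known to be exact, the splitting dichotomy is comparatively easy. If $q\nmid s$, then by \eqref{eqn:conf_wt_diff} the weights $h_{r,s-1}$ and $h_{r,s+1}$ differ by $-r+ps/q\notin\mathbb{Z}$ and so lie in distinct cosets of $\mathbb{C}/\mathbb{Z}$; any extension of $\mathcal{K}_{r,s+1}$ by $\mathcal{K}_{r,s-1}$ therefore decomposes along these cosets and the sequence splits. If $q\mid s$, the two weights are congruent modulo $\mathbb{Z}$ and I would rule out a splitting exactly as at the end of Proposition \ref{prop:K12_times_simple_indecomp}: self-duality of $\mathcal{K}_{1,2}$ gives $\mathrm{Hom}(\mathcal{K}_{r,s+1},\mathcal{K}_{1,2}\boxtimes\mathcal{K}_{r,s})\cong\mathrm{Hom}(\mathcal{K}_{1,2}\boxtimes\mathcal{K}_{r,s+1},\mathcal{K}_{r,s})$, and since $q\nmid s+1$ the already-established split case identifies $\mathcal{K}_{1,2}\boxtimes\mathcal{K}_{r,s+1}\cong\mathcal{K}_{r,s}\oplus\mathcal{K}_{r,s+2}$, from which one reads off that every map $\mathcal{K}_{r,s+1}\to\mathcal{K}_{1,2}\boxtimes\mathcal{K}_{r,s}$ has image inside $\mathrm{Im}\,F^-_{r,s}\subseteq\mathrm{Ker}\,F^+_{r,s}$, so that no section of $F^+_{r,s}$ exists. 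The corresponding statements for \eqref{eqn:21_exact_seq}, with $p\mid r$ in place of $q\mid s$, follow by the $p\leftrightarrow q$, $r\leftrightarrow s$ symmetry.
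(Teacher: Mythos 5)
Your overall skeleton is sound and largely parallels the paper's: reduce exactness to the composition-factor identity $[\mathcal{K}_{1,2}\boxtimes\mathcal{K}_{r,s}]=[\mathcal{K}_{r,s-1}]+[\mathcal{K}_{r,s+1}]$ via Proposition \ref{prop:partial_exact_for_gen_rs}, then settle the splitting dichotomy by the coset-of-$\mathbb{Z}$ argument and a $\mathrm{Hom}$-space computation using rigidity of $\mathcal{K}_{1,2}$ (your sketch of the non-split direction is essentially the paper's, though one should work with the submodule of $\mathcal{K}_{r,s+1}$ generated by its lowest weight space rather than with $\mathcal{K}_{r,s+1}$ itself). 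Your observation that $[\mathcal{K}_{r,s}]=[\widetilde{\mathcal{K}}_{r,s}]$ in the Grothendieck group (distinct simples have distinct lowest conformal weights, so characters determine classes) lets you import $\mathcal{K}_{1,2}\boxtimes\widetilde{\mathcal{K}}_{r,s}\cong\widetilde{\mathcal{K}}_{r,s-1}\oplus\widetilde{\mathcal{K}}_{r,s+1}$ for all $r$ when $q\nmid s$; for the cases $r>p$, $s>q$, $q\nmid s$ this is a genuinely different and arguably cleaner route than the paper's, which reaches those cases only through its induction.

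There is, however, a genuine gap in the case $q\mid s$ with $r>p$, and your ``hard part'' paragraph does not close it. First, the upper-bound argument ``as in Proposition \ref{prop:K12_times_simple_indecomp}'' does not pin down $\mathcal{J}^{\pm}$ on its own: in that proof $\mathcal{J}^-$ is identified using the injection $F^-$ (hence the surjection $F^+$ one level down), and $\mathcal{J}^+$ is located inside $\langle\tilde{v}_{r,s+1}\rangle$ only because $\mathcal{V}_{r,s+1}/\mathcal{J}^+$ is already known to surject onto the Kac module; without that lower-bound input, the cofinite-dimension bound cannot distinguish $\langle\tilde{v}_{r,s+1}\rangle$ from the submodule generated by the other first-generation singular vector of $\mathcal{V}_{r,s+1}$, whose degree can also lie below $2rs$. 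For $\widetilde{\mathcal{K}}_{r,s}$ with $q\mid s$ and $r>p$, Theorem \ref{thm:K_tilde_intw_ops} supplies no such surjection. Second, the coupling you propose to supply the missing input is circular exactly where it is needed: your displayed identity shows that the $\mathcal{K}_{1,2}$-identity at $(r,s)$ is equivalent, modulo identities at smaller first index, to the $\mathcal{K}_{2,1}$-identity at $(r-1,s+1)$. When $p\nmid r-1$ the latter is among your known inputs and the induction closes; but for $r=mp+1$, $s=nq$ the pair consisting of the $\mathcal{K}_{1,2}$-identity at $(mp+1,nq)$ and the $\mathcal{K}_{2,1}$-identity at $(mp,nq+1)$ reduces each to the other, and neither is covered by your ``independent input,'' since each is precisely a $q\mid s$, $r>p$ (respectively $p\mid r$, $s>q$) case. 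The paper breaks this deadlock with a device your proposal never invokes: the explicit filtrations of the Kac modules read off from the Feigin--Fuchs structures, such as $0\to\mathcal{K}_{r,(m+n+2)q+s}\to\mathcal{K}_{(m+1)p+r,(n+1)q+s}/\mathcal{K}_{mp+r,nq+s}\to\mathcal{K}_{p-r,(m+n+1)q+s}\to0$ and $0\to\mathcal{K}_{r,(m+1)q}\to\mathcal{K}_{mp+r,q}\to\mathcal{K}_{p-r,mq}\to0$. Tensoring these with the exact functor $\mathcal{K}_{1,2}\boxtimes\bullet$ expresses the composition factors of $\mathcal{K}_{1,2}\boxtimes\mathcal{K}_{(m+1)p+r,(n+1)q+s}$ in terms of cases with $r\leq p$ or $s\leq q$, which are already settled, and an induction on $(m,n)$ finishes the count. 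You would need to incorporate this, or an equivalent independent computation of $\mathcal{K}_{1,2}\boxtimes\mathcal{K}_{mp+1,nq}$ or $\mathcal{K}_{2,1}\boxtimes\mathcal{K}_{mp,nq+1}$, for your argument to be complete.
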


\begin{proof}
By $c_{p,q}=c_{q,p}$ symmetry, it is enough to prove the first assertion. To show that the sequence \eqref{eqn:12_exact_seq} is exact, Proposition \ref{prop:partial_exact_for_gen_rs} implies that we just need to show $\mathrm{Im}\,F_{r,s}^-=\mathrm{Ker}\,F_{r,s}^+$. Proposition \ref{prop:partial_exact_for_gen_rs} also implies that the composition factors of the finite-length module $\mathcal{K}_{1,2}\boxtimes\mathcal{K}_{r,s}$ are obtained by combining the composition factors of $\mathcal{K}_{r,s-1}\cong\mathrm{Im}\,F_{r,s}^-$, $\mathcal{K}_{r,s+1}=\mathrm{Im}\,F_{r,s}^+$, and $\mathrm{Ker}\,F_{r,s}^+/\mathrm{Im}\,F_{r,s}^-$. Thus we just need to show that $\mathcal{K}_{1,2}\boxtimes\mathcal{K}_{r,s}$ has the same number of composition factors as $\mathcal{K}_{r,s-1}\oplus\mathcal{K}_{r,s+1}$ (though the tensor product need not be isomorphic to the direct sum as $\mathfrak{Vir}$-modules). We will first prove this when either $r\leq p$ or $s\leq q$, and then inductively show that if the $(r,s)$ case holds, then so does the case $(p+r,q+s)$.

To handle the cases where either $r\leq p$ or $s\leq q$, Corollary \ref{cor:1st_Kac_module_fusion} shows that $\mathcal{K}_{1,2}\boxtimes\mathcal{K}_{r,s}$ has the same composition factors as $\mathcal{K}_{r,s-1}\oplus\mathcal{K}_{r,s+1}$ when $r\leq p$, $q\nmid s$ and when $s\leq q-1$. Then Proposition \ref{prop:K12_times_simple_indecomp} gives the same result when $r\leq p$, $q\mid s$. It remains to consider $\mathcal{K}_{1,2}\boxtimes\mathcal{K}_{mp+r,q}$ where $m\geq 1$ and $1\leq r\leq p-1$ (since $\mathcal{K}_{mp,q}\cong\mathcal{K}_{p,mq}$ for $m\geq 1$). From the Kac module structures in Section \ref{subsec:FF_and_K_modules}, we have an exact sequence
\begin{equation*}
0\longrightarrow\mathcal{K}_{r,(m+1)q}\longrightarrow\mathcal{K}_{mp+r,q}\longrightarrow\mathcal{K}_{p-r,mq}\longrightarrow 0,
\end{equation*}
where the submodule and quotient Kac modules are simple. Because $\mathcal{K}_{1,2}$ is rigid, this sequence remains exact after tensoring with $\mathcal{K}_{1,2}$, so $\mathcal{K}_{1,2}\boxtimes\mathcal{K}_{mp+r,q}$ has the same composition factors as
\begin{equation*}
(\mathcal{K}_{1,2}\boxtimes\mathcal{K}_{r,(m+1)q})\oplus(\mathcal{K}_{1,2}\boxtimes\mathcal{K}_{p-r,mq}),
\end{equation*}
which by Proposition \ref{prop:K12_times_simple_indecomp} has the same composition factors as
\begin{equation*}
\mathcal{K}_{r,(m+1)q-1}\oplus\mathcal{K}_{r,(m+1)q+1}\oplus\mathcal{K}_{p-r,mq-1}\oplus\mathcal{K}_{p-r,mq+1}.
\end{equation*}
Thus $\mathcal{K}_{1,2}\boxtimes\mathcal{K}_{mp+r,q}$ has $2+2+2+2=8$ composition factors, and from the Kac module structures in Section \ref{subsec:FF_and_K_modules}, $\mathcal{K}_{mp+r,q-1}\oplus\mathcal{K}_{mp+r,q+1}$ has $2+6=8$ composition factors as well. This proves that \eqref{eqn:12_exact_seq} is exact when either $r\leq p$ or $s\leq q$.

We now prove by induction on $\min(m,n)$ that $\mathcal{K}_{1,2}\boxtimes\mathcal{K}_{mp+r,nq+s}$ has the same composition factors as 
$$\mathcal{K}_{mp+r,nq+s-1}\oplus\mathcal{K}_{mp+r,nq+s+1}$$
 for any $1\leq r\leq p$, $1\leq s\leq q$, and $m,n\geq 0$. We have just completed proving the base case where either $m=0$ or $n=0$, and we need to show that if the $(m,n)$ case holds, then so does the $(m+1,n+1)$ case.
  
  To do so, note that $\mathcal{K}_{mp+r,nq+s}$ is a submodule of $\mathcal{K}_{(m+1)p+r,(n+1)q+s}$, and the structure of $\mathcal{K}_{(m+1)p+r,(n+1)q+s}/\mathcal{K}_{mp+r,nq+s}$ can be deduced from the Kac module structures in Section \ref{subsec:FF_and_K_modules}. In particular, if $1\leq r\leq p-1$, then this quotient of Kac modules has the following structure when $1\leq s\leq q-1$ or $s=q$, respectively:
  \begin{equation*}
  \begin{matrix}
  \xymatrix{
  & \mathcal{L}_{r,(m+n+4)q-s} \\
\mathcal{L}_{r,(m+n+2)q+s}  \ar[ru] & \mathcal{L}_{p-r,(m+n+3)q-s} \ar[u]\\
\mathcal{L}_{p-r,(m+n+1)q+s} \ar[u] \ar[ru] & \\
  }
  \end{matrix},
  \begin{matrix}\qquad\qquad
  \xymatrix{
  \mathcal{L}_{r,(m+n+3)q}\\
  \mathcal{L}_{p-r,(m+n+2)q} \ar[u]\\
  }
  \end{matrix}.
  \end{equation*}
 In both cases, these structure diagrams show that $\mathcal{K}_{(m+1)p+r,(n+1)q+s}/\mathcal{K}_{mp+r,nq+s}$ has a submodule (of length $2$ or $1$) generated by a singular vector of conformal weight $h_{r,(m+n+2)q+s}$. Recalling Remarks \ref{rem:Krs_Verma_quot} and \ref{rem:K_small_rs_2}, this submodule is isomorphic to $\mathcal{K}_{r,(m+n+2)q+s}$, and then similarly, the quotient of $\mathcal{K}_{(m+1)p+r,(n+1)q+s}/\mathcal{K}_{mp+r,nq+s}$ by this Kac submodule is $\mathcal{K}_{p-r,(m+n+1)q+s}$. That is,
   if $1\leq r\leq p-1$, then there is an exact sequence
 \begin{equation}\label{eqn:Kac_quotient_seq}
 0\longrightarrow\mathcal{K}_{r,(m+n+2)q+s}\longrightarrow\mathcal{K}_{(m+1)p+r,(n+1)q+s}/\mathcal{K}_{mp+r,nq+s}\longrightarrow\mathcal{K}_{p-r,(m+n+1)q+s}\longrightarrow 0,
 \end{equation}
 regardless of whether $1\leq s\leq q-1$ or $s=q$. If $r=p$, then a similar but easier analysis of the Kac module structures in Section \ref{subsec:FF_and_K_modules} shows that
 \begin{equation}\label{eqn:Kac_quotient_p}
\mathcal{K}_{(m+2)p,(n+1)q+s}/\mathcal{K}_{(m+1)p,nq+s}\cong \mathcal{K}_{p,(m+n+2)q+s}, 
 \end{equation}
again regardless of whether $1\leq s\leq q-1$ or $s=q$.

Now because $\mathcal{K}_{1,2}\boxtimes\bullet$ is exact, 
\begin{equation*}
\mathcal{K}_{1,2}\boxtimes(\mathcal{K}_{(m+1)p+r,(n+1)q+s}/\mathcal{K}_{mp+r,nq+s})\cong (\mathcal{K}_{1,2}\boxtimes\mathcal{K}_{(m+1)p+r,(n+1)q+s})/(\mathcal{K}_{1,2}\boxtimes\mathcal{K}_{mp+r,nq+s}).
\end{equation*}
Thus for $1\leq r\leq p-1$, we can use the exact sequence \eqref{eqn:Kac_quotient_seq} and exactness of $\mathcal{K}_{1,2}$ again to obtain the composition factors of $\mathcal{K}_{1,2}\boxtimes\mathcal{K}_{(m+1)p+r,(n+1)q+s}$ by combining the composition factors of $\mathcal{K}_{1,2}\boxtimes\mathcal{K}_{mp+r,nq+s}$, $\mathcal{K}_{1,2}\boxtimes\mathcal{K}_{r,(m+n+2)q+s}$, and $\mathcal{K}_{1,2}\boxtimes\mathcal{K}_{p-r,(m+n+1)q+s}$. By the induction assumption and the theorem in $r\leq p$ case, this means that $\mathcal{K}_{1,2}\boxtimes\mathcal{K}_{(m+1)p+r,(n+1)q+s}$ has the same composition factors as
%
%
\begin{align*}
(\mathcal{K}_{mp+r,nq+s-1} & \oplus\mathcal{K}_{r,(m+n+2)q+s-1}\oplus\mathcal{K}_{p-r,(m+n+1)q+s-1})\nonumber\\
&\oplus(\mathcal{K}_{mp+r,nq+s+1}\oplus\mathcal{K}_{r,(m+n+2)q+s+1}\oplus\mathcal{K}_{p-r,(m+n+1)q+s+1}),
\end{align*}
which by the $s\pm 1$ cases of \eqref{eqn:Kac_quotient_seq} has the same composition factors as
\begin{equation*}
\mathcal{K}_{(m+1)p+r,(n+1)q+s-1}\oplus\mathcal{K}_{(m+1)p+r,(n+1)q+s+1},
\end{equation*}
as required. The case $r=p$ is similar: by \eqref{eqn:Kac_quotient_p} and the exactness of $\mathcal{K}_{1,2}\boxtimes\bullet$, $\mathcal{K}_{1,2}\boxtimes\mathcal{K}_{(m+2)p,(n+1)q+s}$ has the same composition factors as 
\begin{equation*}
(\mathcal{K}_{1,2}\boxtimes\mathcal{K}_{(m+1)p,nq+s})\oplus(\mathcal{K}_{1,2}\boxtimes\mathcal{K}_{p,(m+n+2)q+s}),
\end{equation*}
which by the inductive hypothesis and the $r=p$ case of the theorem has the same composition factors as
\begin{equation*}
(\mathcal{K}_{(m+1)p,nq+s-1}\oplus\mathcal{K}_{p,(m+n+2)q+s-1})\oplus(\mathcal{K}_{(m+1)p,nq+s+1}\oplus\mathcal{K}_{p,(m+n+2)q+s+1}).
\end{equation*}
By the $s\pm 1$ cases of \eqref{eqn:Kac_quotient_p}, this in turn has the same composition factors as
\begin{equation*}
\mathcal{K}_{(m+2)p,(n+1)q+s-1}\oplus\mathcal{K}_{(m+2)p,(n+1)q+s+1}.
\end{equation*}
This completes the proof that \eqref{eqn:12_exact_seq} is exact for all $r,s\in\mathbb{Z}_{\geq 1}$.

To determine when \eqref{eqn:12_exact_seq} splits, first note that the conformal weights of $\mathcal{K}_{r,s\pm 1}$ are congruent to $h_{r,s\pm1}$ modulo $\mathbb{Z}$ and that $h_{r,s+1}-h_{r,s-1}\in\mathbb{Z}$ if and only if $q\mid s$. Thus if $q\nmid s$, then the conformal weights of $\mathcal{K}_{1,2}\boxtimes\mathcal{K}_{r,s}$ are contained in two disjoint cosets of $\mathbb{C}/\mathbb{Z}$. Since the Virasoro operators $L_n$ change conformal weights by integers, the direct sum of all weight spaces of $\mathcal{K}_{1,2}\boxtimes\mathcal{K}_{r,s}$ with conformal weights in $h_{r,s+1}+\mathbb{Z}$ form a submodule which must be isomorphic to $\mathcal{K}_{r,s+1}$. This means that \eqref{eqn:12_exact_seq} splits when $q\nmid s$. For the $q\mid s$ case, consider $\mathcal{K}_{1,2}\boxtimes\mathcal{K}_{mp+r,nq}$ for $1\leq r\leq p$, $m\geq 0$, and $n\geq 1$. Note that the submodule of $\mathcal{K}_{mp+r,nq+1}$ generated by its lowest conformal weight space is $\mathcal{K}_{r,(n-m)q+1}$ if $m\leq n$ and $\mathcal{K}_{(m-n)p+r,1}$ if $m\geq n$. Then using the rigidity of $\mathcal{K}_{1,2}$ and Corollary \ref{cor:1st_Kac_module_fusion} as in the proof of Proposition \ref{prop:K12_times_simple_indecomp}, we get
\begin{align*}
\dim\mathrm{Hom}(\mathcal{K}_{r,(n-m)q+1},\mathcal{K}_{1,2}\boxtimes\mathcal{K}_{mp+r,nq}) =\dim\mathrm{Hom}(\mathcal{K}_{r,(n-m)q}\oplus\mathcal{K}_{r,(n-m)q+2},\mathcal{K}_{mp+r,nq}) =1
\end{align*}
if $m<n$; the one-dimensional space on the left is thus spanned by the non-injective map
\begin{equation*}
\mathcal{K}_{r,(n-m)q+1}\twoheadrightarrow\mathcal{L}_{r,(n-m)q+1}\hookrightarrow\mathcal{K}_{mp+r,nq-1}\xrightarrow{F_{mp+r,nq}^-} \mathcal{K}_{1,2}\boxtimes\mathcal{K}_{mp+r,nq}.
\end{equation*}
Similarly, if $m\geq n$, then
\begin{align*}
\dim\mathrm{Hom}(\mathcal{K}_{(m-n)p+r,1},\mathcal{K}_{1,2}\boxtimes\mathcal{K}_{mp+r,nq}) =\dim\mathrm{Hom}(\mathcal{K}_{(m-n)p+r,2},\mathcal{K}_{mp+r,nq}) =0.
\end{align*}
In either case, any map $\mathcal{K}_{mp+r,nq+1}\rightarrow\mathcal{K}_{1,2}\boxtimes\mathcal{K}_{mp+r,nq}$ is non-injective, because it must restrict to a non-injective map on the submodule generated by the lowest conformal weight space of $\mathcal{K}_{mp+r,nq+1}$. Thus the exact sequence \eqref{eqn:12_exact_seq} does not split when $q\mid s$.
\end{proof}

\begin{Remark}
The exact sequences \eqref{eqn:12_exact_seq} and \eqref{eqn:21_exact_seq} agree with the Grothendieck fusion rules predicted using a conjectural Verlinde formula in \cite[Equation 4.16]{MRR}. Note however that the conjectural Verlinde formula does not predict whether or not these exact sequences split.
\end{Remark}

We can use Theorem \ref{thm:main_exact_sequences} to prove the fusion product formula conjectured in \cite[Equation 4.34]{MRR}; this also generalizes Proposition \ref{prop:Kr1_K1s_small_r_or_s} to all $r,s\in\mathbb{Z}_{\geq 1}$:

\begin{Theorem}\label{thm:Kr1_times_K1s}
For all $r, s \geq 1$, the surjective map $F_{r,1;1,s}:\mathcal{K}_{r,1}\boxtimes\mathcal{K}_{1,s}\rightarrow\mathcal{K}_{r,s}$ of Theorem \ref{thm:Fock_module_associativity} is an isomorphism.
\end{Theorem}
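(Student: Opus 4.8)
The plan is to leverage that the map $F_{r,1;1,s}$ is already surjective by Theorem \ref{thm:Fock_module_associativity}; since every object of $\mathcal{O}_{c_{p,q}}$ has finite length, it then suffices to show that $\mathcal{K}_{r,1}\boxtimes\mathcal{K}_{1,s}$ and $\mathcal{K}_{r,s}$ have the same length $\ell$, as equality of lengths forces a surjection between them to be an isomorphism. When $r\leq p$ or $s\leq q$, Proposition \ref{prop:Kr1_K1s_small_r_or_s} already gives an abstract isomorphism $\mathcal{K}_{r,1}\boxtimes\mathcal{K}_{1,s}\cong\mathcal{K}_{r,s}$, hence equal lengths and the desired conclusion; these serve as the base cases $s\leq q$ (for all $r\geq 1$) of an induction on $s$.

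For the inductive step I would assume $F_{r,1;1,\sigma}$ is an isomorphism for all $r\geq 1$ and all $\sigma\leq s$ (for some $s\geq q$) and deduce the case $s+1$. First apply the exact functor $\mathcal{K}_{1,2}\boxtimes\bullet$ (exact by rigidity of $\mathcal{K}_{1,2}$, Theorem \ref{thm:rigidity}) to the inductive isomorphism and use the associativity and braiding isomorphisms to obtain $\mathcal{K}_{1,2}\boxtimes\mathcal{K}_{r,s}\cong(\mathcal{K}_{1,2}\boxtimes\mathcal{K}_{r,1})\boxtimes\mathcal{K}_{1,s}\cong\mathcal{K}_{r,1}\boxtimes(\mathcal{K}_{1,2}\boxtimes\mathcal{K}_{1,s})$. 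On one hand, Theorem \ref{thm:main_exact_sequences} gives $\ell(\mathcal{K}_{1,2}\boxtimes\mathcal{K}_{r,s})=\ell(\mathcal{K}_{r,s-1})+\ell(\mathcal{K}_{r,s+1})$. On the other hand, applying the right exact functor $\mathcal{K}_{r,1}\boxtimes\bullet$ to the short exact sequence $0\to\mathcal{K}_{1,s-1}\xrightarrow{F_{1,s}^-}\mathcal{K}_{1,2}\boxtimes\mathcal{K}_{1,s}\xrightarrow{F_{1,s}^+}\mathcal{K}_{1,s+1}\to0$ of Theorem \ref{thm:main_exact_sequences} yields a right exact sequence $\mathcal{K}_{r,1}\boxtimes\mathcal{K}_{1,s-1}\xrightarrow{\alpha}\mathcal{K}_{r,1}\boxtimes(\mathcal{K}_{1,2}\boxtimes\mathcal{K}_{1,s})\to\mathcal{K}_{r,1}\boxtimes\mathcal{K}_{1,s+1}\to0$, where $\alpha:=\mathrm{Id}_{\mathcal{K}_{r,1}}\boxtimes F_{1,s}^-$ and $\mathrm{Im}\,\alpha$ equals the kernel of the cokernel map. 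Using the inductive isomorphism $\mathcal{K}_{r,1}\boxtimes\mathcal{K}_{1,s-1}\cong\mathcal{K}_{r,s-1}$ and combining these two length computations gives $\ell(\mathcal{K}_{r,1}\boxtimes\mathcal{K}_{1,s+1})=\ell(\mathcal{K}_{r,s-1})+\ell(\mathcal{K}_{r,s+1})-\ell(\mathrm{Im}\,\alpha)$. Since $\mathrm{Im}\,\alpha$ is a quotient of $\mathcal{K}_{r,1}\boxtimes\mathcal{K}_{1,s-1}\cong\mathcal{K}_{r,s-1}$, it remains only to prove that $\alpha$ is injective: then $\ell(\mathrm{Im}\,\alpha)=\ell(\mathcal{K}_{r,s-1})$, so $\ell(\mathcal{K}_{r,1}\boxtimes\mathcal{K}_{1,s+1})=\ell(\mathcal{K}_{r,s+1})$, and surjectivity of $F_{r,1;1,s+1}$ finishes the induction.

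The hard part is precisely this injectivity of $\alpha$: because $\mathcal{K}_{1,s}$ need not be rigid when $s>q$, the functor $\mathcal{K}_{r,1}\boxtimes\bullet$ is only right exact and a priori need not preserve the injection $F_{1,s}^-$. To resolve it, I would identify $\alpha$, up to a nonzero scalar and the inductive isomorphism $\mathcal{K}_{r,1}\boxtimes\mathcal{K}_{1,s-1}\cong\mathcal{K}_{r,s-1}$, with the canonical injection $F_{r,s}^-:\mathcal{K}_{r,s-1}\to\mathcal{K}_{1,2}\boxtimes\mathcal{K}_{r,s}$. This should follow by unwinding the definition $F_{1,s}^-=(\mathrm{Id}_{\mathcal{K}_{1,2}}\boxtimes F_{1,s-1}^+)\circ(i_{\mathcal{K}_{1,2}}\boxtimes\mathrm{Id})$ through the coevaluation $i_{\mathcal{K}_{1,2}}$, and then tracking the coevaluation and the surjections $F^+$ across the braiding and the associativity isomorphism of Theorem \ref{thm:Fock_module_associativity}, which governs exactly how the maps $F$ behave under reassociation. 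Once $\alpha$ is identified with $F_{r,s}^-$ composed with the inductive isomorphism, injectivity is immediate from Proposition \ref{prop:partial_exact_for_gen_rs}, which asserts $F_{r,s}^-$ is injective for all $r,s$. The coherence bookkeeping in this identification (braidings, associators, and the rigidity data of $\mathcal{K}_{1,2}$) is the one genuinely technical point; everything else is length counting resting on the exactness supplied by rigidity of $\mathcal{K}_{1,2}$ together with the exact sequences of Theorem \ref{thm:main_exact_sequences}. Finally, the conjectured formula \cite[Equation 4.34]{MRR} follows, and the whole argument is symmetric under $c_{p,q}=c_{q,p}$, so one may equally run the induction on $r$ using $\mathcal{K}_{2,1}$ and the maps $G^\pm$.
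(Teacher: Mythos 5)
Your overall strategy is sound and genuinely different in its mechanics from the paper's: you induct on $s$, apply $\mathcal{K}_{r,1}\boxtimes\bullet$ to the sequence $0\to\mathcal{K}_{1,s-1}\to\mathcal{K}_{1,2}\boxtimes\mathcal{K}_{1,s}\to\mathcal{K}_{1,s+1}\to 0$, and close the argument by counting lengths, whereas the paper inducts on $r$, applies $\bullet\boxtimes\mathcal{K}_{1,s}$ to the sequence $0\to\mathcal{K}_{r-1,s}\to\mathcal{K}_{2,1}\boxtimes\mathcal{K}_{r,s}\to\mathcal{K}_{r+1,s}\to 0$, and closes with the Short Five Lemma applied to a commutative two-row diagram. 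The length-counting finish and the five-lemma finish are interchangeable; in both arguments everything reduces to showing that the tensored-up copy of the canonical injection remains injective. Your base cases and the reduction ``surjection between objects of equal finite length is an isomorphism'' are correct.

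The one genuine issue is the step you yourself flag: identifying $\alpha=\mathrm{Id}_{\mathcal{K}_{r,1}}\boxtimes F_{1,s}^-$ with $F_{r,s}^-$. You attribute the bookkeeping to Theorem \ref{thm:Fock_module_associativity}, but that theorem only controls how the maps $F_{r,s;r',s'}$ behave under \emph{reassociation}. In your setup the new factor $\mathcal{K}_{r,1}$ is prepended on the left while the coevaluation $i_{\mathcal{K}_{1,2}}$ and the maps $F^{\pm}$ act on the $\mathcal{K}_{1,2}$ factors sitting to its right, so to match $\alpha$ with $F_{r,s}^-$ you must commute $\mathcal{K}_{r,1}$ past $\mathcal{K}_{1,2}\boxtimes\mathcal{K}_{1,2}$ using the braiding, and you then need the compatibility $F_{r',s';r,s}\circ\mathcal{R}_{\mathcal{K}_{r,s},\mathcal{K}_{r',s'}}=c\cdot F_{r,s;r',s'}$ for a nonzero scalar $c$. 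This is nowhere in the paper and does not follow from Theorem \ref{thm:Fock_module_associativity}; it is a separate (true and provable) lemma, which one can obtain from skew-symmetry of Fock-module intertwining operators together with the one-dimensionality of the space of $\mathcal{F}_0$-intertwining operators of type $\binom{\mathcal{F}_{\lambda+\mu}}{\mathcal{F}_\mu\,\mathcal{F}_\lambda}$, restricted to the Kac submodules. The paper's arrangement avoids this entirely: because it appends $\mathcal{K}_{1,s}$ on the \emph{right} of a sequence whose structure maps all act on the \emph{left} factors, only naturality of unit and associativity isomorphisms plus Theorem \ref{thm:Fock_module_associativity} are needed to see that the relevant square commutes, and injectivity of the top-left map then comes for free from injectivity of $G_{r,s}^-$. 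So your proof is completable, but either supply the braiding-compatibility lemma explicitly or rearrange so that the tensored-in factor sits on the side away from the coevaluation, as the paper does.
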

\begin{proof}
The $r = 1$ case is clear. We can prove the general case by induction on $r$. Consider the following diagram, where for simplicity we suppress associativity isomorphisms:
\begin{equation*}
\xymatrixcolsep{2.5pc}
\xymatrix{
 & \mathcal{K}_{r-1,1}\boxtimes\mathcal{K}_{1,s} \ar[r]^(.44){G_{r,1}^-\boxtimes\mathrm{Id}} \ar[d]^{F_{r-1,1;1,s}} & \mathcal{K}_{2,1}\boxtimes\mathcal{K}_{r,1}\boxtimes\mathcal{K}_{1,s} \ar[r]^(.55){\mathcal{G}_{r,1}^+\boxtimes\mathrm{Id}} \ar[d]^{\mathrm{Id}\boxtimes F_{r,1;1,s}} & \mathcal{K}_{r+1,1}\boxtimes\mathcal{K}_{1,s} \ar[r] \ar[d]^{F_{r+1,1;1,s}} & 0\\
 0 \ar[r] & \mathcal{K}_{r-1,s} \ar[r]^(.48){G_{r,s}^-} & \mathcal{K}_{2,1}\boxtimes\mathcal{K}_{r,s} \ar[r]^(.52){G_{r,s}^+} & \mathcal{K}_{r+1,s} \ar[r] & 0\\
}
\end{equation*}
The bottom row is exact by Theorem \ref{thm:main_exact_sequences}, and the top row is right exact because $\bullet\boxtimes\mathcal{K}_{1,s}$ is a right exact functor (see \cite[Proposition 4.26]{HLZ3}). Moreover, by Theorem \ref{thm:Fock_module_associativity}, the right square of the diagram commutes if we add a suitable non-zero multiple of the associativity isomorphism $(\mathcal{K}_{2,1}\boxtimes\mathcal{K}_{r,1})\boxtimes\mathcal{K}_{1,s}\rightarrow\mathcal{K}_{2,1}\boxtimes(\mathcal{K}_{r,1}\boxtimes\mathcal{K}_{1,s})$ to the middle vertical arrow. As for the left square of the diagram, the upper right composition along the left square is a non-zero multiple of
\begin{align*}
\mathcal{K}_{r-1,1}\boxtimes\mathcal{K}_{1,s} & \xrightarrow{i_{\mathcal{K}_{2,1}}\boxtimes\mathrm{Id}} \mathcal{K}_{2,1}\boxtimes\mathcal{K}_{2,1}\boxtimes\mathcal{K}_{r-1,1}\boxtimes\mathcal{K}_{1,s}\nonumber\\ &\xrightarrow{\mathrm{Id}\boxtimes G_{r-1,1}^+\boxtimes\mathrm{Id}} \mathcal{K}_{2,1}\boxtimes\mathcal{K}_{r,1}\boxtimes\mathcal{K}_{1,s}\xrightarrow{\mathrm{Id}\boxtimes F_{r,1;1,s}}\mathcal{K}_{2,1}\boxtimes\mathcal{K}_{r,s}
\end{align*}
(suppressing unit and associativity isomorphisms for simplicity). By Theorem \ref{thm:Fock_module_associativity}, this equals a non-zero multiple of
\begin{align*}
\mathcal{K}_{r-1,1}\boxtimes\mathcal{K}_{1,s} & \xrightarrow{i_{\mathcal{K}_{2,1}}\boxtimes\mathrm{Id}} \mathcal{K}_{2,1}\boxtimes\mathcal{K}_{2,1}\boxtimes\mathcal{K}_{r-1,1}\boxtimes\mathcal{K}_{1,s}\nonumber\\ 
& \xrightarrow{\mathrm{Id}\boxtimes F_{r-1,1;1,s}} \mathcal{K}_{2,1}\boxtimes\mathcal{K}_{2,1}\boxtimes\mathcal{K}_{r-1,s}\xrightarrow{\mathrm{Id}\boxtimes G_{r-1,s}^+} \mathcal{K}_{2,1}\boxtimes\mathcal{K}_{r,s}.
\end{align*}
By naturality of the implicit unit and associativity isomorphisms, this is the same as 
\begin{equation*}
\mathcal{K}_{r-1,1}\boxtimes\mathcal{K}_{1,s}\xrightarrow{F_{r-1,1;1,s}} \mathcal{K}_{r-1,s}\xrightarrow{i_{\mathcal{K}_{2,1}}\boxtimes\mathrm{Id}}\mathcal{K}_{2,1}\boxtimes\mathcal{K}_{2,1}\boxtimes\mathcal{K}_{r-1,s}\xrightarrow{\mathrm{Id}\boxtimes G_{r-1,s}^+} \mathcal{K}_{2,1}\boxtimes\mathcal{K}_{r,s},
\end{equation*}
which is $G_{r,s}^-\circ F_{r-1,1;1,s}$. Thus the left square of the diagram also commutes if we replace $F_{r-1,1;1,s}$ by a suitable non-zero multiple.

We now assume by induction on $r$ that $F_{r-1,1;1,s}$ and $F_{r,1;1,s}$ are isomorphisms. Then $G_{r,1}^-\boxtimes\mathrm{Id}$ is a non-zero multiple of $(\mathrm{Id}\boxtimes F_{r,1;1,s})^{-1}\circ G_{r,s}^-\circ F_{r-1,1;1,s}$ and thus is injective because $G_{r,s}^-$ is. That is, the top row of the diagram is actually exact, and we conclude that $F_{r+1,1;1,s}$ is an isomorphism from the Short Five Lemma.
\end{proof}

\subsection{Tensor products of \texorpdfstring{$\mathcal{K}_{1,2}$}{K12} and \texorpdfstring{$\mathcal{K}_{2,1}$}{K21} with simple modules}

Finally, we determine how the Kac module $\mathcal{K}_{1,2}$ tensors with all simple objects of $\mathcal{O}_{c_{p,q}}$. The corresponding results for $\mathcal{K}_{2,1}$ then follow by $c_{p,q}=c_{q,p}$ symmetry, so for brevity we will not state them explicitly here. Since $\mathcal{L}_{r,nq}=\mathcal{K}_{r,nq}$ for $1\leq r\leq p$ and $n\geq 1$, we have already largely determined $\mathcal{K}_{1,2}\boxtimes\mathcal{L}_{r,nq}$ in Proposition \ref{prop:K12_times_simple_indecomp}, but we will say a little more about these tensor products after the next theorem; in particular, we will show that they are indecomposable and that $L_0$ acts on them non-semisimply. For more uniform formulas, we use the notation $\mathcal{L}_{r,0}=0$.

\begin{Theorem}\label{thm:K12_times_simple}
Let $1\leq r\leq p$, $1\leq s\leq q-1$, and $n\geq 0$, with $n\geq 1$ if $r=p$. Then

\begin{equation*}
\mathcal{K}_{1,2}\boxtimes\mathcal{L}_{r,nq+s}\cong\left\lbrace\begin{array}{lll}
\mathcal{L}_{r,nq+s-1}\oplus\mathcal{L}_{r,nq+s+1} & \text{if} & s\leq q-2\\
\mathcal{L}_{r,(n+1)q-2} & \text{if} & s=q-1
\end{array}\right. .
\end{equation*}



\end{Theorem}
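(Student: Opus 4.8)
The plan is to compute the graded character of $\mathcal{K}_{1,2}\boxtimes\mathcal{L}_{r,nq+s}$ exactly and then upgrade the character equality to a module isomorphism by splitting off a conformal weight coset decomposition. Throughout I fix $1\le r\le p$ and $1\le s\le q-1$ (with $n\ge 1$ when $r=p$), and I use that $\mathcal{K}_{1,2}$ is rigid, so $\mathcal{K}_{1,2}\boxtimes\bullet$ is exact (Theorem \ref{thm:Krs_rigid}).

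First I would record the coset decomposition. Since $q\nmid(nq+s)$, Proposition \ref{prop:lowest_weight_spaces} shows that the conformal weights of $\mathcal{K}_{1,2}\boxtimes\mathcal{L}_{r,nq+s}$ lie in $(h_{r,nq+s-1}+\mathbb{Z}_{\ge 0})\cup(h_{r,nq+s+1}+\mathbb{Z}_{\ge 0})$, and $h_{r,nq+s-1}$, $h_{r,nq+s+1}$ lie in distinct cosets of $\mathbb{C}/\mathbb{Z}$. Hence there is a direct sum decomposition $\mathcal{K}_{1,2}\boxtimes\mathcal{L}_{r,nq+s}=\mathcal{W}^-\oplus\mathcal{W}^+$, where $\mathcal{W}^{\pm}$ has conformal weights congruent to $h_{r,nq+s\pm 1}$ modulo $\mathbb{Z}$. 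Applying the exact functor $\mathcal{K}_{1,2}\boxtimes\bullet$ to the surjection $\mathcal{K}_{r,nq+s}\twoheadrightarrow\mathcal{L}_{r,nq+s}$ and using $\mathcal{K}_{1,2}\boxtimes\mathcal{K}_{r,nq+s}\cong\mathcal{K}_{r,nq+s-1}\oplus\mathcal{K}_{r,nq+s+1}$ (Corollary \ref{cor:1st_Kac_module_fusion}) yields a surjection $\mathcal{K}_{r,nq+s-1}\oplus\mathcal{K}_{r,nq+s+1}\twoheadrightarrow\mathcal{W}^-\oplus\mathcal{W}^+$, which must respect the coset grading since module maps preserve generalized $L_0$-eigenvalues. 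Thus $\mathcal{K}_{r,nq+s\pm 1}\twoheadrightarrow\mathcal{W}^{\pm}$, so each nonzero $\mathcal{W}^{\pm}$ is a highest-weight module with simple top $\mathcal{L}_{r,nq+s\pm 1}$.

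The heart of the argument is a character computation via a Kac-module resolution of $\mathcal{L}_{r,nq+s}$. From the Kac-module structures in Section \ref{subsec:FF_and_K_modules}, for $1\le r\le p$ there is a short exact sequence $0\to\mathcal{L}_{r,(n+2)q-s}\to\mathcal{K}_{r,nq+s}\to\mathcal{L}_{r,nq+s}\to 0$, and since $(n+2)q-s=(n+1)q+(q-s)$ with $1\le q-s\le q-1$, the submodule is again a simple module of the same type; iterating gives $\mathrm{ch}\,\mathcal{L}_{r,nq+s}=\sum_{j\ge 0}(-1)^j\,\mathrm{ch}\,\mathcal{K}_{r,t_j}$ with $t_0=nq+s$, $t_{2k-1}=(n+2k)q-s$, and $t_{2k}=(n+2k)q+s$ (this converges weight by weight, the lowest weights tending to infinity). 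Applying $\mathcal{K}_{1,2}\boxtimes\bullet$, exactness together with the character identity $\mathrm{ch}(\mathcal{K}_{1,2}\boxtimes\mathcal{K}_{r,t})=\mathrm{ch}\,\mathcal{K}_{r,t-1}+\mathrm{ch}\,\mathcal{K}_{r,t+1}$ from Theorem \ref{thm:main_exact_sequences} gives $\mathrm{ch}(\mathcal{K}_{1,2}\boxtimes\mathcal{L}_{r,nq+s})=\sum_{j\ge 0}(-1)^j(\mathrm{ch}\,\mathcal{K}_{r,t_j-1}+\mathrm{ch}\,\mathcal{K}_{r,t_j+1})$. I would then verify the telescoping: each index of the form $(n+2k)q$ occurs once with each sign and cancels, leaving exactly $\mathrm{ch}\,\mathcal{L}_{r,nq+s-1}+\mathrm{ch}\,\mathcal{L}_{r,nq+s+1}$ when $s\le q-2$ (each summand being resolved by the same kind of alternating sum), and leaving $\mathrm{ch}\,\mathcal{L}_{r,(n+1)q-2}$ with complete cancellation of the $+$-coset contributions when $s=q-1$.

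Finally I would convert characters into modules coset by coset. When $s\le q-2$, comparison gives $\mathrm{ch}\,\mathcal{W}^{\pm}=\mathrm{ch}\,\mathcal{L}_{r,nq+s\pm 1}$; since $\mathcal{W}^{\pm}$ is a highest-weight quotient of $\mathcal{K}_{r,nq+s\pm 1}$ surjecting onto its top $\mathcal{L}_{r,nq+s\pm 1}$ with equal graded dimensions, the surjection is an isomorphism, so $\mathcal{W}^{\pm}\cong\mathcal{L}_{r,nq+s\pm 1}$ (with $\mathcal{L}_{r,nq}=\mathcal{K}_{r,nq}$ and $\mathcal{L}_{r,0}=0$ handling the degenerate value $s=1$). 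When $s=q-1$, the $+$-coset character vanishes, forcing $\mathcal{W}^+=0$, while $\mathcal{W}^-\cong\mathcal{L}_{r,(n+1)q-2}$ by the same argument. I expect the main obstacle to be the bookkeeping of the telescoping cancellation — checking that the only surviving Kac characters are precisely those assembling into the claimed simple modules, uniformly across the bulk case $1\le r\le p-1$ and the boundary case $r=p$ and across the degenerate values $s=1$ and $s=q-1$; the conceptual steps (coset splitting, exactness of $\mathcal{K}_{1,2}\boxtimes\bullet$, and the resolution) are otherwise routine given the earlier results.
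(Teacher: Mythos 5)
Your proposal is correct and follows essentially the same route as the paper's proof: resolve $\mathcal{L}_{r,nq+s}$ by the Kac modules $\mathcal{K}_{r,t_j}$, tensor with $\mathcal{K}_{1,2}$ using its flatness (rigidity) together with Corollary \ref{cor:1st_Kac_module_fusion}, split the result into the two conformal-weight cosets, and identify each summand as a quotient of $\mathcal{K}_{r,nq+s\pm 1}$ with the graded dimension of the claimed simple module. Your telescoping of alternating character sums is just a repackaging of the paper's comparison of the induced resolutions of $\mathcal{W}_{\pm}$ with those of $\mathcal{L}_{r,nq+s\pm 1}$, including the degenerate cancellations at $s=1$ and $s=q-1$.
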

\begin{proof}
From the structures in Section \ref{subsec:FF_and_K_modules}, $\mathcal{K}_{r,np+s}$ for $1\leq r\leq p$, $1\leq s\leq q-1$, and $n\geq 0$ (with $n\geq 1$ if $r=p$) fits into a short exact sequence
\begin{equation*}
0\longrightarrow\mathcal{L}_{r,(n+2)q-s}\longrightarrow\mathcal{K}_{r,nq+s}\longrightarrow\mathcal{L}_{r,nq+s}\longrightarrow 0.
\end{equation*}
We can combine such exact sequences into a resolution of $\mathcal{L}_{r,nq+s}$ by Kac modules:
\begin{equation*}
\cdots\longrightarrow\mathcal{K}_{r,(n+4)q-s}\longrightarrow\mathcal{K}_{r,(n+2)q+s}\longrightarrow\mathcal{K}_{r,(n+2)q-s}\longrightarrow\mathcal{K}_{r,nq+s}\longrightarrow\mathcal{L}_{r,nq+s}\longrightarrow 0
\end{equation*}
(compare with Felder complexes \cite{Fe} of Feigin-Fuchs modules).
Since $\mathcal{K}_{1,2}$ is rigid and thus flat, this resolution remains exact after tensoring with $\mathcal{K}_{1,2}$. Then also using Corollary \ref{cor:1st_Kac_module_fusion}, we get a resolution of $\mathcal{K}_{1,2}\boxtimes\mathcal{L}_{r,nq+s}$:
\begin{equation*}
\cdots\longrightarrow\mathcal{K}_{r,(n+2)q-s-1}\oplus\mathcal{K}_{r,(n+2)q-s+1}\longrightarrow\mathcal{K}_{r,nq+s-1}\oplus\mathcal{K}_{r,nq+s+1}\longrightarrow\mathcal{K}_{1,2}\boxtimes\mathcal{L}_{r,nq+s}\longrightarrow 0.
\end{equation*}
Since the conformal weights of the two direct summands in each term of this resolution are non-congruent modulo $\mathbb{Z}$, we see that
\begin{equation*}
\mathcal{K}_{1,2}\boxtimes\mathcal{L}_{r,nq+s}=\mathcal{W}_{-}\oplus\mathcal{W}_{+}
\end{equation*}
where $\mathcal{W}_{\pm}$ have the following resolutions by Kac modules:
\begin{align*}
\cdots\longrightarrow\mathcal{K}_{r,(n+4)q-(s\pm 1)}\longrightarrow\mathcal{K}_{r,(n+2)q+s\pm 1}\longrightarrow\mathcal{K}_{r,(n+2)q-(s\pm 1)}\longrightarrow\mathcal{K}_{r,nq+s\pm 1}\longrightarrow\mathcal{W}_{\pm}\longrightarrow 0.
\end{align*}
We consider the possibilities for $s$ individually.

If $2\leq s\leq q-2$, then the resolutions for $\mathcal{W}_{\pm}$ are the same as those for $\mathcal{L}_{r,nq+s\pm 1}$, which implies that as graded vector spaces, $\mathcal{W}_{\pm}$ have the same graded dimensions as $\mathcal{L}_{r,nq+s\pm 1}$. Moreover, $\mathcal{W}_{\pm}$ are quotients of $\mathcal{K}_{r,nq+s\pm 1}$, as are $\mathcal{L}_{r,nq+s\pm 1}$, so it follows that $\mathcal{W}_{\pm}\cong\mathcal{L}_{r,nq+s\pm 1}$ when $2\leq s\leq q-2$. Similarly, assuming $q\geq 3$, $\mathcal{W}_+\cong\mathcal{L}_{r,nq+2}$ when $s=1$ and $\mathcal{W}_-=\mathcal{L}_{r,(n+1)q-2}$ when $s=q-1$.

Now consider $\mathcal{W}_-$ when $s=1$: it has a resolution
\begin{equation*}
\cdots\longrightarrow\mathcal{K}_{r,(n+4)q}\longrightarrow\mathcal{K}_{r,(n+2)q}\longrightarrow\mathcal{K}_{r,(n+2)q}\longrightarrow\mathcal{K}_{r,nq}\longrightarrow\mathcal{W}_-\longrightarrow 0.
\end{equation*}
If $n=0$, then $\mathcal{K}_{r,0}=0$ so $\mathcal{W}_-=0$ as well. Now assume $n>0$: Then from the structures in Section \ref{subsec:FF_and_K_modules}, $\mathcal{K}_{r,nq}$ is simple and there is no non-zero homomorphism $\mathcal{K}_{r,(n+2)q}\rightarrow\mathcal{K}_{r,nq}$. Thus $\mathcal{W}_-\cong\mathcal{K}_{r,nq}\cong\mathcal{L}_{r,nq}$.

Now consider $\mathcal{W}_+$ when $s=q-1$: it has resolution
\begin{equation*}
\cdots\longrightarrow\mathcal{K}_{r,(n+3)q}\longrightarrow\mathcal{K}_{r,(n+3)q}\longrightarrow\mathcal{K}_{r,(n+1)q}\longrightarrow\mathcal{K}_{r,(n+1)q}\longrightarrow\mathcal{W}_+\longrightarrow 0.
\end{equation*}
In this case, $\mathcal{K}_{r,(n+1)q}$ is simple and there is no non-zero map $\mathcal{K}_{r,(n+3)q}\rightarrow\mathcal{K}_{r,(n+1)q}$, so we get $\mathcal{W}_+=0$. This completes the proof of the theorem for all cases of $s\in\lbrace 1,2,\ldots, q-1\rbrace$; in particular, the case $q=2$, $s=1$ follows by considering $\mathcal{W}_-$ for $s=1$ and $\mathcal{W}_+$ for $s=q-1$.
\end{proof}

An easy consequence of this theorem is that $\mathcal{K}_{1,2}\boxtimes\mathcal{L}_{r,s}$ is indecomposable when $q\mid s$:
\begin{Corollary}
For $1\leq r\leq p$ and $n\geq 1$, the module $\mathcal{K}_{1,2}\boxtimes\mathcal{K}_{r,nq}$ is indecomposable.
\end{Corollary}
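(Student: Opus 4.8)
The plan is to argue by contradiction, exploiting the non-split short exact sequence
\[
0\longrightarrow\mathcal{K}_{r,nq-1}\xrightarrow{F_{r,nq}^-}\mathcal{K}_{1,2}\boxtimes\mathcal{K}_{r,nq}\xrightarrow{F_{r,nq}^+}\mathcal{K}_{r,nq+1}\longrightarrow 0
\]
supplied by Proposition \ref{prop:K12_times_simple_indecomp}, together with tight control over the lowest conformal weight space of $M:=\mathcal{K}_{1,2}\boxtimes\mathcal{K}_{r,nq}$. So suppose $M\cong A\oplus B$ with $A,B\neq 0$. The first step is to show that all of the lowest weight space $M(0)$ lies in a single summand. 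Since $\mathcal{K}_{r,nq}=\mathcal{L}_{r,nq}$ is simple and hence generated by its lowest weight space, Proposition \ref{prop:lowest_weight_spaces} applies with $q\mid nq$: either $h_{r,nq}\neq h_{p,q}$, in which case $\dim M(0)=1$ and the claim is immediate, or $h_{r,nq}=h_{p,q}$ — which forces $(r,n)=(p,1)$ — in which case $L_0$ acts on $M(0)$ by an indecomposable Jordan block. In this corner case I would note that $\dim M(0)=2$: since $h_{p,q-1}=h_{p,q+1}$ is the common minimal weight of the two highest-weight modules $\mathcal{K}_{p,q-1}$ and $\mathcal{K}_{p,q+1}$, each contributes a one-dimensional space to $M$ at that weight via the exact sequence, so $M(0)\cong\mathbb{C}[L_0]/\langle(L_0-h_{p,q})^2\rangle$ is indecomposable as a $\mathbb{C}[L_0]$-module. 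In either case the $L_0$-stable decomposition $M(0)=A(0)\oplus B(0)$ forces one summand, say $B(0)$, to vanish.

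Next I would locate the submodule $\mathcal{K}_{r,nq-1}=\mathrm{Im}\,F_{r,nq}^-$ inside $A$. Because $r\leq p$, Remark \ref{rem:Krs_Verma_quot} shows $\mathcal{K}_{r,nq-1}$ is a quotient of the Verma module $\mathcal{V}_{r,nq-1}$, hence a highest-weight module generated by its genuine lowest-weight eigenvector $v$. As $v$ is a genuine $L_0$-eigenvector of minimal weight $h_{r,nq-1}$ in $M$, it lies in $A(0)=M(0)$ (using $B(0)=0$), and therefore $\mathcal{K}_{r,nq-1}=\langle v\rangle\subseteq A$.

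Passing to the quotient then yields $\mathcal{K}_{r,nq+1}\cong M/\mathcal{K}_{r,nq-1}\cong(A/\mathcal{K}_{r,nq-1})\oplus B$. The decisive input here is that $\mathcal{K}_{r,nq+1}$ is itself a highest-weight module (again by Remark \ref{rem:Krs_Verma_quot}, since $r\leq p$), hence has simple head and is indecomposable. As $B\neq 0$, this forces $A/\mathcal{K}_{r,nq-1}=0$, so $A=\mathcal{K}_{r,nq-1}$ and $B\cong\mathcal{K}_{r,nq+1}$; but then the sequence above would split, contradicting Proposition \ref{prop:K12_times_simple_indecomp}. Thus $M$ is indecomposable. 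I expect the only genuine subtlety to be the corner case $(r,n)=(p,1)$, where $M(0)$ is two-dimensional and one must invoke the non-semisimplicity of $L_0$ (a genuine size-two Jordan block) to confine $M(0)$ to one summand; away from that case the one-dimensionality of $M(0)$ makes the first step automatic.
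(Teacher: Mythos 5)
Your proof is correct, but it takes a genuinely different route from the paper's. The paper deduces indecomposability by showing that $\mathcal{L}_{r,nq+1}$ is the unique simple submodule of $\mathcal{K}_{1,2}\boxtimes\mathcal{K}_{r,nq}$: rigidity of $\mathcal{K}_{1,2}$ gives $\mathrm{Hom}(\mathcal{L}_{r',s'},\mathcal{K}_{1,2}\boxtimes\mathcal{K}_{r,nq})\cong\mathrm{Hom}(\mathcal{K}_{1,2}\boxtimes\mathcal{L}_{r',s'},\mathcal{K}_{r,nq})$, and the right-hand side is read off from Theorem \ref{thm:K12_times_simple} and Proposition \ref{prop:K12_times_simple_indecomp}; a module whose socle is simple cannot be a sum of two proper non-zero submodules. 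You instead combine the non-splitness of the exact sequence with the observation that the submodule $\mathcal{K}_{r,nq-1}$ is generated by the lowest conformal weight space $M(0)$, which Proposition \ref{prop:lowest_weight_spaces} confines to a single direct summand (one-dimensional away from $(r,n)=(p,q)$... i.e.\ $(p,1)$, and an indecomposable Jordan block there); since the quotient $\mathcal{K}_{r,nq+1}$ is a highest-weight module and hence indecomposable, any nontrivial decomposition of the middle term would force a splitting, a contradiction. Both arguments are valid and non-circular; the paper's version yields the extra information that the socle is $\mathcal{L}_{r,nq+1}$, while yours bypasses Theorem \ref{thm:K12_times_simple} entirely and needs only the Zhu-algebra control of $M(0)$ plus the already-established non-split sequence. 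Two small points: the generalized $L_0$-eigenvalue on $M(0)$ in the corner case is $h_{p,q-1}=h_{p,q+1}$, not $h_{p,q}$, so the isomorphism should read $M(0)\cong\mathbb{C}[L_0]/\langle(L_0-h_{p,q\pm 1})^2\rangle$; and it is worth emphasizing that a non-split extension of two indecomposables is not in general indecomposable, so the confinement of $M(0)$ to one summand is not a formality but the essential extra input, which you do correctly supply.
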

\begin{proof}
We will show that $\mathcal{K}_{1,2}\boxtimes\mathcal{K}_{r,nq}$ contains $\mathcal{L}_{r,nq+1}$ (with multiplicity one) as its unique simple submodule. This will imply that $\mathcal{K}_{1,2}\boxtimes\mathcal{K}_{r,nq}$ is indecomposable, since if we write $\mathcal{K}_{1,2}\boxtimes\mathcal{K}_{r,nq}=W_1+W_2$ as the sum of two non-zero submodules, then $\mathcal{L}_{r,nq+1}\subseteq W_1\cap W_2$, that is, the sum is not direct.

To determine the simple submodules of $\mathcal{K}_{1,2}\boxtimes\mathcal{K}_{r,nq}$, rigidity of $\mathcal{K}_{1,2}$ says that
\begin{equation*}
\mathrm{Hom}(\mathcal{L}_{r',s'},\mathcal{K}_{1,2}\boxtimes\mathcal{K}_{r,nq}) \cong\mathrm{Hom}(\mathcal{K}_{1,2}\boxtimes\mathcal{L}_{r',s'},\mathcal{K}_{r,nq})
\end{equation*}
for any $1\leq r'\leq p$ and $s'\in\mathbb{Z}_{\geq 1}$. If $q\nmid s'$, Theorem \ref{thm:K12_times_simple} says that this space is non-zero (of dimension one) if and only if $r'=r$ and $s'=nq+1$. If $q\mid s'$ on the other hand, then Proposition \ref{prop:K12_times_simple_indecomp} shows that $\mathcal{K}_{1,2}\boxtimes\mathcal{K}_{r,nq}$ does not contain any composition factors isomorphic to $\mathcal{L}_{r',s'}$. Thus the unique simple submodule of $\mathcal{K}_{1,2}\boxtimes\mathcal{K}_{r,nq}$ is isomorphic to $\mathcal{L}_{r,nq+1}$.
\end{proof}

We present one more result on $\mathcal{K}_{1,2}\boxtimes\mathcal{K}_{r,nq}$ (where we now allow any $r\in\mathbb{Z}_{\geq 1}$): we will show that it is a logarithmic $\mathfrak{Vir}$-module, that is, $L_0$ acts non-semisimply on $\mathcal{K}_{1,2}\boxtimes\mathcal{K}_{r,nq}$:
\begin{Proposition}\label{prop:log_modules}
For $r\in\mathbb{Z}_{\geq 1}$ and $n\geq 1$, $\mathcal{K}_{1,2}\boxtimes\mathcal{K}_{r,nq}$ is a logarithmic $\mathfrak{Vir}$-module.
\end{Proposition}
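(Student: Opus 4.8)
The statement is equivalent to the assertion that $L_0$ does not act semisimply on $M:=\mathcal{K}_{1,2}\boxtimes\mathcal{K}_{r,nq}$. The plan is to first record the module-theoretic shape of $M$ and then detect non-semisimplicity using the ribbon twist. I would begin with the case $1\leq r\leq p$, where $\mathcal{K}_{r,nq}=\mathcal{L}_{r,nq}$ is simple. By Theorem~\ref{thm:main_exact_sequences}, $M$ fits into the non-split exact sequence $0\to\mathcal{K}_{r,nq-1}\xrightarrow{F^-_{r,nq}}M\xrightarrow{F^+_{r,nq}}\mathcal{K}_{r,nq+1}\to 0$, and the preceding corollary shows that $M$ is indecomposable with socle $\mathcal{L}_{r,nq+1}$. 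Since $\mathcal{K}_{1,2}$ and $\mathcal{L}_{r,nq}$ are self-dual (Theorems~\ref{thm:rigidity} and~\ref{thm:Krs_rigid}), the braiding together with the compatibility of contragredients with $\boxtimes$ gives $M'\cong M$, so the cosocle of $M$ is also $\mathcal{L}_{r,nq+1}$. Crucially $h_{r,nq+1}-h_{r,nq-1}=np-r\in\mathbb{Z}$, so all conformal weights of $M$ lie in $h_{r,nq-1}+\mathbb{Z}$ and both copies of $\mathcal{L}_{r,nq+1}$ occur at the single conformal weight $h_{r,nq+1}$.

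Next I would translate non-semisimplicity of $L_0$ into a statement about the twist $\theta=e^{2\pi iL_0}$. Because $L_0$ acts semisimply on the Verma-module quotient $\mathcal{K}_{1,2}$ and on the simple module $\mathcal{L}_{r,nq}$, we have $\theta_{\mathcal{K}_{1,2}}=e^{2\pi ih_{1,2}}\,\mathrm{Id}$ and $\theta_{\mathcal{K}_{r,nq}}=e^{2\pi ih_{r,nq}}\,\mathrm{Id}$. The balancing axiom then yields $\theta_M=e^{2\pi i(h_{1,2}+h_{r,nq})}\,\mathcal{M}$, where $\mathcal{M}=\mathcal{R}_{\mathcal{K}_{r,nq},\mathcal{K}_{1,2}}\circ\mathcal{R}_{\mathcal{K}_{1,2},\mathcal{K}_{r,nq}}$ is the monodromy. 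Since $M$ is indecomposable, its conformal weights lie in one coset of $\mathbb{C}/\mathbb{Z}$, so the semisimple part of $\theta_M$ is a scalar, and $\theta_M$ is scalar if and only if $L_0$ acts semisimply. Thus it suffices to prove that $\mathcal{M}$ is not a scalar. Writing $\mu=e^{2\pi ih_{r,nq-1}}$, the endomorphism $\theta_M-\mu\,\mathrm{Id}$ annihilates $A:=\mathcal{K}_{r,nq-1}$ and induces $0$ on $B:=\mathcal{K}_{r,nq+1}$, hence factors as $\iota\circ g\circ F^+_{r,nq}$ for some $g\in\mathrm{Hom}(B,A)$; this space is one-dimensional, spanned by $\mathcal{K}_{r,nq+1}\twoheadrightarrow\mathcal{L}_{r,nq+1}\hookrightarrow\mathcal{K}_{r,nq-1}$. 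So the task reduces to showing that this canonical logarithmic coupling $g$ is nonzero.

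The main obstacle is precisely this nonvanishing: non-splitting of the sequence alone does not force it, and the coupling must be computed. I would evaluate $\mathcal{M}$ (equivalently, extract the $\log z$ part of $\mathcal{Y}_\boxtimes$) on the image of $v_{1,2}\otimes v_{r,nq}$ using the explicit braiding formula~\eqref{eqn:braiding}, and reduce the resulting matrix element to the monodromy of a solution of the relevant BPZ differential equation, exactly as in the rigidity computations of Section~\ref{sec:tens_prods_and_rigidity}; the appearance of a logarithmic solution of the associated hypergeometric equation, whose indicial exponents differ by the integer $np-r$, yields $g\neq 0$. As a consistency check and base case, when $h_{r,nq}=h_{p,q}$ (that is, $(r,n)=(p,1)$) non-semisimplicity is immediate from Proposition~\ref{prop:lowest_weight_spaces}(3), since there $h_{p,q-1}=h_{p,q+1}$ forces $\dim\mathcal{W}(0)=2$ and hence an indecomposable Jordan block at the lowest conformal weight. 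Finally, for $r>p$ the module $\mathcal{K}_{r,nq}$ is reducible; here I would use the Kac-module filtrations of Section~\ref{subsec:FF_and_K_modules} together with exactness of $\mathcal{K}_{1,2}\boxtimes(-)$ (from rigidity of $\mathcal{K}_{1,2}$) to propagate the logarithmic structure from the $1\leq r\leq p$ cases, tracking the weight-$h_{r,nq+1}$ Jordan block through the induced exact sequences.
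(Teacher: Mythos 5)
Your reduction is sound up to a point: passing from non-semisimplicity of $L_0$ to non-scalarity of the twist $\theta_M$ (using that all weights of the indecomposable $M$ lie in a single coset of $\mathbb{Z}$), and then via the balancing axiom to non-scalarity of the monodromy $\mathcal{M}=\mathcal{R}^2_{\mathcal{K}_{1,2},\mathcal{K}_{r,nq}}$, is exactly the right first move and matches the paper's starting point. The genuine gap is the step where you claim the logarithmic coupling $g$ is nonzero. You justify this by asserting that the relevant BPZ/hypergeometric equation has a logarithmic solution because the indicial exponents at $z=0$ differ by the integer $np-r$. That is not sufficient: in the resonant case an integer exponent difference does \emph{not} force a $\log$ term (the coefficient of the logarithm in the Frobenius second solution can vanish), and even when the fundamental system does contain a logarithmic solution, you must still show that the particular solution realized by $\langle w',\mathcal{Y}_\boxtimes(v_{1,2},z)v_{r,nq}\rangle$ is not the log-free one. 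This is precisely the content of the proposition, so the proposal defers the essential difficulty to an uncarried-out computation. Your ``base case'' check is also not conclusive: Proposition \ref{prop:lowest_weight_spaces}(3) only gives $\dim\mathcal{W}(0)\leq 2$ with an indecomposable Jordan block, which is consistent with a $1\times 1$ block, so it does not by itself force non-semisimplicity even when $h_{r,nq}=h_{p,q}$.

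The paper closes this gap without any differential-equation analysis, and you could adopt its argument to finish: assume $\theta_M$ is the scalar $e^{2\pi i h_{r,nq+1}}$; the balancing equation then pins $\mathcal{R}^2_{\mathcal{K}_{1,2},\mathcal{K}_{r,nq}}$ down to the explicit scalar $(-1)^{pn-r+1}e^{-\pi i p/q}$. The hexagon axiom then forces $\mathcal{R}^2_{\mathcal{K}_{1,2}\boxtimes\mathcal{K}_{1,2},\,\mathcal{K}_{r,nq}}=e^{-2\pi i p/q}\,\mathrm{Id}$, while naturality of the braiding applied to the coevaluation $i_{\mathcal{K}_{1,2}}:\mathcal{K}_{1,1}\to\mathcal{K}_{1,2}\boxtimes\mathcal{K}_{1,2}$, together with the fact that double braiding with the unit is the identity and that $i_{\mathcal{K}_{1,2}}\boxtimes\mathrm{Id}\neq 0$ by rigidity, forces that same double braiding to act as the identity on a nonzero subobject. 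Since $e^{-2\pi i p/q}\neq 1$, this is a contradiction, uniformly in $r\in\mathbb{Z}_{\geq 1}$ (so no separate $r>p$ case or filtration argument is needed). If you prefer to keep your analytic route, you must actually compute the connection/monodromy data of the hypergeometric solution attached to $\langle v_{1,2}',\,\cdot\,\rangle$ and verify the $\log z$ coefficient is nonzero, in the style of the $q=2$ computation in the proof of Theorem \ref{thm:rigidity}; as written, that verification is missing.
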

\begin{proof}
We will show that the twist automorphism $\theta=e^{2\pi i L_0}$ acts non-semisimply on $\mathcal{K}_{1,2}\boxtimes\mathcal{K}_{r,nq}$. The twist on $\mathcal{K}_{1,2}\boxtimes\mathcal{K}_{r,nq}$ satisfies the \textit{balancing equation}
\begin{equation}\label{eqn:balancing}
e^{2\pi i L_0} =\mathcal{R}^2_{\mathcal{K}_{1,2},\mathcal{K}_{r,nq}}\circ(e^{2\pi i L_0}\boxtimes e^{2\pi i L_0}),
\end{equation}
where
\begin{equation*}
\mathcal{R}_{\mathcal{K}_{1,2},\mathcal{K}_{r,nq}}^2: \mathcal{K}_{1,2}\boxtimes\mathcal{K}_{r,nq}\xrightarrow{\cong}\mathcal{K}_{r,nq}\boxtimes\mathcal{K}_{1,2}\xrightarrow{\cong}\mathcal{K}_{1,2}\boxtimes\mathcal{K}_{r,nq}
\end{equation*}
is the double braiding in $\mathcal{O}_{c_{p,q}}$. The balancing equation follows from the formula \eqref{eqn:braiding} for the braiding in $\mathcal{O}_{c_{p,q}}$ together with the $L_0$-conjugation formula
\begin{equation*}
y^{L_0}\mathcal{Y}(w_1,z)y^{-L_0} =\mathcal{Y}(y^{L_0} w_1,yz)
\end{equation*}
which gives the action of scaling transformations on intertwining operators (see for example \cite[Proposition 3.36(b)]{HLZ2}).

Now assume for the sake of contradiction that $e^{2\pi i L_0}$ is semisimple on $\mathcal{K}_{1,2}\boxtimes\mathcal{K}_{r,nq}$. By Theorem \ref{thm:main_exact_sequences}, the conformal weights of $\mathcal{K}_{1,2}\boxtimes\mathcal{K}_{r,nq}$ are contained in $(h_{r,nq-1}+\mathbb{Z}_{\geq 0})\cup(h_{r,nq+1}+\mathbb{Z}_{\geq 0})$. Moreover, $h_{r,nq+1}-h_{r,nq-1}=-r+pn\in\mathbb{Z}$, so if $e^{2\pi i L_0}$ were semisimple, then it would act on $\mathcal{K}_{1,2}\boxtimes\mathcal{K}_{r,nq}$ by the scalar $e^{2\pi i h_{r,nq+1}}$. Then the balancing equation \eqref{eqn:balancing}
would imply
\begin{equation*}
\mathcal{R}_{\mathcal{K}_{1,2},\mathcal{K}_{r,nq}}^2=e^{2\pi i L_0}\circ(e^{-2\pi i L_0}\boxtimes e^{-2\pi i L_0}) = e^{2\pi i(h_{r,nq+1}-h_{1,2}-h_{r,nq})}\mathrm{Id}_{\mathcal{K}_{1,2}\boxtimes\mathcal{K}_{r,nq}}.
\end{equation*}
Since
\begin{equation*}
h_{r,nq+1}-h_{1,2}-h_{r,nq}=\frac{pn-r+1}{2}-\frac{p}{2q},
\end{equation*}
this would yield $\mathcal{R}_{\mathcal{K}_{1,2},\mathcal{K}_{r,nq}}^2 =(-1)^{pn-r+1} e^{-\pi i p/q}\mathrm{Id}_{\mathcal{K}_{1,2}\boxtimes\mathcal{K}_{r,nq}}$.

Consider now the double braiding isomorphism
\begin{equation*}
\mathcal{R}_{\mathcal{K}_{1,2}\boxtimes\mathcal{K}_{1,2},\mathcal{K}_{r,nq}}^2: (\mathcal{K}_{1,2}\boxtimes\mathcal{K}_{1,2})\boxtimes\mathcal{K}_{r,nq}\xrightarrow{\cong}\mathcal{K}_{r,nq}\boxtimes(\mathcal{K}_{1,2}\boxtimes\mathcal{K}_{1,2})\xrightarrow{\cong}(\mathcal{K}_{1,2}\boxtimes\mathcal{K}_{1,2})\boxtimes\mathcal{K}_{r,nq}.
\end{equation*}
Since we are assuming $\mathcal{R}_{\mathcal{K}_{1,2},\mathcal{K}_{r,nq}}^2$ is a scalar multiple of the identity, we can use the hexagon axiom for braided tensor categories (with associativity isomorphisms suppressed from the notation for simplicity) to calculate that
\begin{align*}
\mathcal{R}_{\mathcal{K}_{1,2}\boxtimes\mathcal{K}_{1,2},\mathcal{K}_{r,nq}}^2 & = (\mathrm{Id}_{\mathcal{K}_{1,2}}\boxtimes\mathcal{R}_{\mathcal{K}_{1,2},\mathcal{K}_{r,nq}})\circ(\mathcal{R}_{\mathcal{K}_{1,2},\mathcal{K}_{r,nq}}\boxtimes\mathrm{Id}_{\mathcal{K}_{1,2}})^2\circ(\mathrm{Id}_{\mathcal{K}_{1,2}}\boxtimes\mathcal{R}_{\mathcal{K}_{1,2},\mathcal{K}_{r,nq}})\nonumber\\
& = (-1)^{pn-r+1} e^{-\pi ip/q}(\mathrm{Id}_{\mathcal{K}_{1,2}}\boxtimes\mathcal{R}_{\mathcal{K}_{1,2},\mathcal{K}_{r,nq}})^2\nonumber\\
& = e^{-2\pi ip/q}\mathrm{Id}_{(\mathcal{K}_{1,2}\boxtimes\mathcal{K}_{1,2})\boxtimes\mathcal{K}_{r,nq}}.
\end{align*}
However, recalling that $\mathcal{K}_{1,2}$ is rigid with coevaluation $i_{\mathcal{K}_{1,2}}: \mathcal{K}_{1,1}\rightarrow\mathcal{K}_{1,2}\boxtimes\mathcal{K}_{1,2}$, this leads to
\begin{align*}
e^{-2\pi ip/q} (i_{\mathcal{K}_{1,2}}\boxtimes\mathrm{Id}_{\mathcal{K}_{r,nq}}) & = \mathcal{R}_{\mathcal{K}_{1,2}\boxtimes\mathcal{K}_{1,2},\mathcal{K}_{r,nq}}^2\circ (i_{\mathcal{K}_{1,2}}\boxtimes\mathrm{Id}_{\mathcal{K}_{r,nq}})\nonumber\\
& = (i_{\mathcal{K}_{1,2}}\boxtimes\mathrm{Id}_{\mathcal{K}_{r,nq}})\circ\mathcal{R}^2_{\mathcal{K}_{1,1},\mathcal{K}_{r,nq}} = i_{\mathcal{K}_{1,2}}\boxtimes\mathrm{Id}_{\mathcal{K}_{r,nq}},
\end{align*}
since the braiding isomorphisms are natural and since double braiding with the unit object in a braided tensor category is always the identity. This is a contradiction since $e^{-2\pi i p/q}\neq 1$ (because $q>1$ and $\mathrm{gcd}(p,q)=1$) and since $i_{\mathcal{K}_{1,2}}\boxtimes\mathrm{Id}_{\mathcal{K}_{r,nq}}\neq 0$ (because the composition
\begin{align*}
\mathcal{K}_{1,2}\boxtimes\mathcal{K}_{r,nq}\xrightarrow{\mathrm{Id}\boxtimes i_{\mathcal{K}_{1,2}}\boxtimes\mathrm{Id}} \mathcal{K}_{1,2}\boxtimes\mathcal{K}_{1,2}\boxtimes\mathcal{K}_{1,2}\boxtimes\mathcal{K}_{r,nq}\xrightarrow{e_{\mathcal{K}_{1,2}}\boxtimes\mathrm{Id}\boxtimes\mathrm{Id}} \mathcal{K}_{1,2}\boxtimes\mathcal{K}_{r,nq}
\end{align*}
is non-zero by the rigidity of $\mathcal{K}_{1,2}$). Thus $L_0$ in fact acts non-semisimply on $\mathcal{K}_{1,2}\boxtimes\mathcal{K}_{r,nq}$.
\end{proof}

\begin{Remark}\label{rem:log_modules}
By $c_{p,q}=c_{q,p}$ symmetry, $\mathcal{K}_{2,1}\boxtimes\mathcal{K}_{mp,s}$ is also indecomposable for $m\geq 1$ and $1\leq s\leq q$, and logarithmic for all $s\in\mathbb{Z}_{\geq 1}$.
\end{Remark}

\begin{Remark}
Logarithmic modules which are extensions of highest-weight quotients by highest-weight submodules are called ``staggered modules'' in the physics literature; staggered modules for the Virasoro algebra have been studied comprehensively in \cite{KyRi}. Propositions \ref{prop:K12_times_simple_indecomp} and \ref{prop:log_modules} show that $\mathcal{K}_{1,2}\boxtimes\mathcal{K}_{r,nq}$ is a staggered module when $1\leq r\leq p$ and $n\geq 1$.
\end{Remark}

\section{Conclusion and outlook}\label{sec:conclusion}

In this paper, we have explored the structure of the tensor category $\mathcal{O}_{c_{p,q}}$ \cite{CJORY} of $C_1$-cofinite modules for the universal Virasoro vertex operator algebra $V_{c_{p,q}}$ at central charge $c_{p,q}=1-\frac{6(p-q)^2}{pq}$ for integers $p,q\geq 2$ with $\mathrm{gcd}(p,q)=1$. We have especially focused on the Kac modules $\mathcal{K}_{r,s}$ for $r,s\in\mathbb{Z}_{\geq 1}$ defined in \cite{MRR}. We have shown that $\mathcal{K}_{1,2}$ and $\mathcal{K}_{2,1}$ are rigid and self-dual (though the full tensor category $\mathcal{O}_{c_{p,q}}$ is not rigid), and that the tensor subcategory of $\mathcal{O}_{c_{p,q}}$ generated by $\mathcal{K}_{1,2}$ and $\mathcal{K}_{2,1}$ (closed under subquotients as well as tensor products)  contains all Kac modules and irreducible modules in $\mathcal{O}_{c_{p,q}}$. We have also determined the tensor products of $\mathcal{K}_{1,2}$ and $\mathcal{K}_{2,1}$ with all Kac modules and irreducible modules of $\mathcal{O}_{c_{p,q}}$, in the process proving some fusion product conjectures from \cite{MRR}.

An obvious next step would be to calculate all fusion tensor products involving all Kac modules and irreducible modules. We expect that many of these calculations would be relatively straightforward consequences of our Theorems \ref{thm:main_exact_sequences}, \ref{thm:Kr1_times_K1s}, and \ref{thm:K12_times_simple}. However, it will be necessary to construct more indecomposable modules, similar to $\mathcal{K}_{1,2}\boxtimes\mathcal{K}_{r,nq}$, which will surely appear in  the general fusion tensor product formulas. Such modules should be constructible as indecomposable summands of repeated tensor products of $\mathcal{K}_{r,nq}$ with $\mathcal{K}_{1,2}$ and $\mathcal{K}_{2,1}$, similar to the constructions of \cite[Section 5.2]{MY-cp1-vir}, for example. Detailed conjectures for general fusion product formulas have been derived in \cite{RP}.

Closely related to the problem of constructing indecomposable modules in $\mathcal{O}_{c_{p,q}}$ is the problem of constructing and classifying modules which are projective in some suitable natural subcategory of $\mathcal{O}_{c_{p,q}}$. The category $\mathcal{O}_{c_{p,q}}$ itself does not have non-zero projective objects. Indeed, if a projective object $\mathcal{P}$ in $\mathcal{O}_{c_{p,q}}$ were non-zero, it would surject onto some simple quotient $\mathcal{L}_{r,s}$. But for every non-zero submodule $\mathcal{J}\subseteq\mathcal{V}_{r,s}$, the Verma module quotient $\mathcal{V}_{r,s}/\mathcal{J}$ is a finite-length object of $\mathcal{O}_{c_{p,q}}$ that also surjects onto $\mathcal{L}_{r,s}$. So by projectivity there would be a map $f_\mathcal{J}:\mathcal{P}\rightarrow\mathcal{V}_{r,s}/\mathcal{J}$ for any non-zero $\mathcal{J}$ whose image would contain the generating vector $v_{r,s}+\mathcal{J}$. Thus $f_\mathcal{J}$ would be surjective, but this is impossible because $\mathcal{P}$ has fixed finite length and so cannot surject onto all finite-length quotients of $\mathcal{V}_{r,s}$. 

Thus an outstanding problem is to identify a natural tensor subcategory of $\mathcal{O}_{c_{p,q}}$ which contains all simple objects and in which every simple object has a projective cover. In particular, such a subcategory must exclude most finite-length quotients of any Verma module. In the case of central charge $c_{p,1}$ studied in \cite{MY-cp1-vir}, the correct subcategory is the M\"{u}ger centralizer of $\mathcal{K}_{3,1}$, which also turns out to be the tensor subcategory of $\mathcal{O}_{c_{p,1}}$ generated by $\mathcal{K}_{1,2}$. In the general $c_{p,q}$ case, it seems likely that the tensor subcategory generated by both $\mathcal{K}_{1,2}$ and $\mathcal{K}_{2,1}$ is at least close to being the correct subcategory of $\mathcal{O}_{c_{p,q}}$ for obtaining projective objects. However, at the moment it is not clear if this subcategory would contain the correct projective covers of $\mathcal{L}_{r,s}$ for $r< p$ and $s< q$.

One possible application of finding a subcategory of $\mathcal{O}_{c_{p,q}}$ with enough projectives might be in constructing a full (bulk) logarithmic conformal field theory from the chiral LCFT $\mathcal{LM}(p,q)$. There is a vertex operator algebra approach to constructing a full CFT from a chiral one due to Huang and Kong called full field algebras \cite{HK1, HK2, Ko}. However, the main constructions of full field algebras so far apply to rational vertex operator algebras rather than to those appearing in LCFT. There is also a tensor-categorical approach to constructing the bulk space of a full CFT from a chiral one; see for example \cite{RFFS, GaRu, GRW2}. This approach does apply to at least some chiral LCFTs, but a key input in \cite{GRW2}, for example, is a braided tensor category with enough projective objects. Thus extending the methods of \cite{GRW2} to examples such as $\mathcal{LM}(p,q)$ would probably at least require constructing projective covers in an appropriate subcategory of $\mathcal{O}_{c_{p,q}}$.

Another research direction is to apply vertex operator algebra extension theory \cite{HKL,CKM-exts} to our results on $\mathcal{O}_{c_{p,q}}$ to study the singlet and triplet $W$-algebras, which are non-simple vertex operator algebras containing $V_{c_{p,q}}$ as vertex operator subalgebra. The singlet and triplet extensions of the Virasoro algebra provide some of the basic examples of logarithmic conformal field theories, and there are higher-rank generalizations which extend higher-rank principal affine $W$-algebras \cite{FT, Su}. The representation theories of the singlet and triplet extensions of the Virasoro algebra at central charge $c_{p,1}$ are by now well-understood mathematically, thanks largely to the works \cite{Ad-sing, AM-trip, NT, TW, MY-cp1-vir, CMY-singlet, CMY-sing2, GN, CLR}. However, questions and conjectures remain about the singlet and triplet algebras at central charge $c_{p,q}$, although these algebras have also received extensive study in the mathematical physics literature \cite{FGST1, FGST2, RP4, Ra1, Ra2, Ra3, Ra4, GRW, Wo, RW, TW2, AM2, AM3, AM4, AM5, AM6}. We expect it should be possible to use the induction functors \cite{KO, CKM-exts} from $\mathcal{O}_{c_{p,q}}$ to the categories of singlet and triplet modules to rigorously determine fusion tensor products and prove (non)-rigidity for at least some modules of the singlet and triplet algebras.

We also expect that methods similar to those applied to the Virasoro algebra in this paper will be useful for studying the $N=1$ super Virasoro algebras \cite{NS, Ramond} at suitable central charges, because the representation theories of these superalgebras are analogous to that of the Virasoro algebra in many respects. For example, Verma modules for the $N=1$ super Virasoro algebra (at least in the Neveu-Schwarz sector) have structures similar to Virasoro Verma modules \cite{As, IK-N=1-1}, and the $N=1$ super Virasoro algebra admits free field representations analogous to the Heisenberg Fock modules for the Virasoro algebra \cite{MR-C, IK-N=1-2}. Using these free field representations, Kac modules for both the Neveu-Schwarz and Ramond $N=1$ super Virasoro algebras were introduced in \cite{CRR, CR}, and fusion rules for these Kac modules were conjectured. In upcoming work \cite{CMOHY}, it will be shown that the category of $C_1$-cofinite modules for the Neveu-Schwarz $N=1$ super Virasoro algebra at any central charge admits vertex algebraic braided tensor category structure, analogous to the corresponding result for the Virasoro algebra in \cite{CJORY}. Thus it may soon be possible to apply the techniques of the present paper to prove some of the conjectures in \cite{CRR}. We also hope it will be possible to apply our methods to the $N=2$ superconformal algebras (see for example \cite{Gray, CLRW, LPX, RRR} for some recent work on the representation theory of these algebras, as well as the related work \cite{So} of the second author of this paper).

Finally, we note that there are analogues of the Virasoro Kac modules studied in this paper for the affine Lie algebra $\widehat{\mathfrak{sl}}_2$: they are defined in \cite{Ra-sl2} as certain finitely-generated submodules of Wakimoto modules. This is one of many links between the representation theories of $\widehat{\mathfrak{sl}}_2$ and the Virasoro algebra (recall that  the Virasoro algebra is the affine $W$-algebra associated to $\widehat{\mathfrak{sl}}_2$ by quantum Drinfeld-Sokolov reduction \cite{FFr}). In fact, upcoming work of the first author of this paper with Jinwei Yang will use Proposition \ref{prop:intrinsic_dim} to show that there are braided tensor functors from the Kazhdan-Lusztig category \cite{KL1} of $\widehat{\mathfrak{sl}}_2$ at levels $-2+\frac{p}{q}$ and $-2+\frac{q}{p}$ to the Virasoro category $\mathcal{O}_{c_{p,q}}$. These functors can be viewed as tensor-categorical versions of quantum Drinfeld-Sokolov reduction for $\widehat{\mathfrak{sl}}_2$.

\end{document}